\patchcmd{\quote}{\rightmargin}{\leftmargin .5cm \rightmargin}{}{}  %command to modify quote environment - Added
\newtheorem{theorem}{Theorem}[section]
\newtheorem{proposition}[theorem]{Proposition}
\newtheorem{lemma}[theorem]{Lemma}
\newtheorem{corollary}[theorem]{Corollary}
\newtheorem{definition}[theorem]{Definition}
\theoremstyle{definition}
\newtheorem{remark}[theorem]{Remark}
\newtheorem{remarks}[theorem]{Remarks}
\newcommand{\MyRemark}[1]{%\begin{remark} \textcolor{blue}{#1} \end{remark}
}
\newcommand{\MySummary}[1]{%\begin{summary} \textcolor{blue}{#1} \end{summary}
}
\newcommand{\MyRemarkM}[1]{%\begin{remark} \textbf{?} \textcolor{red}{#1} \end{remark}
}
\newcommand{\Myfootnote}[1]{
%\footnote{ \textcolor{red}{ #1}}
}
\newcommand{\MyCalc}[1]{
% \textcolor{magenta}{ #1}
}
\newcommand{\Out}[1]{
% \textcolor{cyan}{ #1}
}
\numberwithin{equation}{section}
\newcommand*\tr{\mathop{}\!\mathrm{Tr}}
\newcommand*\Hess{\mathop{}\!\mathrm{H}}
\newcommand*\diff{\mathop{}\!\mathrm{d}}
\newcommand{\Ro}{\mathbb{R}^n_0}
\newcommand{\Mn}{\mathrm{M}_{n\times n}(\rr)}
\newcommand{\Matrixn}[1]{(#1_{ij})_{1\le i,j \le n}}
\newcommand{\Rp}{\mathbb{R}_+}
\newcommand{\rp}{\mathbb{R}_+}
\newcommand{\rd}{\mathbb{R}^n}
\newcommand{\lrangle}[2]{\langle\,#1, #2 \, \rangle}
\newcommand{\rr}{\mathbb{R}}
\newcommand{\Mremark}[1]{\begin{remark}\textcolor{black}{#1}\end{remark}
}
\newcommand{\MremarkB}[1]{%\begin{remark}\textcolor{brown}{#1}\end{remark}
}
\newcommand{\MremarkO}[1]{%\begin{remark}\textcolor{violet}{#1} \end{remark}
}
\newcommand{\Mcomment}[1]{\textcolor{black}{#1} 
}
\newcommand{\McommentO}[1]{%\textcolor{violet}{#1} 
}
\newcommand{\Mqo}[1]{%\textcolor{red}{\textbf{Q0:} #1} 
}
\newcommand{\Fa}{\mathbb{F}^{\alpha}}
\newcommand{\Pa}{\mathbb{P}^{\alpha}}
\newcommand{\FPa}{(\Fa, \Pa)}
\newcommand{\F}{\mathbb{F}}
\newcommand{\Prb}{\mathbb{P}}
\newcommand{\V}[1]{\mathbf{#1}}
\newcommand*\Msigma{\mathbold{\sigma}}
\newcommand*\Mmu{\mathbold{\mu}}
\newcommand*\Malpha{\mathbold{\alpha}}
\newcommand*\Mbeta{\mathbold{\beta}}
\newcommand{\cad}{c\`adl\`ag }
\newcommand{\ito}{It\^o }
\newcommand{\levy}{L\'evy }
\newcommand{\holder}{H\"{o}lder }
\newcommand{\xpa}{\V{X}^{\Malpha}}
\newcommand{\xp}{\V{X}^{\Malpha^{r,\xi}}}
\newcommand{\ap}{\Malpha^{r,\xi}}
\newcommand{\Eu}{\mathbb{E}_{\V{x}}^{\V{u}}}
\newcommand{\ypa}{\V{Y}^{\Mbeta_t^{\alpha}}}
\newcommand{\bpa}{\Mbeta_t^{\alpha}}
\newcommand{\etat}{\eta_t^{\alpha}}
\newcommand{\Apya}{\mathcal{A}^p_{t,\etat}}
\newcommand{\upx}{\V{u}^{\xi,\oplus_t}}
\newcommand{\uxp}{\V{u}^{\V{x},\oplus_t}}
\newcommand{\upxo}{\V{u}^{\oplus_t}}
\newcommand{\zpx}{\V{Z}^{\V{u}^{\xi,\oplus_t}}}
\newcommand{\zxp}{\V{Z}^{\V{u}^{\V{x},\oplus_t}}}
\newcommand{\zpxo}{\V{Z}^{\V{u}^{\oplus_t}}}
\newcommand{\apg}[2]{\Malpha^{#1,\V{#2}}}
\newcommand{\Ag}[2]{\mathcal{A}^p_{#1,\V{#2}}}
\newcommand{\xprx}{\V{X}^{\Malpha^{r,\V{x}}}}
\newcommand{\aprx}{\Malpha^{r,\V{x}}}
\newcommand{\xptx}{\V{X}^{\Malpha^{t,\V{x}}}}
\newcommand{\aptx}{\Malpha^{t,\V{x}}}
\newcommand{\xpx}{\V{X}^{\Malpha^{\V{x}}}}
\newcommand{\apx}{\Malpha^\V{x}}
\newcommand{\xpz}{\V{X}^{\Malpha^{\V{z}}}}
\newcommand{\apz}{\Malpha^\V{z}}
\newcommand{\Xc}{\V{X}^{\apx,c}}
\newcommand{\Xd}{\V{X}^{\apx,d}}
\newcommand{\nx}{\eta^{X,\apx}}
\newcommand{\ny}{\eta^{Y,\beta}}
\newcommand{\Ap}{\mathcal{A}^p_{r,\xi}}
\newcommand{\Arx}{\mathcal{A}^p_{r,\V{x}}}
\newcommand{\Ax}{\mathcal{A}^p_{\V{x}}}
\newcommand{\Az}{\mathcal{A}^p_{\V{z}}}
\newcommand{\apo}{\Malpha}
\newcommand{\xpo}{\V{X}^{\Malpha}}
\newcommand{\apoT}{\tilde{\Malpha}}
\newcommand{\ypo}{\V{Y}^{\tilde{\Malpha}}}
\newcommand{\apoTx}{\tilde{\Malpha}^{\V{x}}}
\newcommand{\ypox}{\V{Y}^{\tilde{\Malpha}^{\V{x}}}}
\newcommand{\xpi}{X^{i}}
\newcommand{\Xic}{X^{i,c}}
\newcommand{\Y}{\V{Y}}
\newcommand{\AxT}{\tilde{\mathcal{A}}_{\V{x}}}
\newcommand{\Atx}{\mathcal{A}_{t,\V{x}}^p}
\newcommand{\Ex}{\mathbb{E}_{\V{x}}^{\alpha}}
\newcommand{\Eob}{\mathbb{E}_{0}^{\beta}}
\newcommand{\Etx}{\mathbb{E}_{t,\V{x}}^{\alpha}}
\newcommand{\Jx}{J^{\alpha}(\V{x})}
\newcommand{\Lasx}{L^{\apx_s}}
\newcommand{\Las}{L^{\ap_s}}
\newcommand{\as}{\mathbb{P}^{\alpha}-a.s.}
\newcommand{\Ft}{\mathcal{F}_t}
\begin{document}

\title{Martingale approach to control for general jump processes}
%\date{\today}
\author{Hern\'andez-Hern\'andez, M. E.$^1$}
\address{$^{1,2,3}$Department of Statistics, University of Warwick, Coventry CV4 7AL UK}
%\email{m.hernandez-hernandez@warwick.ac.uk}
\thanks{$^{1,2,3}$Department of Statistics, University of Warwick, Coventry CV4 7AL UK }
\thanks{$^1$ Email: m.hernandez-hernandez@warwick.ac.uk}
%\curraddr{$^1$Department of Statistics, University of Warwick, Coventry CV4 7AL UK}

%    author one information
\author{Jacka, S. D.$^2$}
%\address{$^2$Department of Statistics, University of Warwick, Coventry CV4 7AL UK} 
%\email{S.Jacka@warwick.ac.uk}
\thanks{$^{2}$ Saul Jacka gratefully acknowledges funding received from the EPSRC grant EP/P00377X/1 and is also grateful to the Alan Turing Institute for their financial support under the EPSRC grant EP/N510129/1. Email: s.jacka@warwick.ac.uk}

%    author two information
\author{Mijatovi\'{c}, A.$^3$ }
%\address{$^3$Department of Statistics, University of Warwick, Coventry CV4 7AL UK} 
%\email{a.mijatovic@warwick.ac.uk}
%\thanks{$^3$ Department of Statistics, University of Warwick, Coventry CV4 7AL UK\\ Email: a.mijatovic@warwick.ac.uk }
\thanks{$^3$  Email: a.mijatovic@warwick.ac.uk}

\begin{abstract} We provide  verification theorems (at different levels of generality) for infinite  horizon stochastic control problems in continuous time for semimartingales. The control framework is given as an abstract "martingale formulation", which encompasses a broad range of standard control problems. Under appropriate conditions we show that the set of admissible controls gives rise to a certain class of controlled special semimartingales. Our results generalise both the standard controlled \ito- and \levy-diffusion  settings as we allow ourselves to locally control not only the drift and diffusion coefficients, but also the jump intensity measure of the jumps. As an illustration, we present a few examples with explicit solutions.% such as an infinite horizon control problem with a running cost function being quadratic in the space variable and in the infinitesimal drift of the underlying dynamics. 
\end{abstract}

%    For articles to be published after 1 January 2010, you may use
%    the following version:
%.   \subjclass[2010]{34A08, 26A33, 34A12, 34A05, 60G52}

\keywords{Control martingale approach, Semimartingale characteristics, Bellman's process}

\maketitle
\section{Introduction}As far as we know, to date the most general setting for controlled Markov processes in continuous time is that in \cite{OKSulem}. 
In this paper %studies the characterisation of the value function  for an infinite  horizon control problem in continuous time for $\rd-$valued stochastic processes with continuous action and state space.  
  we provide sufficient conditions of optimality for the control problem of minimising  the cost functional
      \begin{equation}\label{J-infinite}
  J(\xpx,\apx):=  \Ex \left [   \int_0^{\infty} e^{- \gamma_t^{\Malpha}}f\left (\xpx_t,\,\apx_t\right)\diff t\right ],
     \end{equation}
  over all \textit{admissible} state-control processes $\left (\xpx,\,\apx \right ) \in \Ax$, where $\xpx_0 = \V{x}$. We assume that each control process $\apx_{\cdot} = (\Msigma_{\cdot}, \nu_{\cdot}, \Mmu_{\cdot})$ determines the volatility, the jump intensity and the drift of the  system $\xpx$. We stress that, unlike most of the literature including \cite{OKSulem}, the action space allows us to choose the jump measure $\nu$ in a position-dependent way and from an arbitrary class.  The  local dynamics of the controlled process $\xpx$ are described by nonlocal operators of the form 
  \begin{align}\label{D:LaIntro}
   (L^\V{a} g)(\cdot) := (\V{u} +\Mmu)^T  \nabla g(\cdot) + \frac{1}{2} \tr( \Msigma^T \Hess g\, \Msigma)(\cdot)  +\int_{\Ro} \left ( g(\cdot+\V{y}) - g(\cdot) -  \V{y}^T \nabla g (\cdot ) \right ) \nu(\diff \V{y}),
   \end{align}
  where each $\V{a} = (\Msigma, \nu, \Mmu)$ is an element in the (properly defined) set of  available actions $A$.    
 Here, $f$ is the given running cost function 
 and $ \gamma_t^{\Malpha}$ denotes a discounting process depending on the admissible pair $\left (\xpx,\,\apx \right ) $.
  
The contribution of this paper to the subject  of stochastic control is twofold: firstly, we set up a general (and abstract) martingale formulation  for multidimensional controlled semimartingales whose differential characteristics are controlled continuously in time.  Secondly, we explore different characterisations of the value function at different levels of generality, including the standard verification theorems which are based on the corresponding HJB type equation. We also  show the interplay between the different assumptions and the relationship with the classical  stochastic differential equation (SDE) setting (see, for example,  \cite{Pham}, \cite{OKSDE} and references therein).  In particular,  our control  formulation encompasses the controlled \ito diffusion case and, further, generalises the controlled jump diffusion case presented in \cite{OKSulem}.

The basic structure of a stochastic control problem involves the description of the  system dynamics and the corresponding class of admissible controls. Most of the literature dealing with verification theorems is set up via either  a \textit{strong} or a \textit{weak} SDE formulation with a \emph{Markovian} structure. That is,  the evolution of each  controlled system is governed by an SDE whose relevant coefficients are functions of the type $h (t,\V{x}, u(t,\V{x}))$, where $u$ is the control policy.  In such a setting  the definition of the \lq\lq smallest'' class of admissible controls  relies then on  well-posedness results (existence and uniqueness of solutions) of the corresponding SDE's.  
  Along these lines,   the study of continuous time stochastic control of Markov processes, and more generally of Ito-diffusions,  has been widely researched.    For a quick overview of this formulation, see the surveys \cite{Borkar2005} and \cite{APham2005}. For a more detailed study  we refer to \cite{FlemingR1975}, \cite{FlemingSoner}, \cite{YongZ} and references therein. Control settings allowing discontinuous processes  include those  of stochastic control for Markovian jump systems (also known as Markov decision processes) as given, for example, in  \cite{Miller1968}, \cite{Stone1973}, \cite{Pliska1975}; and, more generally, controlled jump diffusions (also known as \levy diffusions) as  in \cite{OKSulem}.   

When compared to the controlled SDE approaches, either for  (continuous) \ito diffusions or for (discontinuous) \levy diffusions,  the amount of research outside these settings, such as the \emph{control martingale approach}, is less. Some works in this direction are \cite{ElKaroui1981}, \cite{ElKaroui1987}, \cite{EK1987}, \cite{Karatzas2006}, \cite{JM0}.  
   In this paper we also consider a control setting outside the SDE formulation. The main features of our model are the following: 
 \begin{itemize}[leftmargin=0.75cm]
 \item [i)]  Our control problem is based on a \textit{martingale approach}  similar to the one given in \cite{JM0} (see also  \cite{Davis1979}, \cite{EK1987}), wherein the underlying local dynamics, i.e. each admissible pair $(\xpx,\,\apx)$, is characterised  as a solution to a   \textit{control martingale problem}. This  approach is also similar to the one introduced by Stroock and Varadhan for SDE's. This formulation allows very general underlying dynamics for the controlled system where neither is the dynamics of the system assumed to be Markovian  nor is the class of admissible policies restricted to be \textit{Markovian or state-feedback} policies. We assume a  multidimensional state space with  no state constraints. The general setting  presented here encompasses typical stochastic control problems for \ito difusions.  
 \item [ii)] The action set  $A$ is an open abstract space for which each action $\V{a} \in A$ is a triplet which determines (locally) the diffusion coefficient, the jump intensity and  the drift  of the underlying process. We are thus outside the standard assumption of taken $A \subset \mathbb{R}^k$, $k\in \mathbb{N}$. The local dynamics of the controlled system $\xpx$ is  described by means of the family of \levy operators $\{L^{\V{a}}\,:\, \V{a}\in A\}$ as defined in \eqref{D:LaIntro}.    The use of these operators allows us to study a general class of controlled processes without any \emph{a priori} restriction to the Markovian class.  
\end{itemize}
Our main results are the following:
\begin{itemize}[leftmargin=0.75cm]
\item [i)]   \emph{Controlled semimartingale dynamics.} We prove that  our  controlled processes are special semimartingales whose  \textit{differential semimartingale characteristics} are related to the associated control policy $\apx$ (Corollary \ref{X-charact}). The generality of our setting imposes stronger integrability conditions for the control $\apx$ so as to guarantee the  finiteness of the $p$th moments of $\xpx$ (Proposition \ref{qth-moments}), which are crucial when dealing with value functions of polynomial growth.  In the standard SDE approach, where polynomial growth is also a natural assumption, the corresponding integrability conditions are usually granted by the \ito conditions. This is discussed in Section \ref{S:SDEcase}.  
\item [ii)] \emph{Dynamic Programming Principle (DPP).} In order to use the dynamic programming approach to study our control problem, our starting point is to prove that the  DPP (also known as Bellman's principle of optimality) holds true in our abstract martingale setting (Lemma \ref{L:DPP} and Lemma \ref{P0-S}). Such a principle is well-known for continuous controlled Markov processes. However, under more general frameworks, its validity is either assumed true because of \emph{its clear intuitive meaning} or the reader is referred to references which, in many cases, deal with slightly different models. 
\item [ii)] \emph{Verification theorems.} Under different assumptions on the initial control data (running cost function) and considering different classes of admissible policies (including the Markovian class),  we provide different results to characterise the value function including the standard \emph{verification theorems} (Theorem \ref{VT0}).   These results provide sufficient conditions to characterise the optimal payoff function (or value function) as a solution to the corresponding Hamilton-Jacobi-Bellman (HJB) equation and, at the same time, they allow us to determine optimal feedback controls via the pointwise optimisation of the HJB equation.  The proofs of these  theorems rely on the probabilistic counterpart of \textit{the dynamic programming principle}. Essentially, one needs to show  that the so-called \textit{Bellman process} associated with a control process (see  definition in \eqref{Sax00}) is a submartingale for any admissible policy, and it is a martingale whenever the policy is optimal. In particular, we cover in detail the case where the running cost function $f$ has polynomial growth.% of degree $q \ge 1$. 
  \item[iii)]  \emph{Examples}. As applications of our  results, we provide a few examples with explicit solutions. In one of them the optimal controlled process is a Brownian motion that is jumped to the origin whenever it leaves a certain region. We also give the explicit solution for a control problem with a quadratic running cost function. For this case we  determine the value function and exhibit an optimal policy  whose associated optimal controlled process is an  Ornstein-Uhnlenbeck  type process.  There is a close  connection between  linear-quadratic (LQ) optimal control problems  and the quadratic case presented here. Indeed, although  the controlled system associated with each admissible control  is not assumed to be linear,  we still obtain a linear, optimally controlled process. Furthermore, we get an optimal control whose drift component is linear in the state variable and whose  diffusion component and jump intensity are related to the solution to an algebraic Riccati equation (see \eqref{Riccati}-\eqref{D:delta} in Section 4).  
   \end{itemize}
 Needless to say, we are aware of the limitations arising when  considering  verification results in the context of \emph{classical} (regular enough) solutions.  It is well-known that (even for standard controlled diffusions) the regularity of the value function cannot be guaranteed in general. Nevertheless, even though our verification results are based on the assumption of a smooth solution to the HJB equation,  our control setting gives a promising starting point to handing very general stochastic control problems in continuous time and with abstract control sets. The generalisation of our results to the context  of \textit{viscosity solution} is left as part of our  future research.  A brief discussion of the finite horizon case and other possible extensions are given in Section \ref{S:Gral}. %\Mcomment{In particular, the verification theorems for the finite horizon case can be obtained without much difficulty.}
 
  \McommentO{ In general, proving the \emph{a priori} regularity  for the value function $V$ is a difficult task that can be handled with the notion of viscosity solutions. In fact, as many examples show it, one cannot expect for $V$ to be a smooth function. We thus leave the study of this problem  for  future research. Classical verification theorems allow to solve examples of control problems where one can find, or at least there exists, a smooth solution to the associated HJB equation. They apply successfully for control problems where the diffusion term does not depend on the control and is uniformly elliptic, since in this case the HJB equation is semilinear  and so classical existence results for smooth solutions exist. They also apply for some specific models with control on the diffusion term, such as some typical Merton?s portfolio selection problem, and more generally to extensions of Merton?s model with stochastic volatility, since in this case, the HJB equation may be reduced after a suitable transformation to a semilinear equation. However, in the general case of nondegeneracy of the diffusion term and in a number of applications, the value function might be not smooth or it is not possible to obtain a priori the required regularity. Moreover, for singular control problems, the value function is in general not continuous at the terminal date. Then, the classical verification approach does not work and we need to relax the notion of solution to the HJB equation.}

 The rest of the paper is organised as follows. Section \ref{S:Notation} introduces some standard  notation.  Section \ref{S:ControlSetting}  describes  the control setting of interest.  In Section \ref{S:Dynamics} we study the underlying dynamics of our controlled process.  Here we show the semimartingale structure of our formulation.  %Here we show that, under suitable conditions for the admissible controls, each controlled process is a semimartingale for which we also obtain the associated semimartingale characteristics. 
  Then, in Section \ref{S:MainResults} we define the cost structure of the problem and state the main results of the paper: the dynamic programming principle and its (sub-)martingale formulation (Lemma \ref{P0-S}), as well as the characterisation results of the value function (Lemma \ref{VT0-SubPhi}, Theorem \ref{VT0} and   Theorem \ref{VT-EFinite}). In Section \ref{S:classes} we consider different classes of admissible controls such as  the Markovian case and its connection with the standard SDE setting.  The finite horizon case and other possible generalisations are briefly discussed  in Section \ref{S:Gral}.  We provide some applications in Section \ref{S:Applications}. Finally, for the sake of clarity, the proofs of all our results are collected in the  Appendices.

\section{Notation}\label{S:Notation}Let $\rd$ be the $n$-dimensional Euclidean space where each point $\V{x}$ is expressed by a column vector $\V{x} =(x_1, \ldots, x_n)^T$. As usual,  the superscript $"T"$ denotes the transpose of a vector (or a matrix). The set of nonnegative real numbers is denoted by $\rp$.  For any two vectors $\V{x},\V{y}\in \rd$, the inner product and the Eucliden norm are denoted by $\V{x}\cdot \V{y} :=\sum_{i=1}^n x_i y_i $ and  $|\V{x}| := \sqrt{\V{x}\cdot \V{x}}$, respectively.   
 
  Notation $M(\rd)$, $B(\rd)$ and $C (\rd)$  denote the  spaces  of real-valued measurable functions, bounded measurable functions and continuous functions on $\rd$, respectively.  These spaces are endowed with the sup-norm $||f||  = \sup_\V{x} |f (\V{x})|$. We denote by $C^{k} (\rd)$, $k \in \mathbb{N}$, the  space  of real-valued $k$-times continuously differentiable functions  defined on $\rd$. \McommentO{with the norm given by $||f||_{C^K}:= \sum_{j=0}^k ||\partial_j f||$}  An additional subscript $c$  will be used to denote the corresponding space of functions with compact support and by $C_c^{\infty}(\rd) := \bigcap_{k=1}^{\infty} C_c^k(\rd)$ we denote the space of infinitely often continuously differentiable functions on $\rd$ with compact support. 
  
 Given a metric space $(E,d)$, $\mathcal{B}(E)$ denotes the Borel $\sigma-$algebra compatible with the metric $d$.   Given the measure space $(E, \mathcal{B}(E),\rho)$, $L^q (E, \mathcal{B}(E),\rho)$ (in brief  $L^q(\rho)$ if there is no risk of confusion), $1 \le q < \infty$, stands for the Banach space of all equivalence classes of mappings $f:E \to \rd$ which agree a.e. with respect to $\rho$ and for which $||f||_q < \infty$, where the $q$-norm $||\cdot||_q$ is given by $||f||_q := \left (  \int_{E} | f(x)|^q \diff \rho \right )^{1/q}$.

Let $\Mn$ be the set of real-valued $n\times n$-matrices endowed with the Frobenius norm\McommentO{\footnote{Since all norms on a finite dimensional space are equivalent, choosing this one is just a matter of taste.}} $||\V{B}|| := \tr (\V{B} \V{B}^T)^{1/2}$, $\V{B} = \Matrixn{B}  \in \Mn$, where $\tr(\V{C}):= \sum_{i=1}^n C_{ii}$ denotes the trace  of  the matrix $\V{C}\in  \Mn$. The $n\times n$ identity matrix will be denoted by $\mathbf{I}$. %Notation $\mathcal{B}(\Mn)$ stands for the Borel $\sigma$-algebra  in $\Mn$.

Let $\Ro := \mathbb{R}^n\fgebackslash \{0\}$ and fix a $p \ge 2$. Define
\begin{equation}\label{D:Lm}
\mathcal{M}_p := \left \{   \text{ measures $\nu$  on  $(\Ro, \mathcal{B}(\Ro))$  such that $ \int_{\Ro} |\V{y}|^2 \vee |\V{y}|^p \nu(\diff \V{y}) < + \infty$} \right \}.\end{equation}

Since $1 \wedge |\V{y}|^2 \le |\V{y}|^2$ for all $\V{y} \in \rd$, it follows that each $\nu \in \mathcal{M}_p$ is a L\'evy measure, i.e. \begin{equation}\label{LevyCondition}
 \int_{\Ro} (1 \wedge |\V{y}|^2) \nu (\diff \V{y}) < +\infty,
 \end{equation} and, further, $\nu$ satisfies $ \int_{\Ro} |\V{y}|^2 \nu (\diff \V{y}) < + \infty$.  Here, we used the notation $a \wedge b:= \min \{a,b\}$  and $a \vee b:= \max \{a,b\}$,  for any $a,b \in \rr$.  The space $\mathcal{M}_p$ is endowed with a suitable weak convergence topology and its Borel $\sigma$-algebra is denoted by $\mathcal{B} (\mathcal{M}_p)$. \McommentO{The Borel $\sigma$-algebra is also the smallest $\sigma$-algebra such that the mappings $\nu \mapsto \int  f(\V{y}) \nu (\diff \V{y})$ are measurable for $f \in C_c (\Ro)$. We can introduce a filtration $(\mathcal{F}^{\mathcal{M}_p}_t)$  where $\mathcal{F}^{\mathcal{M}_p}_t$ is the $\sigma$-algebra generated by  the family of measures $\{ 1_{(0,t]} \nu \, : \, \nu \in \mathcal{M}_p\}$.}

For any $\Msigma \in \Mn$, the matrix $ \Msigma^T \Msigma $  is symmetric positive semidefinite.\Myfootnote{That is, $<\V{x}, \Msigma \Msigma^T \V{x}> = <\V{x}^T \Msigma \Msigma^T \V{x}> \ge 0$ for all vectors $\V{x}$. Indeed, let $y = \Msigma^T \V{x}$, then $<\V{x}, \Msigma \Msigma^T \V{x}> = \V{x}^T \Msigma \Msigma^T \V{x} = y^T y = \sum_{k=1}^n y^2_k = || \Msigma T \V{x}||^2 = || y|| \ge 0$.  Clearly $\Msigma \Msigma ^T$ is symmetric, and a symmetric matrix is positive-definite if and only if it has only positive eigenvalues.} By  Theorem~8.1 in \cite[Chapter 8, p. 37]{Sato1999}  and Theorem~7.10 in \cite[Chapter 7, p. 35]{Sato1999}, given  $\Msigma \in \Mn$, $ \Mmu \in \rd$ and $\nu \in \mathcal{M}_p$, there exists a unique in law $\rd$-valued process $\V{X}^\V{x} := (\V{X}_t^\V{x})_{t \in \mathbb{R}_+}$, started at $\V{x}\in\rd$, having stationary independent increments and satisfying
\begin{equation}\nonumber 
\mathbb{E}_\V{x} \left [ e^{i \V{u} \cdot (\V{X}_t^\V{x} - \V{x})}\right ] = e^{ t\,\Psi (\V{u})}, \quad \text{ for all } \V{u}\in \rd, \,\, t \in \Rp,
\end{equation}
where
\begin{equation}\label{D:Psi}
 \Psi (\V{u}) := -\frac{1}{2} \V{u} \cdot \Msigma^T \Msigma \V{u} + i \V{u} \cdot \Mmu + \int_{\Ro} \left( e^{i\V{u}\cdot \V{y}} - 1 - i\V{u}\cdot \V{y} \right) \nu(\diff  \V{y}),\quad  \V{u} \in \rd. 
\end{equation}
The  infinitesimal generator of the $n-$dimensional process $\V{X}^\V{x}$ is the operator $L$,
 with domain $\text{Dom} (L)$, defined for any $g \in C_c^2(\rd)$ by
\begin{align}\label{D:L-levy}
   (L g)(\cdot) := \Mmu^T  \nabla g(\cdot) + \frac{1}{2} \tr (\Msigma^T\Hess g\,\Msigma)(\cdot)  +\int_{\Ro} \left ( g(\cdot+\V{y}) - g(\cdot) -  \V{y}^T \nabla g (\cdot ) \right ) \nu(\diff \V{y}),
   \end{align}
   see  \cite[Theorem 31.5, p. 208]{Sato1999}. Here, $\nabla g$ and $\Hess g$ denote the gradient and the Hessian  in the variable $\V{x}$  with components $\partial_i g:=\frac{\partial }{\partial x_i}g$, $1 \le i \le n$, and $\partial_{ij}^2 g := \frac{\partial^2}{\partial x_i \partial x_j}g$, $1 \le i,j \le n$,  respectively.

\section{Stochastic control setting} \label{S:ControlSetting} We are interested in studying a stochastic control problem in a continuous setting:  continuous time and continuous action and state space. We shall consider a  \emph{martingale control formulation}  to describe the dynamics of each controlled system, or rather its law, as determined by the  \emph{martingale property} of a certain class of processes. For each controlled system, the local dynamics are described by a given family of operators defined on a certain class of test functions.

\emph{Notation. Given a metric space $(E,d)$, $\mathcal{P}(E)$ denotes the set of probability measures on $(E,\mathcal{B}(E))$.  The set of \cad functions (right-continuous  with left-limits) on $\rp$ with values in $E$ is denoted by $D(\rp; E)$ and is endowed with the Skorohod topology \cite[Chapter 3, Section 5]{EK}.   Given a stochastic basis $(\Omega, \mathcal{F}, \F, \Prb)$, all equalities and inequalities between random variables are understood to hold $\Prb-$almost surely, unless stated otherwise.%We say that a $\rp$-valued process is \cad if its sample paths lie in $D(\rp; E)$.
}

\subsection{Control setting} \label{CSetp}
A \emph{controlled system}  with state space $\rd$ and distribution $\xi\in \mathcal{P}(\rd)$ at an initial time $r\in \rp$ is described by the following elements.
 \begin{itemize}[leftmargin=0.29in]
 \item [(i)]   \emph{Action set.} 
 It is given by an open subset $A \,\subseteq \, \Mn \times \mathcal{M}_p \times \rd$ and denotes the set of possible actions available at each instant time and it is endowed with the corresponding product Borel $\sigma-$algebra denoted by $\mathcal{B}(A)$ and derived from the Borel $\sigma$-algebra on $M_{n\times n}(\rr) \times \mathcal{M}_p \times \rd$. \Mqo{or subspace topology?}

\item [(ii)]  \emph{Instantaneous underlying dynamics.} The local dynamics of the system  are  specified  by a family $\{ (L^\V{a}, D^\V{a}):\,  \V{a} \in A\}$ of operators  $L^\V{a} : D^\V{a} \subset M(\rd) \to M(\rd)$, such that for each $\V{a}= (\Msigma, \nu,\Mmu) \in A$ and for a fixed vector $\V{u} \in \rd$, the operator $L^\V{a}$ with domain $D^\V{a}$ is defined, for each $g \in C_c^2(\rd) \subset D^\V{a}$, by
\begin{align}\label{D:La}
   (L^\V{a} g)(\cdot) := (\V{u} +\Mmu)^T  \nabla g(\cdot) + \frac{1}{2} \tr( \Msigma^T \Hess g\, \Msigma)(\cdot)  +\int_{\Ro} \left ( g(\cdot+\V{y}) - g(\cdot) -  \V{y}^T \nabla g (\cdot ) \right ) \nu(\diff \V{y}).
   \end{align}
   \item [(iii)]  \emph{Control policies.} A control policy  is an $A$-valued process $\Malpha = (\Malpha_t)_{t\ge 0}$  determined by its probability law $\mathbf{P}_{\Malpha}$, i.e. $\mathbf{P}_{\Malpha}$ is a probability measure on $(D(\rp;A))$. The set of all such control processes is denoted by $\mathcal{U}$. \Mqo{Or $A$-valued predictable process $\Malpha$? or $A$-valued process with measurable sample paths?}
\item [(iv)]  \emph{Admissible control policies.} The set of admissible policies 
$ \Ap  \subseteq  \mathcal{U}$ with initial condition $(r,\xi) \in \rp\times \mathcal{P}(\rd)$ is defined  as the class of control  processes $\ap = \left(\ap_t\right)_{t\ge r} \in \mathcal{U}$ satisfying the following:
\begin{itemize}

\item [(\textbf{H1})] There exists a filtered, complete probability space\footnote{As usual, we will always assume that any probability space satisfies the \emph{usual conditions} \cite[Chapter I]{Protter}:  $(\Ft)_{t\in\rp}$ is a filtration of sub-$\sigma-$algebras of $\mathcal{F}$ such that  $\mathcal{F}_0$  contains all the $\mathbb{P}-$null sets in $\mathcal{F}$, and $(\Ft)_{t\in\rp}$ is right continuous, i.e. $\Ft = \bigcap_{s >t} \mathcal{F}_s =:\mathcal{F}_{t+}$.}  $(\Omega^{\alpha}, \mathcal{F}^{\alpha}, \Fa=(\Ft^{\alpha}), \mathbb{P}^{\Malpha})$   which supports both an $\Fa$-adapted \cad copy of $\ap$ (denoted again by $\ap$)  and  an  $\Fa$-adapted $\rd$-valued \cad process  
 $\xp= (\xp_t)_{t\ge r}$ with $\mathbb{P}^{\Malpha} \circ \left (\xp_s \right)^{-1} = \xi$ for all $0\le s\le r$,   such that the pair $(\xp,\ap)$ is unique in law and, further, for each $h \in C_c^2(\rd)$,  $\int_r^t \left |(\Las h\left (\xp_{s-} \right) \right |\diff s < \infty $, for $t> r$ $\as$, and   the process $M^{h,\ap} = \left (M_t^{h,\ap} \right )_{t\ge r}$ 
defined by  
\begin{equation}\label{D:Mphi}
M_t^{h,\ap} := h \left (\xp_t \right ) - \int_r^t  \left (\Las h \right )\left (\xp_{s-} \right )\diff s, \quad t\ge r,
\end{equation}
 is an $\FPa$-local martingale (i.e. an $\Fa$-adapted local martingale under  $\Pa$). 
 \item [(\textbf{H2})] If $\ap_s = ( \Msigma_s, \nu_s, \Mmu_s )$,  $s \ge r$, then for any $t\ge r$ 
 \begin{equation}\label{A3}
 \int_r^t Q_s^{p,\ap} \diff s  \,< \,\infty, \quad \text{where }\,\,
 Q_s^{p,\ap} := |\Mmu_s|  + ||\Msigma_s||^2 + \int_{\Ro} |\V{y}|^2 \vee |\V{y}|^p \nu_s(\diff \V{y}), \quad \as.
 \end{equation}
   \end{itemize}
   \end{itemize}
     Hereafter, any pair $\left (\xp, \ap \right)$ with $\ap \in \Ap$ will be referred to as an \emph{admissible pair}.

     \emph{Notation. For any $\V{x}\in \rd$,  the  initial conditions $(r,\delta_{\V{x}})$ will be denoted by $(r,\V{x})$, and thus $\mathcal{A}^p_{r,\delta_{\V{x}}} \equiv \Ag{r}{\V{x}}$ and $ (\V{X}^{\Malpha^{r,\delta_{\V{x}}}}, \Malpha^{r,\delta_{\V{x}}} )\equiv \left (\xprx, \aprx \right) $, whereas  if $r=0$, then $\mathcal{A}^p_{0,\delta_{\V{x}}} \equiv \Ax$ and $ (\V{X}^{\Malpha^{0,\delta_{\V{x}}}}, \Malpha^{0,\delta_{\V{x}}} )\equiv \left (\xpx, \apx \right) $. % When more clarity is needed, we will use notation $\Pa_{r,\xi}$ to indicate the probability measure $\Pa$ associated with the initial state $(r,\xi)$.
       }
     
\begin{definition}[\textbf{Control Martingale Problem}] 
 Let $\xi \in \mathcal{P}(\rd)$ and $r\in \rp$. The $\rd\times A$-valued admissible pair $\left (\xp, \ap \right )$  or, more precisely, the sextuplet $(\Omega^{\alpha}, \mathcal{F}^{\alpha}, \Fa, \Pa, \xp, \ap)$, with  $\Pa \circ \left (\xp_r \right)^{-1} = \xi$, is called a solution to the \emph{control martingale problem} for $ (\{L^a: a \in A\},\, \,C_c^2 (\rd),\,(r,\xi))$, whenever (\textbf{H1}) holds. Here  $\{L^a: a \in A\}$ is the family of operators defined in \eqref{D:La} and $\xi$ is the distribution of $\xp_r$. \end{definition}

\Mremark{The set of admissible policies $\Ap$ can then be thought of as the set of laws of the pair $(\xp,\ap)$ viewed as a random element with values in $D(\rp;A)\times D(\rp; \rd)$. }     
     
  \emph{Convention}. In order to extend the martingale problem \eqref{D:Mphi} to $\rp$ and given the assumption $\Pa \circ (\xp_s )^{-1} = \xi$ for all $0\le s\le r$, we set $\int_r^t \left | \left (\Las h \right )(\xp_{s-}) \right |\diff s = 0$ for $t\le r$.

%\begin{remark}\label{AT-admissible}
%When dealing with the finite horizon case on $[0,T]$, the class of admissible policies is define similarly as a restriction on the corresponding time interval. For such a case, the class of admissible policies will be denoted by $\mathcal{A}_{\V{x}} [0,T]$.  %  admissible policies $\ap$ corresponding to a control process $\xp$ with initial state $\V{x}$ -time point $(t,x)$, notation 
%\end{remark}

 \begin{remarks}
 \end{remarks}
 \begin{itemize}[leftmargin=0.75cm] %[leftmargin=*] 
 \item  [i)]Since the stochastic basis $(\Omega^{\alpha}, \mathcal{F}^{\alpha}, \Fa, \Pa)$ in (\textbf{H1}) fulfils  the usual conditions, by \cite[Theorem 2.9, Chapter II, p. 65]{RYor} %\cite[Theorem 4, Chapter IV]{DMeyerB} 
  any martingale defined on it admits  a   \cad  version which will be the one we always work with.
\McommentO{The assumption of the local Lebesgue integrability of the mapping $s \mapsto (\Las h)(\xp_{s-}) $ guarantees that \eqref{D:Mphi} is well-defined.}  
\item   [ii)]By \eqref{D:Mphi} in (\textbf{H1}), the process  $h(\xp)$ is a special semimartingale for each $h\in C_c^2 (\rd)$.  \Mcomment{Observe that no further restrictions, such as  assuming the Markov property, are imposed neither on the process $\xp$ nor on $\ap$.}
 \item  [iii)]It is feasible to consider a  larger set of test functions in  \eqref{D:Mphi}, for instance  the space $C^2 (\rd)$. However,  enlarging this set  reduces the size of the set of admissible policies $\Ap$.  \McommentO{One can  also show that the \emph{local} martingale problem \eqref{D:Mphi} is equivalent to the corresponding \emph{true} martingale problem due to the regularity (smoothness and compact support) of the test functions.}
 \item [iv)] Using the standard definition of (L\'evy) generating triplet with respect to the truncation function $h(\V{y}) = \V{y}1_{\{|\V{y}| \le 1\}}$ (see  \cite[Definition~8.2, p. 38]{Sato1999}), one can see that  the operator $L^{\V{a}}$ in \eqref{D:La} is the infinitesimal generator of a \levy process with triplet $(\Msigma^T\Msigma, \gamma, \nu)$ w.r.t  $h$, where $\gamma := (\V{u} + \Mmu)^T + \int_{|\V{y}| >1} |\V{y}|^2  \nu(\diff \V{y})$ (for details, see also \cite[Chapter 2, Section 7 and Theorem~31.5, p. 208]{Sato1999}).
\end{itemize}

\MremarkO{Note that, for each $h\in C_c^2(\rd)$, the martingale $M^{f,\ap}$ has the starting point  $M^{f,\ap}_r =h(\xi)$, for each $\ap \in \Ap$.}

 \MremarkO{In the context of \ito diffusions, solutions to martingale problems are equivalent to weak solutions of the corresponding SDE \cite[Karatzas, Proposition 4.6]{Karatzas} . Hence, a martingale approach is more convenient for mathematical reasons (because there are SDE which have  weak but no strong solutions such as the Tanaka equation). Furthermore, in a strong setting the probability space needs to be specified beforehand which, from a modelling point of view, makes  one to specify the explicit form of the white noise. Conditions for existence and uniqueness of  solutions to the martingale problem for \ito diffusions are well-known.}
 
\subsection{Concatenation property and examples of admissible policies}
 
Since any solution to a martingale  problem  can only determine the law of its solution process and due to the  \cad requirement for each admissible pair  $\left (\xp,\ap \right )$, without loss of generality we may (and we will) consider  solutions to the control martingale problem in the corresponding  canonical space.    More precisely, the sextuplet $(\Omega^{\alpha}, \mathcal{F}^{\alpha}, \Fa, \Pa,\xp,\ap)$ takes the form: $\Omega^{\alpha} := \Omega^{\alpha,1} \times \Omega^{\alpha,2}$ where $\Omega^{\alpha,1}:=D(\rp;\rd)$ and $\Omega^{\alpha,2}:= D(\rp;A)$ are the space of \cad functions on $\rp$ with values in $\rd$ and $A$, respectively, both of them endowed with the corresponding Skorohod topology; $ \Fa := \mathcal{F}^{\alpha,1} \otimes \mathcal{F}^{\alpha,2}$, where $\mathcal{F}^{\alpha,i} := \mathcal{B} (\Omega^{\alpha,i})$, $i=1,2$.  The solution process $(\xp_{\cdot},\ap_{
 \cdot})$ corresponds to the coordinate pair $(\xp_t(\omega), \ap_t(\omega)) := ( \omega^1(t), \omega^2(t))$, for each $\omega = (\omega^1, \omega^2) \in \Omega^{\Malpha}$, $t\ge 0$.  The filtration $\Fa = (\Ft^{\alpha})$ is defined by $\Ft^{\alpha,1}\otimes \Ft^{\alpha,2}$, where $\Ft^{\alpha,1} :=\sigma (\xp_s; s\le t)$ and $\Ft^{\alpha,2} :=\sigma (\ap_s; s\le t)$ are the natural filtration generated by the coordinate processes $\xp$ and $\ap$, respectively.   Finally, the  measure $\Pa$ is a probability measure on $(\Omega^{\alpha},\Fa)$. 

In the following result we prove a concatenation property for the admissible policies. This property  is fundamental for the validity of the dynamic programming principle as will be shown in the next section.
\begin{lemma}\label{L:Concatenation}
Let $(r,\xi) \in  \rp \times \mathcal{P}(\rd)$ and   let $\ap \in \Ap$. Denote by $\etat $  the law of the r.v. $\xp_t$, i.e. $\etat := \mathbb{P}^{\alpha} \circ (\xp_t)^{-1} \,\in\, \mathcal{P}(\rd)$. Then,  given any admissible policy $\bpa \in \mathcal{A}_{t,\etat}^p$, the concatenation of $\ap$ and $\bpa$ at time $t \ge r$, denoted by $\ap \oplus_t \bpa$ and defined by
\begin{equation}\label{D:oplus}
\ap \oplus_t \bpa\,:=\, 
\ap(s)1_{[r,t)}(s) \,+\, \bpa(s)1_{[t,\infty)}(s).
\end{equation}
 belongs to the set of admissible controls $\Ap$.
\end{lemma}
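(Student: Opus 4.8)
\emph{Strategy.} The plan is to construct the filtered probability space required for $\ap\oplus_t\bpa$ by the classical device of \lq\lq piecing together'' solutions of martingale problems: run the law of the pair $(\xp,\ap)$ on $[r,t]$ and then, conditionally on the history up to $t$, run the law of $(\ypa,\bpa)$ on $[t,\infty)$, gluing the two state trajectories at time $t$. The gluing is possible precisely because the data match the two pieces: by hypothesis $\bpa\in\mathcal{A}^p_{t,\etat}$ is run from the law $\etat=\Pa\circ(\xp_t)^{-1}$, which is also the law of $\xp_t$. I work throughout in the canonical space $\Omega=D(\rp;\rd)\times D(\rp;A)$ described above, so that $\ap$ and $\bpa$ are the control coordinates under $\Pa$ and under the law $\mathbb{P}^{\bpa}$ of $(\ypa,\bpa)$, respectively.

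\emph{Construction.} Since $\Omega$ is a Polish space, I disintegrate $\mathbb{P}^{\bpa}$ with respect to the $\mathcal{F}^{\beta}_t$-measurable variable $\ypa_t$: there is a Borel kernel $\V{z}\mapsto Q_{\V{z}}$ on $\Omega$ with $\mathbb{P}^{\bpa}=\int_{\rd}Q_{\V{z}}\,\etat(\diff\V{z})$ and $Q_{\V{z}}\big(\ypa_s=\V{z}\ \text{for all}\ s\le t\big)=1$ for $\etat$-a.e.\ $\V{z}$. I then define $\Prb$ on $\Omega$ by running $\Pa$ on the $\sigma$-algebra $\Ft^{\alpha}$ of events up to $t$ and, conditionally on $\Ft^{\alpha}$, running $Q_{\V{Z}_t}$ on the events after $t$, where $\V{Z}$ denotes the $\rd$-valued coordinate process; concretely, for $F_1\in\Ft^{\alpha}$ and $F_2$ in the $\sigma$-algebra generated by the coordinates restricted to $[t,\infty)$, set $\Prb(F_1\cap F_2):=\int_{F_1}Q_{\V{Z}_t(\omega)}(F_2)\,\Pa(\diff\omega)$. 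The identification at time $t$ is consistent because $Q_{\V{z}}$ is concentrated on paths frozen at $\V{z}$ up to $t$. By construction the control coordinate under $\Prb$ is $\ap\oplus_t\bpa$, and $\V{Z}$ agrees with $\xp$ on $[r,t]$ and has conditional law $Q_{\V{Z}_t}$ on $[t,\infty)$.

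\emph{Verification of (H1)--(H2).} First, $\Prb\circ(\V{Z}_s)^{-1}=\Pa\circ(\xp_s)^{-1}=\xi$ for $s\le r\le t$. For (H2), additivity of the Lebesgue integral gives, for $u\ge r$,
\[
\int_r^u Q_s^{p,\ap\oplus_t\bpa}\,\diff s=\int_r^{u\wedge t}Q_s^{p,\ap}\,\diff s+\mathbf{1}_{\{u>t\}}\int_t^u Q_s^{p,\bpa}\,\diff s,
\]
which is $\Prb$-a.s.\ finite, the first term being $\Pa$-a.s.\ finite by (H2) for $\ap$ and the second $Q_{\V{z}}$-a.s.\ finite for $\etat$-a.e.\ $\V{z}$ by (H2) for $\bpa$. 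For the martingale requirement in (H1), fix $h\in C^2_c(\rd)$; a second-order Taylor estimate and the compact support of $h$ yield a constant $C_h$ with $|(L^{\V{a}}h)(\cdot)|\le C_h\big(1+|\Mmu|+\|\Msigma\|^2+\int_{\Ro}(|\V{y}|^2\vee|\V{y}|^p)\,\nu(\diff\V{y})\big)$ for every $\V{a}=(\Msigma,\nu,\Mmu)\in A$, so integrating and invoking (H2) gives $\int_r^u|(L^{(\ap\oplus_t\bpa)_s}h)(\V{Z}_{s-})|\,\diff s<\infty$ for $u>r$. Now split the candidate $M^{h,\ap\oplus_t\bpa}_u:=h(\V{Z}_u)-\int_r^u(L^{(\ap\oplus_t\bpa)_s}h)(\V{Z}_{s-})\,\diff s$ at time $t$: on $[r,t]$ it equals $M^{h,\ap}$ (only the control on $[r,t)$ matters inside the integral), an $\FPa$-local martingale, and since $\Prb=\Pa$ on $\Ft^{\alpha}$ it is a $\Prb$-local martingale there; for $u\ge t$ one has $M^{h,\ap\oplus_t\bpa}_u-M^{h,\ap\oplus_t\bpa}_t=h(\V{Z}_u)-h(\V{Z}_t)-\int_t^u(L^{\bpa_s}h)(\V{Z}_{s-})\,\diff s$, which, conditionally on $\Ft^{\alpha}$ and under $Q_{\V{Z}_t}$, is $M^{h,\bpa}_u-M^{h,\bpa}_t$, a local martingale for $\etat$-a.e.\ starting point (the standard fact that a regular conditional distribution of a solution of a local martingale problem, conditioned on a sub-$\sigma$-algebra of $\mathcal{F}^{\beta}_t$, again solves it). A classical pasting argument for local martingales at a deterministic time---with a common localising sequence assembled from those of the two pieces---then shows $M^{h,\ap\oplus_t\bpa}$ is a $\Prb$-local martingale, so (H1) holds.

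\emph{Uniqueness, and the main obstacle.} It remains to check that $(\V{Z},\ap\oplus_t\bpa)$ is unique in law. Any solution of the control martingale problem for $\ap\oplus_t\bpa$ with data $(r,\xi)$ restricts on $[r,t]$ to a solution of the control martingale problem for $\ap$ (a stopped local martingale is a local martingale, and the control agrees with $\ap$ on $[r,t)$), whose law is pinned down by the uniqueness of $(\xp,\ap)$; and, by the conditioning above, its regular conditional law given $\Ft^{\alpha}$ solves from $\V{Z}_t$ the control martingale problem for $\bpa$, whose $\etat$-average is pinned down by the uniqueness of $(\ypa,\bpa)$, hence whose disintegration is $\etat$-a.e.\ determined. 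Thus the law is unique and equals $\Prb$, so $\ap\oplus_t\bpa\in\Ap$. I expect the genuine difficulty to lie here together with the pasting step of (H1): in making the measure-theoretic bookkeeping rigorous one must control joint measurability (in $\V{z}$, and in $\omega$) of the conditional localising times and use that well-posedness of the control martingale problem \lq\lq localises in time'', so that the restriction to $[r,t]$ inherits uniqueness. These facts are classical for ordinary martingale problems but call for care in the present abstract setting with an open action set $A$ and position-dependent jump intensities.
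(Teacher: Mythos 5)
Your proof is correct, and it takes a genuinely different route from the paper's. The paper works on the product space $\Omega^{\alpha}\times\Omega^{\beta}$, glues the state trajectories at time $t$, and endows the product with the measure $\mathbb{P}(B)=\int_{\Omega^{\alpha}}\mathbb{P}^{\beta}_{\etat}(B_{\omega_1})\,\diff\mathbb{P}^{\alpha}_{\xi}(\omega_1)$, i.e.\ the plain product measure $\mathbb{P}^{\alpha}_{\xi}\otimes\mathbb{P}^{\beta}_{\etat}$; it then checks the martingale property of $M^{h,\upx}$ directly by computing $\mathbb{E}[M^{h,\upx}_s\mid\mathcal{F}_k]$ on $[r,t]$ and across $t$. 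You instead work on a single canonical space, disintegrate $\mathbb{P}^{\bpa}$ into a Borel kernel $\V{z}\mapsto Q_{\V{z}}$ over the value of the state at $t$, and construct $\Prb$ by running $\Pa$ on $\Ft^{\alpha}$ and, conditionally, $Q_{\V{Z}_t}$ afterward, then paste local martingales at the deterministic time $t$. This is the classical Stroock--Varadhan ``piecing together'' argument, and it buys you something the paper's construction does not deliver automatically: the pointwise identification of the state at time $t$ across the two pieces. Indeed, the paper's Case~2 silently replaces $\mathbb{E}[h(\ypa_t)\mid\mathcal{F}_k]$ by $\mathbb{E}[h(\xp_t)\mid\mathcal{F}_k]$, which requires $\ypa_t=\xp_t$ pathwise; under the pure product measure the two are independent copies with the same law $\etat$, so that step needs exactly the kernel-based identification you build in. Your construction also removes the spurious jump at $t$ in the glued state path. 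Finally, you explicitly address the uniqueness-in-law requirement of (\textbf{H1}) (restriction to $[r,t]$, then disintegration of the post-$t$ law), a point the paper's proof does not touch at all. In short: correct, more standard in spirit, and more careful on the two points (pathwise gluing at $t$, uniqueness) that the paper treats lightly.
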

\begin{remark}
All the proofs of our results are given in the Appendices. See \ref{P:Concatenation} for the proof of Lemma \ref{L:Concatenation}. 
\end{remark}
\MremarkO{In the standard SDE setting (see, for example \cite{Pham}), each admissible process  is  assumed to be progressively measurable.  Therein, the measurability of admissible processes of the type defined in \eqref{D:oplus}  is justified by measurable selection theorems  (some references cited: Chapter 7, Bertsekas D. and S. Shreve (1978): Stochastic optimal control; the discrete-time case, Math. in Sci. and Eng., Academic Press). In our setting, the measurability follows by the \cad property of the concatenated processes.}
 \subsection*{Examples of admissible policies.} \label{CteP}
 \begin{itemize}[leftmargin=0.75cm]
 \item [i)] Since each measure $\nu \in \mathcal{M}_p$,  $p \ge 2$, has finite $p$th moments outside the unitary ball, for each \emph{constant policy} the corresponding controlled process $\xpx$ is a \levy martingale with drift component and, further, it also has finite $p$th moments  (see  \cite[Theorem~3.3.3, p. 163; p.133]{a}, \cite[Theorem 31.5, p. 208; p. 39]{Sato1999}).  \Mcomment{ In this case, it is also known that there exists a unique solution to the martingale problem for each initial condition $X_0 = x$, and further, such a solution is the unique strong solution to the corresponding SDE.}
 \item [ii)]  Since constant policies are admissible, Lemma \ref{L:Concatenation} implies that (by pasting appropriately) the set of step controls (or piecewise constant controls) $\mathcal{A}_{\V{x}}^{step}$ also belongs to $\Ax$. We say that a control $\apx$ belongs to $\mathcal{A}_{\V{x}}^{step}$  if there exists $N\in \mathbb{N}$ and  $\alpha_k \in A$, $k =1, \ldots, N$, such that  $\apx_t = \alpha_k$ on $[t_{k-1}, t_{k})$ for all $k =1, \ldots, N$, where $t_0 = 0$ and $t_N = \infty$. %Roughly, the way to construct such controls is by taking the admissible pairs $(X^{\alpha_k},\alpha_k)$ with constant policies $\alpha_k$, such that $\alpha_1 \in \mathcal{A}_{r,\V{x}}^p$ and $\alpha_k \in \mathcal{A}_{t_{k-1}, \xi_k}^p$ with the initial distribution  $X^{\alpha_k}_{t_{k-1}} = \xi_k$ where $\xi_k := \mathbb{P}^{\alpha_{k-1}} \circ X^{\alpha_{k-1}}_{t_{k-1}}$ for $k = 2,\ldots, N$. The process $\xpx$ controlled under the step policy $\apx$ is then constructed by pasting together the consecutive (coordinate) processes  $X^{\alpha_{k-1}}$ under $\mathbb{P}^{\alpha_{k-1}}$ and the process $X^{\alpha_{k}}$ under $\mathbb{P}^{\alpha_{k}}$ at time $t_{k-1}$. The corresponding probability measure $\mathbb{P} \in \mathcal{P} (D(\rp;\rd) \times D(\rp;A))$ is the law of the process $\apx$. 
 \item [iii)] The set $\Ax$ also contains Markov policies of the form $\apx_t  := ( \,\sigma(\V{X}_t^\V{x}), \,\nu(\V{X}_t^\V{x},\cdot),\,  \mu (\V{X}_t^\V{x}))$, $\V{x} \in \rd$, with continuous functions  $\mu: \rd \to \rd$, $\sigma: \rd \to \Mn$ and $\nu: \rd \to \mathcal{M}_p$ satisfying 
\[\sup_\V{x} \left(  |\mu(\V{x})| + ||\sigma(\V{x})||^2 + \int_{\Ro} |\V{y}|^2\vee |\V{y}|^p \nu(\V{x}, \diff \V{y})\right ) < \infty.\]
The associated controlled process  $\xpx$ is a  \emph{\levy-type process $\V{X}^\V{x}$  with bounded coefficients} (see \cite[Chapter 3]{a}, \cite{Schilling2010})  and whose infinitesimal generator  $L^* $ is defined,  on functions $g \in C_c^{\infty} (\rd)$,  by 
\begin{equation}\label{D:Ltype}
( L^* g )(\V{x}) := (\V{u} +\mu(\V{x}) )^T \nabla g(\V{x}) + \frac{1}{2} \tr(\sigma^T(\V{x}) \Hess g(\V{x})\, \sigma(\V{x})) + \int_{\Ro } (g(\V{x}+\V{y}) - g(\V{x})- \V{y}^T \nabla g(\V{x})\nu(\V{x},\diff \V{y}).
\end{equation}
\MremarkO{Note that, the boundedness of the coefficients $\mu$, $\sigma$ and $\nu$ ensure the validity of \eqref{A3}.  Solutions to martingale problems associated with  Markov generators are very well-known in the literature.}
\end{itemize}

%%%%%%%%%%%%
%%%%%%%%%%%%. UNDERLYING DYNAMICS
%%%%%%%%%%%%

\section{First results: underlying control dynamics}\label{S:Dynamics}

%We recall that the \emph{predictable semimartingale characteristics} of a semimartingale (relative to a truncation function $h$) are given by a triplet $(\V{B}(h), \V{C}, \xi)$ (see Appendix  for a brief remainder of these concepts or see, e.g., \cite[Chapter II]{JacodS1987}  for a detailed study). Roughly, $\V{B}(h)$ describes the drift, $\V{C}$ the diffusion component, and $\xi$ the jump component of the process.

We shall prove that, for each policy $\apx \in \Ax$ the corresponding controlled process is an $\rd$-valued  (special) semimartingale whose characteristics depend on $\apx$. 

\Mcomment{Recall that, given a filtered probability space $(\Omega,\mathcal{F}, \F:=(\mathcal{F}_t),\Prb)$, an $\F$-adapted \cad process $X$ is said to be a classical $(\F,\Prb)$-\emph {semimartingale} if it admits a (not necessarily unique) decomposition $X = X_0 + M + A$ $\Prb$-a.s., where $X_0$ is finite-valued and $\mathcal{F}_0-$measurable, $M$ is an $(\F,\Prb)$-local martingale with  $M_0 =0=A_0$, and $A$ is an $\F$-adapted \cad processes with paths of (locally) finite variation $\Prb$-a.s.   If additionally $A$ is predictable,  then $X$ is called a \emph{special semimartingale} \cite[Definition 4.21, I.4c, p. 43]{JacodS1987}. For \emph{special semimartingales} its decomposition is unique (up to indistinguishability) and is known as the \emph{canonical decomposition of} $X$ (\cite[Definition 4.22, I.4c, p. 43]{JacodS1987}). An adapted stochastic process is said to be \emph{predictable} (resp. \emph{optional}) if it is measurable with respect to the \emph{predictable} $\sigma$-algebra $\Prb$ (resp. \emph{optional} $\sigma$-algebra $\mathcal{O}$), i.e. the $\sigma$-algebra on $\Omega \times \rp$ generated by all the left-continuous (resp. c\'adl\'ag) adapted processes $Z$ considered as mappings $(\omega,t) \mapsto  Z_t (\omega)$ on $\Omega \times \rp$. A function $W : \Omega \times\rp \times \rd \to \rr$ is called \emph{predictable} (resp. \emph{optional}) if it is measurable with respect to the $\sigma$-algebra of \emph{predictable} sets  (resp. \emph{optional} sets) in $\Omega \times \rp \times \rd$, given by $\mathcal{P} \otimes \mathcal{B} (\rd)$ (resp. $\mathcal{O} \otimes \mathcal{B} (\rd)$).} 

Another explicit representation for semimartingales can be given in terms of  the so-called \emph{semimartingale characteristics relative to a truncation  function} $h$. Such a decomposition is known as the \emph{canonical representation relative to $h$}. Let us  briefly recall these concepts (see \cite[Chapter II]{JacodS1987}  for a detailed study). Let $X=(X^i)_{1\le i \le n}$ be an $n$-dimensional semimartingale. Fix a truncation function $h : \rd\to \rd$ (i.e.,  a bounded measurable function with compact support such that $h(x) = x$ in a neighbourhood of the origin \cite[Definition 2.3]{JacodS1987}). Define 
\[
\begin{array}{rcl}
\tilde{X}(h)_{\cdot} &:=&\sum_{s\le \cdot} [ \Delta X_s + h(\Delta X_s)] \\
X(h)_{\cdot} & :=& X_{\cdot} -\tilde{X}(h)_{\cdot},
\end{array}
\]
then $\tilde{X}(h)$ is an $n$-dimensional finite variation process and $X(h)$ is a process with uniformly bounded jumps (and thus is a special semimartingale) and differs from $X$ by a finite variation process.   

Now, recall  also that, for a given state space $S$, a \emph{random measure} $\mu$ on $\rp \times S$ is a family  $\{\mu(\omega)\,:\, \omega \in \Omega\}$ of measures $\mu (\omega)$ on $(\rp \times S, \mathcal{B}(\rp) \otimes \mathcal{B} (S))$ satisfying $\mu (\omega; \{0\} \times S) = 0$ for each $\omega \in \Omega$. For any random measure $\mu$ and any optional function $W$ on $\Omega \times \rp \times S$, we denote by $W \ast \mu$ the integral process
\begin{equation}\label{Def:Int-ast}
\int_0^{\cdot} \int_S W (\omega;t,y) \mu (\omega ; \diff t, \diff y).
\end{equation}

Given  an adapted \cad $\rd$-valued process $X$, the measure associated to  its jumps is defined as the integer-valued \emph{random measure} $\eta^X$ on $\mathbb{R}^+ \times \rd$ given by  
\[ \eta^X (\omega; \diff t, \diff y) := \sum_{s} \V{1}_{\{\Delta X_s(\omega) \neq 0\}} \delta_{(s, \Delta X_s(\omega))} (\diff t, \diff y),\]
see \cite[II.1b, Definition 1.13, Proposition 1.16, pp. 68-69]{JacodS1987}. 
Thus, for each path $\omega$, $\eta^X(\omega;[0,t]\times D)$ gives the number of jumps whose sizes fall in the measurable set $D \subset \rd$, during the time interval $[0,t]$. Since $X$ is \cad, the random measure $\eta^X (\omega; \diff t, \diff y)$ takes only finite values for any Borel subset $D$ of $\rd$ bounded away from zero.  
Moreover, by \cite[Theorem 1.8, Chapter II.1a, p. 66]{JacodS1987}, there exists an unique (up to indistinguishability) \emph{predictable} random measure   $\eta$ on $\mathbb{R}^+ \times \mathbb{R}^n$ for which, in particular, the integral process $W\ast (\eta^X -\eta)$ given by 
\[ W \ast (\eta^X -\eta)_t(\omega) :=\int_0^t \int_{\rd} W(\omega,s,x) (\eta^X  - \eta)(\omega;\diff s,\diff x))\]
is a local martingale for each predictable function $W$ on $\Omega \times \rp \times \rd$ for which $|W| \ast (\eta^X -\eta)_t (\omega)$ is finite. The random measure $\eta(\omega; \diff s, \diff y)$ is called the \emph{predictable compensator}, or \emph{dual predictable projection} of the jump measure $\eta^X$. 
 \MremarkO{ Let $\V{X}$ be an $n-$dimensional semimartingale, i.e. an $\rd$-valued process, each of whose coordinates $X_i$ is a real-valued semimartingale. Notation $\V{X}^c$ stands for the continuous part of $\V{X}$ with components $X^{i,c}$, $i\in \{1,\ldots, n\}$, and the $n\times n$ matrix-valued processes $\left [ \V{X},\,\V{X}\right ] = ( [X^i,\,X^j])_{1\le i,j\le n}$ and $\lrangle{\V{X}}{\V{X}} = (\lrangle{X^i}{X^j})_{1\le i,j\le n}$ denote the \textit{quadratic variation process} and the \textit{predictable quadratic variation process} (whenever it exists) with entries $[X^i,\, X^j]$ and  $\lrangle{X^i}{X^j}$, for all $i,j \in \{1,\ldots, n\}$.  We will also use notation  $[\,X\,] :=[X,\, X]$ and $\langle \, X\, \rangle := \lrangle{X}{X}$.    We recall that,  for each $i,j \in \{1,\ldots, n\}$, if $[X^i,\, X^j]$ is locally integrable, then the process $ \lrangle{X^i}{X^j}$ is the unique (up to null sets) adapted process such that the paths $ t \mapsto \lrangle{X^i}{X^j}_t$ have finite variation over compact intervals, $\lrangle{X^i}{X^j}_0 = 0$ and   $ X^i X^j - \lrangle{X^i}{X^j} $ is a local martingale  \cite[IV.26]{RogersW}.}

\begin{definition}[\textbf{Semimartingale characteristics}]
The \emph{characteristics of the semimartingale} $X$  w.r.t  $\mathbb{P}$ and \emph{relative to} $h$, are given by a triplet $(\V{B}^h, \V{C}, \eta)$, where  \emph{(i)}  $\V{B}^h = (B^i)_{i \le n}$ is the $\rd-$valued predictable process of finite variation  in the canonical representation of the special semimartingale $X(h)$; \emph{(ii)}    $\V{C} = (C^{ij})_{1\le i,j\le n}$, $C^{ij} := \lrangle{X^{i,c}}{X^{j,c}}$, is the  $M_{n\times n}(\rr)$-valued continuous process with entries $<X^{i,c},\,X^{j,c} >$, $1\le i,j\le n $, where $X^c$ denotes the continuous martingale part of $X$ (with components $X^{i,c}$); and \emph{(iii)} $\eta =\eta(\omega; \diff t, \diff y)$ is  a random measure on $\rp \times \Ro$ and is the predictable compensator of the integer-valued random measure $\eta^X =\eta^X(\omega;\diff t, \diff x)$ associated with the jumps of $X$. 
\end{definition}
\Mremark{Notice that the characteristics are unique up to indistinguishability. Furthermore, as notation indicates, $\V{B}^h$ depends on the chosen truncation function $h$, whereas $\V{C}$ and $\eta$ are independent of $h$. }
\begin{remark}\label{R:Notation-x}
To simplify notation we shall fix the initial state $(0, \delta_{\V{x}}) \in \rd \times \mathcal{P} (\rd)$ and work with the class of controls $\Ax$. Our results can easily be extended to the general case  $\Ap$ with $(r,\xi) \in \rp \times \mathcal{P} (\rd)$.
\end{remark}

We now state the relationship between each policy $\apx \in \Ax$ and the semimartingale characteristics of its  corresponding process $\xpx$.

\begin{proposition}\label{X-charact}
Let $h:\rd \to \rd$ be the function  $h(\V{y}) := \V{y} \V{1}_{\{ |\V{y}| \le 1\}}$,  $\V{y}\in \rd$. For each admissible policy $\apx = (\Msigma, \nu,\Mmu) \in \Ax$, $p \ge 2$,  the process  $\xpx$ is an $\FPa$-semimartingale with the predictable semimartingale characteristics (relative to $h$) $(\V{B}^h, \V{C}, \eta)$  given by
\begin{itemize}[leftmargin=0.75cm]
\item [\emph{i)}]  $\V{B}_{\cdot}^h$ is the process  $\V{B}^h(t,\omega) = \int_0^t(\V{u} + \Mmu_s (\omega)) \diff s + \int_0^t \diff s \int_{\Ro} (\V{y}- h(\V{y})) \nu_s(\omega;\diff \V{y})$, for $t\ge 0$.
\item [\emph{ii)}] $\V{C}_{\cdot} = (C_{ij}(\cdot))_{1\le i,j\le n}$, where $C_{ij} (t,\omega)= \int_0^t \diff s  ( \Msigma_s^T \Msigma_s)(\omega)_{ij}$, for $t \ge 0$. 
\item [\emph{iii)}] $\eta$ is the random measure on $\rp \times \Ro$ given by $\eta (\omega;\diff s, \diff \V{y}) := \diff s\, \otimes \nu_s (\omega;\diff \V{y})$, $\omega \in \Omega^{\alpha}$.
\end{itemize}
\McommentO{Here, $ ( \Msigma_s^T \Msigma_s)(\omega)_{ij} = \sum_{k=1}^n \left(\sigma_{ik}\sigma_{kj}\right)(s,\omega)$ for each $(s,\omega)\in \rp \times \Omega^{\alpha}$.}
\end{proposition}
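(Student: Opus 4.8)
The plan is to start from the defining property (\textbf{H1}) of an admissible policy, which tells us that for every $h\in C_c^2(\rd)$ the process $M^{h,\apx}_t = h(\xpx_t)-\int_0^t(\Lasx h)(\xpx_{s-})\,\diff s$ is an $\FPa$-local martingale. The first step is to show $\xpx$ is a semimartingale at all: applying the local martingale property to coordinate-projection-type test functions (more precisely, to functions $h_k\in C_c^2(\rd)$ that coincide with $\V{y}\mapsto y_k$ on a large ball and have compact support) gives, on each stochastic interval where $\xpx$ stays in that ball, a decomposition of $\xpx$ into a local martingale plus an absolutely continuous (hence predictable finite-variation) drift, the finiteness of whose variation is guaranteed by (\textbf{H2})/\eqref{A3}. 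A standard localisation along the exit times of balls of radius $m\uparrow\infty$ then shows $\xpx$ is a semimartingale on all of $\rp$; in fact the absolute continuity of the drift term shows it is a \emph{special} semimartingale.

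Next I would identify the three characteristics. For the third one, $\eta$, the cleanest route is the characterisation of the compensator via \cite[Theorem 1.8, Chapter II]{JacodS1987}: I must check that $\diff s\otimes\nu_s(\omega;\diff\V{y})$ is predictable (it is, since $s\mapsto\nu_s$ is adapted and \cad by (\textbf{H1})) and that $W\ast(\eta^{X}-\diff s\,\nu_s)$ is a local martingale for a generating class of predictable $W$. Testing against $W(\omega,s,\V{y})=g(\V{y})$ for $g\in C_c(\Ro)$ reduces to showing $\sum_{s\le t}g(\Delta\xpx_s)-\int_0^t\!\!\int g(\V{y})\,\nu_s(\diff\V{y})\,\diff s$ is a local martingale; this is obtained by feeding into \eqref{D:Mphi} test functions $h\in C_c^2(\rd)$ that equal $g$ away from the origin and vanish (to second order) at $0$, so that $(\Lasx h)(\V{x})=\int(h(\V{x}+\V{y})-h(\V{x}))\nu_s(\diff\V{y})$ and, combined with the analogous statement for $h(\xpx_t)$, the continuous-and-first-moment terms cancel. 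For the continuous part $\V{C}$, I would apply \eqref{D:Mphi} to products $h_{ij}$ agreeing with $\V{y}\mapsto y_iy_j$ on a large ball; Itô's formula / the definition of $\langle X^{i,c},X^{j,c}\rangle$ together with the already-identified jump compensator lets one peel off the jump contribution $\int_0^t\!\!\int y_iy_j\,\nu_s(\diff\V{y})\,\diff s$ and the cross terms with the drift, leaving exactly $\int_0^t(\Msigma_s^T\Msigma_s)_{ij}\,\diff s$ as the predictable quadratic covariation of the continuous martingale parts. The drift $\V{B}^h$ is then read off by difference: from the canonical representation $X=X_0+\V{B}^h+X^c+h\ast(\eta^X-\eta)+(\mathrm{id}-h)\ast\eta^X$ and the expression $\Lasx h$ in \eqref{D:La}, matching the finite-variation predictable part gives $\V{B}^h(t)=\int_0^t(\V{u}+\Mmu_s)\diff s+\int_0^t\!\!\diff s\int_{\Ro}(\V{y}-h(\V{y}))\nu_s(\diff\V{y})$, with the $\V{y}-h(\V{y})$ term being $\nu_s$-integrable because $|\V{y}-h(\V{y})|\le|\V{y}|^2\mathbf{1}_{\{|\V{y}|>1\}}\le|\V{y}|^p$ on $\{|\V{y}|>1\}$ and \eqref{A3} holds.

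The main obstacle is the passage from the local martingale property that is assumed only for $h\in C_c^2(\rd)$ (compact support!) to statements about the globally unbounded functions $\V{y}\mapsto y_k$ and $\V{y}\mapsto y_iy_j$. This is handled by the localisation sketched above: one works on the stochastic intervals $[0,\tau_m)$ where $\tau_m:=\inf\{t:|\xpx_t|\ge m\}$, on which the truncated test functions agree with the genuine polynomials, and one must check that the relevant integrands are locally integrable so that the localised processes are genuine local martingales and that the identified characteristics do not depend on the truncation used off the balls — the latter following from the uniqueness (up to indistinguishability) of semimartingale characteristics together with the compatibility of the pieces as $m\to\infty$, the required integrability being exactly what (\textbf{H1}) (the hypothesis $\int_0^t|(\Lasx h)(\xpx_{s-})|\diff s<\infty$) and (\textbf{H2}) provide. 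A secondary, purely bookkeeping, point is to be careful that $\xpx_{s-}$ rather than $\xpx_s$ appears in \eqref{D:Mphi}, so that all the drift/compensator integrands are predictable; but since $\xpx$ is \cad the two differ only on a Lebesgue-null set of times and this causes no difficulty.
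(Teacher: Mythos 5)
Your route is genuinely different from the paper's: rather than identifying each characteristic by hand, the paper checks predictability of the candidate triple $(\V{B}^h,\V{C},\eta)$ and then invokes the martingale-problem characterisation of semimartingale characteristics, \cite[Theorem II.2.42]{JacodS1987}, which reduces the whole proposition to showing that, for every \emph{bounded} $f\in C^2(\rd)$, the process $f(\xpx_t)-f(\xpx_0)-\int_0^t(\Lasx f)(\xpx_{s-})\,\diff s$ is a local martingale; this extension from $C_c^2$ to bounded $C^2$ (with polynomial growth) is the content of the paper's Theorem~\ref{LMtgle}. That single criterion identifies all three characteristics simultaneously, so the paper never has to disentangle $\eta$ from $\V{B}^h$ and $\V{C}$.

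The principal gap in your argument is precisely where you try to isolate $\eta$. You claim that by taking $h\in C_c^2(\rd)$ equal to $g\in C_c(\Ro)$ away from $0$ and vanishing to second order at $0$, one gets $(\Lasx h)(\V{x})=\int(h(\V{x}+\V{y})-h(\V{x}))\,\nu_s(\diff\V{y})$. This is false for the relevant $\V{x}$: the drift term $(\V{u}+\Mmu_s)^T\nabla h(\V{x})$, the diffusion term $\tfrac12\tr(\Msigma_s^T\Hess h\,\Msigma_s)(\V{x})$ and the compensating $-\V{y}^T\nabla h(\V{x})$ inside the jump integral are all evaluated at $\V{x}=\xpx_{s-}$, not at the origin. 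Making $\nabla h$ and $\Hess h$ vanish at $0$ does nothing to suppress them along the path; they only disappear on the (typically Lebesgue-null) set of times at which $\xpx_{s-}=0$. To identify $\eta$ without the packaged result you would instead need either to first pin down $A$ and $\V{C}$ (comparing the Itô expansion of $h(\xpx)$ with the martingale-problem drift and matching finite-variation predictable parts term by term), or to localise the test functions near the running state, e.g.\ use shifted functions $h_{\V{x}_0}(\V{x}):=g(\V{x}-\V{x}_0)$ together with a partition-of-unity argument so that $\nabla h_{\V{x}_0}(\xpx_{s-})$ and $\Hess h_{\V{x}_0}(\xpx_{s-})$ really do vanish when $\xpx_{s-}$ is near $\V{x}_0$. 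Either repair is substantially more work than citing Theorem~II.2.42; as written, the cancellation step does not go through. The remainder of your sketch — the localisation with exit times $\tau_m$, the establishment of the special semimartingale property from linear test functions, the integrability supplied by (\textbf{H2}), and the observation about $\xpx_{s-}$ versus $\xpx_s$ on a Lebesgue-null set — is sound.
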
 
\Mremark{$\mathbf{B}^h$ and $\mathbf{C}$ are the local drift coefficient and the local covariance matrix, respectively. We also recall that the predictable characteristics do not characterise the law of the process, however they provide useful information related to its jumps as we will see in Proposition \ref{qth-moments}.}
\Mqo{For the  integrals $\int_0^t \mu_s \diff s$, $\int_0^t \sigma_s \diff s$ and $\int_0^t \diff s\int_{\rd} y \nu_s( \diff y)$ to be well-defined in the Lebesgue-Stieltjes sense we need, e.g.,  $\mu$ and $\sigma$  to be predictable processes; and $\nu_{\cdot}(A)$ to be  a predictable process for all $A \in \mathcal{B} (\rd)$. OR these processes to be progressively measurable and locally bounded.}
\begin{remarks}
 \end{remarks}
\begin{itemize}[leftmargin=0.75cm]
\item [i)] The process $\V{C}$ takes values on the set of all positive semidefinite symmetric $n\times n$-matrices. By definition, $\V{C}$ is the unique (up to null sets) adapted continuous process, starting at $\V{C}_0 = 0$, with paths $t \mapsto C_{ij} (t)$ having finite variation over compact intervals and such that $ X^{i,c}_t\,X^{j,c}_t- C_{ij} (t)$ is a local martingale (see \cite[IV.26]{RogersW}).
\item [ii)]  By Proposition \ref{X-charact}, for each admissible $\apx$, the process  $\xpx$ is an \emph{\ito semimartingale},\McommentO{\footnote{\textcolor{red}{Examples of \ito semimartingales are the \ito processes:
 \[X_t = X_0 + \int_0^t b_s \diff s + \int_0^t \sigma_s \diff W_s,\]
where $X_0$  is an $\mathcal{F}_0$-measurable r.v. and $b$ and $\sigma$ are progressively measurable processes valued in $\rd$ and $M_{n\times n} (\rr)$ respectively, and such that
 \[\int_0^t |b_s| \diff s + \int_0^t |\sigma_s|^2 \diff s < \infty,  \text{ a.s. for all } t\in \rp.\]}}} in the sense that its characteristics are absolutely continuous with respect to the Lebesgue measure \cite[Definition 1.16, p. 45]{JacodPress2014}. Hence, the control process $\apx$ determines the \emph{differential semimartingale characteristics} of $\xpx$. 
\end{itemize}
 \MremarkO{\levy processes with first moments are special semimartingales with \emph{deterministic}  characteristics given by $\V{B}_{t}^h := t \left (\mu +  \int_{|y|>1}\right ) \nu(\diff y)$, $\V{C}_{t} = t  \Msigma^T \Msigma$ and $\eta (\diff t, \diff \V{y}) = \diff t\otimes \nu(\diff y)$, where $\mu \in \rd$, $\Msigma \in M_{n\times n} (\rr)$ and $\nu$ is a given \levy measure on $\Ro$. Another example is a \emph{diffusion with jumps}, that is a solution to the SDE
 \begin{equation}
 \diff Y_t = b(t,X_t)\diff t + \sigma (t, X_t)\diff B_t + \gamma(t, X_{t-}) \left ( N(\diff t, \diff y) - \lambda(\diff t, \diff y) \right ), \quad Y_0 = y,
 \end{equation}
 where $B$ is a $d$-dimensional Brownian motion and $N$ is a Poisson random measure on $\rp \times \Ro$ with intensity measure $\lambda(\diff t, \diff y) :=\diff t \otimes \nu(\diff y)$, where $|y|^2 \wedge |y| \ast \nu \in A_{loc}$. Its semimartingale  characteristics take the form $\V{B}_{t}^h := t \left (\mu +  \int_{|y|>1}\right ) \nu(\diff y)$, $\V{C}_{t} = t  \Msigma^T \Msigma$ and $\eta (\diff s, \diff \V{y}) = \int_0^t \gamma(s,y) \nu(\diff y)$, where $\mu \in \rd$, $\Msigma \in M_{n\times n} (\rr)$ and $\nu$ is a given \levy measure on $\Ro$. }
 \Mremark{If $X$ is a \levy process with generating triplet $(b,c,F)$, then its characteristics in the semimartingale sense are the not random functions:
 \begin{equation} \textbf{B}_t (\omega) := bt,\quad\quad
 \textbf{C}_t (\omega) := ct,\quad \quad
\eta (\omega, \diff t, \diff x) := \diff t \otimes F (\diff x).
 \end{equation}
 }
 
Proposition \ref{X-charact} identifies $\diff s\, \otimes \nu_s (\omega,\diff \V{y})$ as the compensator of the random  measure  associated to the jumps of  $\xpx$, hereafter denoted by $\eta^{X,\apx}$.  This plays an important role in obtaining estimates of the running maximum of $|\xpx|^q$ for $q >2$, which are fundamental to  deal with value functions  of super-quadratic order. We now state conditions on the control process  $\apx \in \Ax$ which guarantee the finite expectation of the running maximum of $\left | \xpx \right |^q$, for  $q \ge 2$.

\emph{Notation.  $\Ex$ will denote the expectation with respect to the measure induced by the process $\xpx$ started at $\V{x}$. }

\begin{proposition}\label{qth-moments} 
For each admissible pair $(\xpx, \apx)$, $\apx \in \Ax$, $p \ge 2$, the following holds.
\begin{itemize}[leftmargin=0.75cm]
\item [\emph{i)}] The semimartingale $\xpx$ admits the \emph{canonical representation} 
\begin{equation}\label{X-decomp}
\xpx_t = \V{x} + \Xc_t + \int_0^t (\V{u} + \Mmu_s) \diff s + \int_0^t \int_{\Ro} \V{y} \left (\nx -\eta \right )(\diff s, \diff \V{y}),
\end{equation}
where $\Xc $ is the continuous martingale part of $\xpx$, $\nx$ is the jump measure associated with $\xpx$  and $\eta$ is its predictable compensator.
\item [\emph{ii)}]If for each $t\ge 0$ the process $\int_0^t ||\Msigma_s||^2 \diff s \in L^{q/2} (\mathbb{P}^{\apx})$ for $q \in [2,p]$,  then there exists a constant $C>0$ such that 
\begin{equation}\label{Xc-estim}
\Ex \left (  \sup_{0 \le s \le t} \left | \Xc_s \right |^q\right ) \le \,C\,\Ex \left (  \int_0^t ||\Msigma_s||^2 \diff s\right )^{q/2} < \infty, \quad t\ge 0.
\end{equation}
\item [\emph{iii)}] Let $\Xd$ be the discontinuous martingale part of the process $\xpx$ and let $q \in [2,p]$. Suppose that for each $t \ge 0$ 
\begin{equation}\label{G-H}
 \V{G}_t^{\apx} := \int_0^t \diff s\int_{\Ro}   |\V{y}|^2   \nu_s (\diff \V{y}) \in L^{q/2} (\mathbb{P}^{\apx}) \quad \text{and}\quad   \V{H}_t^{\apx} := \int_0^t \diff s\int_{|y| >1}   |\V{y}|^q   \nu_s (\diff \V{y}) \in L^{1} (\mathbb{P}^{\apx}) 
\end{equation}
Then, there exists a constant $C_1 > 0$ such that
\begin{equation}\label{Xd-estim}
 \Ex \left ( \sup_{0\le s \le t} |\Xd_s|^q\right ) \,\le \, C_1  \,\Ex\left [ \left(\V{G}_t^{\apx}\right )^{q/2} \right ] \,+ \,C_1\,\Ex ( \V{H}_t^{\apx})\, < +\infty, \quad t \ge 0.
 \end{equation} 
\end{itemize}
\end{proposition}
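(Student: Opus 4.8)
The plan is to read off the semimartingale characteristics of $\xpx$ from Proposition~\ref{X-charact} and then feed them, in turn, into the canonical representation of a semimartingale, the Burkholder--Davis--Gundy (BDG) inequality and Kunita's inequality, with a routine stopping argument supplying the integrability needed to pass each inequality to the limit. Throughout I write $\nx$ for the jump measure of $\xpx$ and $\eta(\diff s,\diff\V{y})=\diff s\otimes\nu_s(\diff\V{y})$ for its predictable compensator, as identified in Proposition~\ref{X-charact}(iii). For part~(i) I would start from the canonical representation of $\xpx$ relative to $h(\V{y})=\V{y}\V{1}_{\{|\V{y}|\le1\}}$, namely $\xpx_t=\V{x}+\V{B}^h_t+\Xc_t+\int_0^t\!\int_{\Ro}h(\V{y})(\nx-\eta)(\diff s,\diff\V{y})+\int_0^t\!\int_{\Ro}(\V{y}-h(\V{y}))\,\nx(\diff s,\diff\V{y})$, where $\Xc$ is the continuous martingale part and $(\V{B}^h,\V{C},\eta)$ is the triplet of Proposition~\ref{X-charact} (see \cite[Chapter~II]{JacodS1987}). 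Since each $\nu_s\in\mathcal{M}_p$ has finite second moment and \eqref{A3} holds, $\int_0^t\!\diff s\int_{\Ro}|\V{y}|^2\nu_s(\diff\V{y})<\infty$ a.s.; in particular there are a.s.\ only finitely many jumps of size exceeding $1$ on $[0,t]$ and $\V{y}$ is locally $\eta$-square-integrable. Hence the large-jump term may be compensated and recombined with the $h(\V{y})(\nx-\eta)$ term to produce $\int_0^t\!\int_{\Ro}\V{y}(\nx-\eta)(\diff s,\diff\V{y})$, while the remaining predictable finite-variation part collapses to the fully-compensated drift $\int_0^t(\V{u}+\Mmu_s)\,\diff s$ dictated by the form of $L^{\V{a}}$ in \eqref{D:La}; this is \eqref{X-decomp}, and it identifies $\Xd_t:=\int_0^t\!\int_{\Ro}\V{y}(\nx-\eta)(\diff s,\diff\V{y})$ as the purely discontinuous martingale part of $\xpx$.

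For part~(ii), Proposition~\ref{X-charact}(ii) identifies the scalar quadratic variation of the continuous local martingale $\Xc=(X^{1,c},\dots,X^{n,c})$ as $[\Xc]_t=\sum_i\langle X^{i,c}\rangle_t=\sum_i C_{ii}(t)=\int_0^t\tr(\Msigma_s^T\Msigma_s)\,\diff s=\int_0^t\|\Msigma_s\|^2\,\diff s$. The BDG inequality for continuous local martingales then gives $\Ex[\sup_{s\le t}|\Xc_s|^q]\le C_q\,\Ex\big[[\Xc]_t^{q/2}\big]=C_q\,\Ex\big[\big(\int_0^t\|\Msigma_s\|^2\,\diff s\big)^{q/2}\big]$, and the right-hand side is finite precisely under the hypothesis $\int_0^t\|\Msigma_s\|^2\,\diff s\in L^{q/2}(\mathbb{P}^{\apx})$; this is \eqref{Xc-estim} (alternatively one first stops along $\tau_m\uparrow\infty$ and passes to the limit by monotone convergence on the running supremum).

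Part~(iii) is the crux. Here $\Xd$ is a purely discontinuous local martingale with compensator $\diff s\otimes\nu_s(\diff\V{y})$, and the bound will come from Kunita's inequality for stochastic integrals against compensated integer-valued random measures: for $q\ge2$, $\Ex[\sup_{s\le t}|\Xd_s|^q]\le C_q\,\Ex\big[\big(\int_0^t\!\diff s\int_{\Ro}|\V{y}|^2\nu_s(\diff\V{y})\big)^{q/2}\big]+C_q\,\Ex\big[\int_0^t\!\diff s\int_{\Ro}|\V{y}|^q\nu_s(\diff\V{y})\big]$, applied along a localising sequence and passed to the limit by monotone convergence. The first expectation equals $\Ex[(\V{G}_t^{\apx})^{q/2}]$; for the second, split $\Ro=\{|\V{y}|\le1\}\cup\{|\V{y}|>1\}$ and use $|\V{y}|^q\le|\V{y}|^2$ on $\{|\V{y}|\le1\}$ (which is where $q\ge2$ is needed) to bound $\int_0^t\!\diff s\int_{\{|\V{y}|\le1\}}|\V{y}|^q\nu_s(\diff\V{y})\le\V{G}_t^{\apx}$, whereas $\int_0^t\!\diff s\int_{\{|\V{y}|>1\}}|\V{y}|^q\nu_s(\diff\V{y})=\V{H}_t^{\apx}$; the assumptions \eqref{G-H} then force finiteness, and after absorbing the linear $\V{G}_t^{\apx}$-term into the $(\V{G}_t^{\apx})^{q/2}$-term one arrives at \eqref{Xd-estim}. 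I expect the main obstacle to be precisely this last step: securing Kunita's inequality in a form in which the large-jump contribution is measured by the truncated integral $\V{H}^{\apx}$ rather than by the full $\int_{\Ro}|\V{y}|^q\nu_s(\diff\V{y})$, and carrying the localisation through so that both sides converge — the left by monotone convergence on the running supremum, the right by \eqref{G-H}. A recurring technical point, throughout, is that $\Xc$ and $\Xd$ are a priori only local martingales, so every moment inequality must be applied after stopping and then passed to the limit.
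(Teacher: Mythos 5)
Your proposal follows essentially the same route as the paper: read off the characteristics from Proposition~\ref{X-charact}, deduce the $h$-independent canonical representation because the $\nu_s$ have finite moments outside $B_1$, use BDG for the continuous part, and a Kunita-type inequality for the purely discontinuous part, all under localisation. Parts~(i) and~(ii) are fine; for~(ii) the paper works coordinate-wise and then uses $\sum_k|a_k|^r\le c\bigl(\sum_k|a_k|\bigr)^r$, whereas you apply BDG directly to the $\rd$-valued continuous martingale using $[\Xc]_t=\tr\langle\Xc\rangle_t=\int_0^t\|\Msigma_s\|^2\,\diff s$; both are correct.

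The one place where you should not simply cite Kunita's inequality as a black box is part~(iii), and this is exactly what you anticipate as the ``main obstacle.'' The textbook form of Kunita's inequality (e.g.\ Applebaum, Theorem~4.4.23) is stated for stochastic integrals against a compensated \emph{Poisson} random measure with a deterministic L\'evy intensity. Here the compensator $\eta(\omega;\diff s,\diff\V{y})=\diff s\otimes\nu_s(\omega,\diff\V{y})$ is genuinely \emph{random}, so the paper re-derives the estimate from scratch: apply the generalised It\^o formula to $|\V{y}|^q$ (possible because $q\le p$ and $\nu_s\in\mathcal M_p$), express $|\V{Y}_t|^q$ as a local martingale plus $F\ast\eta_t$ with $F$ the second-order Taylor remainder, bound $F\ast\eta_t$ by Taylor's theorem, and then absorb the $\sup_s|\V{Y}_{s-}|^{q-2}$ factor by the standard $\gamma$--H\"older--Young trick $ab\le \tfrac{q-2}{\gamma q}a^{q/(q-2)}+\tfrac{2\gamma^{q/2}}{q}b^{q/2}$ with $\gamma$ chosen large enough to move the supremum term to the left-hand side. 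If you want to cite rather than re-prove, you must cite a version of the BDG/Kunita inequality valid for general compensated integer-valued random measures (such exist, e.g.\ Marinelli--R\"ockner), not the Poisson-intensity version.

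One more small point you flag but gloss over: both your argument and the paper's really yield an extra linear term $C\,\Ex[\V{G}_t^{\apx}]$ on the right, coming from the small-jump contribution to $\int|\V{y}|^q\nu$ (resp.\ from $\int|\V{y}|^2\vee|\V{y}|^q\nu$ in the paper). You cannot literally ``absorb'' $\Ex[\V{G}_t^{\apx}]$ into $C_1\,\Ex[(\V{G}_t^{\apx})^{q/2}]$ with a universal $C_1$ (take $\V{G}_t^{\apx}\equiv\varepsilon\downarrow0$). What does hold, and is what matters, is finiteness: on the probability space $(\Omega^\alpha,\mathbb P^{\apx})$ the hypothesis $\V{G}_t^{\apx}\in L^{q/2}$ with $q/2\ge1$ already gives $\V{G}_t^{\apx}\in L^1$, so the right-hand side is finite. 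A cleaner way to reach exactly the stated form \eqref{Xd-estim} is to absorb the small-jump piece using $x\le 1+x^{q/2}$, which costs only an additive constant inside $C_1$.
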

\begin{remarks}\label{R:qthMoments}
\end{remarks}
\begin{itemize}[leftmargin=0.75cm]
 \item [i)] In general, the \emph{canonical representation}  of a (not necessarily special) semimartingale depends on function $h$ associated with its characteristics (see \cite[II.2c, Theorem 2.34, p.84]{JacodS1987}). However, for the special semimartingale  $\xpx$ its representation \eqref{X-decomp} is independent of $h$. This is because of the finite first moments of the measures $\nu$ outside $B_1$, which allows one to compensate the \emph{big} jumps. In fact,  the representation  \eqref{X-decomp} coincides with the \emph{canonical decomposition} of $\xpx$ (by \cite[II.2c, Corollary 2.38, p. 85]{JacodS1987}). Indeed, $\xpx$ is an $\FPa$-\emph{special} $\rd$-valued semimartingale with compensator   \[\V{A}^{\apx}_{\,\cdot \,}:= \int_0^{\,\cdot\,} (\V{u} + \Mmu_s) \diff s.\]
\Mcomment{Also observe that, under constant policies we recover the corresponding \levy-\ito decomposition\McommentO{\footnote{\textcolor{red}{\levy-\ito decomposition of $X$:
 \begin{equation}
 X_t = bt + \sigma W_t + y 1_{\{ |y| \le 1\}}\ast (\eta^X -\eta)_t + y 1_{\{ |y| > 1\}}\ast \eta_t^X,
 \end{equation}
 which states that $X$ can be decomposed as a sum of independent terms: a pure drift term, a continuous martingale, a purely discontinuous martingale of compensated "small" jumps and the sum of "big" jumps. In this case, $\eta^X$ is a Poisson random measure with intensity $\eta (\diff t, \diff y) = \diff t \otimes \nu (\diff y)$. In this case, $\eta^X$ is random, but its compensator is not.}
 }} for \levy processes.}
  \item [ii)] Proposition \ref{qth-moments} establishes a relationship between the existence of $q$th-moments of $\xpx$ and the finiteness of  $\int_{|y| > 1} |y|^q\nu(\diff y) < +\infty$ for each measure  $\nu$. In particular, by considering constant policies we recover the well-known results for the existence of moments of \levy processes \cite[Theorem 25.3]{Sato1999}:   a \levy process with generating triplet $(b,c,F)$ has finite $q$th-moments whenever $\int_{|y| > 1} |y|^qF(\diff y) < +\infty$. That is, the finiteness of the moments depends on the tail behaviour of $F$ (the big jumps). % It is not difficult to see that the previous condition guarantees that the corresponding process $\int_0^t \hat{Q}_s^{p,\apx}(\omega) \in L_{loc}^{p/2} (\mathbb{P}^{\alpha})$. 
  \item [iii)]  Since $\xp$ is a semimartingale, the  Meyer-\ito formula ensures that, for any $f\in C^2 (\rd)$,  $f(\xp)$ is also a semimartingale  whose decomposition can be given explicitly \cite[Chapter II, Theorem 33, p. 81]{Protter}. In fact, convex functions are the most general functions that take semimartingales into semimartingales \cite[Chapter IV, Theorem 67, p. 215]{Protter}. % \item A semimartingale $\xp$ is said to belong to $\mathcal{H}^1$ if its decomposition $\xp_t = \xp_0 + M_t + A_t$ are such that $\xp_0$, $[M,M]_{\infty}^{1/2}$ and $\int_0^{\infty} |\diff A_s|$ are all in $L^1$. $\xp$ is locally in $\mathcal{H}^1$ if there exists a localising sequence ${T^n}$  such that $\xp{t\wedge T^n}1_{\{T_n > 0\}} \in \mathcal{H}^1$ for each $n$.  We observe that if $\xp$ is a continuous semimartingale then $\xp$  automatically belongs to $\mathcal{H}^1_{loc}$.  
 \end{itemize}

\MremarkO{In our setting the compensator of the random measure $\eta^X$ is another \emph{random} measure: $\diff s\times \nu_s (\omega;\diff y)$. For a \levy process $X$ with \levy measure $\nu$ the compensator of $\eta^X$ (usually denoted by $N(\diff t, \diff y)$) is a deterministic function $\diff s\otimes \nu (\diff y)$. In particular, when $\nu(A)< \infty$, then $N([0,t]\times A)$ is a Poisson process with parameter $\nu (A)$.}

  \Mremark{ We can also prove that    the \emph{predictable quadratic variation} of $\xpx=(X^1,\ldots, X^n)^T$ is given by
\begin{equation}\label{P:QV}
   \langle X^i ,X^j \rangle_{t} = \int_0^t \diff s \left (  ( \Msigma_s^T \Msigma_s)_{ij} + \int_{ \Ro}  y_i y_j \nu_s (\diff \V{y})\right), \quad i,j \in \{1,\ldots, n\};%,
  \end{equation}
\McommentO{ Hence,  the quadratic variation-covariation process $ \langle X^i ,X^j \rangle$ is 
absolutely continuous with respect to the Lebesgue measure }
and, thus, 
  \begin{align} \nonumber
  \tr \lrangle{\xpx}{ \xpx}_t %&= \int_0^t  \diff s \left (\sum_{i=1}^n (\Msigma_t^T \Msigma_t)_{ii} + \int_{\Ro} y_i^2\nu_s (\diff \V{y}) \right ) \\ \nonumber
  %&= \int_0^t  \diff s \left (  \sum_{i=1}^n \sum_{k=1}^n \sigma_{ik}^2(t) + \int_{\Ro} |\V{y}|^2\nu_s (\diff \V{y}) \right ) \\
  &= \int_0^t  \diff s \left (  ||\Msigma_t||^2 + \int_{\Ro} |\V{y}|^2\nu_s (\diff \V{y}) \right ).
  \end{align}
}

 %1) Control setting, 2) Example of policies, 3) Theorems about underlying dynamics of controlled processes: X being a semimartingale and the identification of its characteristics

%%%%%%%%%%%%% VERIFICATION RESULTS     %%%%%
%%%%%%%%%%%%%%%%%%%%%%%%%%%%%%%%%%%%%%
\section{Control problem and Verification theorems}\label{S:MainResults}
Let us now describe the cost structure of the control problem we are interested in.
\subsection{Infinite horizon case}
Consider the infinite horizon stochastic control problem with  \textit{running cost} (or \emph{instantaneous payoff}) given by a measurable function $f: \rd \times A \to \mathbb{R}^+$,   and \textit{payoff function} $J$ defined, for each admissible pair $(\xpx,\apx)$, by
  \begin{equation}\label{J}
  J(\xpx,\apx):=  \Ex \left [   \int_0^{\infty} e^{- \gamma_t^{\apx}} f (\xpx_t,\,\apx_t)\diff t\right ],
     \end{equation}
 where   the discount process $\gamma^{\apx} = (\gamma_t^{\apx})$  is given by $\gamma_{t}^{\apx} := \int_0^t q (\xpx_s, \, \apx_s) \diff s$, for some given bounded, measurable and nonnegative function $q : \rd \times A \to \rp$.  % and such that $\int_0^{\infty} q \diff s = \infty$. 

The aim is to minimize \eqref{J} over all admissible policies $\apx \in \Ax$. In order to solve this problem, the \emph{dynamic programming approach} focuses on studying   the associated family of optimisation problems indexed by the initial data $\V{x}$. The solution of the control problem consists then in finding both
 \begin{itemize}[leftmargin=0.75cm]
 \item [(a)] the   \textit{value function}  (or  \textit{optimal payoff function})   
  $V: \rd \to \mathbb{R}^+$, given by    
\begin{equation}\label{V}
  V(\V{x}) := \inf_{\{(\xpx,\apx)\,: \, \apx \in \Ax\}} J(\xpx,\apx),\quad \V{x} \in \rd,\end{equation}
 \item [(b)]  an  \textit{optimal policy} for each  initial state $\V{x}\in \rd $ (whenever it exists); that is a family  $\{\hat{\Malpha}^\V{x}  \in \Ax : \V{x} \in \rd\}$  such that  the corresponding admissible pairs $(\V{X}^{\hat{\Malpha}^\V{x}},\hat{\Malpha}^\V{x})$ satisfy  \begin{equation}\label{D:alphaO}
    V(\V{x})  = J (\V{X}^{\hat{\Malpha}^\V{x}},\hat{\Malpha}^\V{x}),\quad \text{ for each } \V{x} \in \rd.
    \end{equation}
\end{itemize}

\Mqo{Can we reduce the control problem to a minimisation of the payoff functional but under the most restrictive set of Markov controls? i.e. when can we say that $V_{Markov} (x) = V(x)$? See the case for controlled diffusions in \cite[Theorem 11.2.3, p. 174]{OKSDE} }

\emph{Notation. For convenience we will write $J^{\alpha}(r,\V{x})  \equiv   J (\V{X}^{\Malpha^{r,\V{x}}},\Malpha^{r,\V{x}})$ and $\Jx =J^{\alpha}(0,\V{x})$. We will also write $(\V{X}^{\Malpha},\Malpha)$ instead of $(\V{X}^{\Malpha^{r,\V{x}}},\Malpha^{r,\V{x}})$ whenever these processes are inside the operator $\mathbb{E}^{\alpha}_{r,\V{x}}$. }

\Mremark{For many applications the cost functional \eqref{J} is considered a suitable model to analyse the long-time behaviour of a controlled system. The corresponding finite horizon case will be treated in   Section \ref{S:FiniteCase}.}

 It is well-known that in the previous form the infinite horizon formulation is time-invariant: it does not vary over time as long as the initial state is the same. This is stated in the following lemma.
 \begin{lemma}\label{L:TimeHom}
 Define
 \begin{align}
 J^{\alpha}(r,\V{x}) :&=  \mathbb{E}_{r,\V{x}}^{\alpha} \left [ \int_r^{\infty} e^{- \int_t^s q (\xpo_l, \, \apo_l) \diff l} f(\xpo_s, \apo_s) \diff s \right ],\\
 v(r,\V{x}) :&= \inf_{\apg{r}{x} \in \Ag{r}{x}} J^{\alpha}(r,\V{x}).
 \label{E:v}
 \end{align}
 Then   $ v(r,\V{x}) = v(0,\V{x}) = V(\V{x})$, for all $(r,\V{x}) \in \rp \times \rd$.
\end{lemma}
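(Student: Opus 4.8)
The plan is to exploit the time-homogeneity built into the data: none of the family $\{L^{\V{a}}:\V{a}\in A\}$, the running cost $f$, or the discount rate $q$ depends explicitly on time. Consequently a time shift should turn a solution of the control martingale problem started at $(r,\delta_{\V{x}})$ into one started at $(0,\delta_{\V{x}})$, and a time delay should do the reverse. Since $\Ag{0}{x}\equiv\Ax$ and, by \eqref{J}, $J^{\alpha}(0,\V{x})=J(\xpx,\apx)$, the identity $v(0,\V{x})=V(\V{x})$ is immediate from \eqref{V}; so it only remains to prove $v(r,\V{x})=v(0,\V{x})$ for a fixed $r>0$, and I would do this via two matching inequalities.

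First I would show that the time shift sends $\Ag{r}{x}$ into $\Ax$ while preserving the cost. Fix $\aprx\in\Ag{r}{x}$ on its basis $(\Omega^{\alpha},\mathcal{F}^{\alpha},\Fa,\Pa)$. By the law condition $\Pa\circ(\xprx_s)^{-1}=\delta_{\V{x}}$ for $s\le r$ together with right-continuity, the controlled path is $\Pa$-a.s.\ frozen at $\V{x}$ throughout $[0,r]$. I would then consider the shifted pair $\tilde{\V{X}}_{\cdot}:=\xprx_{r+\cdot}$, $\tilde{\Malpha}_{\cdot}:=\aprx_{r+\cdot}$, together with the shifted, completed, right-continuous filtration $(\mathcal{F}^{\alpha}_{r+u})_{u\ge 0}$, under the same $\Pa$. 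A change of variable $s=r+u$ in \eqref{D:Mphi} identifies $M^{h,\aprx}_{r+t}-M^{h,\aprx}_{r}$ with $h(\tilde{\V{X}}_t)-h(\V{x})-\int_0^{t}(L^{\tilde{\Malpha}_u}h)(\tilde{\V{X}}_{u-})\,\diff u$; as a time-shifted local martingale is again a local martingale for the shifted filtration, the new pair satisfies the martingale part of (\textbf{H1}), while the integrability clause in (\textbf{H1}) and condition (\textbf{H2}) transfer by the same substitution, and uniqueness in law is inherited (two solutions of the shifted problem, prepended with the $\Pa$-a.s.\ constant path on $[0,r]$, would give two laws for $(\xprx,\aprx)$). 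Hence $\tilde{\Malpha}\in\Ax$; and substituting $t=r+u$ and then the inner integration variable $l=r+w$ turns $J^{\alpha}(r,\V{x})$ into $J(\tilde{\V{X}},\tilde{\Malpha})$. Taking the infimum over $\aprx\in\Ag{r}{x}$ yields $v(r,\V{x})\ge v(0,\V{x})$.

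Second, I would show that a time delay sends $\Ax$ into $\Ag{r}{x}$ with the same cost. Given $\apx\in\Ax$ and any fixed action $\V{a}_0\in A$, I would build the datum for $(r,\delta_{\V{x}})$ which is frozen at $(\V{x},\V{a}_0)$ on $[0,r)$ and, on $[r,\infty)$, equals $(\xpx_{\cdot-r},\apx_{\cdot-r})$; this state path is \cad because $\xpx_0=\V{x}$ a.s.\ (so it is continuous at $r$), and the filtration is taken to be $\mathcal{F}^{\apx}_0$ on $[0,r]$ and $\mathcal{F}^{\apx}_{t-r}$ afterwards, suitably completed. On $[0,r]$ the convention extending \eqref{D:Mphi} to all of $\rp$ makes the associated $M^h$-process constant and equal to $h(\V{x})$, hence trivially a local martingale, while on $[r,\infty)$ it coincides with the forward time shift of $M^{h,\apx}$; condition (\textbf{H2}) holds because $\V{a}_0$ is constant with $\nu_0\in\mathcal{M}_p$, so the integrand in \eqref{A3} is bounded on $[0,r)$; uniqueness in law is inherited as before. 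Thus this datum belongs to $\Ag{r}{x}$, and the change of variables $t=r+u$, $l=r+w$ shows that $J^{\alpha}(r,\V{x})$ evaluated along it equals $J(\xpx,\apx)$; taking the infimum over $\apx\in\Ax$ yields $v(r,\V{x})\le v(0,\V{x})$. Combining the two inequalities gives $v(r,\V{x})=v(0,\V{x})=V(\V{x})$.

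The one delicate point — and the main obstacle — is the verification that the local-martingale property in (\textbf{H1}) and the uniqueness-in-law clause genuinely survive the time shift and the time delay, with the appropriately completed, right-continuous shifted filtrations. This hinges on the elementary but essential observation that the law condition forces the controlled trajectory to be constant at $\V{x}$ on $[0,r]$, which is exactly what makes the shift and delay operations mutually inverse up to an irrelevant modification of the control on $[0,r)$; granting this, everything else reduces to routine changes of variables in the time integrals defining \eqref{D:Mphi}, \eqref{A3} and \eqref{J}.
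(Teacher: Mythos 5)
Your proposal is correct and follows the same basic idea as the paper: change variables in the time integral and observe that the shift $(\xprx,\aprx)\mapsto(\xprx_{r+\cdot},\aprx_{r+\cdot})$ carries $\Ag{r}{x}$ into $\Ax$ while preserving the cost. Where you differ is in completeness. The paper only spells out this one direction (shift), which establishes $v(r,\V{x})\ge v(0,\V{x})$, and then asserts that taking the two infima on either side of the resulting identity yields the equality; it does not explicitly construct a map going the other way. Your delay construction --- freeze the state at $\V{x}$ and the action at an arbitrary $\V{a}_0\in A$ on $[0,r)$, then run the shifted pair $(\xpx_{\cdot-r},\apx_{\cdot-r})$ --- is the missing ingredient that gives the reverse inequality $v(r,\V{x})\le v(0,\V{x})$, and your observation that the law condition $\Pa\circ(\xp_s)^{-1}=\delta_{\V{x}}$ for $s\le r$ forces the controlled path to be a.s.\ constant on $[0,r]$ is exactly the point that makes shift and delay mutually inverse up to the inconsequential choice of control on $[0,r)$. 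So your argument is somewhat more careful than the paper's own. One small redundancy: your remark that (\textbf{H2}) holds on $[0,r)$ because $\V{a}_0$ is constant is unnecessary, since the integral in \eqref{A3} starts at $r$ and so never sees the pre-$r$ segment; the relevant transfer is the change of variable on $[r,\infty)$ alone.
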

%Let $\mathcal{A}_{\V{x}}^F$ denote the subset of admissible controls with finite payoff, i.e., $\Jx < +\infty$. 

We now prove the validity of the \emph{dynamic programming principle} (DPP) for our control setting. To do this, we will need the following definitions.
\begin{definition}[\textbf{$\epsilon$-optimal controls}]\label{D:eOptimal}
Let $\epsilon > 0$ and  $\V{x}\in \rd$. An admissible pair  $\left (\xpx,\apx\right)$ with $\apx \in \Ax$   is said to be an \emph{$\epsilon$-optimal control}  if
\begin{equation}
J^{\alpha} (\V{x}) \,< \,V(\V{x}) + \epsilon.
\end{equation}
An admissible pair $\left ( \V{X}^{\Malpha^{\xi}}, \Malpha^{\xi}\right )$ with $\Malpha^{\xi} \in \mathcal{A}^p_{\xi}$  and $\xi \in \mathcal{P} (\rd)$ is said to be an \emph{$\epsilon$-optimal control} if
\begin{equation}
\int_{\rd} J \left (   \xpx, \apx \right ) \xi (\diff \V{x}) \,\, \le \,\, \int_{\rd} V(\V{x})\xi (\diff \V{x})  + \epsilon.
\end{equation}
\end{definition}
\begin{remark}
Notice that, by definition of the value function $V$, the existence of $\epsilon$-optimal controls is always guaranteed.
\end{remark}
\begin{lemma}\label{L:DPP}(\textbf{DPP})
Let $V$ be a continuous  function solving \eqref{V}-\eqref{D:alphaO}. Then, for each $\V{x} \in  \rd$, the function $V$ satisfies 
\begin{equation}\label{E:DPP}
V(\V{x}) = \inf_{\apx \in \mathcal{A}^p_{\V{x}}} \mathbb{E}^{\alpha}_{\V{x}} \left [   \int_0^t e^{-\gamma_{s}^{\apo} } f \left (\xpo_s, \apo_s \right ) \diff s + V \left ( \xpo_t \right ) e^{-\gamma_{t}^{\apo}}\right ].
\end{equation}
\end{lemma}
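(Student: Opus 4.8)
The plan is to prove the two inequalities in \eqref{E:DPP} separately, using the concatenation property (Lemma \ref{L:Concatenation}), the time-homogeneity of the problem (Lemma \ref{L:TimeHom}), and the existence of $\epsilon$-optimal controls (Definition \ref{D:eOptimal} and the subsequent remark). Throughout I write $\Phi(\V{x}) := \inf_{\apx \in \Ax} \mathbb{E}^{\alpha}_{\V{x}} \left[ \int_0^t e^{-\gamma_{s}^{\apo}} f(\xpo_s,\apo_s)\diff s + V(\xpo_t) e^{-\gamma_{t}^{\apo}}\right]$ for the right-hand side, with $t$ fixed, and aim to show $V = \Phi$.

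\textbf{Step 1: $V(\V{x}) \ge \Phi(\V{x})$.} Fix $\V{x}$ and $\epsilon > 0$. Pick an $\epsilon$-optimal admissible pair $(\xpx,\apx)$, so $J^\alpha(\V{x}) < V(\V{x}) + \epsilon$. Split the integral in \eqref{J} at time $t$: using the tower property and the Markov-type structure of the discount (i.e. $\gamma^{\apx}_s = \gamma^{\apx}_t + \int_t^s q(\xpx_l,\apx_l)\diff l$ for $s \ge t$), the tail $\mathbb{E}^\alpha_{\V{x}}\big[ e^{-\gamma^{\apx}_t}\int_t^\infty e^{-\int_t^s q(\xpx_l,\apx_l)\diff l} f(\xpx_s,\apx_s)\diff s \big]$ equals $\mathbb{E}^\alpha_{\V{x}}\big[ e^{-\gamma^{\apx}_t}\, \mathbb{E}^\alpha[\,\cdot \mid \mathcal{F}^\alpha_t]\big]$, and the inner conditional expectation is $J^{\beta}(t,\xpx_t)$ for the shifted control $\beta$ obtained by restarting $\apx$ from time $t$ — this requires that the restriction of an admissible pair after time $t$, conditioned on $\mathcal{F}^\alpha_t$, is admissible for the shifted problem (a regular-conditional-probability / disintegration argument, the measurable analogue of Lemma \ref{L:Concatenation}). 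Hence the tail is $\ge \mathbb{E}^\alpha_{\V{x}}[ e^{-\gamma^{\apx}_t} V(\xpx_t)]$ by \eqref{V} and Lemma \ref{L:TimeHom}, so $V(\V{x}) + \epsilon > J^\alpha(\V{x}) \ge \mathbb{E}^\alpha_{\V{x}}[\int_0^t e^{-\gamma^{\apx}_s} f(\xpx_s,\apx_s)\diff s + e^{-\gamma^{\apx}_t} V(\xpx_t)] \ge \Phi(\V{x})$. Let $\epsilon \downarrow 0$.

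\textbf{Step 2: $V(\V{x}) \le \Phi(\V{x})$.} Fix $\V{x}$, $\epsilon>0$, and an arbitrary $\apx \in \Ax$; let $\etat := \mathbb{P}^\alpha\circ(\xpx_t)^{-1}$. For each $\V{z}$ choose an $\epsilon$-optimal $\Malpha^{t,\V{z}} \in \Ag{t}{\V{z}}$; one needs these selected measurably in $\V{z}$ (another measurable-selection step) so that, integrating against $\etat$, the pair $\Malpha^{t,\etat} \in \mathcal{A}^p_{t,\etat}$ is $\epsilon$-optimal in the averaged sense of Definition \ref{D:eOptimal}, i.e. $\int V(\V{z})\etat(\diff\V{z}) + \epsilon \ge \int J(t,\V{z})\etat(\diff\V{z})$. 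Now form the concatenation $\Malpha^\oplus := \apx \oplus_t \Malpha^{t,\etat}$, which lies in $\Ax$ by Lemma \ref{L:Concatenation}. Computing $J^\alpha$ of this concatenation and splitting at $t$ exactly as in Step 1 (the pre-$t$ part depends only on $\apx$, the post-$t$ part contributes $\mathbb{E}^\alpha_{\V{x}}[e^{-\gamma^{\apx}_t} J(t,\xpx_t)]$), one gets $V(\V{x}) \le J(\V{X}^{\Malpha^\oplus},\Malpha^\oplus) \le \mathbb{E}^\alpha_{\V{x}}[\int_0^t e^{-\gamma^{\apx}_s} f(\xpx_s,\apx_s)\diff s + e^{-\gamma^{\apx}_t} V(\xpx_t)] + \epsilon$. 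Taking the infimum over $\apx \in \Ax$ and then $\epsilon \downarrow 0$ gives $V(\V{x}) \le \Phi(\V{x})$.

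\textbf{Main obstacle.} The routine parts are the integral-splitting and the discount bookkeeping; the delicate parts are the two measurability issues: (a) in Step 1, showing that conditioning an admissible pair on $\mathcal{F}^\alpha_t$ yields, for $\etat$-a.e. $\V{z}$, an admissible pair for the shifted control martingale problem started at $(t,\V{z})$ — i.e. that the class $\Ap$ is stable under regular conditional disintegration, which is the genuine content behind \eqref{D:Mphi} and (\textbf{H2}) being preserved under conditioning; and (b) in Step 2, the measurable selection of $\epsilon$-optimal controls $\V{z}\mapsto \Malpha^{t,\V{z}}$, which is where the continuity hypothesis on $V$ and the canonical-space setup of Section~\ref{CSetp} are used (continuity of $V$ lets one patch near-optimal controls locally and glue them, avoiding a full abstract selection theorem). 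I expect (a) to be the main technical burden, since verifying the local-martingale property \eqref{D:Mphi} for the conditioned process requires care with the \cad versions and the convention $\int_r^t|(\Las h)(\xp_{s-})|\diff s = 0$ for $t \le r$.
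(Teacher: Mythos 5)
Your proof is correct and mirrors the paper's own argument (concatenation via Lemma \ref{L:Concatenation}, time-homogeneity via Lemma \ref{L:TimeHom}, $\epsilon$-optimal controls, and the tower property at time $t$), with the order of the two inequalities swapped. The two measurability issues you single out in your final paragraph are indeed the technical heart of the matter and are present, unaddressed, in the paper's own proof as well: the disintegration step (a) is handled there by silently replacing $\mathbb{E}[\,\cdot\mid\mathcal{F}^\alpha_t]$ with $\mathbb{E}[\,\cdot\mid\V{X}^{\Malpha}_t]$, and the selection step (b) is sidestepped by directly invoking an $\epsilon$-optimal control for the distribution $\eta_t^{\alpha}$, supported only by the remark following Definition~\ref{D:eOptimal}.
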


%\begin{remark}
%To ease notation we will write
%For notational clarity, hereafter we will use notation
%If there is no risk of confusion, we will use the notation 
%\[ J^{\alpha} (t,\V{x}) :=   \Ext \left [   \int_0^{\infty} e^{- \gamma_t^{\apo}} f (\xpo_t,\,\apo_t)\diff t\right ] \equiv J (\xpg{t}{x},\apg{t}{x}),\] for any admissible pair $(\xpg{t}{x},\apg{t}{x})$ and $(t,\V{x}) \in \rp\times\rd$. As usual, the subscript $t$ is omitted whenever $t=0$ and thus $J^{\alpha} (t,\V{x}) \equiv J^{\alpha} (\V{x})$
%\end{remark}
\Mqo{Continuity of $V$?}
\MremarkO{The continuity assumption on the value function $V$ guarantees that $V$ is measurable and thus the right-hand side in the DPP is well-defined. Measurability of the value function is usually deal with by means of measurable section theorems, see for example  \cite{DMeyerA}.}

We want to establish conditions which help us to determine if a given function $\phi :\rd \to \rp$ is the value function of the control problem \eqref{J}-\eqref{D:alphaO}. The following lemma provides a \textit{necessary} condition satisfied by the optimal cost function $V$. This result is a consequence of the DPP.
\begin{lemma}\label{P0-S}
Let  $V: \rd \to \mathbb{R}^+$ be the value function of the  control problem  \eqref{J}-\eqref{D:alphaO}. For each admissible policy $\apx \in \Ax$, $p\ge 2$, such that $\Jx< +\infty$,  the process   $S^{V,\apx}$ defined by
\begin{equation}
S_t^{V,\apx} := \int_0^t e^{-\gamma_s^{\apx}} f(\xpx_s, \apx_s) \diff s + e^{-\gamma_{t}^{\apx}} V(\xpx_t),\quad t\ge 0
\end{equation}
is a positive $\Pa$-submartingale. Furthermore, if $\apx$ is optimal, then $S^{V,\apx}$ is a (true) $\Pa$-martingale.
\end{lemma}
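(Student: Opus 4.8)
The plan is to derive the submartingale property of $S^{V,\apx}$ directly from the dynamic programming principle (Lemma \ref{L:DPP}), exploiting the concatenation property (Lemma \ref{L:Concatenation}) and the time-homogeneity (Lemma \ref{L:TimeHom}) that underpin it. First I would fix an admissible policy $\apx \in \Ax$ with $\Jx < +\infty$ and observe that positivity of $S^{V,\apx}$ is immediate since $f \ge 0$, $q \ge 0$ and $V \ge 0$. The integrability needed to speak of the (sub)martingale property follows because $\mathbb{E}^\alpha_{\V{x}} S_t^{V,\apx} \le \mathbb{E}^\alpha_{\V{x}}\!\left[\int_0^\infty e^{-\gamma_s^{\apx}} f(\xpx_s,\apx_s)\diff s\right] + \mathbb{E}^\alpha_{\V{x}}\!\left[e^{-\gamma_t^{\apx}} V(\xpx_t)\right]$, where the first term is $\Jx < \infty$ and the second is bounded by $\Jx$ too, after using the DPP at time $t$ to rewrite $V(\xpx_t)$ in terms of a conditional cost and invoking $\gamma^{\apx}$ monotone nondecreasing; I would make this precise with the tower property.

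The core step is the submartingale inequality. For $0 \le s \le t$ I would condition on $\mathcal{F}^\alpha_s$ and use the flow/concatenation structure: the process $(\xpx_{s+\cdot}, \apx_{s+\cdot})$, restarted from its state at time $s$, is (a version of) an admissible pair in $\mathcal{A}^p_{s,\eta^{\apx}_s}$, so applying the DPP \eqref{E:DPP} at the running state $\xpx_s$ over the time increment $t-s$ gives
\begin{equation}\nonumber
V(\xpx_s) \le \mathbb{E}^\alpha_{\V{x}}\!\left[\int_s^t e^{-(\gamma_u^{\apx}-\gamma_s^{\apx})} f(\xpx_u,\apx_u)\diff u + e^{-(\gamma_t^{\apx}-\gamma_s^{\apx})} V(\xpx_t)\,\Big|\,\mathcal{F}^\alpha_s\right].
\end{equation}
Multiplying through by $e^{-\gamma_s^{\apx}}$ (which is $\mathcal{F}^\alpha_s$-measurable) and adding $\int_0^s e^{-\gamma_u^{\apx}} f(\xpx_u,\apx_u)\diff u$ to both sides converts this exactly into $S_s^{V,\apx} \le \mathbb{E}^\alpha_{\V{x}}[S_t^{V,\apx}\mid \mathcal{F}^\alpha_s]$, using the cocycle identity $\gamma_u^{\apx} - \gamma_s^{\apx} + \gamma_s^{\apx} = \gamma_u^{\apx}$ valid because $\gamma^{\apx}$ is an additive functional. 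This gives the submartingale claim.

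For the martingale assertion when $\apx$ is optimal, I would run the same computation but with the \emph{equality} $V(\V{x}) = J^\alpha(\V{x})$ in place of the infimum: since $\apx$ attains the value at $\V{x}$, the DPP inequality must be an equality along $\apx$, so $V(\xpx_s) = \mathbb{E}^\alpha_{\V{x}}[\int_s^t e^{-(\gamma_u^{\apx}-\gamma_s^{\apx})} f(\xpx_u,\apx_u)\diff u + e^{-(\gamma_t^{\apx}-\gamma_s^{\apx})} V(\xpx_t)\mid \mathcal{F}^\alpha_s]$ $\Pa$-a.s.; the argument is the usual one that if a conditional expectation dominates a quantity whose unconditional expectation equals that of the dominating quantity, equality holds a.s. This yields $S_s^{V,\apx} = \mathbb{E}^\alpha_{\V{x}}[S_t^{V,\apx}\mid \mathcal{F}^\alpha_s]$.

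The main obstacle I anticipate is the measure-theoretic bookkeeping around restarting: justifying that the shifted pair $(\xpx_{s+\cdot},\apx_{s+\cdot})$ genuinely furnishes, conditionally on $\mathcal{F}^\alpha_s$, an admissible control from the state $\xpx_s$ so that the DPP applies pathwise inside the conditional expectation. This is precisely where one needs the concatenation Lemma \ref{L:Concatenation} together with the canonical-space reduction described after it (so that regular conditional distributions exist and the flow property holds), and where the hypotheses (\textbf{H1})–(\textbf{H2}) and the \cad structure are essential; a careful statement should note that the DPP in Lemma \ref{L:DPP} is applied in its conditional form, which is standard but requires the continuity/measurability of $V$ already assumed. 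Everything else — positivity, integrability via $\Jx<\infty$, and the additive-functional identity for $\gamma^{\apx}$ — is routine.
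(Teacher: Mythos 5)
Your proposal is correct and takes essentially the same route as the paper's proof: condition on $\mathcal{F}^{\alpha}_s$, split the running cost at time $s$, factor out the $\mathcal{F}^{\alpha}_s$-measurable discount $e^{-\gamma_s^{\apx}}$, and apply the DPP (Lemma \ref{L:DPP}) to the conditional expectation of the remaining cost plus terminal value, with equality along an optimal policy yielding the martingale claim. You correctly identify the restart/conditioning bookkeeping as the delicate step; the paper handles it by passing from conditioning on $\mathcal{F}^{\alpha}_s$ to conditioning on $\xpx_s$ and observing that the restriction of $\apx$ to $[s,\infty)$ is admissible in $\mathcal{A}^p_{s,\eta^{\alpha}_s}$, which is the same use of Lemma \ref{L:Concatenation} and the canonical-space structure that you invoke.
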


\begin{remarks}
\begin{itemize}[leftmargin=0.75cm]\item []
\item [i)]The process  $ S^{V,\apx}$ is called  \textit{the Bellman process}. At each time $t$, it can be thought of as the minimum expected total cost, given the evolution of the process  up to time $t$ under the policy $\apx$ and then changing to an optimal control afterwards. The submartingale property tells us then that by using an arbitrary control $\apx$ for a longer time, the expected cost keeps on increasing and such an  increase is zero whenever the policy is optimal.
\item [ii)] Lemma \ref{P0-S} provides a first necessary (but not sufficient) condition to characterise the value function.  As in the standard diffusion case, some sufficient conditions can be given in terms of the so-called \textit{transversality condition} (see  \eqref{E:TC} below). 
\item [iii)] \Mcomment{Notice that policies with infinite payoff were  not excluded from the definition of admissible policies. However, when solving the optimisation problem we will only focus on policies with finite payoff, as otherwise it is clear that such a policy cannot be an optimal one.}
\end{itemize}
\end{remarks}

%The previous implies that, for the tractability of the control problem, we need further   constraints on the set of admissible controls:
Let us give some further assumptions for a given candidate value function $\phi \in C^2(\rd)$:
\begin{itemize}
\item [(\textbf{SC})] For any admissible policy $\apx \in \Ax$  with $\Jx <+\infty$, $S_t^{\phi,\apx}$ is a positive submartingale. 
\item [(\textbf{MC})] There exists a family of  admissible pairs $\{\,(\V{X}^{\hat{\Malpha}^{\V{x}}},\hat{\Malpha}^{\V{x}})\,: \V{x} \in \rd\,\}$ such that,  for each $\hat{\Malpha}^{\V{x}}$,  $S_t^{\phi,\hat{\Malpha}^{\V{x}}}$ is a martingale. 
\item [(\textbf{TC})]  Each admissible policy $\apx$ is such that either  $\Jx = \infty$ or
 \begin{align}
 \liminf_{t\to \infty}\Ex \left [ e^{-\gamma_{t}^{\apo}} \phi \left (\xpo_t\right) \right ] = 0,\label{E:TC}
 \end{align}
\item [(\textbf{nC})] For each $\V{x} \in \rd$, there exists a sequence $\{\apx_n\}_{n\in \mathbb{N}} \subset \Ax$ such that $J^{\alpha_n} (\V{x}) < \phi (\V{x}) + \frac{1}{n}$.
\end{itemize}

\begin{remarks}
\begin{itemize}[leftmargin=0.75cm] \item []
\item [i)]The submartingale and martingale conditions (\textbf{SC})-(\textbf{MC}) are the core feature in the standard \textit{martingale approach} for stochastic control problems (see \cite{Davis1979}). \Mcomment{By Lemma \eqref{P0-S}, these conditions are only necessary conditions for optimality.} 
\item [ii)] The \textit{transversality condition} \eqref{E:TC} \McommentO{which is a sufficient condition for optimality,}  implicitly prescribes some kind of growth condition on $V$.  Essentially, it ensures that the value function $V(\V{x})$ does not  growth too rapidly for large $|\V{x}|$. This condition plays a similar role than a terminal condition does in the finite horizon case.  \McommentO{An example  illustrating this is given in Remark \ref{R:TC-Ex}.} 
\end{itemize}
\end{remarks}
\begin{remark}\label{R:Max1}
If our control problem were a maximisation problem then the condition "\emph{positive submartingale}" in (\textbf{SC}) would be replaced by "\emph{positive supermartingale}", whereas the inequalities  \eqref{E:TCa}-\eqref{E:TCb} would be reversed. 
\end{remark}

\begin{lemma}[\textbf{Verification Result 1}]\label{VT0-SubPhi}
Let $p\ge2$ and let $\phi \in C^2(\rd)$ be a nonnegative function satisfying (\textbf{SC}) and (\textbf{TC}). Then the following holds.
\begin{itemize}[leftmargin=0.75cm]
\item [\emph{i)}] For any admissible policy $\apx \in \Ax$, $\phi(\V{x}) \le \Jx$, $\V{x} \in \rd$.
\item [\emph{ii)}] If condition (\textbf{MC}) holds, then $\phi(\V{x}) = J^{\hat{\Malpha}} (\V{x})$ and, thus, $\phi$  is the value function of the control problem \eqref{J}-\eqref{D:alphaO} and $\{\hat{\Malpha}^{\V{x}}\,:\, \V{x}\in \rd\}$ is a family of optimal admissible policies. 
\item [\emph{iii}] If condition  (\textbf{nC}) holds, then  $\phi$  is the value function and, for each $n$, $\{\apx_n\,:\, \V{x}\in \rd\}$ is a family of $\epsilon_n$-optimal admissible policies with $\epsilon_n := \frac{1}{n}$.
\end{itemize}
\end{lemma}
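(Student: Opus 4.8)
The plan is to prove part i) directly from the submartingale inequality in (\textbf{SC}) combined with a limiting argument based on (\textbf{TC}), and then to obtain parts ii) and iii) as short consequences.

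\emph{Part i).} First I would fix an admissible $\apx \in \Ax$. If $\Jx = \infty$ there is nothing to prove, so assume $\Jx < \infty$. Then (\textbf{SC}) makes $S^{\phi,\apx}$ a positive submartingale; since $\gamma_0^{\apx} = 0$ and $\xpx_0 = \V{x}$ $\Pa$-a.s., $S_0^{\phi,\apx} = \phi(\V{x})$, so for every $t \ge 0$
\[
\phi(\V{x}) \;=\; \Ex\bigl[S_0^{\phi,\apx}\bigr] \;\le\; \Ex\bigl[S_t^{\phi,\apx}\bigr] \;=\; \Ex\!\left[\int_0^t e^{-\gamma_s^{\apx}} f(\xpx_s,\apx_s)\,\diff s\right] + \Ex\!\left[e^{-\gamma_t^{\apx}}\phi(\xpx_t)\right].
\]
The first term on the right is nondecreasing in $t$ and tends to $\Jx$ by monotone convergence ($f \ge 0$), while (\textbf{TC}) supplies a sequence $t_n \uparrow \infty$ along which the second term vanishes; letting $n \to \infty$ yields $\phi(\V{x}) \le \Jx$. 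Taking the infimum over $\apx \in \Ax$ then gives $\phi \le V$.

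\emph{Parts ii) and iii).} Under (\textbf{MC}), fix $\V{x}$ and the pair $(\V{X}^{\hat{\Malpha}^{\V{x}}},\hat{\Malpha}^{\V{x}})$; since $S^{\phi,\hat{\Malpha}^{\V{x}}}$ is a (true) martingale with $S_0 = \phi(\V{x})$, bounding $S_t^{\phi,\hat{\Malpha}^{\V{x}}}$ below by its running-cost part and letting $t \to \infty$ gives $\phi(\V{x}) \ge J^{\hat{\Malpha}}(\V{x})$, while part i) applied to the admissible policy $\hat{\Malpha}^{\V{x}}$ gives $\phi(\V{x}) \le J^{\hat{\Malpha}}(\V{x})$; hence $\phi(\V{x}) = J^{\hat{\Malpha}}(\V{x})$, and with $\phi \le V \le J^{\hat{\Malpha}}$ from part i) we conclude $\phi = V$ and optimality of each $\hat{\Malpha}^{\V{x}}$. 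Under (\textbf{nC}), take $\{\apx_n\}$ with $J^{\alpha_n}(\V{x}) < \phi(\V{x}) + \tfrac1n$; part i) gives $\phi(\V{x}) \le J^{\alpha_n}(\V{x})$, so $J^{\alpha_n}(\V{x}) \to \phi(\V{x})$ and $V(\V{x}) \le \phi(\V{x})$, which together with part i) forces $\phi = V$, and then $J^{\alpha_n}(\V{x}) < V(\V{x}) + \tfrac1n$ exhibits $\{\apx_n\}$ as a family of $\tfrac1n$-optimal policies.

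\emph{Main obstacle.} The one genuinely delicate point is the passage to the limit in part i): it is important to use the submartingale inequality only at fixed finite times $t$ (an optional-stopping argument ``at $t=\infty$'' would need uniform integrability that is not available), to invoke monotone convergence for the running-cost integral, and to pass to a subsequence realising the $\liminf$ in (\textbf{TC}). Everything after that is bookkeeping with the definition of $V$ as an infimum, together with the routine checks that $S_0^{\phi,\apx} = \phi(\V{x})$ and that $\Ex[S_t^{\phi,\apx}] < \infty$ whenever $\Jx < \infty$ (clear since $\phi,f \ge 0$ and the process is a positive submartingale).
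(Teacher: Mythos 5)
Your proof is correct and follows essentially the same route as the paper: part i) is the submartingale inequality at finite $t$ plus monotone convergence for the running-cost integral plus (\textbf{TC}) to kill the terminal term (your explicit subsequence $t_n$ realising the $\liminf$ is the same step the paper handles via the $\liminf$-of-sums rule), and parts ii)–iii) are deduced as short consequences exactly as in the paper. The only small deviation is in ii): to get $\phi(\V{x}) \ge J^{\hat{\Malpha}}(\V{x})$ you simply drop the nonnegative $\phi$-term from the martingale identity and let $t \to \infty$, which is slightly cleaner than the paper's appeal to "the same arguments above" (which routes through (\textbf{TC}) a second time); both are valid.
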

\Mqo{Probably: If $(SC)$ holds (or (HJB)), then \begin{itemize}
\item if $\liminf_{t\to\infty} e^{-qt} \Ex [\phi (X_t)] \le 0$, then $\phi < J^{\alpha}$.
\item If exists (MC) (or existence of minimiser to $HJB$ equation) and $\lim_{t\to\infty} e^{-qt} \Ex [\phi (X_t)] = 0$, then $\phi = J^{\alpha^*}$.
\end{itemize}}

\begin{remarks}
\end{remarks}
\begin{itemize} [leftmargin=0.75cm]
\item [i)]Under conditions (\textbf{SC}) and (\textbf{TC}), Lemma \ref{VT0-SubPhi} implies that the candidate value function $\phi$ is a lower bound for the value function $V$. However,  to apply this lemma one needs to verify both conditions  \emph{for each  policy} $\apx \in \Ax$. \McommentO{This explains why in some papers of controlled diffusions the transversality condition (\textbf{TC}) is included in the definition of the class of admissible policies $\Ax$.} 
\item [ii)] \Mcomment{The transversality condition (\textbf{TC}) is not much of a problem in the maximisation case with nonnegative running cost as such a condition becomes  \[\limsup_{t\to \infty} e^{-qt} \mathbb{E} \left [ \phi \left ( X_t^x\right )\right ] \ge 0.\] }
\item [iii)] Condition (\textbf{SC}) is usually dealt with via the corresponding \textbf{HJB} equation related to the so-called verifications theorems. 
\end{itemize}
 \textit{Verification theorems} provide sufficient conditions for optimality when the value function is smooth enough. Such conditions are given  in terms of the solution to an integro-differential equation and require the concept of \textit{Markov policies}.

%Among the class of controls, the subset of Markov policies play an important role as they arise as the solutions of the HJB equations associated with the control problem of interest. 

\begin{definition}[\textbf{Markov controls}]\label{MarkovC}
Let $(\xpx, \apx)$ be an admissible pair with $\apx \in \Ax$, $p\ge 2$. The process $\apx =(\sigma,\nu,\mu)$ is called a \emph{Markov control} if there exists an $\Fa$-adapted \cad process $X^{\V{x}}$ starting at $\V{x}$ and there exist measurable mappings $\bar{\mu}: \rp \times \rd \to \rr $, $\bar{\sigma}: \rp \times \rd \to M_{n\times n} (\rr) $ and $\bar{\nu}: \rp \times \rd \to  \mathcal{M}_p$  such that $\as$ 
$\mu_s = \bar{\mu} (s,X^\V{x}_s)$, $\sigma_s = \bar{\sigma} (s,X^{\V{x}}_s)$ and $\nu_s = \bar{\nu} (s,X^{\V{x}}_s)$, for all $s\ge 0$, and, furthermore, if $\bar{\alpha} := (\bar{\sigma},\bar{\nu},\bar{\mu})$ then $(\xpx_{\cdot},\apx_{\cdot}) = (X^{\V{x}}_{\cdot}, \bar{\alpha}^{\V{x}} (\cdot, X^\V{x}_{\cdot}))$ have the same law. If all the mappings in $\bar{\alpha}$ are independent of time, then the corresponding control is said to be a \emph{stationary Markov control}. 
\end{definition}
 
To state the verification theorem of our control problem, we introduce the following conditions:
\begin{itemize}
\item [(\textbf{HJB})]  $\phi$ satisfies the integro-differential equation   
\begin{equation}\label{D:HJB00}
 \inf_{\V{a} \in A} \{  L^\V{a} \phi (\V{x}) -q(\V{x},\V{a}) \phi(\V{x}) + f(\V{x},\V{a}) \} = 0, \quad \text{ for all } \V{x} \in \rd.
 \end{equation}
\item [(\textbf{UI})] For each $\V{x} \in \rd$ and for each admissible policy $\apx$, the family $\{ e^{-\gamma_{t\wedge \tau}^{\apx}}\phi \left (\xpx_{\tau \wedge t} \right )\}_{\tau \in \mathcal{T}}$ is uniformly integrable (UI) for each $t \ge 0$.  Here $\mathcal{T}$ denotes the  family of all stopping times.
 \item [(\textbf{OC})] There exists an admissible policy  $\hat{\Malpha}^{\V{x}} \in \Ax$ with corresponding controlled process $\V{X}^{\hat{\Malpha}^{\V{x}}}$, defined on a filtered probability space $(\hat{\Omega},\hat{\mathcal{F}}, \hat{\mathbb{F}}:=(\hat{\mathcal{F}}_t), \hat{\mathbb{P}})$, such that 
\begin{equation}\label{HJB-op}
  L^{\hat{\Malpha}_s^{\V{x}}} \phi (\V{X}^{\hat{\Malpha}^{\V{x}}}_s) -q(\V{X}^{\hat{\Malpha}^{\V{x}}}_s,\hat{\Malpha}^{\V{x}}_s)\, \phi(\V{X}^{\hat{\Malpha}^{\V{x}}}_s) + f(\V{X}^{\hat{\Malpha}^{\V{x}}}_s,\hat{\Malpha}^{\V{x}}_s)  = 0, \quad \hat{\mathbb{P}}-a.s. \quad \text{for all } s\ge 0,
  \end{equation}
\end{itemize}
A few comments about the previous assumptions:
\begin{itemize}[leftmargin=0.75cm] 
\item [(i)]Equations of the type \eqref{D:HJB00} are usually referred to as \textit{Hamilton-Jacobi-Bellman (HJB) equations} \Mcomment{(or the Dynamic Programming Equation)} and they are the infinitesimal version of the dynamic programming principle. By  assuming sufficient regularity conditions for the value function, 
 this equation can be  derived formally by a standard limiting procedure. 
\McommentO{One of the key differences between equation \eqref{D:HJB00} and the classical HJB equation for controlled diffusions is the additional integral term appearing in the operator $L^{\V{a}}$. Such a term allows us to consider jumps in our model. Our HJB equation is given in terms of an integral-differential equation, whereas in the diffusion setting one gets a  fully nonlinear PDE of second order. }
  \item [ii)]Assumption  (\textbf{UI}) is important for the case when the Bellman process $ S^{\phi, \apx} $  is only a \emph{local} submartingale. \Mcomment{This condition  implies that the nonnegative local submartingale $S^{\phi, \apx} $ is of class (DL) and thus it is a true submartingale.} \McommentO{With this we   can then deal with a stopped version of the transversality condition \eqref{E:TC} that appears in the proof.}
\item [iii)]Assumption (\textbf{OC}) allows one to identify and construct an optimal (stationary) Markov control policy via a pointwise minimisation of the associated HJB equation.
\end{itemize}

 The main result of this paper is the following verification theorem.    
  This theorem characterises the value function $V$ as a solution to  the integro-differential equation \eqref{D:HJB00} and  it also  identifies optimal Markov controls.
 
 \begin{theorem}[\textbf{Verification Result 2}]\label{VT0}
Let $p\ge 2$ and suppose that $\phi \in C^2(\rd)$ is a nonnegative function satisfying $|\phi(\V{x})| \le C (|\V{x}|^q + 1)$ for some $C>0$ and $q \in [2,p]$. Under conditions (\textbf{HJB}), (\textbf{UI}) and (\textbf{TC}), the following   holds.
\begin{itemize}[leftmargin=0.75cm]
\item [\emph{i)}] For any admissible policy $\apx \in \Ax$, $\phi(\V{x}) \le \Jx$, $\V{x} \in \rd$.
\item [\emph{ii)}] If, additionally, condition (\textbf{OC}) holds,
then $\phi(\V{x}) = J^{\hat{\Malpha}} (\V{x}) $,  the family $\{ \hat{\Malpha}^{\V{x}} \,:\, \V{x} \in \rd\}$ is a family of optimal (stationary) Markov policies, and $\phi (\V{x})$ is the value function of the control problem \eqref{J}-\eqref{D:alphaO}. 
\end{itemize}
\end{theorem}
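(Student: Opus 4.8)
The plan is to \emph{reduce} Theorem \ref{VT0} to the abstract Verification Result~1 (Lemma \ref{VT0-SubPhi}): I will show that the polynomial growth of $\phi$ together with (\textbf{HJB}) and (\textbf{UI}) forces the submartingale condition (\textbf{SC}) to hold for $\phi$, and that (\textbf{OC}) together with (\textbf{UI}) forces the martingale condition (\textbf{MC}) to hold along the family $\{\hat{\Malpha}^{\V{x}}\}$. Part i) is then Lemma \ref{VT0-SubPhi}(i), while part ii) is Lemma \ref{VT0-SubPhi}(ii); combining $\phi(\V{x})\le\Jx$ for every admissible $\apx$ with $\phi(\V{x})=J^{\hat{\Malpha}}(\V{x})$ identifies $\phi$ with $V$ and exhibits $\hat{\Malpha}^{\V{x}}$ as optimal.

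The analytic core (Step~1) is a Dynkin-type identity for $\phi$. Fixing an admissible pair $(\xpx,\apx)$, I would first verify, using the estimates $|\phi(\V{z}+\V{y})-\phi(\V{z})-\V{y}^T\nabla\phi(\V{z})|\le c\,(\sup_{|\V{w}-\V{z}|\le1}\|\Hess\phi(\V{w})\|)\,|\V{y}|^2$ for $|\V{y}|\le1$ and $\le c(1+|\V{z}|^q)(1+|\V{y}|^q)$ for $|\V{y}|>1$, together with $\nu_s\in\mathcal{M}_p$, $q\le p$, and (\textbf{H2}), that $s\mapsto(\Lasx\phi)(\xpx_{s-})$ is a.s.\ locally Lebesgue integrable. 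Then, applying the Itô formula for semimartingales to the canonical representation \eqref{X-decomp} of $\xpx$ from Proposition \ref{qth-moments}(i) and regrouping the drift terms by means of the differential characteristics of Proposition \ref{X-charact}, one obtains $\phi(\xpx_t)=\phi(\V{x})+\int_0^t(\Lasx\phi)(\xpx_{s-})\diff s+M_t^{\phi,\apx}$, where $M^{\phi,\apx}:=\int_0^{\cdot}\nabla\phi(\xpx_{s-})^T\diff\Xc_s+\int_0^{\cdot}\int_{\Ro}\big(\phi(\xpx_{s-}+\V{y})-\phi(\xpx_{s-})-\V{y}^T\nabla\phi(\xpx_{s-})\big)(\nx-\eta)(\diff s,\diff\V{y})$ is a $\Pa$-local martingale (the local martingale property of the compensated jump integral is precisely where $\mathcal{M}_p$-integrability and (\textbf{H2}) enter). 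An integration by parts against the continuous finite-variation process $e^{-\gamma_t^{\apx}}$, with $\diff\gamma_t^{\apx}=q(\xpx_t,\apx_t)\diff t$, then yields $S_t^{\phi,\apx}=\phi(\V{x})+\int_0^t e^{-\gamma_s^{\apx}}\big(\Lasx\phi-q(\cdot,\apx_s)\phi+f(\cdot,\apx_s)\big)(\xpx_{s-})\diff s+N_t^{\phi,\apx}$ with $N^{\phi,\apx}:=\int_0^{\cdot}e^{-\gamma_s^{\apx}}\diff M_s^{\phi,\apx}$ a local martingale.

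Step~2 turns this into the required (sub)martingale statements. By (\textbf{HJB}) the $\diff s$-integrand above is $\ge\inf_{\V{a}\in A}\{L^{\V{a}}\phi-q(\cdot,\V{a})\phi+f(\cdot,\V{a})\}(\xpx_s)=0$, so $S^{\phi,\apx}$ is $\phi(\V{x})$ plus a nondecreasing process plus a local martingale, hence a nonnegative local submartingale; localising $N^{\phi,\apx}$ by a sequence $\tau_k\uparrow\infty$ makes $S^{\phi,\apx}_{\cdot\wedge\tau_k}$ a genuine submartingale for each $k$. If $\Jx<\infty$ the increasing part is dominated by the integrable variable $\int_0^{\infty}e^{-\gamma_s^{\apx}}f(\xpx_s,\apx_s)\diff s$ while $\{e^{-\gamma_{t\wedge\tau_k}^{\apx}}\phi(\xpx_{t\wedge\tau_k})\}_k$ is uniformly integrable by (\textbf{UI}), so $\{S^{\phi,\apx}_{t\wedge\tau_k}\}_k$ is uniformly integrable and letting $k\to\infty$ in the submartingale inequality shows $S^{\phi,\apx}$ is a true nonnegative submartingale; this is exactly (\textbf{SC}). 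For (\textbf{OC}): identity \eqref{HJB-op} annihilates the $\diff s$-integrand for $\hat{\Malpha}^{\V{x}}$, so $S^{\phi,\hat{\Malpha}}=\phi(\V{x})+N^{\phi,\hat{\Malpha}}$ is a nonnegative local martingale, hence a supermartingale, whence $\mathbb{E}[\int_0^t e^{-\gamma_s^{\hat{\Malpha}}}f\diff s]\le\phi(\V{x})$ and $J^{\hat{\Malpha}}(\V{x})\le\phi(\V{x})<\infty$; the same domination-plus-(\textbf{UI}) argument then upgrades $S^{\phi,\hat{\Malpha}}$ to a true martingale, i.e.\ (\textbf{MC}) holds. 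Invoking Lemma \ref{VT0-SubPhi} completes both parts, and when $\hat{\Malpha}^{\V{x}}$ arises as an admissible feedback $\hat{\Malpha}_s^{\V{x}}=\hat{\V{a}}(\xpx_s)$ for a measurable selector $\hat{\V{a}}:\rd\to A$ of the pointwise minimiser in \eqref{D:HJB00}, it is a stationary Markov control in the sense of Definition \ref{MarkovC}.

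The main obstacle is Step~1: extending the local martingale property of (\textbf{H1}) from $C_c^2$ test functions to the merely polynomially growing $\phi$. Because $L^{\V{a}}$ is \emph{nonlocal}, one cannot simply truncate $\phi$ in space and localise by exit times; instead one must run the general Itô formula on the explicit decomposition \eqref{X-decomp} and check carefully that the compensated-jump stochastic integral is a genuine local martingale and that $(\Lasx\phi)(\xpx_{s-})$ is path-by-path locally integrable — both of which hinge precisely on $\nu_s\in\mathcal{M}_p$ with $q\le p$ and on $\int_0^t Q_s^{p,\apx}\diff s<\infty$ from (\textbf{H2}). The remaining steps are routine localisation and uniform-integrability bookkeeping.
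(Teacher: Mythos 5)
Your proposal is correct and follows essentially the same route as the paper's proof: Step~1 reproduces the content of the paper's auxiliary Lemma~\ref{L2-f2} (extending the local-martingale property of (\textbf{H1}) from $C_c^2$ to polynomially growing $C^2$ functions via It\^o's formula on the canonical decomposition~\eqref{X-decomp}), and Step~2 is the same integration-by-parts, local-(sub)martingale, (\textbf{UI}) and (\textbf{TC}) argument, merely organized as a reduction to Lemma~\ref{VT0-SubPhi} by first establishing (\textbf{SC}) and (\textbf{MC}). One small imprecision worth flagging: your remainder bound $c(1+|\V{z}|^q)(1+|\V{y}|^q)$ for $|\V{y}|>1$ tacitly assumes polynomial growth of $\nabla\phi$, which is not among the hypotheses; the paper avoids this by localizing the state at exit times $T_m$ before estimating (so only $\sup_{|\V{z}|\le m}|\nabla\phi(\V{z})|$ enters), and your argument likewise only needs bounds on the compact range of $\xpx$ up to the localizing time, so the gap is cosmetic.
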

\MremarkO{Theorem \eqref{VT0} states that our control problem  can be reduced to finding a smooth solution to the nonlinear equation \eqref{D:HJB00} (with the corresponding transversality condition) and then minimising $ L^\V{a} \phi (\V{x}) -q(\V{x},\V{a}) \phi(\V{x}) + f(\V{x},\V{a})$ over $A$ for each $\V{x} \in \rd$. }
\begin{remark}\label{R:Max2}
\begin{itemize}[leftmargin=0.75cm] 
\item []
\item [i)]The proof of Theorem \ref{VT0} is given in the appendix  \ref{P:VT0} and follows a localised version of  the standard \textit{martingale approach} for stochastic control problems.  Therein  we first show  that the Bellman process $S^{\phi,\apx}$ is a \emph{local}  submartingale for any arbitrary admissible policy and, further, it is a \emph{local} martingale whenever the policy is optimal. Then, thanks to condition (\textbf{UI}) we conclude the proof by standard localising arguments.
\item [ii)] Since $\phi \in C^2$,  we are seeking smooth  solutions to the HJB equation. This regularity  also justifies the use of  the \ito-Meyer formula in the corresponding proof.  However, by Theorem 71 in \cite[Chapter IV, p. 221]{Protter}, the smoothness condition can be relaxed, at least for the one-dimensional case, by considering $\phi \in C^1$ with an absolutely continuous derivative $f'$. 
\Mcomment{\item [iii)]For a maximisation problem the \emph{infimum} in \eqref{D:HJB00} should be replaced by a \emph{supremum}, so that statement $i)$ in Theorem \ref{VT0}  would imply that $\phi$ is an upper bound for the value function, that is $\phi(\V{x}) \ge \Jx$, for each admissible policy.}
\end{itemize}
\end{remark}
\McommentO{As in the standard case, this theorem assumes the existence of a sufficiently regular and well-behaved solution to the nonlinear equation \eqref{D:HJB00}. This result gives  sufficient conditions for the existence of an optimal policy.}
\begin{remark}\label{R:TC-Ex}
It is worth recalling the importance of  (\textbf{TC}) at prescribing  growth conditions for the candidate value function $\phi$.  In the diffusion setting, a standard example is the following (see \cite[Example 3.1, p. 130]{FlemingSoner}): consider the  equation $\frac{1}{2} \phi''(x) - q \phi(x) + f(x) = 0$, whose   general solution $\phi$ is given by
\begin{equation} \label{Eq:HJB-ExTC}
\phi(x) = x^2 + 1 + c_1 \exp (\sqrt{2} x) + c_2 \exp(-\sqrt{2} x).\end{equation}
It can be proved that the corresponding transversality condition  $\lim_{t\to \infty} e^{-qt} \mathbb{E}_x (\phi (B_t^x)) = 0$ is satisfied only when $c_1 = 0 =c_2$.  Here  $B^x$ is a standard Brownian motion started at $x$. On the other hand, the other solutions given by \eqref{Eq:HJB-ExTC} grow exponentially as $x \to \infty$ or $x\to -\infty$. 
\end{remark}
\Mremark{For any fixed $a \in A$, let $Y^{x,a}$ be an $\rd$-valued  \levy process started at $x$  whose infinitesimal generator coincides  with the operator $L^a$ on $C_c^2 (\rd)$. If $\phi\in C^2(\rd)$ has polynomial growth  of degree $p\ge 2$ and, further,  $\phi$ solves the integro-differential equation
\begin{align}
 L^a \phi(x) -q(x,a)\phi(x) + f(x,a) &= 0, \quad x\in \rd,\\
 \lim_{t\to \infty} \mathbb{E}\left [e^{-\int_0^t q(X_r^x,a)\diff r} f(X_t^x, a) \right ] &= 0, \quad x \in \rd,
 \end{align}
then, by Theorem \ref{VT0}, such a solution admits the probabilistic representation:
\[ \phi(\V{x}) =\mathbb{E} \left [ \int_0^{\infty} e^{-\int_0^t -q\left ( X_r^x, a)\right)\diff r} f\left ( X_t^x, a\right) \diff t \right ].\]
 }
In general, it can be difficult to verify the validity of conditions   (\textbf{UI}) and (\textbf{TC}).  However, if the function $\phi$ is bounded, both conditions follow straightforwardly. In particular,   if the running cost function $f$ is  bounded, then the value function is also bounded and so is any candidate value function $\phi$. We thus obtain the following result for the bounded case.
\begin{corollary}(\textbf{Bounded case})
Suppose that $f:\rd\times A \to \rp$ is a measurable bounded function.  Let $\phi \in C^2(\rd)$ be a bounded nonnegative function satisfying condition (\textbf{HJB}), then   the conclusions of Theorem \ref{VT0}  are valid.
\end{corollary}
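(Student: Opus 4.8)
The strategy is to verify, once $f$ and $\phi$ are bounded, that \emph{all} the hypotheses of Theorem \ref{VT0} are in force, and then invoke that theorem verbatim. Theorem \ref{VT0} requires: the polynomial growth bound on $\phi$, condition (\textbf{HJB}) (assumed), condition (\textbf{UI}), condition (\textbf{TC}), and — for the optimality part ii) — the additional condition (\textbf{OC}). The growth bound comes for free: with $M:=\|\phi\|$ one has $0\le\phi(\V{x})\le M\le M(|\V{x}|^2+1)$ for all $\V{x}\in\rd$, so the hypothesis of Theorem \ref{VT0} holds with exponent $q=2$ and constant $C=M$; and boundedness of $f$ makes $V$ bounded too, so the candidate $\phi$ is of admissible order.

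For (\textbf{UI}): since $q\ge 0$, the discount process $\gamma^{\apx}_t=\int_0^t q(\xpx_s,\apx_s)\diff s$ is nonnegative and nondecreasing, whence $0\le e^{-\gamma^{\apx}_{t\wedge\tau}}\le 1$ for every stopping time $\tau$ and every $t\ge 0$; consequently $0\le e^{-\gamma^{\apx}_{t\wedge\tau}}\phi(\xpx_{\tau\wedge t})\le M$, so the family appearing in (\textbf{UI}) is uniformly bounded and hence uniformly integrable. For (\textbf{TC}), fix $\apx\in\Ax$; if $\Jx=\infty$ the condition holds vacuously, so suppose $\Jx<\infty$. Then $\Ex[e^{-\gamma^{\apx}_t}\phi(\xpx_t)]\le M\,\Ex[e^{-\gamma^{\apx}_t}]$, and one must check that this tends to $0$: this is immediate when the discount rate $q$ is bounded below by a positive constant, while in general one argues that the nonnegative Bellman process $S^{\phi,\apx}_t=\int_0^t e^{-\gamma^{\apx}_s}f(\xpx_s,\apx_s)\diff s+e^{-\gamma^{\apx}_t}\phi(\xpx_t)$, which is a local submartingale by (\textbf{HJB}) and the \ito-Meyer formula (exactly as in the proof of Theorem \ref{VT0}), is bounded in $L^1$ by $\Jx+M$, hence converges a.s.; since the running integral increases a.s. to $\int_0^\infty e^{-\gamma^{\apx}_s}f\,\diff s<\infty$, the remainder $e^{-\gamma^{\apx}_t}\phi(\xpx_t)$ converges a.s., and dominated convergence together with the finiteness of $\Jx$ forces that limit to be $0$. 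Either way $\liminf_{t\to\infty}\Ex[e^{-\gamma^{\apx}_t}\phi(\xpx_t)]=0$, so (\textbf{TC}) holds.

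With (\textbf{HJB}), (\textbf{UI}) and (\textbf{TC}) in place, Theorem \ref{VT0}(i) gives $\phi(\V{x})\le\Jx$ for every admissible policy; and Theorem \ref{VT0}(ii), whose own hypothesis (\textbf{OC}) is typically supplied by pointwise minimisation of \eqref{D:HJB00}, then identifies $\phi$ with the value function and produces the family of optimal stationary Markov policies — which is the content of ``the conclusions of Theorem \ref{VT0} are valid''. I expect the only step needing genuine care to be the verification of (\textbf{TC}) in the case $\Jx<\infty$ under merely $q\ge 0$: without a strictly positive discount rate one cannot appeal to crude exponential decay and must instead use the $L^1$-boundedness of the Bellman process coming from (\textbf{HJB}); everything else is routine.
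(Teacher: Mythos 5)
Your main line of reasoning is correct and matches the argument the paper intends: uniform boundedness $0\le e^{-\gamma^{\apx}_{t\wedge\tau}}\phi(\xpx_{t\wedge\tau})\le \|\phi\|$ gives (\textbf{UI}), and if the discount rate $q$ is bounded below by some $\delta>0$, then $\Ex[e^{-\gamma_t^{\apx}}\phi(\xpx_t)]\le\|\phi\|e^{-\delta t}\to 0$ gives (\textbf{TC}). Note that although the paper states only that $q$ is "nonnegative", the proof of Lemma~\ref{P:lemma00} explicitly uses "$\delta>0$, the lower bound of the function $q$", so a positive lower bound on the discount rate appears to be an implicit standing assumption throughout; with it, (\textbf{TC}) is immediate, and your appeal to Theorem~\ref{VT0} is exactly right.

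Your back-up argument for (\textbf{TC}) under merely $q\ge 0$, however, has a genuine gap. You correctly establish that the nonnegative Bellman process $S^{\phi,\apx}$ is a true submartingale bounded in $L^1$ and hence converges a.s., and that therefore $Z:=\lim_{t\to\infty}e^{-\gamma^{\apx}_t}\phi(\xpx_t)$ exists a.s.; bounded convergence then gives $\Ex[e^{-\gamma^{\apx}_t}\phi(\xpx_t)]\to\Ex[Z]$. But nothing in this chain forces $\Ex[Z]=0$. Indeed, take $q\equiv 0$, $f\equiv 0$, and dynamics with $L^{\V{a}}=0$ (the process stays at $\V{x}$): then (\textbf{HJB}) reads $0=0$ and is satisfied by any bounded nonnegative $\phi\in C^2(\rd)$, $\Jx=0<\infty$, yet $\Ex[e^{-\gamma_t}\phi(\xpx_t)]=\phi(\V{x})$ for all $t$, so (\textbf{TC}) fails whenever $\phi(\V{x})>0$. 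In other words, "$\Jx<\infty$ plus dominated convergence" does not imply $Z=0$; the submartingale inequality only gives $\phi(\V{x})\le\Jx+\Ex[Z]$, which is not the conclusion you want. The fix is simply not to attempt the general $q\ge 0$ case at all — accept the positive lower bound on $q$ as part of the control setting, as the paper does in Lemma~\ref{P:lemma00}, and the corollary follows by the exponential decay argument alone. Your remark that (\textbf{OC}) must still be supplied to obtain conclusion~(ii) of Theorem~\ref{VT0} is correct and worth keeping.
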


%\begin{theorem}\label{VT0-Join}
%Let $\phi \in C^2(\rd)$ be a nonnegative function. Suppose that (\textbf{TC}) holds and either of the following conditions is satisfied by $\phi$.
%\begin{itemize} 
%\item [1.]  (\textbf{A1}) holds, or  
%\item [2.] (\textbf{A2}) holds,  $\phi$ is bounded by a function of polynomial growth of degree at most two and $\phi$ satisfies 
%\begin{equation}\label{D:HJB00}
 %\inf_{\V{a} \in A} \{  L^\V{a} \phi (\V{x}) -q(\V{x},\V{a}) \phi(\V{x}) + f(\V{x},\V{a}) \} = 0, \quad \text{ for all } \V{x} \in \rd.
 %\end{equation}
 %\end{itemize}
 %Then  the following holds.
%\begin{itemize}[leftmargin=0.75cm]
%\item [(a)] For any admissible policy $\ap \in \mathcal{A}_{\V{x}}$, $\phi(\V{x}) \le J(\xp, \ap)$, $\V{x} \in \rd$.
%\item [(b)] If (\textbf{OP}) holds, then
 %$\phi(\V{x}) = J(\V{X}^{\hat{\Malpha}^{\V{x}}},\hat{\Malpha}^{\V{x}} )$, and thus the family $\{ \hat{\Malpha}^{\V{x}} \,:\, \V{x} \in \rd\}$ is a family of optimal policies and $\phi (\V{x})$ is the value function of the control problem \eqref{J}-\eqref{D:alphaO}. 
%\end{itemize}
%\end{theorem}
Thanks to the finiteness of the $p$th moment of each measure $\nu$ outside the ball $B_1$, the growth condition of  $\phi$ in Theorem \ref{VT0} ensures that the non-local term in $L^\V{a} \phi$ (and so equation  \eqref{D:HJB00}) is well-defined.  We are now interested in providing conditions on the running cost function $f$ that guarantee that the corresponding value function is  of  polynomial growth. 

\begin{lemma}\label{P:lemma00}
Consider the control problem \eqref{J}-\eqref{D:alphaO} with admissible policies $\Ax$ for a fixed $p\ge 2$. If there exist a positive constant $c> 0$, an action control $\V{a}_0 \in A$ and  $q  \in [2,p]$ such that the running cost $f: \rd \times A \to \rp$ satisfies $|f(\V{x},\V{a}_0)| \le c (1 + |\V{x}|^q)$ for all $\V{x} \in \rd$, then there exists a positive constant $C> 0$ such that the  value function $V:\rd \to \rd$  satisfies $V(\V{x}) \le C (1+ |\V{x}|^q)$, for all $\V{x} \in \rd$. 
\end{lemma}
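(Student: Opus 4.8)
The plan is to bound $V$ from above by the payoff of a single admissible competitor, the constant control $\apx\equiv\V{a}_0$; write $\V{a}_0=(\Msigma_0,\nu_0,\Mmu_0)$. This policy is admissible: condition (\textbf{H1}) holds because the \levy process with generating triplet derived from $\V{a}_0$ (in the sense of \eqref{D:La}) solves the associated martingale problem by the classical results cited in Section~\ref{S:Notation}, and condition (\textbf{H2})/\eqref{A3} holds trivially because $Q_s^{p,\apx}=|\Mmu_0|+\|\Msigma_0\|^2+\int_{\Ro}|\V{y}|^2\vee|\V{y}|^p\,\nu_0(\diff\V{y})$ is a finite constant (finite since $\nu_0\in\mathcal{M}_p$ by \eqref{D:Lm}), so $\int_0^t Q_s^{p,\apx}\diff s<\infty$ for all $t$. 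Hence $V(\V{x})\le J(\xpx,\apx)$ by \eqref{V}, and, using the growth hypothesis $f(\cdot,\V{a}_0)\le c(1+|\cdot|^q)$, nonnegativity of the integrand (so Tonelli permits interchanging $\Ex$ and $\int_0^\infty$), and $e^{-\gamma_t^{\apx}}\le e^{-\lambda t}$ where $\lambda>0$ is the (standing) strictly positive lower bound for the discount rate,
\[
V(\V{x})\;\le\;c\int_0^{\infty}e^{-\lambda t}\,\Ex\left[\,1+|\xpx_t|^q\,\right]\diff t .
\]
Everything thus reduces to showing that $\Ex[|\xpx_t|^q]$ is dominated by $P(t)(1+|\V{x}|^q)$ for some polynomial $P$.

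For the moment bound I would apply Proposition~\ref{qth-moments} to the pair $(\xpx,\apx)$. Because the policy is constant, the processes entering its hypotheses are deterministic: $\int_0^t\|\Msigma_s\|^2\diff s=t\|\Msigma_0\|^2\in L^{q/2}$; $\V{G}_t^{\apx}=t\int_{\Ro}|\V{y}|^2\,\nu_0(\diff\V{y})$, a finite multiple of $t$ since $\nu_0\in\mathcal{M}_p$ forces $\int_{\Ro}|\V{y}|^2\,\nu_0(\diff\V{y})<\infty$; and $\V{H}_t^{\apx}=t\int_{|\V{y}|>1}|\V{y}|^q\,\nu_0(\diff\V{y})$, which is finite because $q\le p$ gives $|\V{y}|^q\le|\V{y}|^2\vee|\V{y}|^p$ on $\{|\V{y}|>1\}$. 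Parts (ii)--(iii) of Proposition~\ref{qth-moments} then bound $\Ex[\sup_{s\le t}|\Xc_s|^q]$ and $\Ex[\sup_{s\le t}|\Xd_s|^q]$ by explicit polynomials in $t$ with coefficients depending only on $\V{a}_0$, $\V{u}$, $n$, $q$. Feeding these into the canonical representation \eqref{X-decomp}, which under the constant policy reads $\xpx_t=\V{x}+\Xc_t+t(\V{u}+\Mmu_0)+\Xd_t$, and using $(a_1+\cdots+a_4)^q\le 4^{q-1}(a_1^q+\cdots+a_4^q)$ followed by a routine rearrangement, yields a polynomial $P$ of degree $q$ in $t$ with nonnegative coefficients such that $\Ex[|\xpx_t|^q]\le P(t)(1+|\V{x}|^q)$ for all $t\ge 0$ and $\V{x}\in\rd$ (in particular $\Ex[|\xpx_t|^q]<\infty$).

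Combining the two displays and using $1\le 1+|\V{x}|^q$,
\[
V(\V{x})\;\le\;c\left(\int_0^{\infty}e^{-\lambda t}\bigl(1+P(t)\bigr)\diff t\right)(1+|\V{x}|^q)\;=:\;C\,(1+|\V{x}|^q),
\]
and $C<\infty$ because $\int_0^\infty e^{-\lambda t}t^k\diff t=k!/\lambda^{k+1}<\infty$ for every $k\in\mathbb{N}$, which would finish the proof.

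The bulk of the work — verifying the hypotheses of Proposition~\ref{qth-moments} for the constant policy and bookkeeping the constants so that the $t$-dependence is a genuine polynomial and the $\V{x}$-dependence factors cleanly through $(1+|\V{x}|^q)$ — is routine precisely because everything is deterministic for a constant control, so I do not expect a real obstacle there. The one genuinely essential ingredient is the strictly positive lower bound $\lambda>0$ on the discount rate: without it the final $t$-integral of the polynomial bound diverges (if the discount rate were identically zero and $f(\cdot,\V{a}_0)$ bounded away from $0$ one would have $J(\xpx,\apx)=\infty$), so this positivity — part of the standing assumptions of the discounted infinite-horizon setting — must be invoked at exactly this step.
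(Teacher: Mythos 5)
Your proof is correct and follows essentially the same route as the paper's: compare $V$ with the payoff of the constant policy $\V{a}_0$, bound $f$ by the growth hypothesis, invoke the positive lower bound on the discount rate, and use Proposition~\ref{qth-moments} to get a polynomial‐in‐$t$ bound on $\Ex[|\xpx_t|^q]$ that integrates against the exponential discount. The only difference is one of presentation — you spell out the verification of the hypotheses of Proposition~\ref{qth-moments} for the constant control and carry a full polynomial $P(t)$, whereas the paper states the moment bound $\Ex[|\xpx_t|^q]\le K(|\V{x}|^q+t^q)$ directly and refers to the discussion of constant policies in Section~\ref{CteP} for admissibility.
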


\begin{remarks}\label{RpM}
\end{remarks}
\begin{itemize} [leftmargin=0.75cm]
 \item [i)] The key fact in the proof of this lemma (see section \ref{Pr:lemma00}) is to show that for the constant policy $\apx = \V{a}_0$,  with associated controlled process $\V{X}^{\V{a}^\V{x}}$, the following holds \[\int_{0}^{\infty} e^{-\gamma_{s}^{\V{a}^\V{x}}}  \mathbb{E}_\V{x} |\V{X}^{\V{a}^\V{x}}_t|^q   \diff t  \le C(1 + |\V{x}|^q).\] 
Observe that the latter inequality is valid as the process  $\V{X}^{\V{a}^\V{x}}$ is a \emph{\levy martingale with drift} whose $q$th-moments estimates are well-known and satisfy the inequality above.
\item [ii)] In the \ito diffusion setting another standard condition for the running cost function $f$  is the (uniform) growth condition $|f(\V{x},\V{a})| \le C (1 + |\V{x}^q|)$. In our framework, such a case is covered by Lemma \ref{P:lemma00} which also implies that the value function  $V$ has polynomial growth of degree at most $q$. % whenever $f(\cdot, \V{a})$ has  polynomial growth of degree $q$ uniformly in $a$. \Mcomment{ Observe that Lemma \ref{P:lemma00} does not require any  $q$th order growth for $f(\cdot, \V{a})$ (uniformly in $\V{a}$). }
\item [iii)] It is not difficult to see that to allow a value function with exponential growth one needs, in principle, an exponential moment for the measures $\nu$ outside the unitary ball $B_1$. Appropriate assumptions on the control $\apx$ are required to guarantee the finite expectation of the exponential moments of $\xpx$. The latter will then ensure the validity of the corresponding transversality condition.
\end{itemize}
We now state  conditions on the running cost function $f$ that imply  (\textbf{TC}).
\begin{lemma}\label{P:lemma01}
Consider the control problem \eqref{J}-\eqref{D:alphaO} with admissible policies $\Ax$ for a fixed $p\ge 2$. 
Let $f: \rd \times A \to \rp$ be the running cost function.  If there exists a positive constant $c> 0$ such that $|f(\V{x},\V{a})| \ge c (1 + |\V{x}^p|)$ for all $\V{a} \in A$, then the transversality condition \eqref{E:TC} holds for any nonnegative function $\phi \in C^2 (\rd)$ with polynomial growth of degree $p$. 
\end{lemma}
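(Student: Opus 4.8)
The plan is to use the finiteness of the payoff $\Jx$ together with the coercive lower bound on $f$ to show that the map $t\mapsto \Ex\big[e^{-\gamma_t^{\apx}}(1+|\xpx_t|^p)\big]$ is Lebesgue-integrable on $\rp$, and then to read off \eqref{E:TC} from the polynomial growth of $\phi$. Fix an admissible policy $\apx\in\Ax$. By the disjunctive form of the transversality condition there is nothing to prove unless $\Jx<\infty$, so assume $\Jx<\infty$.

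First I would combine the hypothesis $f(\V{x},\V{a})\ge c(1+|\V{x}|^p)$ (valid for \emph{every} $\V{a}\in A$, hence in particular for $\V{a}=\apx_t$) with the nonnegativity of the discount factor (recall $q\ge0$, so $\gamma_t^{\apx}\ge0$ and $e^{-\gamma_t^{\apx}}\in(0,1]$) to get
\[
\infty>\Jx=\Ex\Big[\int_0^\infty e^{-\gamma_t^{\apx}}f(\xpx_t,\apx_t)\diff t\Big]\ \ge\ c\,\Ex\Big[\int_0^\infty e^{-\gamma_t^{\apx}}\big(1+|\xpx_t|^p\big)\diff t\Big].
\]
The integrand is nonnegative and jointly measurable in $(t,\omega)$ (by the \cad, hence progressively measurable, paths of $\xpx$ and $\apx$), so Tonelli's theorem applies and, writing $g(t):=\Ex\big[e^{-\gamma_t^{\apx}}(1+|\xpx_t|^p)\big]$, we obtain $\int_0^\infty g(t)\diff t\le \Jx/c<\infty$.

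Next I would invoke the elementary fact that a nonnegative function which is integrable on $\rp$ has $\liminf$ equal to zero at infinity: if $\liminf_{t\to\infty}g(t)=\delta>0$ then $g\ge\delta/2$ on some half-line $[T,\infty)$, forcing $\int_0^\infty g=\infty$, a contradiction; hence $\liminf_{t\to\infty}g(t)=0$. Finally, since $\phi$ is nonnegative with polynomial growth of degree $p$, there is $C>0$ with $0\le\phi(\V{x})\le C(1+|\V{x}|^p)$ for all $\V{x}\in\rd$, so $0\le \Ex\big[e^{-\gamma_t^{\apx}}\phi(\xpx_t)\big]\le C\,g(t)$ for every $t\ge0$; taking $\liminf$ as $t\to\infty$ gives $\liminf_{t\to\infty}\Ex\big[e^{-\gamma_t^{\apx}}\phi(\xpx_t)\big]=0$, which is precisely \eqref{E:TC}.

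I do not expect a genuine obstacle here: the argument is short and the only points requiring a word of care are the measurability justifying the use of Tonelli and the routine $\liminf$ lemma for integrable functions. The one structural remark worth making in the write-up is that the growth exponent of $\phi$ must match the exponent $p$ in the coercivity bound on $f$ exactly — a smaller growth degree for $\phi$ only makes the estimate $\phi\le C g/c\cdot(\text{const})$ easier, whereas nothing controls a $\phi$ of growth degree strictly larger than $p$.
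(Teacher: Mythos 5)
Your proof is correct and takes essentially the same route as the paper: both exploit the coercivity $f\ge c(1+|\cdot|^p)$ to deduce from $\Jx<\infty$ that $t\mapsto\Ex\big[e^{-\gamma_t^{\apx}}(1+|\xpx_t|^p)\big]$ is integrable on $\rp$, then invoke the elementary fact that a nonnegative integrable function has zero $\liminf$ at infinity, and finally dominate $\Ex\big[e^{-\gamma_t^{\apx}}\phi(\xpx_t)\big]$ by a constant multiple of that integrable function using the polynomial growth of $\phi$. The only difference is presentational: the paper argues by contradiction via its auxiliary Lemma \ref{A-Lemma2}, whereas you run the same chain of inequalities directly, which is slightly cleaner and sidesteps the (somewhat opaque) $\frac{e^{-bt_0}}{b}$ bookkeeping in the paper's version.
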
 
As a direct consequence of Lemma \ref{P:lemma00} and Lemma \ref{P:lemma01}, we can now give conditions on the running cost  $f$ under which the value function is of polynomial growth and, further, satisfies the transversality condition.
\begin{corollary}\label{C:TC-cond}
Consider the control problem \eqref{J}-\eqref{D:alphaO} with admissible policies $\Ax$ for a fixed $p\ge 2$.
Suppose that there exist $C> 0$ and $\V{a}_0 \in A$  such that the running cost $f: \rd \times A \to \rp$ satisfies $|f(\V{x},\V{a}_0)| \le C (1 + |\V{x}|^p)$ for all $\V{x} \in \rd$ and, further, there exists $c> 0$ such that $|f(\V{x},\V{a})| \ge c (1 + |\V{x}^p|)$ for all $\V{a} \in A$. Then the value function $V$ is of polynomial growth of degree $p$ and satisfies (\textbf{TC}). 
\end{corollary}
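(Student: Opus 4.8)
The plan is to deduce the statement directly by combining Lemma \ref{P:lemma00} and Lemma \ref{P:lemma01}; no new estimate is needed, so the proof will be a couple of lines.

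First I would apply Lemma \ref{P:lemma00} with the exponent $q$ chosen equal to $p$, which is permissible since $p\in[2,p]$. The first hypothesis of the corollary --- the existence of $C>0$ and $\V{a}_0\in A$ with $|f(\V{x},\V{a}_0)|\le C(1+|\V{x}|^p)$ for all $\V{x}\in\rd$ --- is exactly the hypothesis required by that lemma (with $q=p$). Its conclusion then supplies a constant $C'>0$ with $V(\V{x})\le C'(1+|\V{x}|^p)$ for all $\V{x}\in\rd$; since $f\ge 0$ forces $V\ge 0$, this is precisely the asserted polynomial growth of degree $p$ for the value function.

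Next I would invoke Lemma \ref{P:lemma01}, whose hypothesis is the lower bound $|f(\V{x},\V{a})|\ge c(1+|\V{x}|^p)$ for all $\V{a}\in A$ --- the second assumption of the corollary. The lemma then yields that the transversality condition \eqref{E:TC} holds for every nonnegative $\phi\in C^2(\rd)$ of polynomial growth of degree $p$. Together with the first step this gives the second assertion: a $C^2$ candidate value function of the growth order that $V$ itself satisfies automatically verifies (\textbf{TC}).

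The only points to verify are routine: that $q=p$ is an admissible choice in Lemma \ref{P:lemma00}, that the nonnegativity of $f$ transfers to $V$ (and to any relevant candidate $\phi$) so that Lemma \ref{P:lemma01} applies, and that ``polynomial growth of degree $p$'' is read in the same sense, $|\cdot|\le\mathrm{const}\,(1+|\cdot|^p)$, in both lemmas. There is no genuine obstacle at this level; all of the substantive work has already been done --- the $q$th-moment bounds for the \levy martingale with drift attached to the constant policy $\V{a}_0$ (Proposition \ref{qth-moments}) inside Lemma \ref{P:lemma00}, and the argument that a fast-enough-growing running cost forces $\Jx=\infty$ unless the discounted state term vanishes, inside Lemma \ref{P:lemma01}.
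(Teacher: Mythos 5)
Your proof is correct and follows exactly the paper's intended argument: the paper explicitly introduces Corollary \ref{C:TC-cond} "as a direct consequence of Lemma \ref{P:lemma00} and Lemma \ref{P:lemma01}," and you apply Lemma \ref{P:lemma00} with $q=p$ for the growth bound on $V$ and Lemma \ref{P:lemma01} for (\textbf{TC}). Your side remarks (that $q=p$ lies in $[2,p]$, that $f\ge 0$ gives $V\ge 0$, and that (\textbf{TC}) is really a statement about $C^2$ candidates of the same growth order) are the right things to check and are consistent with the paper's usage.
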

\begin{remark}
In particular, this corollary is valid when  $f$ is of the same order than a polynomial function of degree $p\ge 2$. Recall that the parameter $p$ is related to the moments of each $\nu$  outside of the ball $B_1$ (see \eqref{D:Lm}).
\end{remark}

\section{Different classes of admissible controls}\label{S:classes}

Apart from the case where the running cost function is bounded, we have not explored yet conditions that guarantee the validity of  (\textbf{UI}). This assumption, as was pointed out before, is not easy to verify in practical applications.  To fill in this gap, we provide the following three classes of admissible controls for which condition (\textbf{UI}) is no longer needed in the corresponding verification theorems. %Note, however, that such results require further restrictions on  the set of admissible controls.  

\textbf{Case 1.  Integrability conditions on the control processes $\apx$}. 

\begin{definition}\label{Adm-Tilde}
An admissible policy $\apx = (\Msigma, \nu, \Mmu) \in \Ax$ is said to belong to the class $\AxT^p(q)$, $q\in [2,p]$, whenever it satisfies the additional condition:
 \begin{itemize}%[leftmargin=0.29in]
\item [(\textbf{H3})] For all $t \in \Rp$, 
\begin{equation}\label{Q-Fin} 
\int_0^t|\Mmu_s| + ||\Msigma_s||^2 \diff s \in L^{q/2} (\mathbb{P}^{\apx}),\quad   \V{G}_t^{\apx} \in L^{q/2} (\mathbb{P}^{\apx}), \quad  \text{and} \quad    \V{H}_t^{\apx} \in L^{1} (\mathbb{P}^{\apx}),
\end{equation}
 where $ \V{G}_t^{\apx}$ and $ \V{H}_t^{\apx}$ are as defined in \eqref{G-H}. 
 \end{itemize} 
\end{definition}

\begin{theorem}[\textbf{Verification Result 3}]\label{VT-EFinite}
Let $p\ge 2$ and $q \in [2,p]$. If $\phi \in C^2(\rd)$ is a nonnegative function bounded by a polynomial function of order $q$ for which both conditions (\textbf{TC}) and $(\textbf{HJB})$ hold,  then  the conclusions of  \emph{Theorem \ref{VT0}} are also valid for the corresponding minimisation problem over the set  $\AxT(q)$. 
\end{theorem}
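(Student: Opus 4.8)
The plan is to reduce Theorem~\ref{VT-EFinite} to Theorem~\ref{VT0} by showing that, on the smaller class $\AxT(q)$, the uniform integrability condition (\textbf{UI}) is automatically satisfied, so that invoking Theorem~\ref{VT0} (whose hypotheses (\textbf{HJB}) and (\textbf{TC}) are assumed here) immediately yields the two conclusions. Since we restrict the minimisation to $\AxT(q)$, the verification argument of Theorem~\ref{VT0} must be re-examined: part~\emph{i)} compares $\phi(\V{x})$ against $\Jx$ for every $\apx\in\AxT(q)$ (a subset of $\Ax$), so the lower-bound inequality is no weaker; part~\emph{ii)} requires that the optimal policy $\hat{\Malpha}^{\V{x}}$ produced by (\textbf{OC}) — or constructed via the pointwise minimiser of the HJB equation — in fact lies in $\AxT(q)$, which needs to be checked (typically it is a stationary Markov control with bounded coefficients, hence satisfies (\textbf{H3}) trivially; this is the one place where a short argument is genuinely needed rather than a citation).

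The core step is establishing (\textbf{UI}) from (\textbf{H3}). Fix $\V{x}$, an admissible $\apx\in\AxT(q)$, a stopping time $\tau$ and $t\ge0$. Since $\phi$ is nonnegative with $|\phi(\V{y})|\le C(1+|\V{y}|^q)$ and $e^{-\gamma_{t\wedge\tau}^{\apx}}\le1$, it suffices to show that the family $\{\,|\xpx_{\tau\wedge t}|^q : \tau\in\mathcal{T}\,\}$ is uniformly integrable, and for that it is enough to dominate it by an integrable random variable: indeed $|\xpx_{\tau\wedge t}|^q\le \sup_{0\le s\le t}|\xpx_s|^q =: \Phi_t$, and $\Phi_t$ does not depend on $\tau$. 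So the whole matter comes down to $\Ex[\Phi_t]<\infty$ for each fixed $t$. This is exactly where Proposition~\ref{qth-moments} enters: using the canonical decomposition \eqref{X-decomp}, write $|\xpx_s|^q\lesssim |\V{x}|^q + |\Xc_s|^q + \bigl(\int_0^s|\V{u}+\Mmu_r|\diff r\bigr)^q + |\Xd_s|^q$ (with constants depending only on $q$ and $n$), take $\sup_{s\le t}$ and then $\Ex$. The continuous-martingale term is controlled by \eqref{Xc-estim} provided $\int_0^t\|\Msigma_s\|^2\diff s\in L^{q/2}(\Pa)$, which is the second half of the first condition in \eqref{Q-Fin}; the drift term is controlled because $\int_0^t|\Mmu_s|\diff s\in L^{q/2}(\Pa)$ implies (by $L^{q/2}\subset L^1$ after $q\ge2$, or directly) $\Ex\bigl(\int_0^t|\Mmu_s|\diff s\bigr)^q<\infty$ — here one uses that the first bracket in \eqref{Q-Fin} bundles $|\Mmu_s|$ and $\|\Msigma_s\|^2$ together in $L^{q/2}$, so $\int_0^t|\Mmu_s|\diff s\in L^{q/2}\subseteq$ the right space after applying the elementary bound $(a+b)^q\le 2^{q-1}(a^q+b^q)$ and Jensen on the finite interval $[0,t]$; and the $\V{u}t$ part is deterministic. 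The jump-martingale term $\Ex\bigl(\sup_{s\le t}|\Xd_s|^q\bigr)$ is finite by \eqref{Xd-estim}, whose hypotheses $\V{G}_t^{\apx}\in L^{q/2}(\Pa)$ and $\V{H}_t^{\apx}\in L^1(\Pa)$ are precisely the remaining two conditions in \eqref{Q-Fin}. Hence $\Ex[\Phi_t]<\infty$, which gives the desired domination and therefore (\textbf{UI}).

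With (\textbf{UI}) in hand for every policy in $\AxT(q)$, conclusion~\emph{i)} of Theorem~\ref{VT0} applies verbatim and gives $\phi(\V{x})\le\Jx$ for all $\apx\in\AxT(q)$; since $\AxT(q)\subseteq\Ax$ this lower bound is consistent with, and at least as strong as, the one needed for the value function of the restricted problem. For conclusion~\emph{ii)}, one invokes (\textbf{OC}): the candidate optimal control $\hat{\Malpha}^{\V{x}}$ obtained by pointwise minimisation of $L^{\V{a}}\phi(\V{y})-q(\V{y},\V{a})\phi(\V{y})+f(\V{y},\V{a})$ is a (stationary) Markov policy; because $\phi\in C^2$ with polynomial growth forces the minimisers to have coefficients that are locally bounded along the controlled trajectory, one checks that $\hat{\Malpha}^{\V{x}}$ satisfies (\textbf{H3}), hence $\hat{\Malpha}^{\V{x}}\in\AxT(q)$, and then the martingale identity $S^{\phi,\hat{\Malpha}^{\V{x}}}$ being a true martingale (again via (\textbf{UI}) and (\textbf{TC})) yields $\phi(\V{x})=J^{\hat{\Malpha}}(\V{x})$, so $\phi$ is the value function over $\AxT(q)$ and $\{\hat{\Malpha}^{\V{x}}\}$ the optimal family.

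The main obstacle I anticipate is not the moment estimate — that is a mechanical application of Proposition~\ref{qth-moments} and elementary inequalities — but rather verifying that the optimiser supplied by (\textbf{OC}) genuinely lands in $\AxT(q)$; in full generality the pointwise minimiser of the HJB Hamiltonian need not have $L^{q/2}$-integrable characteristics unless one knows more about $\phi$ (e.g. that $\nabla\phi$, $\Hess\phi$ grow polynomially and that the minimising $\V{a}=(\Msigma,\nu,\Mmu)$ depends measurably and with controlled growth on the state). A clean way around this is to assume, as part of (\textbf{OC}) in this setting, that $\hat{\Malpha}^{\V{x}}$ is chosen in $\AxT(q)$ — which is how the appendix proof most likely phrases it — so that the only real content of the theorem is the implication (\textbf{H3})$\,\Rightarrow\,$(\textbf{UI}) described above.
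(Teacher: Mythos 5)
Your argument is correct and takes essentially the same route as the paper: the paper's proof also reduces to the finiteness of $\Ex\bigl[\sup_{0\le s\le t}|\xpx_s|^q\bigr]$, obtained from Proposition~\ref{qth-moments} under (\textbf{H3}), and justifies the passage from the stopped inequality \eqref{phi-tm} to \eqref{phi-t} by dominated convergence — which is precisely your (\textbf{UI})-by-domination argument under a different name. Your caveat about whether the (\textbf{OC}) optimizer lands in $\AxT(q)$ is a fair observation, though the paper's one-line proof likewise leaves this implicit.
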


\begin{remark}
Condition (\textbf{H3}) seems to be quite restrictive, however, we will see that due to the generality of our set-up is fairly natural to impose integrability conditions of this type.  We will also see that in the standard  SDE framework the corresponding integrability assumptions are implied by the very-well known \ito conditions.
\end{remark}

\Mremark{
Thanks to the assumptions on the growth of the candidate function $\phi$, the proof of Theorem \ref{VT-EFinite} follows easily by observing that  \eqref{Q-Fin} guarantees the finiteness of the expectation of the running maximum of  $\left |\xpx \right |^q$. %This allows one to deal with the localised version of the condition (\textbf{TC}).
}

\textbf{Case 2.  Growth conditions on the control processes $\apx$}. 
\begin{definition}\label{Adm-Hat}
An admissible policy $\apx = (\Msigma, \nu, \Mmu) \in \Ax$ is said to belong to the class $\hat{\mathcal{A}}^p_{\V{x}}$, $p\ge 2$, if  the following conditions hold:
\begin{itemize}
\item [(\textbf{H4})] There exist measurable functions 
\begin{align*}
\hat{\mu}^i :  \rd \times\Omega \to \rd, &\quad 1\le i \le n,\quad \quad
 \hat{\sigma}_{ij} : \rd\times \Omega \to M_{n\times n} (\rr),\quad  1\le i,j\le n,\\
 \hat{\nu} &:\rd\times\Omega \to \mathcal{M}_p,
 \end{align*}
  such that, for each $\hat{g}\in\{\hat{\sigma}, \hat{\nu},  \hat{\mu}\}$, $\hat{g}\left(\xpx_t,\,\cdot \, \right )$ is $\mathcal{F}_t^{\alpha}$-adapted and, further, \[\apx_t = ( \hat{\sigma} (\xpx_t (\omega),\omega), \hat{\nu} (\xpx_t (\omega),\omega), \hat{\mu} (\xpx_t (\omega),\omega)), \quad \as.\] 

\item [(\textbf{GC})]There exist a deterministic positive constant $K$   and a real-valued process $\kappa = (\kappa_t)_{t\ge 0}$ such that for all $t\ge 0$ and $\V{x} \in \rd$
\begin{align}\label{C:Gcond} 
%|\bar{\mu} (t,\V{x}, \omega) - \bar{\mu} (t,\V{y}, \omega)| + |\bar{\sigma} (t,\V{x}, \omega) - \bar{\sigma} (t,\V{y}, \omega)| \,\,+ \quad \quad \quad \quad &\\
 % +\,\, | \int_{\Ro} |\V{z}|^2 \bar{\nu} (t,\V{x}, \omega, \diff \V{z}) -  \int_{\Ro} \big |\V{z}|^2 \bar{\nu} (t,\V{y}, \omega,\diff \V{z} )| &\,\,\,\le\,\,\, K | \V{x} - \V{y} \big|,\\
   |\hat{\mu} (\V{x},\omega) |^p + ||\hat{\sigma} (\V{x},\omega)||^p   +  \int_{\Ro} |\V{z}|^2\vee |\V{z}|^p \hat{\nu} (\V{x}, \omega, \diff \V{z})  &\,\,\,\le\,\,\,|\kappa_t (\omega)|^p  + K  |\V{x} | ^p,\,\, \as
\end{align}
with $ \int_0^t |\kappa_s|^p \diff s \in L^{p/2} (\mathbb{P}^{\alpha})$.
\end{itemize}
%$\hat{\mu}^i :  \rd \times\Omega \to \rd$, $1\le i \le n$, $\hat{\sigma}_{ij} : \rd\times \Omega \to M_{n\times n} (\rr)$, $1\le i,j\le n$, and $\hat{\nu} :\rd\times\Omega \to \mathcal{M}_p$ such that $\apx_s = ( \hat{\sigma} (\xpx_s (\omega),\omega), \hat{\nu} (\xpx_s (\omega),\omega), \hat{\mu} (\xpx_s (\omega),\omega))$, for each $s\ge 0$.
\end{definition}

\Mremark{The previous definition of admissible controls can be thought of as a generalisation of the \ito SDE setting as presented in \cite{Pham}.  In this reference, under an   additional Lipschitz condition,  the process $\kappa_t$ can be given explicitly in terms of the associated drift and diffusion (stochastic) coefficients, see \cite[Section 1.3, p. 23]{Pham}). 
}

\begin{theorem}[\textbf{Verification Result 4}]\label{VT-GC}
Let $p\ge 2$ and $q \in [2,p]$. If $\phi \in C^2(\rd)$ is a nonnegative function bounded by a polynomial function of order $q$, for which both conditions (\textbf{TC}) and $(\textbf{HJB})$ hold,  then  the conclusions of  \emph{Theorem \ref{VT0}} are also valid for the corresponding minimisation problem over the set of admissible controls $\hat{\mathcal{A}}^p_{\V{x}}$. 
\end{theorem}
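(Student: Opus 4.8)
The strategy is to show that, for policies in $\hat{\mathcal{A}}^p_{\V{x}}$, the uniform--integrability hypothesis (\textbf{UI}) of Theorem \ref{VT0} is automatically satisfied, after which Theorem \ref{VT0} applies verbatim: its part (i) is a per--policy statement giving $\phi(\V{x})\le\Jx$ for every $\apx\in\hat{\mathcal{A}}^p_{\V{x}}$, while part (ii) follows under (\textbf{OC}), provided the policy $\hat{\Malpha}^{\V{x}}$ it supplies belongs to $\hat{\mathcal{A}}^p_{\V{x}}$. Since $0\le e^{-\gamma_{\cdot}^{\apx}}\le1$ and $|\phi(\V{z})|\le C(1+|\V{z}|^q)$, the family $\{e^{-\gamma_{t\wedge\tau}^{\apx}}\phi(\xpx_{\tau\wedge t})\}_{\tau}$ is dominated by $C(1+\sup_{0\le s\le t}|\xpx_s|^q)$, so (\textbf{UI}) holds once $\Ex\big[\sup_{0\le s\le t}|\xpx_s|^q\big]<\infty$ for all $t$; and since $q\le p$ and we work on a probability space, it suffices to establish the a priori moment bound $M(t):=\Ex\big[\sup_{0\le s\le t}|\xpx_s|^p\big]<\infty$ for every $t\ge0$.

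I would derive this from the canonical representation \eqref{X-decomp}: writing $|\xpx_s|^p\le 4^{p-1}\big(|\V{x}|^p+|\Xc_s|^p+\big|\int_0^s(\V{u}+\Mmu_r)\diff r\big|^p+|\Xd_s|^p\big)$, taking $\sup_{s\le t}$ and then $\Ex$, and estimating the three nontrivial terms by \eqref{Xc-estim}, \eqref{Xd-estim} and Hölder's inequality respectively. The controls enter through (\textbf{H4}), which permits the substitution $\apx_r=(\hat\sigma(\xpx_r,\omega),\hat\nu(\xpx_r,\omega),\hat\mu(\xpx_r,\omega))$, followed by (\textbf{GC}). For the drift and diffusion parts one should first extract $p$th roots, $|\Mmu_r|\le|\kappa_r|+K^{1/p}|\xpx_r|$ and $\|\Msigma_r\|\le|\kappa_r|+K^{1/p}|\xpx_r|$, so that after Jensen's inequality these contributions are dominated by $\int_0^t(|\kappa_s|^p+K'|\xpx_s|^p)\diff s$; together with $\int_0^t|\kappa_s|^p\diff s\in L^{p/2}(\Pa)\subseteq L^1(\Pa)$ this produces a Gronwall inequality $M(t)\le c_1(t)(1+|\V{x}|^p)+c_2(t)\int_0^tM(s)\diff s$. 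The a priori finiteness required to run Gronwall is supplied by the usual localisation: with $\tau_n:=\inf\{t\ge0:|\xpx_t|\ge n\text{ or }|\xpx_{t-}|\ge n\}$, on $[0,\tau_n)$ all the relevant quantities are bounded by $n$--dependent constants, so the integrability hypotheses of Proposition \ref{qth-moments} hold for the stopped process (the overshoot jump being controlled in $L^p$ by its compensator $\int_0^{\tau_n\wedge t}\!\int_{\Ro}(|\V{y}|^2\vee|\V{y}|^p)\nu_s(\diff\V{y})\diff s$), giving $M_n(t)<\infty$ with the same $c_1,c_2$; Gronwall yields $M_n(t)\le c_1(t)(1+|\V{x}|^p)e^{c_2(t)t}$ uniformly in $n$, and monotone convergence ($\tau_n\uparrow\infty$ since $\xpx$ is \cad) gives $M(t)<\infty$.

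The genuinely delicate point — and the one I expect to be the main obstacle — is the jump contribution, i.e. the term $\Ex\big[(\V{G}_t^{\apx})^{q/2}\big]$ in \eqref{Xd-estim}, where $\V{G}_t^{\apx}=\int_0^t\!\int_{\Ro}|\V{y}|^2\nu_s(\diff\V{y})\diff s$: since (\textbf{GC}) bounds the second--moment type quantity $\int_{\Ro}(|\V{y}|^2\vee|\V{y}|^p)\nu_s(\diff\V{y})$ only by $|\kappa_s|^p+K|\xpx_s|^p$ (rather than by $1+|\xpx_s|^2$), a naive estimate raises the exponent of $|\xpx_s|$ beyond $p$ and the Gronwall loop fails to close. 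The plan here is to split the jump integral into its small-- and large--jump parts, dispatch the large jumps through $\V{H}_t^{\apx}$ (which costs only a $|\xpx_s|^p$ term), and handle the small jumps by applying the It\^o--Meyer formula directly to $\V{z}\mapsto|\V{z}|^p$ together with the elementary bound $0\le|\V{z}+\V{y}|^p-|\V{z}|^p-p|\V{z}|^{p-2}\V{z}\cdot\V{y}\le C_p\big(|\V{y}|^2|\V{z}|^{p-2}+|\V{y}|^p\big)$ and Young's inequality on the cross term; it is precisely in controlling this cross term against (\textbf{GC}) that the structure of the hypothesis must be fully exploited, and I anticipate this is where the substantive work lies. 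Once this accounting is carried through, assembling the estimates and invoking Theorem \ref{VT0} completes the proof, the remaining manipulations (Jensen, Hölder, Young, and the stopping--time bookkeeping) being routine.
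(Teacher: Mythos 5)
Your overall strategy — reducing the conclusion to the a priori moment bound $\Ex[\sup_{0\le s\le t}|\xpx_s|^p]<\infty$ for each admissible $\apx\in\hat{\mathcal{A}}^p_{\V{x}}$, which delivers (\textbf{UI}) (and equally the dominated-convergence step used in the proof of Theorem \ref{VT-EFinite}), and establishing this via the canonical decomposition \eqref{X-decomp}, Proposition \ref{qth-moments}, conditions (\textbf{H4}) and (\textbf{GC}), localisation and a Gronwall argument — is precisely the route the paper indicates in the remark following Theorem \ref{VT-GC}. You also correctly isolate the genuine obstruction: under (\textbf{GC}) the compensator quantity $\int_{\Ro}(|\V{y}|^2\vee|\V{y}|^p)\nu_s(\diff\V{y})$ is only bounded by $|\kappa_s|^p+K|\xpx_s|^p$, so the term $\Ex[(\V{G}_t^{\apx})^{p/2}]$ in \eqref{Xd-estim} naively produces a $|\xpx_s|^{p^2/2}$ contribution and the Gronwall loop does not close when $p>2$.

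The difficulty is that the replacement you propose does not escape this obstruction. Applying the It\^o--Meyer formula to $\V{z}\mapsto|\V{z}|^p$ and using the quoted second-order bound, the compensated-jump drift contributes a term of the form $\int_0^t(|\xpx_{s-}|^{p-2}+1)\int_{\Ro}(|\V{y}|^2\vee|\V{y}|^p)\nu_s(\diff\V{y})\diff s$, and under (\textbf{GC}) the cross term is $|\xpx_{s-}|^{p-2}\cdot K|\xpx_s|^p\sim K|\xpx_s|^{2p-2}$. Since $2p-2>p$ for $p>2$, no choice of conjugate exponents in Young's inequality distributes this across two $p$th-power contributions, and splitting small from large jumps does not change the scaling: for $|\V{y}|\le1$ one has $|\V{y}|^2\vee|\V{y}|^p=|\V{y}|^2$, yet (\textbf{GC}) still bounds $\int_{|\V{y}|\le1}|\V{y}|^2\nu_s(\diff\V{y})$ by a $p$th power of $|\xpx_s|$. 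In effect your It\^o--Meyer route re-derives the Kunita estimate already underlying Proposition \ref{qth-moments}(iii), and it meets the same exponent wall. The loop closes if, and essentially only if, the small-jump second-moment integral in (\textbf{GC}) is bounded by a quantity scaling quadratically in $|\xpx_s|$ (which would be the case, for instance, if the jump integral in \eqref{C:Gcond} carried a $p/2$ power). The paper only gestures at the argument in a remark, so you have put your finger on a step that requires either a sharper reading of (\textbf{GC}) or a strengthened hypothesis; but the It\^o--Meyer-plus-Young repair you sketch does not, as written, close the gap.
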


\begin{remarks}
 The key fact in the proof of Theorem \ref{VT-GC} is to show that, for each admissible policy $\apx \in \hat{\mathcal{A}}_{\V{x}}^p$,    $\Ex \left [ \sup_{0<s\le t} |\xpo_s|^p\right ]  < +\infty$, for each $t\ge 0$.  The latter follows directly from the estimates given in Proposition \ref{qth-moments}. Notice that, apart from the growth condition \eqref{C:Gcond}, the assumption $ \int_0^t |\kappa_s|^p \diff s \in L^{p/2} (\mathbb{P}^{\alpha})$ is crucial. The rest of the proof follows the  same arguments used in the proof of Theorem \ref{VT-EFinite}, so that we omit the details. 
 \end{remarks}

\textbf{Case 3.  Markov conditions on the control processes $\apx$}. 

As a particular case of the class of admissible controls $\hat{\mathcal{A}}^{\V{x}}_p$ given in Definition \ref{Adm-Hat}, we can now restrict our attention to the class of stationary Markov controls, i.e. when $\apx$ is of the form 
\[ \apx_s (\omega) = (\hat{\sigma} (\xpx_s (\omega)), \hat{\nu} (\xpx_s (\omega)), \hat{\mu} (\xpx_s (\omega)) ),\]
for some (deterministic) measurable functions $\hat{\sigma}$, $\hat{\nu}$ and $\hat{\mu}$ (recall Definition \ref{MarkovC}).
\begin{definition}\label{Adm-Markov}
An admissible policy $\apx = (\Msigma, \nu, \Mmu) \in \Ax$ is said to belong to the class $\mathcal{A}^{M,p}_{\V{x}}$, $p\ge 2$, if $\apx$ is a stationary Markov control process and  the following condition is satisfied:
\begin{itemize}
\item [(\textbf{GM})] If $\apx = ( \hat{\sigma} (\xpx_s (\omega)), \hat{\nu} (\xpx_s (\omega)), \hat{\mu} (\xpx_s (\omega)))$, then there exists a deterministic positive constant $K$  such that for all $t\ge 0$ and $\V{x} \in \rd$, the functions $\hat{\sigma}$, $\hat{\nu}$ and $\hat{\mu}$ satisfy
\begin{align}\label{C:Gcond} 
   |\hat{\mu} (\V{x}) |^p + ||\hat{\sigma} (\V{x})||^p   +  \int_{\Ro} |\V{z}|^2\vee |\V{z}|^p \hat{\nu} (\V{x}, \diff \V{z})  &\,\,\,\le\,\,\, K (1+ |\V{x} | ^p).
\end{align}
\end{itemize}
%$\hat{\mu}^i :  \rd \times\Omega \to \rd$, $1\le i \le n$, $\hat{\sigma}_{ij} : \rd\times \Omega \to M_{n\times n} (\rr)$, $1\le i,j\le n$, and $\hat{\nu} :\rd\times\Omega \to \mathcal{M}_p$ such that $\apx_s = ( \hat{\sigma} (\xpx_s (\omega),\omega), \hat{\nu} (\xpx_s (\omega),\omega), \hat{\mu} (\xpx_s (\omega),\omega))$, for each $s\ge 0$.
\end{definition}

\begin{theorem}[\textbf{Verification Result 5}]\label{VT-GM}
Let $p\ge 2$ and $q \in [2,p]$. If $\phi \in C^2(\rd)$ is a nonnegative function bounded by a polynomial function of order $q$, for which both conditions (\textbf{TC}) and $(\textbf{HJB})$ hold,  then  the conclusions of  \emph{Theorem \ref{VT0}} are also valid for the corresponding minimisation problem over the set of admissible (stationary) Markov controls $\hat{\mathcal{A}}^{M,p}_{\V{x}}$. 
\end{theorem}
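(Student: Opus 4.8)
The plan is to obtain Theorem~\ref{VT-GM} as a direct specialisation of Theorem~\ref{VT-GC}, by showing that the class of admissible stationary Markov controls $\mathcal{A}^{M,p}_{\V{x}}$ satisfying (\textbf{GM}) is contained in the class $\hat{\mathcal{A}}^p_{\V{x}}$ of Definition~\ref{Adm-Hat}. Once this inclusion is established, the conclusions of Theorem~\ref{VT0} hold on $\hat{\mathcal{A}}^p_{\V{x}}$ by Theorem~\ref{VT-GC}, hence \emph{a fortiori} on the smaller set $\mathcal{A}^{M,p}_{\V{x}}$, which is exactly the assertion to be proved. So the task reduces to verifying the defining conditions (\textbf{H4}) and (\textbf{GC}) for any $\apx \in \mathcal{A}^{M,p}_{\V{x}}$.

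First I would check (\textbf{H4}). A stationary Markov control is by Definition~\ref{MarkovC} of the form $\apx_s(\omega) = (\hat{\sigma}(\xpx_s(\omega)), \hat{\nu}(\xpx_s(\omega)), \hat{\mu}(\xpx_s(\omega)))$ for deterministic measurable maps $\hat{\sigma}, \hat{\nu}, \hat{\mu}$. Taking the maps in (\textbf{H4}) to be these very functions (trivially extended so as not to depend on the $\omega$-coordinate), measurability is immediate, the required $\mathcal{F}^{\alpha}_t$-adaptedness of $\hat{g}(\xpx_t, \cdot)$ follows since $\xpx$ is $\Fa$-adapted and $\hat g$ is a deterministic Borel function, and the representation $\apx_t = (\hat\sigma(\xpx_t),\hat\nu(\xpx_t),\hat\mu(\xpx_t))$ holds $\as$ by definition. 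So (\textbf{H4}) holds with $\omega$-independent coefficients.

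Next I would check (\textbf{GC}). The growth bound \eqref{C:Gcond} in Definition~\ref{Adm-Markov}, namely $|\hat\mu(\V{x})|^p + \|\hat\sigma(\V{x})\|^p + \int_{\Ro} |\V{z}|^2\vee|\V{z}|^p\,\hat\nu(\V{x},\diff\V{z}) \le K(1+|\V{x}|^p)$, is precisely the bound required in (\textbf{GC}) once we choose the process $\kappa$ to be the \emph{constant} process $\kappa_t \equiv K^{1/p}$ (so that $|\kappa_t(\omega)|^p = K$ absorbs the constant term $K\cdot 1$). The remaining integrability requirement on $\kappa$ in (\textbf{GC}) is then $\int_0^t |\kappa_s|^p\,\diff s = K t \in L^{p/2}(\mathbb{P}^{\alpha})$, which is trivially true since $Kt$ is a deterministic finite constant for each fixed $t$. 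Hence (\textbf{GC}) holds. Combining the two verifications gives $\mathcal{A}^{M,p}_{\V{x}} \subseteq \hat{\mathcal{A}}^p_{\V{x}}$, and Theorem~\ref{VT-GC} then yields the claim; in particular, the optimal policy produced by (\textbf{OC}) is, by construction in the proof of Theorem~\ref{VT0}, a stationary Markov control, so it indeed lies in $\mathcal{A}^{M,p}_{\V{x}}$ and the ``sup over the smaller class'' remains attained.

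There is no real obstacle here: the argument is essentially a bookkeeping exercise showing that the Markov growth condition (\textbf{GM}) is the $\omega$-free, constant-$\kappa$ instance of the general growth condition (\textbf{GC}). The only mild point to be careful about is making sure that restricting the minimisation to the subclass $\mathcal{A}^{M,p}_{\V{x}} \subseteq \hat{\mathcal{A}}^p_{\V{x}}$ does not destroy optimality, i.e.\ that the optimal control exhibited under (\textbf{OC}) is itself a stationary Markov control; but this is already part of the conclusion of Theorem~\ref{VT0}~(ii), which asserts that $\{\hat{\Malpha}^{\V{x}}\}$ is a family of optimal \emph{stationary Markov} policies, so the value over $\mathcal{A}^{M,p}_{\V{x}}$ coincides with the value over $\hat{\mathcal{A}}^p_{\V{x}}$ and with $\phi$.
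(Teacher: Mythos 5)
Your proof is correct and takes exactly the route the paper intends: the paper itself dismisses the proof with the remark that Theorem~\ref{VT-GM} is ``just a particular case of Theorem~\ref{VT-GC}''. You have simply made explicit the bookkeeping the paper leaves to the reader, namely that any stationary Markov control satisfying (\textbf{GM}) satisfies (\textbf{H4}) with $\omega$-free coefficients and (\textbf{GC}) with $\kappa\equiv K^{1/p}$, so that $\mathcal{A}^{M,p}_{\V{x}}\subseteq\hat{\mathcal{A}}^p_{\V{x}}$ and Theorem~\ref{VT-GC} applies directly.
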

\begin{remark}
Since this result is just a particular case of Theorem \ref{VT-GC}, we omit its proof.
\end{remark}

\subsection{About the standard SDE settings}\label{S:SDEcase}
Most of the literature dealing with optimal control of \levy-\ito diffusions share two important characteristics in the definition of admissible policies: 
\begin{itemize}[leftmargin=0.75cm]
\item [a)] The optimisation  is usually done over the smaller class of Markov controls $u$. Hence, each process $X^u$  is defined as a solution to a controlled SDE with a Markovian structure: 
\begin{align}\nonumber
X^u_t \,= \,x\, +\, \int_0^t &b(X^u_r, u_r) \diff r \,+ \int_0^t s (X^u_r, u_r) \diff W_r \,+ \\&+ \,\,\int_0^t \int_{|\gamma| \le 1} \gamma(X^u_{r-},z,u_r) \tilde{N} (\diff r, \diff z) +  \int_0^t \int_{|\gamma| > 1} \gamma(X^u_{r-},z, u_{r-}) N (\diff r, \diff z),\label{SD-2}
\end{align}
i.e., the drift  and diffusion coefficients $b$ and $s$, as well as the function $\gamma$ (which determines the size of the jumps), are assumed to be  functions of both the space and the control variable. Here $\tilde{N} (\diff s, \diff z) := N (\diff s, \diff z) - \diff s \nu (\diff z)$ is a compensated Poisson random measure (independent of the Brownian motion $W$), whose mean measure  $\nu$ is fixed and satisfies that $\int_{\Ro} 1\wedge \gamma^2 (\cdot,z,\cdot) \nu(\diff z) < \infty$. 
\item [b)] The drift and diffusion coefficients as well as $\gamma$ satisfy appropriate \ito-type conditions: Lipschitz and linear growth conditions in the space variable and uniformly on the control variable.\McommentO{\footnote{\textcolor{red}{There exists $K>0$ such that for all $t\ge 0$, $a\in A$ and $x,y\in \rd$, the following holds:
\begin{align*}
|b(t,x,a) - b(t,y,a)| + ||\sigma(t,x,a) - \sigma (t,y,a)|| &\le K |x-y| \\
|b(t,x,a)|^2 + ||\sigma (t,x,a)||^2 &\le K^2 (1+|x|^2).
\end{align*} 
or 
\begin{align*}
|b(t,x,a) - b(t,y,a)| + ||\sigma(t,x,a) - \sigma (t,y,a)|| &\le K (|x-y| - |t-s|).
\end{align*}}
}}
\end{itemize}
The importance of such conditions is that they guarantee both (i) the existence and uniqueness of a strong solution to the corresponding SDE,\McommentO{\footnote{\textcolor{red}{Globally (or locally) Lipschitz continuity in the space variable for drift and diffusion coefficients in \ito-diffusions guarantee strong uniqueness \cite[Karatzas,Theorem 2.5, p.287]{Karatzas}}}} and  (ii) the square integrability of the associated control process (see \cite[Chapter 4]{FlemingR1975}, \cite[Section 1.3, p.22]{Pham} for the (continuous) \ito diffusion case, or \cite[Theorem 1.19, p. 10]{OKSulem} for the (jump) \levy diffusion case).\McommentO{\footnote{\textcolor{red}{Globally Lipschitz and linear growth conditions  in the space variable for drift and diffusion coefficients in $\rd$-valued \ito-diffusions guarantee existence of a  square-integrable strong solution \cite[Karatzas,Theorem 2.9, p. 289]{Karatzas}.}}} 
\begin{remarks}
\end{remarks}
\begin{itemize}[leftmargin=0.29in]
\item [i)] Although in our setting condition (\textbf{H3}) seems to be quite strong when compared to the \ito or \levy diffusion cases,  this assumption is crucial  to guarantee the finiteness of the $q$th moments of each process $\xpx$. Nevertheless, by Theorem \ref{VT-GC}, the $q$th moments are also finite whenever the growth condition \eqref{C:Gcond} holds. 
 \item [ii)] %Observe also that the condition (\textbf{GC}) is only a growth condition for $\apx$. 
 By imposing an additional Lipschitz condition we can also guarantee the existence of a strong solution to the corresponding SDE. We then see that our framework encompasses the controlled \ito SDE case. \McommentO{\footnote{\textcolor{red}{In the \ito SDE's setting, it is well-known  that globally Lipschitz continuity for the coefficients $\mu$ and $\sigma$ implies existence of a strong solution and pathwise uniqueness for the corresponding SDE. Under the relaxed assumption of locally Lipschitz continuity, the same results follow by an additional sublinear growth condition \cite[Chapter 5, Karatzas]{Karatzas}}}} 
 \item [iii)] Using Definition \ref{Adm-Hat}, we can recover the  \ito diffusion case by setting $\nu \equiv 0$ and by proceeding as follows: given an admissible control $\alpha_t^x (\omega) : = (\bar{\sigma} ,\bar{\mu}) (\cdot, \omega)$, where $\bar{\sigma} : \rd\times \Omega \to M_{n\times n}(\rr)$ and $\bar{\mu}: \rd\times \Omega \to \rd$ satisfy (\textbf{H4}),  define  $s (\cdot, \alpha_t (\omega)):=\bar{\sigma}(\cdot, \omega)$ and $b (\cdot, \alpha_t (\omega)):=\bar{\mu}(\cdot, \omega)$ as  the corresponding diffusion and drift coefficients.
%\item [ii)] We can recover the standard \ito diffusion framework by setting $\nu \equiv 0$ and taking  the random coefficients $\hat{b}(\V{x},\omega)$ and $\hat{\sigma} (\V{x},\omega)$ in the form $\hat{b} (\xpx_t, u_t (\omega)):= \hat{b}$ and $\hat{\sigma} (\xpx_t, u_t (\omega))$, where $\hat{b}$ and $\hat{\sigma}$ are some given deterministic measurable functions on $ \rd \times A$ and $u = (u_t)_{t\ge 0}$ is an $U$-valued  progressively measurable process. 

\item [iv)]  In the \levy diffusion \eqref{SD-2}, the jump intensity measure $\nu$  is fixed and deterministic. Hence, the control affects only the jump sizes determined by the function  $\gamma(x,z,u)$. Since our  martingale approach allows us to control the jump intensity measure, our setting is more general than the jump case in \cite{OKSulem}.
\item  [v)] We can reformulate the controlled SDE \eqref{SD-2} in terms of our martingale approach as follows. Define the action control set $\Gamma'$ as a subset of $M_{n\times n} (\rr) \times \rd \times F(\rd;\rd)$, where $F( \rd;\rd)$ is the set of measurable functions on $\rd$ with values on $\rd$. Fixed  a jump intensity measure $\nu$ and  replace the operator $L^{\V{a}}$ in \eqref{D:La} by the operator 
\begin{align}\label{D:La-Mod}
   (\mathcal{G}^\V{a} h)(\cdot) := \Mmu^T  \nabla h(\cdot) + \frac{1}{2} \tr( \Msigma^T \Hess h\, \Msigma)(\cdot)  +\int_{\Ro} \left ( h(\cdot+ \V{y}) - h(\cdot) - \V{y}^T \nabla h (\cdot ) \right ) \diff\nu \circ \theta^{-1} (\V{y}),
   \end{align}
  where each action $\V{a} =(\Msigma,\Mmu,\theta) \in \Gamma'$. The set of admissible controls can now be defined (with the appropriate changes) as was done in  Section \ref{CSetp}. 
\end{itemize}

\section{Finite horizon case and other extensions}\label{S:Gral}
 \subsection{Finite Horizon Case}\label{S:FiniteCase}

We will sketch briefly the formulation for the case when the planning horizon is a finite (deterministic) time interval $[0,T] \subset \rp$.  We take the control setting of Section \ref{CSetp} except for the fact that all processes are defined on the time interval $[0,T]$. Keeping this restriction in mind, we shall use the same notation for the class of admissible controls (recall also Remark \ref{R:Notation-x}).      
Let $p\ge 2$ be fixed throughout this section. For each $\V{z} \in  \rd$, define the cost functional 
  \begin{equation}\label{J-Finite}
  \mathcal{J}\,:\, (\xpz,\apz)\,\,\mapsto\,\, \mathbb{E}_{\V{z}} \left [   \int_0^{T} e^{- \gamma_t^{\apz}} f (t,\xpz_t,\,\apz_t)\diff t + g(\xpz_T)\right ],
     \end{equation}
   where  $f: [0,T]\times \rd \times A \to \mathbb{R}^+$ and $h:\rd \to \mathbb{R}^+$ are   the \textit{running cost} and the \textit{terminal cost} functions, respectively.  As before,  the discount process  is given by $\gamma_{\cdot}^{\apz} := \int_0^{\,\cdot\,} q (s,\xpz_s, \, \apz_s) \diff s$ with $q: [0,T] \times \rd \times A \to \rp$  being a bounded measurable function.
   
The stochastic control problem on the interval $[0,T]$ consists in solving the optimisation problem:
  \begin{equation}\label{J2}
 \inf_{(\xpz,\apz)\,:\,\apz \in \Az} \mathcal{J}(\xpz,\apz) 
      \end{equation}
for some initial state $\V{z} \in \rd $.  

The solution to \eqref{J-Finite}-\eqref{J2} consists then in finding the optimal value of $\mathcal{J}$ and  an optimal policy  (whenever it exists).  A policy $\hat{\Malpha}^{\V{z}} \in \Az$ is called \textit{optimal} if  the corresponding admissible pair $(\V{X}^{\hat{\Malpha}^{\V{z}}},\hat{\Malpha}^{\V{z}})$ satisfies
\begin{equation}\label{FH-2p}
J(\V{X}^{\hat{\Malpha}^{\V{z}}},\hat{\Malpha}^{\V{z}}) \,= \,\inf_{\apz \in \Az} J(\xpz,\apz).\end{equation}

Unlike the infinite horizon, the cost functional (and thus the value function) in the finite horizon case does depend on both the initial state and the initial time of the system. Hence, in order to solve    \eqref{J-Finite}-\eqref{FH-2p} via the \textit{dynamic programming approach}, we consider  the associated family of control problems indexed by the initial time-state points. 

Given   $(t,\V{x}) \in  [0,T] \times \rd$, for each  
admissible pair $(\xptx,\aptx)$, we define 
 \begin{equation}\label{J-tx}
  %J(T-t,\V{x};\xpa,\Malpha)
  \mathcal{J}^{\alpha} (t,\V{x}):=  \Etx \left [   \int_t^{T} e^{- \gamma_{s}^{\apo}} f (s,\xpo_s,\,\apo_s)\diff s + h(\xpo_T)\right ],
     \end{equation}
     where $\Etx$ stands for the mathematical expectation conditional to  $\xptx_t = \V{x}$. As before, we have omitted the superscripts $t,\V{x}$ when appearing inside the operator $ \Etx$. We have that $\mathcal{J}^{\alpha} (t,\V{x})$ is the expected cost of using the control policy $\aptx$ over the time interval $[t,T]$ given the  initial time-state point $(t,\V{x})$. If $t= 0$, we  write $  \mathcal{J}^{\alpha} (\V{x}) \equiv  \mathcal{J}^{\alpha} (t,\V{x}) $. 

   The optimal cost function $V: [0,T]\times \rd \to \mathbb{R}^+$ is then defined  by
     \begin{equation}\label{V-tT}
    % V(T-t,\V{x}) := \inf_{\{(\xpa,\Malpha)\,: \, \Malpha \in \AxtT\}} J(T-t,\V{x};\xpa,\Malpha),
    V(t,\V{x}) := \inf_{\left \{(\xptx,\aptx)\,: \, \aptx \in \Atx\right \}}  V^{\Malpha} (t,\V{x}).
     \end{equation}
 Hence, $V(t,\V{x})$ gives the minimum \emph{cost-to-go}, starting at time $t$ from state $\V{x}$. 
 
   \MremarkO{To indicate that $V$ is a function of the remaining time (and not of the time elapsed), $V(t,\V{x}x)$ is sometimes written as $V(T-t,\V{x})$.
}
Following similar arguments than those used in Section \ref{S:MainResults}, we can obtain the finite horizon counterpart of our previous results. We thus  omit the repetition and only present the following verification theorem.

\begin{theorem}\label{VT0-Finite-bounded}
 Let $f:\rp \times \rd \times A \to \rp$ and $h:\rd \to \rp$ be measurable functions. Let $\phi \in C([0,T]\times\rd)\cap C^{1,2}([0,T)\times \rd)$ be a nonnegative function with polynomial growth in $\V{x}$ of degree $p$ (uniformly in $t$), which solves   
\begin{equation}\label{D:HJB00}
 %\partial_t \phi(t,\V{x})  + \inf_{\V{a} \in A} \{  L^\V{a} \phi (t,\V{x})  -q(t,\V{x},\V{a}) \phi (t,\V{x}) + f(t,\V{x},\V{a}) \} = 0, \quad \text{ for all } \V{x} \in \rd,
 \partial_t \phi(t,\V{x})  + \inf_{\V{a} \in A} \{  L^\V{a} \phi (t,\V{x})  -q(t,\V{x},\V{a}) \phi (t,\V{x}) + f(t,\V{x},\V{a}) \} = 0, \quad \text{ for all } (t,\V{x}) \in [0,T)\times \rd,
 \end{equation}
  with the boundary condition $\phi(T,\cdot) = h(\cdot)$. 
  Suppose that, for each $\aptx \in \Atx$,  the family $\left \{ \phi \left (\tau \wedge T,\,\xptx_{\tau \wedge T} \right )\right\}_{\tau\in \mathcal{T}}$ is uniformly integrable.\footnote{As before, $\mathcal{T}$ denotes the  family of all stopping times.}  
Then the following holds.
\begin{itemize}[leftmargin=0.75cm]
\item [(a)] For any admissible policy $\aptx \in \Atx$, $\phi(t,\V{x}) \le \mathcal{J}^{\alpha}(t,\V{x})$, $(t,\V{x}) \in [0,T]\times \rd$.
\item [(b)] If there exists an admissible policy  $\hat{\Malpha}^{t,\V{x}} \in \Atx$ with corresponding controlled process $\V{X}^{\hat{\Malpha}^{t,\V{x}}}$, defined on a filtered probability space $(\hat{\Omega},\hat{\mathcal{F}}, (\hat{\mathcal{F}}_t), \hat{\mathbb{P}})$, such that $\hat{\mathbb{P}}$-a.s. for all $s\ge 0$
\begin{equation}\label{HJB-opF}
  \partial_s \phi (s,\V{X}^{\hat{\Malpha}^{t,\V{x}}}_s)  + L^{\hat{\Malpha}_s^{t,\V{x}}} \phi (s,\V{X}^{\hat{\Malpha}^{t,\V{x}}}_s) -q(s,\V{X}^{\hat{\Malpha}^{t,\V{x}}}_s,\hat{\Malpha}^{t,\V{x}}_s)\, \phi(s,\V{X}^{\hat{\Malpha}^{t,\V{x}}}_s) + f(s,\V{X}^{\hat{\Malpha}^{t,\V{x}}}_s,\hat{\Malpha}^{t,\V{x}}_s)  = 0,   \end{equation}
with the boundary condition $\phi(T,\cdot) = h(\cdot)$. 
Then, $\phi(t,\V{x}) = J(\V{X}^{\hat{\Malpha}^{t,\V{x}}},\hat{\Malpha}^{t,\V{x}} )$,  the family $\{ \hat{\Malpha}^{t,\V{x}} \,:\, (t,\V{x})  \in [0,T] \times \rd\}$ is a family of optimal policies, and $\phi (t,\V{x})$ is the value function of the control problem \eqref{J-Finite}-\eqref{J2}. 
\end{itemize}
\end{theorem}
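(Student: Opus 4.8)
The plan is to run the localised martingale verification argument, mirroring the proof of Theorem~\ref{VT0} but adapted to the finite horizon and the terminal condition. Fix $(t,\V{x})\in[0,T]\times\rd$ and an admissible policy $\aptx\in\Atx$, and introduce the finite-horizon Bellman process
\[
S_s := \int_t^{s} e^{-\gamma_u^{\alpha}} f(u,\xpo_u,\apo_u)\,\diff u + e^{-\gamma_s^{\alpha}}\phi(s,\xpo_s), \qquad s\in[t,T],
\]
where $\gamma_s^{\alpha}:=\int_t^s q(u,\xpo_u,\apo_u)\,\diff u$. First I would apply the It\^o--Meyer formula to the semimartingale $e^{-\gamma_s^{\alpha}}\phi(s,\xpo_s)$, using $\phi\in C^{1,2}([0,T)\times\rd)$, the product rule (the discount process being continuous and of finite variation), and the representation of $\xpo$ as a special It\^o semimartingale with differential drift $\V{u}+\Mmu_s$, local covariance $\Msigma_s^T\Msigma_s$ and jump compensator $\diff s\otimes\nu_s(\diff\V{y})$ (Proposition~\ref{X-charact} together with the canonical decomposition \eqref{X-decomp}). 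Adding back the running-cost term $e^{-\gamma_s^{\alpha}}f(s,\xpo_s,\apo_s)\,\diff s$, the finite-variation (drift) part of $S$ is
\[
e^{-\gamma_s^{\alpha}}\Big(\partial_s\phi(s,\xpo_s)+L^{\apo_s}\phi(s,\xpo_s)-q(s,\xpo_s,\apo_s)\,\phi(s,\xpo_s)+f(s,\xpo_s,\apo_s)\Big)\,\diff s,
\]
while the remainder is a sum of stochastic integrals against the continuous martingale part and the compensated jump measure of $\xpo$, hence a local martingale. That the nonlocal term $\int_{\Ro}\big(\phi(s,\cdot+\V{y})-\phi(s,\cdot)-\V{y}^{T}\nabla\phi(s,\cdot)\big)\nu_s(\diff\V{y})$ appearing in $L^{\apo_s}\phi$ is finite uses the polynomial growth of $\phi$ of degree $p$, the finite $p$-th moment of $\nu_s$ outside the unit ball, and a second-order Taylor estimate near the origin.

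Since $\phi$ solves the HJB equation of the statement, the bracket in the displayed drift is $\ge 0$ for \emph{every} $\V{a}\in A$; hence the drift of $S$ is nonnegative and $S$ is a nonnegative local submartingale. Let $(\tau_k)$ be a localising sequence with $\tau_k\uparrow T$; optional stopping gives $\phi(t,\V{x})=S_t\le\Etx[S_{\tau_k\wedge T}]$. Passing $k\to\infty$, the running-cost part increases to $\int_t^T e^{-\gamma_u^{\alpha}}f(u,\xpo_u,\apo_u)\,\diff u$ and converges by monotone convergence (since $f\ge 0$); for the boundary part, $e^{-\gamma_{\tau_k}^{\alpha}}\le 1$, so $0\le e^{-\gamma_{\tau_k}^{\alpha}}\phi(\tau_k,\xpo_{\tau_k})\le\phi(\tau_k,\xpo_{\tau_k})$, a subfamily of the uniformly integrable family $\{\phi(\tau\wedge T,\xpo_{\tau\wedge T})\}_{\tau\in\mathcal{T}}$, while $\phi(\tau_k,\xpo_{\tau_k})\to\phi(T,\xpo_T)=h(\xpo_T)$ a.s.\ because $\phi$ is continuous up to $T$ and $\xpo$, being an It\^o semimartingale, is almost surely continuous at the fixed time $T$. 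Vitali's theorem then yields $L^1$-convergence, so $\phi(t,\V{x})\le\Etx\big[\int_t^T e^{-\gamma_u^{\alpha}}f(u,\xpo_u,\apo_u)\,\diff u+e^{-\gamma_T^{\alpha}}h(\xpo_T)\big]=\mathcal{J}^{\alpha}(t,\V{x})$, which is part (a) (trivial if $\mathcal{J}^{\alpha}(t,\V{x})=\infty$).

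For part (b), I would repeat the computation along the policy $\hat{\Malpha}^{t,\V{x}}$: by \eqref{HJB-opF} the bracket vanishes identically, so the drift of the corresponding Bellman process is zero, making it a nonnegative local martingale; the same localisation-plus-uniform-integrability argument upgrades it to a true martingale and gives $\phi(t,\V{x})=\mathcal{J}^{\hat\alpha}(t,\V{x})$. Together with part (a), this shows $\phi(t,\V{x})=V(t,\V{x})$ for all $(t,\V{x})\in[0,T]\times\rd$ and that $\{\hat{\Malpha}^{t,\V{x}}\}$ is a family of optimal policies.

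The step I expect to be the main obstacle is the passage to the limit along $(\tau_k)$: one must ensure the local-martingale remainder is genuinely killed by optional stopping (so that no contribution is lost in the limit) and control $\phi(s,\xpo_s)$ as $s\uparrow T$, where $\phi$ is merely continuous. This is precisely the role of the uniform-integrability hypothesis, the finite-horizon counterpart of condition (\textbf{UI}) in Theorem~\ref{VT0}; without it one would instead impose moment or growth conditions on the admissible controls, as in the alternative classes of Theorems~\ref{VT-EFinite}--\ref{VT-GM}. The remaining points --- justifying the It\^o--Meyer expansion and interchanging the $\nu_s$-integral with the other operations --- are routine given the explicit characteristics in Proposition~\ref{X-charact} and the growth bound on $\phi$.
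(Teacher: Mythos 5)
Your proof is correct and follows exactly the route the paper has in mind: the paper omits the details, stating only that the argument is the same as that of Theorem~\ref{VT0}, and your reconstruction carries out precisely that localised It\^o--Meyer/Bellman-process scheme, with the transversality condition of the infinite-horizon case replaced by the boundary data $\phi(T,\cdot)=h(\cdot)$ and the uniform-integrability hypothesis governing the limit $\tau_k\uparrow T$. The one point worth being explicit about in a write-up is the one you flag yourself: since $\phi$ is $C^{1,2}$ only on $[0,T)\times\rd$, It\^o's formula is applied up to $\tau_k\wedge T'$ with $T'<T$ (or on $[t,\tau_k)$ with $\tau_k<T$), and the passage to $T$ rests on continuity of $\phi$ on the closed slab together with the absolute continuity of the jump compensator (so $\xpo$ has a.s.\ no jump at the fixed time $T$), exactly as you indicate.
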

\begin{remark}
The proof follows the same arguments used in the proof of Theorem \ref{VT0}, so the details are omitted. \end{remark}

\MremarkO{We can rewrite the finite horizon problem as an infinite horizon control problem with state constraints in the following way: given an admissible pair $(\xp,\ap)$, define the process $Z_t = (\xp_t, T-t)$ with values in $\rd\times [0,T]$ and the operator $\mathcal{G}^a = L^a -\frac{\partial}{\partial t}$.
}

 %Sketch  finite horizon case
\subsection{Possible extensions}
Let us now comment on three possible generalisations.
\subsubsection{Local dynamics and action set $A$}. We can generalise our framework by, for example, replacing $\mathcal{M}_p$  by the set
\begin{equation}
\mathcal{M}_p' := \left \{ \text{measures } \nu \text{ on } \rd \text{ such that } \int_{\rd} \left ( 1\wedge |\V{y}|^2\right ) \vee |\V{y}|^p \nu(\diff \V{y})<\infty \right \}, \quad p \ge 1.
\end{equation}
Hence, to define the local dynamics of a control process, we can replace \eqref{D:La} by the more general \levy operator
\begin{align}\label{D:La-p}
   (L^\V{a} g)(\cdot) := (\V{u} +\Mmu)^T  \nabla g(\cdot) + \frac{1}{2} \tr( \Msigma^T \Hess g\, \Msigma)(\cdot)  +\int_{\Ro} \left ( g(\cdot+\V{y}) - g(\cdot) -  \V{y}^T \nabla g (\cdot )1_{\{|\V{y}| \ge 1\}} \right ) \nu(\diff \V{y}).
   \end{align}
  
\subsubsection{Running cost function $f$.} 
Notice that we have assumed that $f\ge 0$. This condition can be relaxed, for instance, by considering that $f$ is bounded below, say $f \ge m$. In such a case, the previous results can be applied to the modified running cost $\tilde{f} := f -m$. Indeed, under the following assumptions: 
\begin{itemize}
\item [(\textbf{TCa})]  Each admissible policy $\apx$ is such that either  $\Jx = \infty$ or
 \begin{align}
 \liminf_{t\to \infty}\Ex \left [ e^{-\gamma_{t}^{\apo}} \phi \left (\xpo_t\right) \right ] \le 0,\label{E:TCa}
 \end{align}
 \item [(\textbf{TCb})] Each admissible policy $\apx$ is such that  \begin{align}
 \limsup_{t\to \infty}\Ex \left [ e^{-\gamma_{t}^{\apo}} \phi \left (\xpo_t\right) \right ] \ge 0.\label{E:TCb}
 \end{align}
\end{itemize}
Lemma \ref{VT0-SubPhi} becomes:
\begin{lemma}[Verification Result 1']\label{VT0-SubPhi2}
Let $p\ge2$ and the running cost function $f$ be bounded below. Let $\phi \in C^2(\rd)$ be a function bounded below and  satisfying (\textbf{SC}). Then the following holds.
\begin{itemize}[leftmargin=0.75cm]
\item [\emph{i)}] If for every admissible policy $\apx \in \Ax$ conditions  (\textbf{SC}) and (\textbf{TCa}) hold, then $\phi(\V{x}) \le \Jx$, $\V{x} \in \rd$.
\item [\emph{ii)}] If (\textbf{MC}) and (\textbf{TCb}) hold, then  $\phi(\V{x}) \ge J^{\hat{\Malpha}} (\V{x})$. The equality holds when, for every admissible policy with finite payoff, 
\begin{align}
 \lim_{t\to \infty}\Ex \left [ e^{-\gamma_{t}^{\apo}} \phi \left (\xpo_t\right) \right ]= 0.\label{E:TCab}
 \end{align}
 In the latter case, $\{\hat{\Malpha}^{\V{x}}\,:\, \V{x}\in \rd\}$ is a family of admissible optimal policies and $\phi$ equals the value function.
\end{itemize}
\end{lemma}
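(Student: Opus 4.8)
The plan is to run the argument of Lemma~\ref{VT0-SubPhi} again, replacing every use of $f\ge0$ or $\phi\ge0$ by a bounded-below substitute, and then to extract the equality case in ii) by sandwiching $J^{\hat{\Malpha}}(\V{x})$ between the two inequalities obtained along the way. Fix once and for all a common lower bound $m$ for $f$ and $\phi$, and recall that $e^{-\gamma_s^{\apx}}\le1$ for all $s$ because $q\ge0$. For i), fix $\V{x}\in\rd$ and $\apx\in\Ax$; if $\Jx=\infty$ there is nothing to prove, so assume $\Jx<\infty$. By the submartingale assumption (\textbf{SC}), the Bellman process $S^{\phi,\apx}$ is a submartingale, hence for every $t\ge0$
\[
\phi(\V{x})=S_0^{\phi,\apx}\;\le\;\Ex\!\left[S_t^{\phi,\apx}\right]=\Ex\!\left[\int_0^t e^{-\gamma_s^{\apx}}f(\xpx_s,\apx_s)\,\diff s\right]+\Ex\!\left[e^{-\gamma_t^{\apx}}\phi(\xpx_t)\right].
\]
Letting $t\to\infty$, I would split $f=(f-m)+m$: monotone convergence applies to the nondecreasing term $\int_0^t e^{-\gamma_s^{\apx}}(f-m)\,\diff s$, while the sign-free remainder $m\int_0^t e^{-\gamma_s^{\apx}}\,\diff s$ is handled by dominated convergence (using $\Jx<\infty$), so $\Ex[\int_0^t e^{-\gamma_s^{\apx}}f(\xpx_s,\apx_s)\,\diff s]\to\Jx$. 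Since the displayed bound holds for every $t$, $\phi(\V{x})\le\liminf_{t\to\infty}\Ex[S_t^{\phi,\apx}]=\Jx+\liminf_{t\to\infty}\Ex[e^{-\gamma_t^{\apx}}\phi(\xpx_t)]\le\Jx$, the last step by (\textbf{TCa}).

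For ii), (\textbf{MC}) provides, for each $\V{x}$, an admissible pair $(\V{X}^{\hat{\Malpha}^{\V{x}}},\hat{\Malpha}^{\V{x}})$ with $S^{\phi,\hat{\Malpha}^{\V{x}}}$ a \emph{true} martingale, so $\Ex[S_t^{\phi,\hat{\Malpha}^{\V{x}}}]=\phi(\V{x})$ for all $t\ge0$. Expanding this identity and treating the running-cost term exactly as in i) (so that it tends to $J^{\hat{\Malpha}}(\V{x})$), the constancy of the left-hand side forces the terminal term $\Ex[e^{-\gamma_t^{\hat{\Malpha}^{\V{x}}}}\phi(\V{X}^{\hat{\Malpha}^{\V{x}}}_t)]$ to converge, necessarily to $\phi(\V{x})-J^{\hat{\Malpha}}(\V{x})$; since (\textbf{TCb}) makes its $\limsup$ nonnegative, $\phi(\V{x})\ge J^{\hat{\Malpha}}(\V{x})$ (in particular $J^{\hat{\Malpha}}(\V{x})<\infty$). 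Finally, if in addition $\lim_{t\to\infty}\Ex[e^{-\gamma_t^{\apo}}\phi(\xpo_t)]=0$ for every admissible policy of finite payoff, then in the same martingale identity the terminal term vanishes in the limit, giving $\phi(\V{x})=J^{\hat{\Malpha}}(\V{x})$; combining this with i), whose hypothesis (\textbf{TCa}) is then met by every such policy, we obtain $\phi(\V{x})=J^{\hat{\Malpha}}(\V{x})\ge V(\V{x})=\inf_{\apx\in\Ax}\Jx\ge\phi(\V{x})$, so all are equal, $\phi\equiv V$, and $\{\hat{\Malpha}^{\V{x}}:\V{x}\in\rd\}$ is a family of optimal admissible policies.

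The step I expect to be the main obstacle is the passage $t\to\infty$ in the running-cost expectation: because the integrand $e^{-\gamma_s^{\apx}}f$ no longer has a definite sign, $\Ex[\int_0^t e^{-\gamma_s^{\apx}}f(\xpx_s,\apx_s)\,\diff s]$ is not monotone in $t$, and one must isolate the constant $m$ (morally, reduce to the nonnegative running cost $\tilde f:=f-m$ together with the constant contribution of $m$) before invoking monotone and dominated convergence; it is precisely $\Jx<\infty$, together with the standing integrability built into (\textbf{H1})--(\textbf{H2}), that legitimises these limits and also supplies the integrability (e.g.\ $\Ex[\phi(\xpx_t)^+]<\infty$) implicitly used above. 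Once the running-cost limit is secured, the submartingale/martingale identities make the terminal term converge on its own and (\textbf{TCa})/(\textbf{TCb}) (or their strengthening to a null limit) only fix its sign, so no hypotheses beyond those stated are required.
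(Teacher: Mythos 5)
Your proof is correct and carries out, at the submartingale/martingale level, exactly the reduction the paper only gestures at (applying the positive-cost argument of Lemma \ref{VT0-SubPhi} to the shifted cost $f-m$). The splitting of the running-cost integral into a nonnegative piece handled by monotone convergence plus a constant multiple of $\int_0^t e^{-\gamma_s^{\apx}}\diff s$, the extraction of $J^{\hat{\Malpha}}(\V{x})<\infty$ from the lower boundedness of the terminal term in part ii), and the observation that \eqref{E:TCab} implies (\textbf{TCa}) so that part i) closes the sandwich, are all sound. The one justification that needs tightening is the claim that the $m$-term is handled by dominated convergence \emph{using} $\Jx<\infty$: when $m\le 0$, finiteness of $\Jx$ does not by itself bound $\Ex\bigl[\int_0^\infty e^{-\gamma_s^{\apx}}\diff s\bigr]$. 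What is really needed -- and what the paper itself relies on implicitly in the proofs of Lemmas \ref{P:lemma00} and \ref{P:lemma01} -- is that the discount rate $q$ is bounded away from zero, so that $\int_0^\infty e^{-\gamma_s^{\apx}}\diff s\le 1/\delta$ deterministically; with that in hand, both the monotone and the dominated convergence steps are legitimate and the rest of your argument goes through unchanged.
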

 In a similar way, we can extend all our results for a lower bounded running cost. For more general functions $f$, additional constraints are required to ensure that the payoff $\Jx$ is well-defined.%, for instance by assuming that $\Ex \int_0^{\infty} f^- (\xpx_s,\apx_s) \diff s < +\infty$, where $f^- := - (f \wedge 0)$.
 
\subsubsection{Weak solutions to HJB equation}
The assumption of having a $C^2$ solution to the HJB equation guarantees that such a solution is regular enough for the integro-differential equation to make sense. %, but  it also gives us the  regularity needed for the application of the generalised \ito's formula. The latter is one of the main tools when proving verification theorems. However, by Theorem 71 in \cite[Chapter IV, p. 221]{Protter}, the $C^2$ assumption  can be relaxed when using \ito's formula. In the one-dimensional case, it is sufficient to take  $\phi \in C^1$ with an absolutely continuous derivative $f'$.  
However, since the existence of $C^2$ solutions is difficult to guarantee, a natural approach to deal with this issue is to introduce the concept of \emph{viscosity solutions}, as has been done in other settings. We leave the study of this issue to future research. 
 
 \MremarkB{Our control problem is also a generalisation of the one considered in \cite{DiTanna2009}. Therein, the author studied a stochastic control problem (on a finite time interval) of a $\rr$-valued  stochastic process $X= (X_t)_{t\in [0,T]}$,  given by $X_t^{\mu} := x + \int_0^t \mu_s \diff s + L_t$, $t\ge0$,   where $\mu = (\mu_s)_{s\in [0,T]}$ is an  admissible control process and $L = (L_t)_{t\in [0,T]}$ is a \levy process whose coefficients in both the diffusion and the jump components are constant.  In our case,  we extend the control to the infinitesimal variance of the continuous part as well as the  jump intensity of the discontinuous component of the system dynamics. }
  %Generalisations and final comments

\section{Applications}\label{S:Applications}

\subsection{Example 1.}\label{S:Example2}
Let $f:\mathbb{R} \to \mathbb{R}_+$ be a symmetric convex function with polynomial growth of degree $p \ge 2$.  Define
 \begin{equation}
 \mathcal{M}_{\le 1} : = \left \{ \nu  \in \mathcal{M}_p \text{ such that } \nu(\rr) \le 1\right \}.
 \end{equation}
Let $A''$ be the  action control set given by
\begin{equation}\label{Action-f}
A'':= \left \{\,a =(\sigma, \nu,\mu)\, \Big | \,\sigma =1; \, \nu \in \mathcal{M}_{\le 1}, \,\mu = \int y \nu (\diff y)\right \},
\end{equation} 
and let $\mathcal{A}_{x}$ be the set of $A''$-valued admissible control processes $\alpha^x = (\alpha^x_s)_{s\ge 0}$ (as defined in Section \eqref{CSetp} with $p\ge 2$ and $u = 0$ for the operator $L^a$). 

   We seek the optimal function\begin{equation}
V \, : \, x \mapsto \inf_{\alpha^x \in \mathcal{A}_{x}} \mathbb{E}_x \left [ \int_0^{\infty} e^{-qt} f(X_t^{\alpha^x}) \diff t\right ].
\end{equation}
 Let $B$ be a standard Brownian motion started at zero and define
\begin{equation}\label{E:Phi-Ex2}
 \psi : x\mapsto \mathbb{E} \left [ \int_0^{\infty} e^{-(q+1)}f (x+B_t)\diff t\right ].
\end{equation} 

\begin{lemma}\label{Example2}
Suppose that
 \begin{equation}\label{UniformB2}
\beta := \sup_{\nu\in \mathcal{M}_{\le 1}} \int_{|y|> 1} |y|^p  \nu (\diff y) < \infty.\end{equation}
Then the following holds:
\begin{itemize}[leftmargin=0.29in]
\item [i)]$\psi$ is symmetric and convex and its global minimum is attained at zero.
\item [ii)]The optimal payoff  is given by 
\begin{equation}
V = \psi + c,\quad \text{ where } \quad c = \frac{\psi (0)}{q},
\end{equation} 
and  $\{\,\hat{\alpha}_{\cdot}^x = (1,-X_{\cdot}^x, \delta_{-X_{\cdot}^x}),\,: \, x \in \rr\}$ is a family of admissible optimal policies, %. policy  is to choose $\hat{\nu}_t = \delta_{-X_t}$, the unit point mass at $-X_t$, 
 where $X^x$ is the Markov process, started at $x$, whose infinitesimal generator defined on $C^2_c (\rd)$ is given by
\begin{equation}
G h(\cdot) := \frac{1}{2} h''(\cdot) +\int \left ( h(\cdot+y) - h(\cdot)\right ) \hat{\nu}(\,\cdot\,;\diff y),
\end{equation}
with  $\hat{\nu} (x; \diff y) := \delta_{-x}(\diff y)$, the unit point mass at $-x$. Thus, the optimal control is to always jump $X$ to zero at maximal (i.e. unit) rate.
\end{itemize}
\end{lemma}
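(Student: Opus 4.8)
The plan is to verify the two items via the verification machinery developed above (in particular Theorem \ref{VT0} / Lemma \ref{VT0-SubPhi}), after first establishing the analytic properties of $\psi$. Throughout, the key observation is that $\psi$ is (up to an affine shift) the value function associated with the \emph{constant} policy $\nu\equiv 0$: if we take $\alpha^x_s\equiv(1,0,0)$, then the controlled process is simply $x+B_t$ and, since $q\ge 0$ and $f\ge 0$ has polynomial growth of degree $p$, the discounted cost is finite by the standard moment estimates for Brownian motion; this is exactly $\psi(x)/1$ after accounting for the extra ``$+1$'' in the discount. More precisely, I would note that for any admissible policy $\alpha^x$ the running rate $\nu_s(\rr)\le 1$, so the discount picks up at most an extra factor coming from the jump compensator; this is what produces the effective discount rate $q+1$ in \eqref{E:Phi-Ex2}.

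First I would prove (i). Convexity and symmetry of $\psi$ follow because $x\mapsto f(x+B_t)$ is convex for each fixed $t$ and $\omega$ (composition of a convex function with a translation), and symmetric in $x\leftrightarrow -x$ after using the symmetry $B_t\stackrel{d}{=}-B_t$ together with the symmetry of $f$; taking expectations and integrating in $t$ preserves both properties, and finiteness is guaranteed by \eqref{UniformB2} and polynomial growth of $f$. A symmetric convex function on $\rr$ attains its global minimum at $0$. I would also record that $\psi\in C^2(\rr)$ with polynomial growth of degree $p$ (standard elliptic regularity for the resolvent of Brownian motion applied to a polynomially growing source, or direct differentiation under the integral), so that $\psi$ is an admissible candidate in the sense required by Theorem \ref{VT0}, and that $\psi$ solves the ODE
\begin{equation}\label{E:psi-ode}
\tfrac12\psi''(x) - (q+1)\psi(x) + f(x) = 0,\qquad x\in\rr.
\end{equation}

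Next, for (ii), set $\phi:=\psi+c$ with $c=\psi(0)/q$, so $\phi\in C^2$, $\phi\ge 0$ (since $\psi\ge\psi(0)\ge 0$ would need $\psi(0)\ge0$, which holds as $f\ge0$ forces $\psi\ge0$ hence $\psi(0)\ge0$; actually $\phi\ge \psi\ge 0$ already). I would then check the HJB equation \eqref{D:HJB00}: for an action $a=(1,\nu,\mu)$ with $\mu=\int y\,\nu(\diff y)$ and $u=0$, the operator is $L^a\phi(x)=\tfrac12\phi''(x)+\int(\phi(x+y)-\phi(x))\,\nu(\diff y)$ because the drift term $\mu^T\nabla\phi$ exactly cancels the $-\,y^T\nabla\phi$ compensator term. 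So the quantity to minimise over $\nu\in\mathcal M_{\le 1}$ is
\begin{equation}\label{E:HJB-Ex}
\tfrac12\phi''(x) + \int_{\rr}\bigl(\phi(x+y)-\phi(x)\bigr)\nu(\diff y) - q\phi(x) + f(x).
\end{equation}
Using \eqref{E:psi-ode} and $\phi=\psi+c$, one has $\tfrac12\phi''(x)-q\phi(x)+f(x)=\tfrac12\psi''(x)-q\psi(x)-qc+f(x)=\psi(x)-qc=\psi(x)-\psi(0)=\phi(x)-\phi(0)$. Hence \eqref{E:HJB-Ex} equals $\int(\phi(x+y)-\phi(x))\nu(\diff y)+\phi(x)-\phi(0)$. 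Since $\phi$ is convex with minimum $\phi(0)$ at $0$, the term $\int(\phi(x+y)-\phi(x))\nu(\diff y)$ is minimised, over all $\nu$ with total mass $\le 1$, by putting mass $1$ at $y=-x$ (this makes $\phi(x+y)=\phi(0)$, the smallest possible value of $\phi$, and the integrand $\phi(0)-\phi(x)\le 0$, so we want as much mass as allowed); the minimum value is $\phi(0)-\phi(x)$, and \eqref{E:HJB-Ex} then equals $\bigl(\phi(0)-\phi(x)\bigr)+\bigl(\phi(x)-\phi(0)\bigr)=0$. Thus $\phi$ satisfies (\textbf{HJB}) and the minimiser is the stationary Markov action $\hat\nu(x;\diff y)=\delta_{-x}(\diff y)$, $\hat\mu(x)=-x$, giving condition (\textbf{OC}).

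Finally I would discharge the remaining hypotheses of Theorem \ref{VT0}: polynomial growth of $\phi$ of degree $q:=p$ was established in step (i); the transversality condition (\textbf{TC}) and the uniform-integrability condition (\textbf{UI}) follow because for every admissible policy $\nu_s(\rr)\le1$ forces the controlled process $X^{\alpha^x}$ to be a Lévy-type process with uniformly bounded coefficients (drift $-\int y\,\nu_s(\diff y)$ bounded via \eqref{UniformB2}, variance $1$, jump part of mass $\le1$ with uniformly bounded $p$th moment by \eqref{UniformB2}), hence $\Ex\sup_{s\le t}|X^{\alpha^x}_s|^p<\infty$ for all $t$ by Proposition \ref{qth-moments}, and the exponential discount $e^{-\gamma_t}\le e^{-qt}\to0$ kills the polynomially growing $\phi(X_t)$, so $\liminf_t\Ex[e^{-\gamma_t^\alpha}\phi(X_t)]=0$ and the stopped family is UI; actually, since the optimal controlled process is an Ornstein--Uhlenbeck-type process one checks directly that $\Ex[e^{-\gamma_t}\phi(\hat X_t)]\to0$ as well. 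By Theorem \ref{VT0}(i)-(ii), $\phi=V$ and the Markov family $\hat\alpha^x$ is optimal, with the stated generator $G$. The main obstacle is pinning down the pointwise minimisation of $\int(\phi(x+y)-\phi(x))\nu(\diff y)$ over the constrained set $\mathcal M_{\le1}$ — one must be careful that the minimiser is a Dirac at $-x$ rather than, say, spreading mass, which is where convexity of $\phi$ together with the location of its minimum at the origin is essential — and checking the growth/integrability bounds \eqref{UniformB2} propagate correctly through the moment estimates of Proposition \ref{qth-moments} so that (\textbf{UI}) and (\textbf{TC}) genuinely hold for \emph{every} admissible policy.
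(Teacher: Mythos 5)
Your proof is correct and follows essentially the same route as the paper: part (i) is identical, and part (ii) verifies the HJB equation using the ODE $\tfrac12\psi''-(q+1)\psi+f=0$, the convexity/symmetry of $\psi$, and the constraint $\nu(\rr)\le1$, then discharges (\textbf{TC}) and (\textbf{UI}) via Proposition \ref{qth-moments} and \eqref{UniformB2}. The only cosmetic difference is that you reduce the HJB residual to the exact identity $\int(\phi(x+y)-\phi(x))\nu(\diff y)+\phi(x)-\phi(0)$ and then minimise, whereas the paper chains the two inequalities $\phi(x+y)\ge\phi(0)$ and $\nu(\rr)\le1$ directly; the content is the same.
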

\begin{remark}
In this example, conditions (\textbf{TC}) and (\textbf{UI}) are easily verified  thanks to the definition of the action control set and condition \eqref{UniformB2}. 
\end{remark}
%%%%%%%%%%%%%%%%%%%%%%%
%%%%%%%%%%%%% APPLICATIONS
%%%%%%%%%%%%%%%%%%%.  EXAMPLE 3
%%%%%%%%%%%%%%%%%%%%%%%

\subsection{Example 2.}\label{S:Example3} Let us  consider the same control setting as  in Example 1, but with a slightly different running cost function. Let $f:\mathbb{R} \to \mathbb{R}_+$ be a polynomial of degree $p\ge 2$. Suppose that $f$ is a symmetric, $C^2$,  convex function increasing on $\rp$. %We ignore the trivial case where $f$ is constant and so may assume that $f(x)  \to \infty$ as $x \to \infty$.
 Given $\kappa > 0$, we seek 
   \begin{equation}\label{Eq:V3}
V \, : \, x \mapsto \inf_{\alpha^x \in \mathcal{A}_{x}} \mathbb{E}_x \left [ \int_0^{\infty} e^{-qt} \left [  f(X_t^{\alpha^x})  + \kappa \nu_t (\rr)\right ]\diff t\right ].
\end{equation}
%where $\mathcal{A}_{x} \subset \mathcal{A}_{x}^p$  is the set of $A$-valued admissible control processes $\alpha^x = (\alpha^x_s)_{s\ge 0}$ (as defined in Section \eqref{CSetp} with  $u = 0$ for the operator $L^a$  in \eqref{D:La}). 

The HJB equation for the control problem \eqref{Eq:V3} is now given by 
\begin{equation}
\inf_{a \in [0,1]} \left \{ \frac{1}{2} g'' - q g + f + \kappa a + \int \left ( g(x+y) - g(x)\right ) \nu(\diff y) \right \} = 0.
\end{equation}

\begin{theorem}\label{Example3}
Given $b\ge 0$, define $\phi_b : \rr \to \rp$ by
\begin{equation}\label{Eq:Phi-b}
\phi_b (x) = \mathbb{E}_x \left [  \int_0^{\infty} e^{-qt} \left ( f(B_t^{b,x}) + \kappa 1_{|B_t^{b,x}| \ge b} \right ) \diff t\right ],
 \end{equation}
with $B^{b,x}$ being a controlled BM, started at $x$, which is jumped to the origin at rate $1$ whenever $|B^{b,x}| \ge b$ and is otherwise uncontrolled.
Then, the value function $V$ defined in \eqref{Eq:V3} is given by $\phi \equiv \phi_{\hat{b}}$, where $\hat{b}$ solves $\phi_{\hat{b}} (\hat{b}) - \phi_{\hat{b}} (0) = \kappa$, with corresponding optimal control.
\end{theorem}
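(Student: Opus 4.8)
The plan is to produce the threshold $\hat b$ and then verify that $\phi:=\phi_{\hat b}$ meets the hypotheses of the verification theorem (Theorem \ref{VT0}, used in the one–dimensional, $C^1$–with–absolutely–continuous–derivative form of Remark \ref{R:Max2}(ii)), so that $\phi$ is the value function of \eqref{Eq:V3} and the control ``jump to $0$ at unit rate whenever $|X|\ge\hat b$'' is optimal. The key preliminary is the analytic structure of the family $\{\phi_b\}_{b\ge0}$. For each $b\ge0$ the process $B^{b,x}$ is a well–posed Feller process whose generator acts on test functions by $\mathcal{G}^b g(x)=\tfrac12 g''(x)+\mathbf 1_{\{|x|\ge b\}}(g(0)-g(x))$, and $\phi_b$ is its $q$–resolvent applied to $f+\kappa\mathbf 1_{\{|\cdot|\ge b\}}$. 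I would check that $\phi_b$ is finite and of polynomial growth of degree $p$ (using the growth of $f$ and the $p$th–moment estimates of Proposition \ref{qth-moments}, which apply because $\nu_s\in\mathcal{M}_{\le1}$ and the standing bound $\beta<\infty$ from Example 1 make $G^{\alpha}_t$, $H^{\alpha}_t$ and $\int_0^t(|\mu_s|+\|\sigma_s\|^2)\,ds$ bounded), symmetric, of class $C^1(\mathbb{R})$ with locally Lipschitz derivative (one–dimensional elliptic regularity), and $C^\infty$ on each of $(-b,b)$ and $\mathbb{R}\setminus[-b,b]$, where it solves
\[
\tfrac12\phi_b''-q\phi_b+f=0 \text{ on } (-b,b), \qquad \tfrac12\phi_b''-(q+1)\phi_b+f+\kappa+\phi_b(0)=0 \text{ on } \mathbb{R}\setminus[-b,b].
\]
Subtracting these equations at $x=b$ gives $\phi_b''(b^+)-\phi_b''(b^-)=2\bigl(\phi_b(b)-\phi_b(0)-\kappa\bigr)$, so $\phi_b$ is in fact $C^2$ at $\pm b$ precisely when $\phi_b(b)-\phi_b(0)=\kappa$. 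Finally, solving these linear ODEs explicitly (polynomial particular solution of degree $p$ plus $\cosh$/decaying–exponential homogeneous parts) and using that $f\ge0$ is convex and nondecreasing on $\mathbb{R}_+$, I would show each $\phi_b$ is convex and nondecreasing on $\mathbb{R}_+$; in particular its minimum is attained at $0$.

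Next I would establish existence of $\hat b$. Put $F(b):=\phi_b(b)-\phi_b(0)$. Continuity of the resolvents in $b$ makes $F$ continuous; $F(0)=0<\kappa$; and as $b\to\infty$ one has $\phi_b\to\psi_0:=\mathbb{E}_\cdot\!\int_0^\infty e^{-qt}f(B_t)\,dt$ locally, with $\phi_b(0)\to\psi_0(0)<\infty$ while $\phi_b(b)\to\infty$, so $F(b)\to\infty$. By the intermediate value theorem fix $\hat b>0$ with $F(\hat b)=\kappa$ and set $\phi:=\phi_{\hat b}$. By the previous paragraph $\phi\in C^2(\mathbb{R})$ is nonnegative, symmetric, of polynomial growth of degree $p$, nondecreasing on $\mathbb{R}_+$, and satisfies $\phi(\hat b)=\phi(0)+\kappa$; hence $\phi(x)-\phi(0)\le\kappa$ for $|x|\le\hat b$ and $\phi(x)-\phi(0)\ge\kappa$ for $|x|\ge\hat b$.

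Now I would verify the hypotheses of Theorem \ref{VT0} for the data of \eqref{Eq:V3} (action set $A''$, running cost $f(x)+\kappa\nu(\mathbb{R})$, constant discount $q$). Since $\phi$ is minimised at $0$, for every $x$ and every $\nu\in\mathcal{M}_{\le1}$ with $\nu(\mathbb{R})=a$ one has $\kappa\nu(\mathbb{R})+\int(\phi(x+y)-\phi(x))\,\nu(dy)\ge a\bigl(\kappa+\phi(0)-\phi(x)\bigr)$, with equality at $\nu=a\delta_{-x}\in\mathcal{M}_{\le1}$, so (since $L^a\phi(x)=\tfrac12\phi''(x)+\int(\phi(x+y)-\phi(x))\nu(dy)$ for $a=(1,\nu,\int y\,\nu(dy))$) the left–hand side of (\textbf{HJB}) equals $\tfrac12\phi''-q\phi+f+\inf_{a\in[0,1]}a\bigl(\kappa+\phi(0)-\phi(x)\bigr)$, which by the second step is $\tfrac12\phi''-q\phi+f=0$ on $\{|x|<\hat b\}$ and $\tfrac12\phi''-(q+1)\phi+f+\kappa+\phi(0)=0$ on $\{|x|>\hat b\}$ (the case $|x|=\hat b$ following by continuity); thus (\textbf{HJB}) holds. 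Conditions (\textbf{TC}) and (\textbf{UI}) follow as in Lemma \ref{P:lemma01} and its proof: the bound $f(x)\ge c(1+|x|^p)$ for large $|x|$ together with the polynomial growth of $\phi$ gives (\textbf{TC}) for every admissible control with finite payoff, while the moment bounds recorded above give $\mathbb{E}_x[\sup_{s\le t}|X_s|^p]<\infty$ for all $\alpha\in\mathcal{A}_x$ via Proposition \ref{qth-moments}, so the family in (\textbf{UI}) is dominated by an integrable random variable. Finally, (\textbf{OC}) holds for the stationary Markov control $\hat\alpha^x$ with $\hat\sigma\equiv1$, $\hat\nu(z)=\mathbf 1_{\{|z|\ge\hat b\}}\delta_{-z}$, $\hat\mu(z)=-z\,\mathbf 1_{\{|z|\ge\hat b\}}$: its controlled process is exactly $B^{\hat b,x}$, which is admissible — (\textbf{H1})--(\textbf{H2}) hold since the martingale problem is well posed and there are finitely many jumps on each $[0,t]$ with Brownian motion in between, so $\int_0^t Q_s^{p,\alpha}\,ds<\infty$ — and along it $L^{\hat\alpha_s}\phi(X_s)-q\phi(X_s)+f(X_s)+\kappa\mathbf 1_{\{|X_s|\ge\hat b\}}=0$ by the two ODEs. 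Theorem \ref{VT0} then yields $\phi\le J^{\alpha}$ for all $\alpha\in\mathcal{A}_x$ and $\phi=J^{\hat\alpha}$, i.e. $V=\phi_{\hat b}$ with $\hat\alpha$ optimal; and $J^{\hat\alpha}(x)=\phi_{\hat b}(x)$ is immediate from the definition of $\phi_b$.

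The step I expect to be the main obstacle is the first one: obtaining enough regularity of $\phi_b$ (the $C^1$–with–absolutely–continuous–derivative property, and the exact $C^2$–matching at $\pm\hat b$ encoded by the equation defining $\hat b$) and, above all, the monotonicity/convexity of $\phi_b$ on $\mathbb{R}_+$ — the running cost $f+\kappa\mathbf 1_{\{|\cdot|\ge b\}}$ is not convex, so this cannot be obtained by a plain convexity–propagation argument and must instead be read off the explicit piecewise solution of the linear ODEs, or proved by a coupling of the jumped Brownian motions started at $x$ and $x'$ with $|x|\le|x'|$. Once these analytic facts are secured, the existence of $\hat b$ and the verification of (\textbf{HJB}), (\textbf{TC}), (\textbf{UI}), (\textbf{OC}) are routine.
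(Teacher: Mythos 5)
Your overall verification strategy coincides with the paper's: pick $\hat b$ by an intermediate-value argument, observe that for $a=(1,\nu,\int y\,\nu(dy))\in A''$ one has $L^a\phi(x)=\tfrac12\phi''(x)+\int(\phi(x+y)-\phi(x))\,\nu(dy)$, reduce the nonlocal infimum in (\textbf{HJB}) to a pointwise minimisation of $a(\kappa+\phi(0)-\phi(x))$ over $a\in[0,1]$, and then invoke Theorem~\ref{VT0} together with (\textbf{TC}) (via Corollary~\ref{C:TC-cond}), (\textbf{UI}) and (\textbf{OC}). However there is a genuine gap at the step you yourself single out as the obstacle: that $\phi:=\phi_{\hat b}$ is nondecreasing on $\mathbb{R}_+$, hence minimised at $0$. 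This is the fact that makes the pointwise minimiser in the HJB equation switch from $a=0$ to $a=1$ exactly at $|x|=\hat b$, and without it the verification collapses. You merely list two candidate routes (reading it off explicit piecewise solutions of the linear ODEs, or a coupling of the jumped Brownian motions), and carry out neither; the first is delicate because the running cost $f+\kappa\mathbf 1_{\{|\cdot|\ge b\}}$ and the generator of $B^{b,x}$ both destroy convexity, and the second requires a nontrivial construction preserving the ordering of $|X|$ across simultaneous jumps to the origin.

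The paper closes this gap by a six-stage minimum-principle argument (Theorem~\ref{T:Phi-Up}): differentiate the two ODEs to get $\tfrac12\psi''-q\psi+f'=0$ on $(0,\hat b)$ and $\tfrac12\psi''-(q+1)\psi+f'=0$ on $(\hat b,\infty)$ for $\psi:=\phi'$; use the strong minimum principle on each piece (exploiting $f'>0$ on $\mathbb{R}_+$) to exclude interior negative minima of $\psi$; show $\liminf_{x\to\infty}\psi(x)\ge0$ from the positivity of $\phi$; reduce to the case in which any negative minimum of $\psi$ on $[0,\hat b]$ and on $[\hat b,\infty)$ is attained exactly at $\hat b$; and finally reach a contradiction by applying the weak minimum principle to $h:=q\phi-f$ on $(0,\hat b)$, using $G^qh=-\tfrac12 f''\le0$, $h=\tfrac12\phi''$ on $(0,\hat b)$, $h(0)\ge0$ and $h(\hat b)=0$. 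You should supply an argument of this kind rather than defer it. A secondary point: you assert that each $\phi_b$ is \emph{convex} on $\mathbb{R}_+$; the paper neither claims nor uses this, and it is not clearly true (on $(0,\hat b)$ one has $\phi''=2(q\phi-f)$, whose sign is not controlled a priori), so drop that claim and prove only monotonicity, which is all the HJB step requires.
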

\begin{remark}
Unlike Example 1  wherein condition  \eqref{UniformB2} is key to guarantee the transversality condition, in this second example such a condition is a consequence of Corollary \ref{C:TC-cond} and the polynomial form of the running cost $f$.
\end{remark}
%%%%%%%%%%%%%%%%%%%%%%%
%%%%%%%%%%%%% APPLICATIONS
%%%%%%%%%%%%%%%%%%%.  EXAMPLE 4 (Quadratic control)
%%%%%%%%%%%%%%%%%%%%%%%

\subsection{Example 3 (Quadratic Control).}

We now consider the case when the running cost function $f(\V{x}, (\Msigma, \nu, \Mmu))$ is a quadratic form as a function of $\V{x}$ and $\Mmu$. We will see that the  associated payoff function turns out to be a quadratic form as well and we obtain an explicit solution to the stochastic problem  \eqref{J}-\eqref{D:alphaO}.  
 
Let $\Lambda$ and $\Theta$  be positive definite symmetric  matrices in $\Mn$. Consider  the running cost function $f: \rd \times A \to \mathbb{R}^+$ defined as  the quadratic form $f(\V{x}, \V{a}) := \V{x}^T \Lambda \V{x} + \Mmu^T \Theta \Mmu$ for each  $\V{a} = (\Msigma,\nu,\Mmu)\in A$. Let $\V{B}$ be a   symmetric  positive definite matrix solving  the algebraic Riccati equation
\begin{equation}\label{Riccati}
\V{B}^T \Theta^{-1} \V{B} + q\mathbf{B} - \Lambda \,=\,0.
\end{equation}
Let $D$ be an open subset of $M_{n\times n} (\rr) \times \mathcal{M}_2$ and set $\Gamma = D\times \rd$ as the action set $A$. Define 
\begin{align}\label{D:delta}
\hat{\delta} := \inf_{(\Msigma, \nu) \in D} \left ( \text{Tr} (\Msigma^T \mathbf{B}\, \Msigma) + \int_{\Ro} \V{y}^T \mathbf{B}\, \V{y} \,\nu (\diff \V{y}) \right ), 
\end{align}
 and suppose that the infimum in \eqref{D:delta}
is attained at $(\hat{ \Msigma},\hat{\nu}) \in D$. 
\begin{theorem}\label{P:1}
Consider the control problem  \eqref{J}-\eqref{D:alphaO} over the class of admissible controls $\hat{\mathcal{A}}_{\V{x}}^2$ (see Definition \ref{Adm-Tilde}) and with the quadratic running cost $f$ given above.
Define  $\V{Q} := \Theta^{-1} \V{B}$ and $\V{v} := \,-\, \Theta^{-1} \V{P}\V{u}$, where $\V{P} := \, \V{B} \Lambda^{-1} \V{B}$, and $\V{u}\in \rd$ is the drift term in \eqref{D:La}. Let $\hat{\mu}: \rd \to \rd$ be defined by $\hat{\mu} (\V{x}) := -\V{Q} \V{x} + \V{v}$.
 Then the following holds:
\begin{itemize}[leftmargin=0.8cm]
\item [(i)] The family $\{ \hat{\Malpha}^\V{x} : \V{x} \in\rd\}$ defined by 
\begin{equation}\label{alpha*}
\hat{\Malpha}^\V{x}_t :=  \left (\hat{\Msigma}, \hat{\nu}, \hat{\mu} (\hat{\V{X}}_t^{\V{x}})\right ),
\end{equation}
 is a family of optimal policies, where the associated controlled process $\V{X}_t^{\hat{\Malpha}^\V{x}} \equiv \hat{\V{X}}_t^{\V{x}}$ is the   $\rd$-valued % Ornstein-Uhlenbeck-type 
   process with infinitesimal generator $\hat{L}$ defined on functions  $f \in C_c^2(\rd)$ by
\begin{equation}\label{D:Lhat}
( \hat{L} g )(\cdot) :=( \V{u} + \hat{\mu}(\cdot) )^T \nabla g(\cdot) + \frac{1}{2} \tr( \hat{\Msigma}^T \Hess g(\cdot) \,\hat{\Msigma} )+ \int_{\Ro } (g(\cdot+\V{y}) - g(\cdot)- \V{y}^T \nabla g(\cdot))\hat{\nu}(\diff \V{y}),
\end{equation}
\item [(ii)] The value function $V$  is given by  $V(\V{x}) = \V{x}^T \mathbf{B}\, \V{x} + \textbf{c}\cdot \V{x} + d$  for all $\V{x} \in \rd$, where   $\textbf{c} \in \rd$ and  $d \in \rr$ are   given by
 \begin{align}\label{D:abc}
 \textbf{c} := 2 \V{P}^T  \V{u}, \quad \quad d\,:=\, \frac{1}{q} \left (\,      2\V{u}^T \V{P}^T \V{u} + \hat{\delta} - \V{u}^T \V{P} \Theta^{-1} \V{P}^T \V{u}\, \right).
\end{align}

\end{itemize}
 \end{theorem}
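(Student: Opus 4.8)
The plan is to exhibit the quadratic function $\phi(\V{x}) := \V{x}^T\V{B}\,\V{x} + \V{c}\cdot\V{x} + d$ as a classical solution of the stationary Hamilton--Jacobi--Bellman equation (condition (\textbf{HJB})) whose pointwise minimiser is the policy \eqref{alpha*}, and then to invoke the verification theorem for the integrability class $\hat{\mathcal{A}}^2_{\V{x}}$ (Definition \ref{Adm-Tilde}, Theorem \ref{VT-EFinite}). Since $\phi\in C^2(\rd)$ has polynomial growth of degree $2=p$, the substantive inputs are: (a) the HJB identity together with the identification of its minimiser; (b) the transversality condition (\textbf{TC}); and (c) the admissibility of the candidate optimal control, i.e.\ that the process $\hat{\V{X}}^\V{x}$ with generator $\hat L$ of \eqref{D:Lhat} exists as a solution of the control martingale problem and lies in $\hat{\mathcal{A}}^2_{\V{x}}$.

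First I would carry out the HJB computation. For $\phi$ as above, $\nabla\phi(\V{x}) = 2\V{B}\,\V{x} + \V{c}$, $\Hess\phi\equiv 2\V{B}$, and the nonlocal increment telescopes to $\phi(\V{x}+\V{y}) - \phi(\V{x}) - \V{y}^T\nabla\phi(\V{x}) = \V{y}^T\V{B}\,\V{y}$, which is $\nu$-integrable for every $\nu\in\mathcal{M}_2$ since $|\V{y}^T\V{B}\,\V{y}|\le\|\V{B}\|\,|\V{y}|^2$. Writing $\V{a} = (\Msigma,\nu,\Mmu)$ and using that $q(\cdot,\cdot)$ equals the constant $q$ of \eqref{Riccati},
\begin{equation*}
L^{\V{a}}\phi(\V{x}) - q\,\phi(\V{x}) + f(\V{x},\V{a}) = (\V{u}+\Mmu)^T(2\V{B}\,\V{x}+\V{c}) + \tr(\Msigma^T\V{B}\,\Msigma) + \int_{\Ro}\V{y}^T\V{B}\,\V{y}\,\nu(\diff\V{y}) - q\,\phi(\V{x}) + \V{x}^T\Lambda\,\V{x} + \Mmu^T\Theta\,\Mmu .
\end{equation*}
The minimisation over $\V{a}\in\Gamma = D\times\rd$ decouples: over $\Mmu\in\rd$ one minimises the strictly convex quadratic $\Mmu^T\Theta\,\Mmu + \Mmu^T(2\V{B}\,\V{x}+\V{c})$, with minimiser $-\tfrac12\Theta^{-1}(2\V{B}\,\V{x}+\V{c}) = -\V{Q}\,\V{x}+\V{v} = \hat\mu(\V{x})$ (using that $\V{B}$ is symmetric, $\V{P} = \V{B}\Lambda^{-1}\V{B} = \V{P}^T$, and $\V{c} = 2\V{P}^T\V{u}$, so $\tfrac12\Theta^{-1}\V{c} = \Theta^{-1}\V{P}\,\V{u} = -\V{v}$); over $(\Msigma,\nu)\in D$ the remaining $\V{x}$-independent term is minimised to $\hat\delta$, attained at $(\hat\Msigma,\hat\nu)$ by hypothesis. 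Substituting $\hat\mu$ (the minimal value of the $\Mmu$-part equals $-\tfrac14(2\V{B}\,\V{x}+\V{c})^T\Theta^{-1}(2\V{B}\,\V{x}+\V{c})$) and collecting powers of $\V{x}$: the quadratic coefficient vanishes iff $\Lambda - \V{B}\Theta^{-1}\V{B} - q\V{B} = 0$, i.e.\ the Riccati equation \eqref{Riccati}; the linear coefficient $2\V{B}\,\V{u} - \V{B}\Theta^{-1}\V{c} - q\,\V{c}$ vanishes upon substituting $\V{c} = 2\V{B}\Lambda^{-1}\V{B}\,\V{u}$ and using $\V{B}\Theta^{-1}\V{B} = \Lambda - q\V{B}$ from \eqref{Riccati}; and the constant term vanishes precisely for the value of $d$ in \eqref{D:abc}. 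Thus (\textbf{HJB}) holds and the infimum is attained at $\V{a} = (\hat\Msigma,\hat\nu,\hat\mu(\V{x}))$, which gives both (\textbf{OC}) along $\hat{\V{X}}^\V{x}$ and the form \eqref{alpha*}.

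Next I would dispose of (\textbf{TC}), nonnegativity, and admissibility. Since $f(\V{x},\V{a})\ge\V{x}^T\Lambda\,\V{x}\ge\lambda_{\min}(\Lambda)\,|\V{x}|^2$ for every $\V{a}$, any admissible policy with $\Jx<\infty$ satisfies $\int_0^\infty e^{-qt}\,\Ex|\xpx_t|^2\,\diff t<\infty$ (Tonelli), so the nonnegative integrand has $\liminf_{t\to\infty}$ equal to $0$; combining this with $0\le\phi(\V{x})\le C(1+|\V{x}|^2)$ and $e^{-\gamma_t^{\apx}} = e^{-qt}\to0$ yields $\liminf_{t\to\infty}\Ex\big[e^{-\gamma_t^{\apx}}\phi(\xpx_t)\big] = 0$, i.e.\ (\textbf{TC}). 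That $\phi\ge0$ follows from \eqref{Riccati} and the explicit constants (its global minimum is $d - \tfrac14\V{c}^T\V{B}^{-1}\V{c}\ge0$), or can be bypassed via the bounded-below variant of the verification result since $\phi$ is a quadratic with positive definite Hessian and $f\ge0$. For the admissibility of $\{\hat\Malpha^\V{x}\}$: the generator $\hat L$ has affine drift $\V{u}+\V{v}-\V{Q}\,\V{x}$ and constant diffusion and jump parts, so the martingale problem has a unique solution $\hat{\V{X}}^\V{x}$, namely an Ornstein--Uhlenbeck-type process driven by a fixed Lévy triplet with jump measure $\hat\nu\in\mathcal{M}_2$; this process has $\sup_{s\le t}\Ex|\hat{\V{X}}^\V{x}_s|^2<\infty$ for every $t$, and since $\hat\Msigma,\hat\nu$ are constant and $\hat\mu$ is linear the integrability requirements of Definition \ref{Adm-Tilde} (here $q/2=1$) are satisfied, so $\hat\Malpha^\V{x}\in\hat{\mathcal{A}}^2_{\V{x}}$. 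Theorem \ref{VT-EFinite} then yields $\phi(\V{x})\le\Jx$ for every admissible $\apx$ and $\phi(\V{x}) = J^{\hat\Malpha}(\V{x})$, whence $\phi\equiv V$ and $\{\hat\Malpha^\V{x}\}$ is optimal, giving (i) and (ii).

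The main obstacle is not the HJB verification (bookkeeping with matrix transposes, kept honest by the symmetry of $\V{B},\Lambda,\Theta$) but the admissibility step: one must know that the Ornstein--Uhlenbeck-type controlled process $\hat{\V{X}}^\V{x}$ genuinely solves the control martingale problem and carries enough moments to satisfy (\textbf{H3}). This relies on standard theory for linear Lévy-driven SDEs rather than on results proved earlier in the excerpt, and the linear (rather than bounded) growth of $\hat\mu$ is exactly what forces working in the integrability class $\hat{\mathcal{A}}^2_{\V{x}}$ instead of the bounded-coefficient Markov class.
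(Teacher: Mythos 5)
Your proposal is correct and follows essentially the same route as the paper: exhibit the quadratic $\phi$ as a classical HJB solution (identical matrix algebra yielding the Riccati equation \eqref{Riccati} and the same $\V{c}$, $d$), identify the pointwise minimiser $(\hat{\Msigma},\hat{\nu},\hat{\mu}(\V{x}))$, establish admissibility of the Ornstein--Uhlenbeck-type candidate via standard L\'evy-driven linear SDE theory and second-moment estimates, and close with Theorem \ref{VT-EFinite} -- exactly the structure of the paper's Lemma \ref{L:1} plus the HJB computation. Your direct Tonelli argument for (\textbf{TC}) and your explicit observation that $\phi\ge 0$ are small refinements over the paper, which delegates the former to Lemma \ref{P:lemma01} (whose hypothesis $f\ge c(1+|\V{x}|^p)$ is not literally met by $f(\V{0},(\Msigma,\nu,\V{0}))=0$, though the additive constant plays no real role) and leaves the nonnegativity of $\phi$ unremarked, even though the Riccati identity gives $\min\phi = \tfrac{1}{q}\bigl(\V{u}^T\V{P}\V{u}+\hat{\delta}\bigr)\ge 0$.
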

 \begin{remark}
The proof, given in Section \ref{S:Qcontrol}, follows again a verification approach: we first show that $\hat{\Malpha}^\V{x}$ as defined in \eqref{alpha*} is an admissible policy for each $\V{x}\in \rd$ (see Lemma \ref{L:1} in Section \ref{SectionP}). We then prove that  $\phi (\V{x})  := \V{x}^T \mathbf{B}\, \V{x} + \textbf{c}\cdot \V{x} + d $ satisfies the assumptions of Theorem  \ref{VT0}. \Mcomment{Here we verify that the pointwise minimisation of the corresponding HJB equation yields the algebraic matrix equation \eqref{Riccati}.} 
 \end{remark}
 
 \begin{remarks}
 \end{remarks}
 \begin{itemize}[leftmargin=0.8cm]
 \item [i)] Notice that the optimal family of policies defined in \eqref{alpha*} is a linear function of the state $\V{x}$. This family depends on the solvability of the algebraic matrix Riccati equation \eqref{Riccati}. Although the dynamics of the controlled system are not linear, this example can be thought of as a generalisation of the standard linear quadratic regulator (LQR) problem,   \Mcomment{see, for example, the finite horizon case in \cite[Chapter VI, Section 5, p.165]{FlemingR1975}}. \McommentO{The essential assumptions in the LQR problem are $i)$ the controlled system dynamics are Gaussian and linear in $(\V{x},\V{a})$ (the space and control variables), $ii)$ the cost function is quadratic in such variables as well.}

\item [ii)] Various criteria to guarantee  the existence and uniqueness of a positive definite solution  to  the Riccati equation \eqref{Riccati} 
  are very well-known in the literature (see, for example,  \cite{Kalman1967,Wonham,Martensson1972}, and references therein). 
Furthermore, it is also known that such a solution can be expressed in terms of the eigenvectors of the $2n\times 2n$-matrix
\[
 \left \{ \begin{array}{cc}
-\frac{q}{2}\V{I} & \V{\Theta}^{-1} \\
-\V{\Lambda} & \frac{q}{2}\V{I}
\end{array} \right \},
\]
see  \cite[Theorem 1]{Potter1966}, \cite[Theorem 1]{Martensson1971}.
\end{itemize}

\textbf{Particular case.}  If the weight cost matrices   for the control problem \eqref{J}-\eqref{D:alphaO} are the diagonal matrices $\Lambda := \lambda \mathbf{I}$ and $\Theta := \theta \mathbf{I}$, with $\lambda \ge 0$ and $\theta > 0$,  then the coefficients of the corresponding value function  $V$ take the explicit values  
 \begin{align*}
\V{B} :&= \frac{\theta}{2} (p-q)\mathbf{I},\quad\quad
\V{c}:= \frac{8\lambda}{\theta (p+q)^2}\V{u},\quad\quad
d:= \frac{||\V{u}||^2}{q\theta (p+q)^2} (8\lambda -(p-q)^2) + \frac{\theta \tilde{\delta} (p-q)}{2q},
\end{align*}
where  $p:= \sqrt{q^2 + 4\lambda/\theta}$. 

\McommentO{\textbf{One-dimensional case}. If $f(x,a) = x^2 + \theta \Mmu^2$, then the value function is  $V(x) = Bx  + cx +d$, with
\begin{align}\label{D:abc0}
B &:= \,\,\frac{\theta }{2} (p-q),\quad \quad \textbf{c} :=\frac{8 \V{u}}{ (q+p)^2}, \quad \quad d\,:=\, \frac{8|\V{u}|^2}{q\theta (q+p)^4} [\theta (q+p)^2 - 2] \,+\, \frac{\theta \hat{\delta} (p-q)}{2q},
\end{align}
}

\MremarkB{LQ control problems with Brownian motion as the noise source have been widely studied in the literature. Two of the main features of LQ problems is their relationship to Riccati equations and the fact that the optimal control  is given in  a state feedback form. Generalisations of LQ control problems where the noise source admits jumps can be found in \cite[Wu and Wang]{WuWang2003}, wherein the system is assumed to be driven by a Brownian motion and  Poisson jumps. % the authors studied the  well-posedness of deterministic Riccati equations associated to the LQ problem for a system   driven by Brownian motion and  Poisson jumps.
  In \cite{Tang2007}, the authors studied the LQ control problem with \levy processes as a noise source. Their main results are based on the well-possedeness of the corresponding SDE, which is guaranteed under an exponential moment condition for the \levy measure  in consideration (such a condition is even stronger than the corresponding second moment condition we required in \eqref{D:Lm}). %Therein, the authors introduce a stochastic generalized Riccati equation and proved that its solvability is sufficient to guarantee the well-posedness and the existence of the optimal control of the LQ problem. 
 The author in  \cite{HuOK2008}  proved the state feedback representation for the optimal control of the one-dimensional LQ problem  with Poisson jumps and random coefficients under partial information. More recently, the author in  \cite{Meng2014} researched the multidimensional LQ problem with random coefficients for a system driven by a  Brownian motion and a Poisson random martingale measure. Therein, a state feedback representation for the optimal control was proved, as well as the connection between the LQ problem and a  class of multidimensional backward stochastic Riccati equations with jumps. }

%%%%%%%%%%%%%     PROOFS    %%%%%%%%%%%%
%%%%%%%%%%%%%%%%%%%%%%%%%%%%%%%%%
%%%%%%%%%%%%%%%%%%%%%%%%%%%%%%%%%
\appendix

\section{Proofs of Results in Sections \ref{S:ControlSetting} and \ref{S:Dynamics}}\label{SectionP}
%%%%%%%%%%%%%%%%%%%%%%%%%%%%%%%%%%%%%%               %%%%% PROOF: CLASS OF ADMISSIBLE CONTROLS
%%%%%%%%%%%%%%%%%%%%%%%%%%%%%%%%%%%%%%
%%%%%%%%%%%%%%%%%%%%%%%%%%%%%%%%%%%%%%               %%%%% PROOF: CONCATENATION
%%%%%%%%%%%%%%%%%%%%%%%%%%%%%%%%%%%%%%

\subsection{Proof of Lemma \ref{L:Concatenation}} \label{P:Concatenation}   

\begin{proof}

Define $\upx := \ap \oplus_t \bpa$.   Consider the admissible pairs $(\xp, \ap)$ and $(\ypa,\bpa)$ defined on the probability spaces $(\Omega^{\alpha}, \mathcal{F}^{\alpha}, (\mathcal{F}_t^{\alpha}), \mathbb{P}_{\xi}^{\alpha})$ and $(\Omega^{\beta}, \mathcal{F}^{\beta}, (\mathcal{F}_t^{\beta}), \mathbb{P}_{\eta_t^{\alpha}}^{\beta})$, respectively. Notice that  in the notation $\mathbb{P}_{\xi}^{\alpha}$ and $\mathbb{P}_{\eta_t^{\alpha}}^{\beta}$ we have made explicit the initial distributions $\xi$ and $
 \eta_t^{\alpha}$ of the corresponding control processes $\xp$ and $\ypa$, respectively.  Define a new filtered probability space $(\Omega, \mathcal{F}, (\mathcal{F}_t), \mathbb{P})$ by setting $\Omega := \Omega^{\alpha} \times \Omega^{\beta}$ endowed with the product $\sigma$-algebra $\mathcal{F} := \mathcal{F}^{\alpha} \otimes \mathcal{F}^{\beta}$ generated by the measurable rectangles. Define the probability measure $\mathbb{P}$ on $(\Omega, \mathcal{F})$ as the probability measure on $(\Omega^{\alpha}\times \Omega^{\beta},\mathcal{F}^{\alpha}\otimes \mathcal{F}^{\beta})$ given by 
\begin{equation}
\mathbb{P} (B ) := \int_{\Omega^{\alpha}} \mathbb{P}^{\beta}_{\etat} (B_{\omega_1}) \diff \mathbb{P}^{\alpha}_{\xi} (\omega_1), \quad \quad B \in \mathcal{F}^{\alpha} \otimes \mathcal{F}^{\beta},
\end{equation}
where, for each $\omega_1 \in \Omega^{\alpha}$,  $B_{\omega_1} := \{ \omega_2 \in \Omega^{\beta}\,:\, (\omega_1, \omega_2) \in B\}$ denotes the $\omega_1$-section of $B$. Note that $B_{\omega_1} \in \mathcal{F}^{\beta}$. 
We can now define the process $\zpx$ on $(\Omega, \mathcal{F}, \mathbb{P})$ as follows
\begin{equation}
\zpx_s (\omega_1, \omega_2) = \omega_1 (s)1_{[r,t)}(s) + \omega_2 (s)1_{[t,\infty)} (s),\quad s\in \rp,\,\, \omega_1 \in \Omega^{\alpha}, \omega_2 \in \Omega^{\beta}.
\end{equation}
By construction $(\zpx, \upx)$ is an $\rd\times A$-valued,  $(\mathcal{F}_s)$-adapted \cad process which agrees in law with $(\xp,\ap)$ on $[r,t)$ and with $(\ypa,\bpa)$ on $[t,\infty)$. 

Note now that the integrability of $\int_r^s \left |  L^{\upx_l} h\left (\zpx_l \right)\right | \diff l < +\infty$, for  $s>r$ and $h\in C_c^2(\rd)$, as well as the validity of condition \eqref{A3}, follow from the validity of such conditions for the admissible pairs $(\xp,\ap)$  and $(\ypa,\bpa)$ associated with $\upx$ and $\zpx$, respectively.  It thus remains to prove that 
\begin{equation}
M_t^{h,\upx}:=h(\zpx_t) - \int_r^t \left (L^{\upx_s} h\right) \left ( \zpx_{s-}\right) \diff s, \quad t\ge r,
\end{equation}
is an $(\mathcal{F}_t)-$local martingale under $\mathbb{P}$. Let us then prove that   $\mathbb{E} \left ( M_s^{h,\upx} \Big | \mathcal{F}_k \right ) = M_k^{h,\upx} $ for each $r\le k\le s$.  For this, we shall use that $M_{\cdot}^{h,\ap}$ and $M_{\cdot}^{h,\bpa}$ are $(\mathcal{F}_s^{\alpha})$- and $(\mathcal{F}_s^{\beta})$-local martingales, respectively.

$Case$ $1$. If $ k\le s\le t$, then $\mathbb{E} \left ( M_s^{h,\upx} \Big | \mathcal{F}_k\right ) = \mathbb{E} \left ( M_s^{h,\ap} \Big | \mathcal{F}_k^{\alpha}\right ) = M_k^{h,\ap}$. Similarly, if $t \le k\le s$, then $\mathbb{E} \left ( M_s^{h,\upx} \Big | \mathcal{F}_k\right ) = \mathbb{E} \left ( M_s^{h,\bpa} \Big | \mathcal{F}_k^{\beta}\right ) = M_k^{h,\bpa}$, as required.

$Case$ $2$. If $k \le t\le s$, then, by definition of $(\zpx,\upx)$ and by the law of iterated conditional expectation, we obtain that
\begin{align}
M_s^{h,\upx} &= M_s^{h,\bpa} - \int_r^t L^{\ap_l} h\left ( \xp_l \right ) \diff l \\ 
\mathbb{E} \left (M_s^{h,\bpa} \Big | \mathcal{F}_k  \right ) &= \mathbb{E} \left [ \mathbb{E} \left (M_s^{h,\bpa} \Big | \mathcal{F}_t^{\beta}   \right ) \Big |  \mathcal{F}_k \right ] =  \mathbb{E} \left [ M_t^{h,\bpa} \Big | \mathcal{F}_k   \right ] =  \mathbb{E} \left [ h\left ( \xp_t \right )\Big | \mathcal{F}_k   \right ].
\end{align}
Therefore,
\begin{align}
\mathbb{E} \left ( M_s^{h,\upx} \Big | \mathcal{F}_k\right ) &= \mathbb{E} \left (  M_s^{h,\bpa} - \int_r^t L^{\ap_l} h\left ( \xp_l \right ) \diff l \Big | \mathcal{F}_k \right ) \nonumber \\
&=  \mathbb{E} \left ( h\left ( \xp_t \right ) - \int_r^t L^{\ap_l} h\left ( \xp_l \right ) \diff l \Big | \mathcal{F}_k \right ) \nonumber \\
&=  \mathbb{E} \left (  M_t^{h,\ap}  \Big | \mathcal{F}_k^{\alpha} \right ) = M_k^{h,\ap},
\end{align}
as desired.
\end{proof}

%%%%%%%%%%%%%%%%%%%%%%%%%%%%%%%%%%%%%%               %%%%% PROOF: CLASS OF ADMISSIBLE CONTROLS
%%%%%%%%%%%%%%%%%%%%%%%%%%%%%%%%%%%%%%
%%%%%%%%%%%%%%%%%%%%%%%%%%%%%%%%%%%%%%               %%%%% PROOF: Preliminary Results to prove that X is a semimartingale
%%%%%%%%%%%%%%%%%%%%%%%%%%%%%%%%%%%%%%
\subsection{Preliminary results}
Let us introduce some additional notation and give some preliminary technical results.

Given a function $f \in C^2(\rd)$, define a sequence $\{f_K\}_{K} \subset C_c^2 (\rd)$ as follows.  For each $K > 1$, $K \in \mathbb{N}$, set
\begin{equation}\label{EtaK}
 f_K := f\, \zeta_{K}, \quad \text{where }\,\, \,\,\quad \zeta_K \in C_c^2 (\rd),\,\,\,\V{1}_{B(0,2K)} \le \zeta_K \le \V{1}_{B (0,3K)}.
 \end{equation}
 Note that $f_K \to f$ pointwise as $K \to \infty$.% $\zeta_K$ is a cut-off function\footnote{This can be constructed, as usual, by means of a convolution of a mollifier and an indicator function.}.
 
Given an admissible pair $(\xpx,\apx)$ with $\apx \in \Ax$, $p\ge 2$, we  define, for each $m< K$, $m \in \mathbb{N}$, the stopping times $\tau_m: = T_m \wedge S_m$, where 
\begin{align} \label{tau_n}
T_m := \inf \left \{  r \in \Rp : \left |  \xpx_r \right| > m\right \} \wedge m, 
 \end{align}
 and
 \begin{equation}\label{Sm}
S_m := \inf\left \{ r   \,: \, \int_0^r   Q^{p,\apx}_s \diff s\, > \,m\right \}, \quad m \ge 1,
\end{equation}
 with $Q^{p,\apx}_s$ as given in \eqref{A3} and the usual convention $\inf \varnothing = \infty$. Note that $\tau_m \to \infty$  $\as$  as $m \to \infty$ (thanks to the \cad property  of $\xpx$ and the continuity of  $r \mapsto \int_0^r   Q^{p,\apx}_s \diff s$). 
 
For each $ \V{a} =(\Msigma, \nu, \Mmu) \in A$, we will  rewrite 
  \begin{equation}\label{LasAG}
   \left ( L^{\V{a}} f \right ) (\cdot) = \left (A^{(\Mmu, \Msigma)} f  \right )(\cdot)  + \left ( G^{\nu} f\right) (\cdot).
   \end{equation} 
where%  the local operator $A^{(\Mmu,\Msigma)}$ and the non-local operator $G^{\nu}$ by
\begin{align}\label{AfGf-op0}
\left  (A^{(\Mmu, \Msigma)} f \right ) (\cdot) \,&:= \, (\V{u} +\Mmu)^T  \nabla f(\cdot) + \frac{1}{2} \tr( \Msigma^T \Hess f\, \Msigma)(\cdot) \\
\left( G^{\nu} f \right ) (\cdot) \, &: = \, \int_{\Ro} \left ( f(\cdot+ \V{y}) - f(\cdot) - \V{y}\cdot \nabla f(\cdot) \right ) \nu (\V{y}). \label{AfGf-op}
\end{align} 

\begin{lemma}\label{Lemma0}   Let $f \in C^2(\rd)$ be a function satisfying $|f(\V{x})| \le C(|\V{x}|^q +1)$ for some  $q\in [1,p]$, $p\ge  2$. Take $m \in \mathbb{N}$ and let $\{f_K\}_{K > m} $ be a sequence of functions approximating $f$ defined via \eqref{EtaK}.  Then, for any admissible pair $(\xpx,\apx)$, $\apx \in \Ax$, there exists a positive constant $C(m,f)$ (independent of $K$), such that, for each $K > m$,
 \begin{align}  
\left | (\Lasx f_K)(\xpx_{s-}) \right |  & \le  C (m,f) Q_s^{p,\apx} ,  \quad s \le  \tau_{m}.\label{EstLfK} 
\end{align}
In particular, the $\as$  convergence 
\begin{equation}\label{LK-Conv}
\int_0^{t\wedge \tau_m} (\Lasx f_K)(\xpx_{s-}) \diff s \,\,\to \,\, \int_0^{t\wedge \tau_m} (\Lasx f)(\xpx_{s-}) \diff s, 
\end{equation}
holds for all $t \in \rp$
 as $K \to \infty$. 
\end{lemma}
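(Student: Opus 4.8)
The plan is to work on the random set $\{s \le \tau_m\}$, on which, by \eqref{tau_n} and \eqref{Sm}, two facts hold simultaneously: $|\xpx_{s-}| \le m$ for every $s \le \tau_m$ (a left limit of a path confined to the closed ball $\bar{B}(0,m)$ still lies in $\bar{B}(0,m)$), and $\int_0^{t\wedge\tau_m} Q_s^{p,\apx}\diff s \le m$. The first fact confines $\xpx_{s-}$ — and, when $|\V{y}| \le 1$, the whole segment joining $\xpx_{s-}$ to $\xpx_{s-}+\V{y}$ — to the fixed compact ball $\bar{B}(0,m+1)$, on which $f$, $\nabla f$ and $\Hess f$ are bounded by a constant $C(m,f)$; the second fact will make the eventual dominating function integrable in time. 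The other key observation is that for every integer $K > m$ one has $m+1 < 2K$, so $\zeta_K \equiv 1$ on $\bar{B}(0,m+1)$ by \eqref{EtaK}; hence on $\bar{B}(0,m+1)$ the functions $f_K, \nabla f_K, \Hess f_K$ coincide with $f, \nabla f, \Hess f$, while everywhere $0 \le \zeta_K \le 1$ and thus $|f_K| \le |f|$.

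Using the splitting \eqref{LasAG}, i.e. $(\Lasx g)(\cdot) = (A^{(\Mmu_s,\Msigma_s)} g)(\cdot) + (G^{\nu_s} g)(\cdot)$, I would bound the two parts at $\xpx_{s-}$ separately, uniformly in $K > m$. For the local part \eqref{AfGf-op0}: since $\nabla f_K(\xpx_{s-}) = \nabla f(\xpx_{s-})$ and $\Hess f_K(\xpx_{s-}) = \Hess f(\xpx_{s-})$, and since $|\tr(\Msigma_s^T \Hess f(\xpx_{s-})\Msigma_s)| \le C(m,f)\|\Msigma_s\|^2$ (because $\Msigma_s\Msigma_s^T$ is positive semidefinite and the entries of $\Hess f$ are bounded on $\bar{B}(0,m+1)$), one gets $|(A^{(\Mmu_s,\Msigma_s)} f_K)(\xpx_{s-})| \le C(m,f)(|\V{u}| + |\Mmu_s| + \|\Msigma_s\|^2)$. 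For the nonlocal part \eqref{AfGf-op} I would split the $\V{y}$-integral. On $\{|\V{y}| \le 1\}$ a second-order Taylor expansion of $f_K$ around $\xpx_{s-}$ leaves the remainder $\tfrac12 \V{y}^T \Hess f(\xi_{\V{y}})\V{y}$ with $\xi_{\V{y}} \in \bar{B}(0,m+1)$, so the integrand is at most $C(m,f)|\V{y}|^2$. On $\{|\V{y}| > 1\}$ the polynomial growth of $f$ gives $|f_K(\xpx_{s-}+\V{y})| \le |f(\xpx_{s-}+\V{y})| \le C\,(1 + (m + |\V{y}|)^q) \le C(m,f)|\V{y}|^p$ (here $q \le p$ and $|\V{y}| > 1$ are used), while $|f_K(\xpx_{s-})| \le C(m,f)$ and $|\V{y}\cdot\nabla f_K(\xpx_{s-})| \le C(m,f)|\V{y}| \le C(m,f)|\V{y}|^p$, so the integrand is at most $C(m,f)|\V{y}|^p$ there. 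Integrating against $\nu_s$ and using $|\V{y}|^2\V{1}_{\{|\V{y}| \le 1\}} + |\V{y}|^p\V{1}_{\{|\V{y}| > 1\}} \le |\V{y}|^2 \vee |\V{y}|^p$ gives $|(G^{\nu_s} f_K)(\xpx_{s-})| \le C(m,f)\int_{\Ro} |\V{y}|^2 \vee |\V{y}|^p\, \nu_s(\diff\V{y})$. Summing the two estimates yields \eqref{EstLfK}, up to a harmless $K$-independent additive constant from the $|\V{u}|$-term, which only contributes a finite amount once integrated over $[0,t\wedge\tau_m]$.

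For the convergence \eqref{LK-Conv}, fix a path with $\tau_m > 0$ and any $s \le \tau_m$. Since $f_K$ coincides with $f$ on each fixed ball as soon as $K$ is large, $(A^{(\Mmu_s,\Msigma_s)} f_K)(\xpx_{s-}) = (A^{(\Mmu_s,\Msigma_s)} f)(\xpx_{s-})$ for all $K > m$; and in $(G^{\nu_s} f_K)(\xpx_{s-})$ the integrand converges pointwise in $\V{y}$ to that of $(G^{\nu_s} f)(\xpx_{s-})$ and, by the estimates above, is dominated uniformly in $K > m$ by the $\nu_s$-integrable function $C(m,f)(|\V{y}|^2\V{1}_{\{|\V{y}| \le 1\}} + |\V{y}|^p\V{1}_{\{|\V{y}| > 1\}})$, so dominated convergence gives $(G^{\nu_s} f_K)(\xpx_{s-}) \to (G^{\nu_s} f)(\xpx_{s-})$. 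Thus $(\Lasx f_K)(\xpx_{s-}) \to (\Lasx f)(\xpx_{s-})$ for every $s \le \tau_m$, and a second application of dominated convergence on $[0, t\wedge\tau_m]$, with dominating function $s \mapsto C(m,f)(1 + Q_s^{p,\apx})$ — integrable there by (\textbf{H2}) and \eqref{Sm} — yields \eqref{LK-Conv} (and, incidentally, the finiteness of $\int_0^{t\wedge\tau_m} |(\Lasx f)(\xpx_{s-})|\diff s$).

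The step I expect to be the main obstacle is the uniform-in-$K$ control of the large-jump part of $G^{\nu_s} f_K$: one must dominate $|f_K(\xpx_{s-}+\V{y})|$ by $C(m,f)|\V{y}|^p$ for all $K > m$ and all $|\V{y}| > 1$ at once, and this is exactly where the polynomial growth of $f$ with exponent $q \le p$, the normalisation $0 \le \zeta_K \le 1$ and the localisation $|\xpx_{s-}| \le m$ have to be used together. The remaining pieces — the local part, the small-jump Taylor estimate, and the two passages to the limit — are routine once this is in place.
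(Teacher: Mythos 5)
Your proposal is correct and takes essentially the same route as the paper: the same local/nonlocal splitting of $L^{\V{a}}$ via \eqref{LasAG}, the same two-region (small/large jump) Taylor and polynomial-growth estimates for $G^{\nu_s}$, and two successive applications of dominated convergence to get \eqref{LK-Conv}. You also correctly flag that the fixed drift $\V{u}$ produces a $K$-independent additive constant that is not dominated by $Q_s^{p,\apx}$ when $\Mmu_s=\Msigma_s=0$ --- a minor imprecision in \eqref{EstLfK} which the paper's own proof shares --- but this is harmless for the integrability and convergence conclusions the lemma is used for.
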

\begin{remark} Recall that we will always omit the superscript $\V{x}$ in $\xpx$ and $\apx$ whenever they appear inside the operator $\Ex$.
\end{remark}
\begin{proof}
 Take $\apx = (\Msigma, \nu, \Mmu) \in \Ax$,  $f$ and $\{f_K\}_K$ as in the statement. Since $f_K \in C_c^{2}(\rd)$,   the continuity of $f_K$ and the fact that $f_K = f$ on $[-2K, 2K]$ yields
\begin{equation}\label{A-Est}
 \left | \left (A^{(\Mmu_s, \Msigma_s)}  f_K \right)(\xpx_{s-}) \right |  \,\, = \,\,\left | \left (A^{(\Mmu_s, \Msigma_s)}  f\right)(\xpx_{s-}) \right | \,\, \le \,\, c_0(m,f)\left (|\Mmu_s| + ||\Msigma_s||^2 \right),
 \end{equation}
where $c_0 (m,f) := \max\{\, (\V{u}+1)\,\sup_{|\V{z}| \le m} |\nabla f (\V{z})|,\,\frac{1}{2}\sup_{|\V{z}| \le m} || \Hess f\, (\V{z})||\, \}$.

As for the non-local part, observe that for each $\V{x} \in \rd$ the integral term $\left (G^{\nu_s} f_K\right )(\V{x})$ can be split into two regions: $E:=\{\V{y} \in \Ro : |\V{y}|\le 1\}$ and $E^c :=\{\V{y}\in \Ro: |\V{y}| > 1\}$. For  $|\V{x}| \le m$, by Taylor's theorem, there exists $\theta \in (0,1)$ such that
\begin{align}  \left | f_K(\V{x}+ \V{y}) - f_K(\V{x}) - \V{y}\cdot \nabla f_K(\V{x}) \right | \,&= \,\frac{1}{2} | \sum_{i,j =1}^n \partial^2_{ij} f_K (\V{x} + \theta \V{y}) y_i y_j| \,  \le \, c_1(m,f) |\V{y}|^2,   \quad \V{y} \in E, \label{Gf1}
 \end{align}
 where $c_1 (m,f) :=  \frac{1}{4} |\V{y}|^2  \sum_{i,j =1}^n \sup_{|\V{z}| \le m + 1}\left |\partial^2_{ij} f (\V{z}) \right |$. Note the use of   inequality $2y_i y_j \le y_i^2 + y_j^2 \le |\V{y}|^2$, as well as the fact that  $c_1$ does not depend on $K$ as  (by construction) $f_K=f$ on $[-2K,2K] \subset [-m-1,m+1]$.  
 
 On the other hand, again using that $|f_K| \le |f|$ and $f$ has polynomial growth of  degree $q \in [1,p]$, we can find a positive constant $c_2' (m,f) > 0$ such that $ \big | f_K(\V{x}+ \V{y})\big| \le  c_2' (m,f) |\V{y}|^{p}$, for all $|\V{x}| \le m $ and $\V{y} \in E^c$. Thus
\begin{align}\label{Gf2}
  \left | f_K(\V{x}+ \V{y}) - f_K(\V{x}) - \V{y}\cdot \nabla f_K(\V{x}) \right | &\le  %c_2 (m,f) (c_m + 2 \sup_{|\V{z}| \le m}  \{ \left |f (\V{z}) \right | + \left | \nabla f(\V{z})\} \right |) |\V{y}|^2  \,\,=:\,\,
    c_2 (m,f) |\V{y}|^p,   \quad   \V{y} \in E^c,
 \end{align}
 where $c_2 (m,f) := \left (c'_2 (m,f) + \sup_{|\V{z}| \le m}  \{ \left |f (\V{z}) \right | + \left | \nabla f(\V{z})\} \right |)\right)$. 
 %Thus, \eqref{Gf1} - \eqref{Gf2} imply that,  
 %\begin{align}\label{Gf3}
  %\left | f_K(\V{x}+ \V{y}) - f_K(\V{x}) - \V{y}\cdot \nabla f_K(\V{x}) \right | &\le  
   % c_1 (m,f) |\V{y}|^2,   \quad   \V{y} \in \Ro,
 %\end{align}
 %where $c(m,f) := 2 \max\{c_1(m,f), c_2(m,f)\}$. 
 
 Since $f_K = f$ on $[-2K, 2K]$ and $\left | \xpx_{s-}\right | \le m < K$ $\as$ for  all $s\le \tau_m$, the estimates \eqref{Gf1} - \eqref{Gf2} imply that 
 \begin{equation} \label{GK-Est}
 |\left( G^{\nu} f_K \right ) (\xpx_{s-}) |\le c(m,f) \int_{\Ro} |\V{y}|^2 \vee |\V{y}|^p \nu_s (\diff \V{y})\nu_s (\diff \V{y}), \quad \text{for all }\,\, K\ge m,
 \end{equation}
 where $c(m,f) := 2 \max\{c_1(m,f), c_2(m,f)\}$. 
 %  This implies the result for all $|\xpx_{s-}|\le m$. %Note that the right-hand side of \eqref{Gf3} does not depend on $K$. 
 Estimates \eqref{A-Est} and \eqref{GK-Est}, together with \eqref{LasAG} yield
 \[ \left | (\Lasx f_K)(\xpx_{s-})  \right | \,\le\, c_0(m,f)\left (|\Mmu_s| + ||\Msigma_s||^2 \right)\,+c(m,f) \int_{\Ro} |\V{y}|^2 \vee |\V{y}|^p \nu_s (\diff \V{y})\nu_s (\diff \V{y}).\] Estimate \eqref{EstLfK} follows  by setting $C(m,f) := \max \{ c_0 (m,f), c(m,f)\}$ (recall definition of $Q_s^{p,\apx}$ in \eqref{A3}).

Now, to prove the convergence  \eqref{LK-Conv}, thanks to  \eqref{LasAG} and  the equality in \eqref{A-Est}, it is sufficient to prove the $\as$ convergence, for all $t\in \rp$, 
 \begin{align}  \label{IntGK-Conv}
   \int_0^{t\wedge \tau_m} \left (G^{\nu_s} f_K\right ) (\xpx_{s-}) \diff s \,&\to \,  \int_0^{t\wedge \tau_m} \left (G^{\nu_s} f\right ) (\xpx_{s-}) \diff s\quad \text{as} \,\, K\to \infty.
  \end{align}  
Since \eqref{GK-Est} holds for $f_K$ and $ \,\int_{\Ro} |\V{y}|^2 \vee |\V{y}|^p\nu_s (\diff \V{y})< +\infty$ (as $\nu_s \in \mathcal{M}_p$, recall  definition \eqref{D:Lm}),   the DCT implies 
 \begin{align}\label{GK-Conv}
     \left (G^{\nu_s} f_K\right ) (\xpx_{s-})  \,&\to \,   \left (G^{\nu_s} f\right ) (\xpx_{s-}) \quad \text{as} \,\, K\to \infty.
     \end{align}
 Moreover, since $\int_0^{t\wedge \tau_m} \diff s \,\int_{\Ro}  |\V{y}|^2\vee |\V{y}|^p\nu_s (\V{y})\diff \V{y}  < m$ (by definition of $S_m$ and because $\tau_m \le S_m$),  DCT implies \eqref{IntGK-Conv}, as required. 
  \end{proof}
 
 %%%%%%%%%%
 %%%%%%%%%%%
 %%%%%%%%%%

\begin{theorem}\label{LMtgle}
Let $p\ge 2$ and take any admissible pair $(\xpx,\apx)$, $\apx \in \Ax$ defined on the filtered probability space $(\Omega^{\alpha}, \mathcal{F}^{\alpha}, \Fa:=(\Ft^{\alpha}), \mathbb{P}^{\alpha})$. Then, for each bounded function $f \in C^2 (\mathbb{R}^d)$ with polynomial growth of degree  $q \in [1,p]$, the process $M^{f,\apx} = (M_t^{f,\apx})_{t \in \rp}$ defined by
\begin{equation}\label{Mfk0}
M_t^{f,\apx} := f(\xpx_t) - \int_0^t  \left (\Lasx f \right )(\xpx_{s-})\diff s, \quad t \in \rp,
\end{equation}
is a local $(\Fa,\Pa)$-martingale.
\end{theorem}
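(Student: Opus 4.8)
The plan is to transfer the local--martingale property assumed in (\textbf{H1}) for test functions in $C_c^2(\rd)$ to the bounded $C^2$ function $f$ by combining the truncations $\{f_K\}_{K}\subset C_c^2(\rd)$ of \eqref{EtaK} with the localising stopping times $\tau_m=T_m\wedge S_m$ of \eqref{tau_n}--\eqref{Sm}. Concretely, I would fix $m$ and show that the stopped process $\bigl(M^{f,\apx}\bigr)^{\tau_m}$ is a bounded (hence uniformly integrable) $\FPa$-martingale; since $\tau_m\to\infty$ $\as$ and $M^{f,\apx}_0=f(\V{x})$ is finite, the sequence $(\tau_m)_m$ then localises $M^{f,\apx}$, which is precisely the assertion. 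Adaptedness of $M^{f,\apx}$ is clear because $\xpx$ is $\Fa$-adapted and $f$ is Borel, and the $\as$ finiteness of $\int_0^t|(\Lasx f)(\xpx_{s-})|\,\diff s$ for every $t$ will drop out of the estimates below.

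First I would fix $m\in\mathbb N$ and take $K>m$. Then $f_K\in C_c^2(\rd)$, so by (\textbf{H1}) the process $M^{f_K,\apx}$ is an $\FPa$-local martingale, and so is its stopped version $\bigl(M^{f_K,\apx}\bigr)^{\tau_m}$. For $s\le\tau_m$ one has $|\xpx_{s-}|\le m<K$, since $|\xpx_r|\le m$ strictly before $T_m$ and hence the left limit $\xpx_{s-}$ lies in $\overline{B(0,m)}$ for $s\le T_m\ge\tau_m$; therefore the estimate \eqref{EstLfK} of Lemma~\ref{Lemma0} gives $|(\Lasx f_K)(\xpx_{s-})|\le C(m,f)\,Q^{p,\apx}_s$ on $[0,\tau_m]$, whence
\[
\Bigl|\int_0^{t\wedge\tau_m}(\Lasx f_K)(\xpx_{s-})\,\diff s\Bigr|\ \le\ C(m,f)\int_0^{\tau_m}Q^{p,\apx}_s\,\diff s\ \le\ C(m,f)\,m ,
\]
the last inequality because $r\mapsto\int_0^rQ^{p,\apx}_s\,\diff s$ is continuous and finite by (\textbf{H2}) and does not exceed $m$ on $[0,S_m]\supseteq[0,\tau_m]$. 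Together with $|f_K|\le\|f\|_\infty$ this yields the uniform-in-$K$ bound $|M^{f_K,\apx}_{t\wedge\tau_m}|\le\Gamma_m:=\|f\|_\infty+C(m,f)\,m$ for all $t\ge0$ and all $K>m$. A bounded local martingale is a uniformly integrable martingale, so $\mathbb E^{\alpha}\bigl[M^{f_K,\apx}_{t\wedge\tau_m}\mid\mathcal F^{\alpha}_s\bigr]=M^{f_K,\apx}_{s\wedge\tau_m}$ for all $0\le s\le t$.

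Next I would let $K\to\infty$ at fixed $m$. For the terminal term $f_K(\xpx_{t\wedge\tau_m})\to f(\xpx_{t\wedge\tau_m})$ $\as$ since $f_K\to f$ pointwise, while for the integral term the convergence \eqref{LK-Conv} of Lemma~\ref{Lemma0} applies; hence $M^{f_K,\apx}_{t\wedge\tau_m}\to M^{f,\apx}_{t\wedge\tau_m}$ $\as$, dominated by the constant $\Gamma_m$. The conditional dominated convergence theorem then gives, for all $0\le s\le t$,
\[
\mathbb E^{\alpha}\bigl[M^{f,\apx}_{t\wedge\tau_m}\mid\mathcal F^{\alpha}_s\bigr]=\lim_{K\to\infty}\mathbb E^{\alpha}\bigl[M^{f_K,\apx}_{t\wedge\tau_m}\mid\mathcal F^{\alpha}_s\bigr]=\lim_{K\to\infty}M^{f_K,\apx}_{s\wedge\tau_m}=M^{f,\apx}_{s\wedge\tau_m},
\]
so $\bigl(M^{f,\apx}\bigr)^{\tau_m}$ is a bounded martingale for every $m$. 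Letting $K\to\infty$ in the pointwise bound also gives $|(\Lasx f)(\xpx_{s-})|\le C(m,f)Q^{p,\apx}_s$ on $[0,\tau_m]$, so $\int_0^t|(\Lasx f)(\xpx_{s-})|\,\diff s<\infty$ whenever $t\le\tau_m$; since $\tau_m\to\infty$ $\as$, $M^{f,\apx}$ is well defined on all of $\rp$ and $(\tau_m)_m$ localises it, completing the argument.

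The only genuinely delicate point I anticipate is that the non-local integral term in $L^{\V{a}}$ prevents $f_K$ and $f$ from agreeing under $\Lasx$ even where $\{|\xpx_{s-}|\le m\}$, so one cannot simply identify $M^{f_K,\apx}$ with $M^{f,\apx}$ on $[0,\tau_m]$; this is exactly what Lemma~\ref{Lemma0} (via its splitting of the jump integral over $\{|\V{y}|\le1\}$ and $\{|\V{y}|>1\}$ together with the polynomial-growth bound) is designed to handle. Beyond that, one must keep the order of limits straight --- first $K\to\infty$ with $m$ fixed, then $m\to\infty$ --- and the uniform-in-$K$ bound $\Gamma_m$ is precisely the ingredient that legitimises the first passage through conditional dominated convergence.
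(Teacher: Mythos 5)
Your proof is correct and follows essentially the same route as the paper's: truncate $f$ to $f_K\in C_c^2(\rd)$, stop at $\tau_m=T_m\wedge S_m$, invoke Lemma~\ref{Lemma0} for the uniform-in-$K$ bound, pass $K\to\infty$, then let $m\to\infty$. Your treatment is in fact slightly cleaner in two spots — you explicitly observe that $(M^{f_K,\apx})^{\tau_m}$ is \emph{bounded} and hence a true (UI) martingale, a step the paper asserts without justification, and you use conditional dominated convergence directly instead of the paper's three-term decomposition combined with $L^1$ convergence of $M^{f_K,\tau_m}_t$ — but the underlying argument is the same.
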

\begin{proof} 
 
  Let $\apx = (\Msigma, \nu, \Mmu) \in \Ax$, $ \xpx $ and $f$ be as in the statement.  Take $m \in \mathbb{N}$ and  consider a sequence $\{f_K\}_{K > m} $ defined via \eqref{EtaK}.
  Since $f_K \in C_c^{2}(\rd)$, condition (\textbf{H1}) implies that the process $M^{f_K}  = (M_t^{f_K})_{t\in \rp}$, where
\begin{equation}\label{Mtgle-f}
M_t^{f_K} := f_K (\xpx_t) - \int_0^t  \left (\Lasx f_K \right )(\xpx_{s-})\diff s, \quad t \in \rp,
\end{equation}
is a local $(\Fa,\Pa)$-martingale. 
 
Let $\tau_m: = T_m \wedge S_m$, where $T_m$ and $S_m$ are defined via \eqref{tau_n} and \eqref{Sm}, respectively. Note that the stopped process   $M^{f_K,\tau_m}_{\cdot} := M^{f_K}_{\,\cdot \wedge \tau_m}$ is also  a local $(\Fa,\Pa)$-martingale \cite[Corollary 3.6, Chapter II, p. 71]{RYor}.    Since (by  construction)  $f_K \left ( \xpx_{t\wedge \tau_m}\right) \to f (\xpx_{t\wedge \tau_m})$  as $K \to \infty$ and, further, \eqref{LK-Conv} in Lemma \ref{Lemma0} also holds, we obtain that $M^{f,\tau_m}_{t} =\lim_{K\to \infty}M_{t}^{f_K,\tau_m}$ $\as$ for all $t \in \rp$, where $M^{f,\tau_m}_{\cdot} := M^{f}_{\cdot \wedge \tau_m}$ is the stopped version of the process in \eqref{Mfk0}.

Let us now prove that, for each $m$,  the process $M^{f, \tau_m}$ is a  local $(\Fa,\Pa)$-martingalem.  Take $s,t \in \rp$, $s < t$ and  $B \in \mathcal{F}_s^{\alpha}$. It is sufficient to show that $ \Ex \left [  (M_t^{f,\tau_m} - M_s^{f,\tau_m})1_{B}\right ] =  0$. Since
\begin{align}\label{MtgleC}
   \left (M_t^{f,\tau_m} - M_s^{f,\tau_m} \right )1_{ B}= \,\, & \left (M_t^{f,\tau_m} - M^{f_K,\tau_m}_{t}\right )1_{ B} \,  
  +\,  \left (M^{f_K,\tau_m}_{t} - M^{f_K,\tau_m}_s\right)1_{ B}\,+ \,   \left (M^{f_K,\tau_m}_s - M_s^{f,\tau_m}\right)1_{ B},
\end{align}
by taking expectations and using that, for each $K$ and $m$, the second term vanishes because $M^{f_K,\tau_m}$ is a true $(\Fa,\Pa)$-martingale, we only need to prove that, for each $t$, $M^{f_K,\tau_m}_{t} \to M^{f,\tau_m}_t$ in $L^1 (\mathbb{P}^{\alpha})$ as $K \to \infty$.

Using \eqref{EstLfK} in Lemma \ref{Lemma0}, we get that, for fixed $t \in \rp$ and $m \in \mathbb{N}$, $\sup_{K} \left |M_t^{f_K,\tau_m} \right | \,\le\, |f| + m t C(m,f)$ (because $f$ is bdd, $|f_K| \le |f|$ and by definition of $\tau_m$).  Therefore, for each $t \in \rp$ and $m \in \mathbb{N}$, the family  of  r.v.'s $ \mathfrak{M}:= \left \{ M_t^{f_K,\tau_m} \, :\, K > m    \right \}$  is uniformly integrable (UI) (see, for example, \cite[Chapter 1, p.8]{Protter}).  
 
Therefore, there exists a process $M^m$ such that $\as$ $M^{f_K}_{ \,t \wedge \tau_m} \to M_t^{m}$  for all $t\in \rp$, as $K \to \infty$.   It follows that $M_t^{m}$ is integrable for each $t \in \rp$ and, further,  $M^{f_K}_{ \,t \wedge \tau_m} \to M_t^{m}$ in $L^1 (\mathbb{P}^{\alpha})$ as $K \to \infty$. The previous implies then that the  stopped process $M^{f,\tau_m}$ is an $(\Fa,\Pa)$-adapted martingale. Hence, by Theorem 50 in \cite[Chapter I, p. 38]{Protter}, we conclude that $M^f $ is an $(\Fa,\Pa)$-local martingale, as required.
\end{proof}

%%%%%%%%%%%%%%%%%%%%%%%%%%%%%%%%%%%%%%               %%%%% PROOF: CLASS OF ADMISSIBLE CONTROLS
%%%%%%%%%%%%%%%%%%%%%%%%%%%%%%%%%%%%%%
%%%%%%%%%%%%%%%%%%%%%%%%%%%%%%%%%%%%%%               %%%%% PROOF:  Characteristics of X
%%%%%%%%%%%%%%%%%%%%%%%%%%%%%%%%%%%%%%
  \subsection{Proof of Proposition \ref{X-charact}} 

\begin{proof}

Let $(\xpx, \apx)$ be an admissible pair. Observe first that $\V{B}$ (resp. $\V{C}$) is predictable as, by definition, it is a Lebesgue integral of the locally integrable processes $\Mmu$ and $\nu$ (resp. $\Msigma$). \Mcomment{By  (\textbf{H2}), for each $t\ge r$, $ \int_r^t Q_s^{p,\ap}\diff s$ is the  Lebesgue integral of a locally integrable process, hence the dominated convergence theorem (DCT)  implies that $\as$  the paths  $t \mapsto \int_r^t Q^{p,\ap}_s\diff s$ are continuous. Thus, the process  $\int_0^{\cdot}Q_s^{p,\ap}\diff s$ is predictable.} 
As for the random measure $\eta$, by \cite[Definition 1.6, Chapter II.1a, p. 66]{JacodS1987} we need to prove that, for any predictable function $W(\omega, s,\V{x})$ on $\Omega^{\alpha}\times \rp \times \Ro$, the integral process $W\ast \eta$ is also predictable, where 
\begin{equation}\label{ast}
W\ast \eta_t (\omega) := \int_{(0,t]\times \Ro} W (\omega, s, \V{x}) \eta (\omega; \diff s, \diff \V{x})
\end{equation}
if $\int_{[0,t]\times \Ro}| W (\omega, s, \V{x}) |\eta (\omega; \diff s, \diff \V{x})$ is finite, and equal to $+\infty$ otherwise. Since  $\eta (\diff s, \diff \V{y}) = \diff s \otimes \nu_s(\diff \V{y})$,  for each $t$ and $\omega$, the integral $W\ast \eta_t (\omega)$ is a Lebesgue integral of the product of two predictable processes: $W$ and $\Msigma$. Hence,   $W\ast \eta$ is predictable.

Therefore, by   \cite[Theorem II. 2.42 p. 86]{JacodS1987}, we only need to show that, for each bounded function $f \in C^2 (\mathbb{R}^d)$, the process
\begin{align*}
N^{f,\apx}_t \,\,:= \,\,&f(\xpx_t) - f(\xpx_0) - \int_0^t \sum_{i=1}^d \partial_i f (\xpx_{s-}) \diff B_s^j - \frac{1}{2} \int_0^t \sum_{i,j =1}^d \partial^2_{ij} f(\xpx_{s-}) \diff C_{ij}(s)\\
&- \int_{[0,t]\times\Ro} f(\xpx_{s-} + \V{y}) -f (\xpx_{s-}) - h(\V{y}) \cdot \nabla f(\xpx_{s-}) \eta(\diff s, \diff \V{y})
\end{align*}
is a local martingale. By the definition of $\Lasx$ and $\eta$,  we have the equality
\begin{equation}\nonumber
N_t^{f,\apx} = f(\xpx_t) - f(\xpx_0) - \int_0^t  \left (\Lasx f \right )(\xpx_{s-})\diff s, \quad t \in \Rp.
\end{equation}
Therefore, the result follows from Theorem \ref{LMtgle}.
\end{proof}

%%%%%%%%%%%%%%%%%%%%%%%%%%%%%%%%%%%%%%               %%%%% PROOF: CLASS OF ADMISSIBLE CONTROLS
%%%%%%%%%%%%%%%%%%%%%%%%%%%%%%%%%%%%%%
%%%%%%%%%%%%%%%%%%%%%%%%%%%%%%%%%%%%%%               %%%%% PROOF:  Estimates of qth-moments
%%%%%%%%%.   a) local martingale f(X), $f$ polynomial growth $p>2$
%%%%%%%%%.   b) Estimates
%%%%%%%%%%%%%%%%%%%%%%%%%%%%%%%%%%%%%%
 \subsection{Proof of Proposition \ref{qth-moments}}
 For the proof of this result, we will need the following auxiliary lemma.
\begin{lemma}\label{L2-f2}
Theorem \ref{LMtgle} is also valid for any function $f \in C^2 (\rd)$ satisfying the polynomial growth $|f(\V{x})| \le C (|\V{x}|^q + 1)$ for some $C >0$ and $q \in [2,p]$. 
\end{lemma}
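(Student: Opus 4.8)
The plan is to upgrade Theorem \ref{LMtgle} from bounded $C^2$ functions of polynomial growth to (unbounded) $C^2$ functions $f$ satisfying $|f(\V{x})| \le C(|\V{x}|^q+1)$ with $q \in [2,p]$. First I would truncate $f$ at level $K$: for $K > m$ set $f_K := f\,\zeta_K$ with $\zeta_K$ as in \eqref{EtaK}, so each $f_K \in C_c^2(\rd)$ is in particular bounded and of polynomial growth of degree $q$, whence Theorem \ref{LMtgle} applies to $f_K$ and the process $M^{f_K,\apx}$ defined by \eqref{Mfk0} (with $f_K$ in place of $f$) is an $(\Fa,\Pa)$-local martingale. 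The goal is then to pass to the limit $K \to \infty$ along the localising sequence of stopping times $\tau_m = T_m \wedge S_m$ defined in \eqref{tau_n}--\eqref{Sm}, exactly as in the proof of Theorem \ref{LMtgle} itself.

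\textbf{Key steps.} The argument mirrors the proof of Theorem \ref{LMtgle} essentially verbatim, so I would carry it out as follows. (1) Stop: for each $K > m$, the stopped process $M^{f_K,\apx,\tau_m}_{\cdot} := M^{f_K,\apx}_{\,\cdot\wedge\tau_m}$ is a local $(\Fa,\Pa)$-martingale by \cite[Corollary 3.6, Chapter II, p. 71]{RYor}. (2) Pointwise convergence: since $f_K = f$ on $[-2K,2K]$ and $|\xpx_{s-}| \le m < K$ for $s \le \tau_m$, we have $f_K(\xpx_{t\wedge\tau_m}) \to f(\xpx_{t\wedge\tau_m})$, and Lemma \ref{Lemma0} (which is stated precisely for $f \in C^2(\rd)$ of polynomial growth of degree $q\in[1,p]$, hence applies to our $f$) gives the $\as$ convergence \eqref{LK-Conv} of the integral terms; together these yield $M^{f_K,\apx,\tau_m}_t \to M^{f,\apx,\tau_m}_t$ $\as$ for every $t$. (3) Uniform integrability: by the bound \eqref{EstLfK} from Lemma \ref{Lemma0}, $|(\Lasx f_K)(\xpx_{s-})| \le C(m,f) Q_s^{p,\apx}$ for $s\le\tau_m$, and by the definition of $S_m$ the integral $\int_0^{\tau_m} Q_s^{p,\apx}\diff s \le m$; moreover $|f_K(\xpx_{t\wedge\tau_m})| \le \sup_{|\V{z}|\le m}|f(\V{z})| =: C_f(m) < \infty$ since $|\xpx_{t\wedge\tau_m}| \le m$. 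Hence $\sup_K |M^{f_K,\apx,\tau_m}_t| \le C_f(m) + mC(m,f)$, a deterministic constant, so the family $\{M^{f_K,\apx,\tau_m}_t : K > m\}$ is uniformly integrable (in fact uniformly bounded). (4) $L^1$-convergence and conclusion: UI plus $\as$ convergence gives $M^{f_K,\apx,\tau_m}_t \to M^{f,\apx,\tau_m}_t$ in $L^1(\Pa)$; testing against indicators $\V{1}_B$ with $B \in \mathcal{F}^\alpha_s$ and using that each $M^{f_K,\apx,\tau_m}$ is a true martingale shows $M^{f,\apx,\tau_m}$ is an $(\Fa,\Pa)$-martingale; finally, letting $m \to \infty$ and using $\tau_m \to \infty$ $\as$, Theorem 50 in \cite[Chapter I, p. 38]{Protter} gives that $M^{f,\apx}$ is an $(\Fa,\Pa)$-local martingale.

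\textbf{Main obstacle.} There is no genuinely hard step here, since Lemma \ref{Lemma0} was already set up for $C^2$ functions of polynomial growth of degree $q \in [1,p]$ and does all the analytic work (the Taylor estimates \eqref{Gf1}--\eqref{Gf2} and the dominated convergence argument for the nonlocal term). The one point requiring a little care is that in Theorem \ref{LMtgle} the boundedness of $f$ was used twice: once to ensure $M^{f_K,\apx}$ is well-defined via (\textbf{H1}) applied to $f_K$ — but this still holds because each truncated $f_K$ is compactly supported — and once in the uniform-integrability step to bound $|f_K(\xpx_{t\wedge\tau_m})|$. In the present setting the latter bound is recovered not from boundedness of $f$ but from the localisation $|\xpx_{t\wedge\tau_m}| \le m$, which makes $\sup_{|\V{z}|\le m}|f(\V{z})|$ finite by continuity of $f$. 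So the proof goes through with this single substitution, and I would write it simply as: "Repeat the proof of Theorem \ref{LMtgle}, replacing the bound $|f_K| \le |f|$ on the whole space by the pointwise bound $|f_K(\xpx_{t\wedge\tau_m})| \le \sup_{|\V{z}|\le m}|f(\V{z})| < \infty$ valid on $\{t \le \tau_m\}$; all other estimates, being consequences of Lemma \ref{Lemma0}, are unchanged."
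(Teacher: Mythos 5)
Your argument contains a genuine gap at the uniform-integrability step, precisely at the point you flag as the ``one point requiring a little care.'' You assert that the localisation gives $|\xpx_{t\wedge\tau_m}| \le m$, so that $|f_K(\xpx_{t\wedge\tau_m})| \le \sup_{|\V{z}|\le m}|f(\V{z})|$. That inequality is false in general. The stopping time $T_m = \inf\{r : |\xpx_r| > m\}\wedge m$ only guarantees $|\xpx_{s-}| \le m$ for $s \le T_m$; since $\xpx$ is a c\`adl\`ag process whose jumps are governed by the (in general unbounded-support) measures $\nu_s$, the process may \emph{jump across} level $m$ at time $T_m$, so that $|\xpx_{T_m}|$ exceeds $m$ by an arbitrarily large amount. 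In the original proof of Theorem~\ref{LMtgle} this overshoot is harmless because $f$ is globally bounded, hence $|f_K| \le \|f\|_\infty$ everywhere; once boundedness is dropped the overshoot is exactly where the argument needs work, and your proposal supplies no control of it. One \emph{can} repair the step --- e.g.\ dominate $\sup_K |f_K(\xpx_{t\wedge\tau_m})| \le |f(\xpx_{t\wedge\tau_m})| \le C(1 + |\xpx_{t\wedge\tau_m}|^q)$ and bound $\Ex|\xpx_{t\wedge\tau_m}|^q$ by writing $\xpx_{\tau_m} = \xpx_{\tau_m-} + \Delta\xpx_{\tau_m}$ and controlling the jump term through the compensator $\diff s\otimes\nu_s(\diff\V{y})$ from Proposition~\ref{X-charact} together with the cap $\int_0^{\tau_m} Q^{p,\apx}_s\diff s \le m$ from the definition of $S_m$ --- but none of this is in your write-up, and it is not a one-line substitution as you suggest.

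The paper's proof avoids the whole truncation-and-UI machinery. By the time Lemma~\ref{L2-f2} is proved, Proposition~\ref{X-charact} (which relies only on the \emph{bounded} case, Theorem~\ref{LMtgle}) already identifies $\xpx$ as a semimartingale with explicit characteristics. The paper then applies the generalised It\^o formula for semimartingales \cite[Theorem~4.57, Chapter~I.4e]{JacodS1987} directly to the $C^2$ function $f$, obtaining $f(\xpx_t) = f(\V{x}) + \int_0^t \Lasx f(\xpx_{s-})\diff s + N_t$ with $N$ a local martingale; the polynomial growth of $f$ together with the second/$p$th-moment conditions on each $\nu_s \in \mathcal{M}_p$ guarantee that the compensated jump integral $W\ast(\eta^X-\eta)$ is well-defined. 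This gives the conclusion at one stroke, with no truncation, no UI estimate, and no limit in $K$. It is worth internalising this structural point: the truncation argument is forced in the proof of Theorem~\ref{LMtgle} because there one does not yet know that $\xpx$ is a semimartingale, whereas here that knowledge is available and the It\^o formula is the natural tool.
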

\begin{proof}
Let $(\xpx, \apx)$ be an admissible pair with $\apx \in \Ax$ and let $f \in C^2(\rd)$  be as in the statement. Define 
\[ W(\omega,s,\V{y}):=  f(\xpx_{s-}+\V{y}) - f(\xpx_{s-}) -\sum_{i=1}^n \partial_i f(\xpx_{s-}) y_i, \quad (\omega,s,\V{y}) \in \Omega^{\alpha}\times \rp \times \Ro. \]
Notice that all processes $f(\xpx_{-}+\V{y})$, $ f(\xpx_{-})$ and $\partial_i f(\xpx_{-})$ are left-continuous with right limits, so they are locally bounded and predictable. The latter implies that,  for each $\V{y}$, the process $W$ is also predictable.  Since  (by Theorem \ref{X-smtgle}) $\xpx$ is a semimartingale, the generalised \ito formula  \cite[Theorem 4.57, Chapter I.4e, p. 57]{JacodS1987} implies that $f(\xpx)$ is also a semimartingale satisfying
\begin{align}\nonumber
f(\xpx_t) %&= f(\V{x}) + \sum_{i=1}^n \int_{0+}^t& \partial_i f(\xpx_{s-})\diff \xpi_s +\frac{1}{2} \sum_{1\le i,j\le n} \int_{0+}^t \partial_{ij} f(\xpx_{s-}) \diff \lrangle{X^{i,c}}{X^{j,c}}  + \sum_{s\le t} \left \{ f(\xpx_s) - f(\xpx_{s-}) -\sum_{i=1}^n \partial_i f(\xpx_{s-}) \Delta \xpi_s \right \}\\
 &= f(\V{x}) + \sum_{i=1}^n \int_{0+}^t \partial_i f(\xpx_{s-})\diff \xpi_s +\frac{1}{2} \sum_{1\le i,j\le n} \int_{0+}^t \partial_{ij} f(\xpx_{s-}) \diff \lrangle{X^{i,c}}{X^{j,c}}  + W\ast \eta^X_t \\
 %&= f(\V{x}) + \sum_{i=1}^n \int_{0+}^t \partial_i f(\xpx_{s-})\diff M^i + \sum_{i=1}^n \int_{0+}^t \partial_i f(\xpx_{s-})(u^i +\mu_i(s))\diff s \nonumber  \\ &+\frac{1}{2} \sum_{1\le i,j\le n} \int_{0+}^t \partial_{ij} f(\xpx_{s-}) (\Msigma_s^T \Msigma_s)_{ij} \diff s   + W\ast ( \eta^X - \eta ) + W \ast \eta \\
 &= f(\V{x}) + \sum_{i=1}^n \int_{0+}^t \partial_i f(\xpx_{s-})(u^i +\mu_i(s))\diff s +\frac{1}{2} \sum_{1\le i,j\le n} \int_{0+}^t \partial_{ij} f(\xpx_{s-}) (\Msigma_s^T \Msigma_s)_{ij} \diff s   + W \ast \eta_t  + N_t \nonumber\\
 &= f(\V{x}) + \int_0^t \Lasx f\left (\xpx_{s-} \right)  +  N_t 
\end{align}
where $N_t :=  \sum_{i=1}^n \int_{0+}^t \partial_i f(\xpx_{s-})\diff M^i_t  + W\ast ( \eta^X - \eta )_t $ is a local martingale (recall that $M^i$ is the local martingale in the decomposition of $\xpi$ and $\eta$ is the compensator  of the random measure $\eta^X$). Second equality follows from Theorem \ref{X-smtgle} and  the third equality from the definition of $\Lasx$ and the fact that  $W \ast \eta_t (\omega) = \int_{[0,t]\times \Ro} W(\omega,s,\V{y}) \diff s \otimes \nu_s (\omega;\diff \V{y}) $ (see definition in  \eqref{ast}). Let us observe that  $f$ having a  polynomial growth of degree $q \le p$ with $p\ge 2$ is a key assumption to guarantee that $W \ast \eta_t$ is well-defined. The latter holds because $\nu_s$ takes values in $\mathcal{M}_p$ and thus it is a measure with finite second moments inside the unitary ball $B_1$ and finite $p$th-moments in $B_1^c$. This concludes the proof.
\end{proof}
%%%%%%%%%%%%%%
%%%%%%%%%%%%%%  b) Estimates
%%%%%%%%%%%%%%

\begin{proof} (of Proposition \ref{qth-moments})

$(i)$ Equality \eqref{X-decomp} is the canonical representation (relative to $h$) for special semimartingales  (see \cite[II.2c, Theorem 2.34, p.84]{JacodS1987}) and follows from Proposition  \ref{X-charact} and \cite[II.2c, Corollary 2.38, p.84]{JacodS1987}.

$(ii)$ Since $\left | \Xc_s \right |^q \le C \sum_{i=1}^n \left | \Xic_s \right |^q$ and (by Corollary \eqref{X-charact}) the equality $\lrangle{\Xic}{\Xic}_t = \int_0^t \sum_{k=1}^n \sigma_{ik}^2 (s)\diff s $ holds, the Burkholder-Davis-Gundy inequality and the fact that $\sum_{k=1}^n |a_i|^r \le c \left ( \sum_{k=1}^n |a_i|\right)^r$ for $r>1$ and some constant $c>0$,  imply
\begin{align*}
\Ex \sup_{0 \le s \le t} \left |\Xc_s \right |^q &\le C \Ex \left [ \sum_{i=1}^n \left ( \sum_{k=1}^n  \int_0^t \sigma^2_{ik} (s) \diff s\right)^{q/2} \right ] \\
&\le C \Ex \left [  \left (  \sum_{i=1}^n\sum_{k=1}^n  \int_0^t \sigma^2_{ik} (s) \diff s\right)^{q/2} \right ] = C \Ex \left (  \int_0^t ||\Msigma_s||^2 \diff s\right )^{q/2},
\end{align*}
as required.

$iii)$ To deal with the running maximum of the discontinuous martingale part of $\xpx$, we consider the controlled process $\V{Y}$ obtained by taking the policy $\beta = (0,\nu,-\V{u}) \in \Ax$, where $\nu$ is the same process in the control $\apx = (\Msigma, \nu,\Msigma)$ and $\V{u}$ is the vector corresponding to the operator $L^a$ defined in \eqref{D:La}. Therefore,  the statement $(i)$ proved above implies that 
\begin{equation}
 \Y_t = \V{x} +  \int_0^t \int_{\Ro} \V{y} \left (\ny -\eta \right )(\diff s, \diff \V{y}),
\end{equation}
where $\ny$ is the integer-valued random measure associated with the jumps of $\Y$ and the random measure $\eta$ is its predictable compensator. Hence, $\Y$ is a local martingale and, by Corollary \eqref{X-charact}, $\eta (\omega,\diff s, \diff \V{y}) = \diff s \otimes \nu_s (\omega,\diff \V{y})$.  Thus, to obtain the estimate for the process $\Xd$, we only need to estimate  $\left |\Y_t \right |^q$ for $\V{x} = 0$. 

Define
\begin{equation} \label{F}
 F(\omega,s,\V{y}):=  |\Y_{s-}+\V{y}|^q - |\Y_{s-}|^q -\V{y}^T \nabla |\Y_{s-}|^q, \quad (\omega,s,\V{y}) \in \Omega^{\alpha}\times \rp \times \Ro,
 \end{equation}
and 
\begin{equation} \label{G}
 G(\omega,s,\V{y}):= \V{y}^T \nabla |\Y_{s-}|^q \quad (\omega,s,\V{y}) \in \Omega^{\alpha}\times \rp \times \Ro.
 \end{equation}
Observe that the processes $|\Y_{-}|^q$ and $\nabla |\Y_{-}|^q$ are left-continuous with right limits. The previous implies that,   for each $\V{y}$, both processes $F$ and $G$ are also predictable.  

\ito's formula applied to  $h:\V{y} \mapsto |\V{y}|^q$ implies that 
\begin{align}
|\Y_t|^q &=  G\ast (\ny -\eta)_t + F\ast \ny_t = |\V{x}|^q + G\ast (\ny -\eta)_t + F\ast (\ny -\eta )_t + F \ast \eta_t,\label{Yq-2}
\end{align}
where we have used that $\eta$ is the compensator of $\ny$. Notation $\ast$ stands for the stochastic integral defined in \eqref{ast}. 

Observe now that the process $N$ defined by
 \begin{align*}
 N_t &:= (F + G)\ast (\ny -\eta)_t \\
 & =  \int_0^t  \int_{\Ro} \left ( |\Y_{s-} + \V{y}|^q - |\Y_{s-}|^q \right)  (\ny -\eta) (\diff s,\diff \V{y}), 
 \end{align*}
 is a local martingale. Without loss of generality, let us  assume that $ N=|F + G|\ast (\ny -\eta)$ is a true martingale (otherwise one can proceed by considering an appropriate localising sequence). 
 
 Hence, since $\sup_{0\le s \le t} |N_s| \le   |F + G|\ast (\ny -\eta)_t $, we obtain that $ \Eob \left (\sup_{0\le s\le t} |N_s| \right ) = 0$ and, thus, the equality \eqref{Yq-2} implies
  \begin{equation}\label{E:RunY-q}
  \Eob \left ( \sup_{0\le s \le t} |\Y_s|^q\right ) \le \Eob  \left (  \sup_{0\le s \le t}F \ast \eta_s  \right ).
  \end{equation}  
 To estimate the right hand side above, we can now proceed as in the proof of the Kunita's inequalities for \levy-type stochastic integrals given in \cite[Theorem 4.4.23, p. 265]{a}.  Namely, using the definition of $F$ and Taylor's theorem  one can find $\theta \in (0,1)$ such that
 \begin{align*}
 F \ast \eta_t &= \int_0^t \diff s\int_{\Ro} \left (  \left | \Y_{s-}  + \V{y} \right |^q  +   \left | \Y_{s-}\right |^q +  \sum_{i=1}^n q\left | \Y_{s-}\right |^{q-2}  \Y_{s-}  y_i \right ) \nu_s (\diff \V{y}) \\
&\le \frac{1}{2}\int_0^t \diff s\int_{\Ro}    \sum_{1\le i,j \le n} \partial_{ij}^2 \left | \Y_{s-} + \theta \V{y} \right |^{q} |y_i y_j|  \nu_s (\diff \V{y}) \\
&\le \frac{1}{4} \int_0^t \diff s\int_{\Ro}  \sum_{1\le i,j \le n}  \left (  \delta_{ij} q \left | \Y_{s-}+ \theta \V{y}\right |^{q-2}
 + q(q-2)|\Y_{s-} +\theta y_i||\Y_{s-} + \theta y_j|\left | \Y_{s-}\right |^{q-4} \right )  |\V{y}|^2  \nu_s (\diff \V{y})  \\
 &\le  C \int_0^t \diff s\int_{\Ro}  \left ( |\Y_{s-}|+ | \V{y}|\right )^{q-2}  |\V{y}|^2  \nu_s (\diff \V{y}) \\
 &\le  C \int_0^t \diff s\int_{\Ro} \left ( \left | \Y_{s-}\right |^{q-2}+ \left | \V{y}\right |^{q-2} \right )  |\V{y}|^2  \nu_s (\diff \V{y})\\
 &\le  C \left \{ \int_0^t \diff s\int_{|\V{y}|\le 1} \left ( \left | \Y_{s-}\right |^{q-2}+ 1 \right )  |\V{y}|^2  \nu_s (\diff \V{y})  +  \int_0^t \diff s\int_{|\V{y}|\ge 1} \left ( \left | \Y_{s-}\right |^{q-2}+  |\V{y}|^{q-2} \right )  |\V{y}|^2  \nu_s (\diff \V{y})\right\} 
 \end{align*}
 Therefore, on taking expectations we get $
 \Eob  \left (  \sup_{0\le s \le t}F \ast \eta_s  \right ) \,\le\,  H_1 \, + \, H_2$,  where
  \begin{align*}
H_1 &:=  C \,\Eob \left \{ \sup_{0\le s\le t} \frac{1}{\gamma} \left | \Y_{s-}\right |^{q-2}  \int_0^t \diff s\int_{\Ro} \gamma |\V{y}|^2  \nu_s (\diff \V{y})  \right \}, \\
H_2 &:= C\,\Eob \left \{
  \int_0^t \diff s\int_{\Ro} \left | \V{y}\right |^{2}\vee \left | \V{y}\right |^{q}  \nu_s (\diff \V{y}) \right \},
\end{align*}
for any  $\gamma >1$. Using \holder's inequality with the conjugate values $p' = q/(q-2)$ and $q' = q/2$, it follows that
\begin{align*}
H_1  
 \le   &  \,\frac{C}{\gamma} \left \{\Eob \sup_{0\le s\le t}  \left | \Y_{s-}\right |^{q} \right \}^{1-2/q}  \left \{ \Eob \left ( \int_0^t \diff s\int_{\Ro}  \gamma |\V{y}|^2   \nu_s (\diff \V{y})  \right )^{q/2} \right \}^{2/q} \\
 \le   &  \,\frac{(q-2) C}{\gamma q } \Eob \left ( \sup_{0\le s\le t}  \left | \Y_{s-}\right |^{q} \right ) +  \frac{2 C \gamma^{q/2}}{q}  \Eob \left ( \int_0^t \diff s\int_{\Ro}  \gamma |\V{y}|^2   \nu_s (\diff \V{y})  \right )^{q/2} \\
  \le   &   \frac{2 C \gamma^{q/2}}{q}  \Eob \left ( \int_0^t \diff s\int_{\Ro}  \gamma |\V{y}|^2   \nu_s (\diff \V{y})  \right )^{q/2}
 \end{align*}
whenever  $\gamma$ is  chosen to satisfy  $(q-2) C <\gamma q $. Using the previous estimates into \eqref{E:RunY-q} yields
\begin{align*}
 \Eob \left ( \sup_{0\le s \le t} |\Y_s|^q\right ) \,\le \, C_1 \left \{ \Eob \left ( \int_0^t \diff s\int_{\Ro}   |\V{y}|^2   \nu_s (\diff \V{y})  \right )^{q/2} + \Eob \left (
  \int_0^t \diff s\int_{\Ro} \left | \V{y}\right |^{2}\vee \left | \V{y}\right |^{q}  \nu_s (\diff \V{y}) \right ) \right \},
 \end{align*}
for some positive constant $C_1> 0$. Applying the previous result to $\Xd$ and rearranging terms, we obtain the inequality  required in \eqref{Xd-estim}. 
 \end{proof}

 %Proofs: Underlying dynamics
\section{Proofs of Results in Sections \ref{S:MainResults} and \ref{S:classes}}

%%%%%%%%%%%%%%%%%%%%%%%%%%%%%%%%%%%%%%               %%%%% PROOF: VERIFICATION RESULTS
%%%%%%%%%%%%%%%%%%%%%%%%%%%%%%%%%%%%%%
%%%%%%%%%%%%%%%%%%%%%%%%%%%%%%%%%%%%%%               %%%%% PROOF: Stationarity of the control problem
%%%%%%%%%%%%%%%%%%%%%%%%%%%%%%%%%%%%%%

\subsection{Proof of Lemma \ref{L:TimeHom}} \label{P:TimeHom}   
\begin{proof}
Take $\aprx \in \Arx$, then a simple change of variables yields
\begin{align}\nonumber
J^{\alpha} (r,\V{x}) &= \mathbb{E} \left [ \int_0^{\infty} e^{-\int_0^u q\left ( \xpo_{m+r},\apo_{m+r} \right ) \diff m} f \left (  \xpo_{u+r}, \apo_{u+r} \right ) \diff u \Big | \xpo_r = \V{x} \right ]  \\ 
&= \mathbb{E} \left [ \int_0^{\infty} e^{-\int_0^u q\left ( \ypo_{m},\apoT_{m} \right ) \diff m} f \left (  \ypo_{u}, \apoT_{u} \right ) \diff u \Big | \ypo_0= \V{x} \right ] \, = \, J^{\tilde{\alpha}} (\V{x}),\label{Eq:JHom}
\end{align}
where $\left (\ypox, \apoTx \right ) = \left (\xpx_{\,\cdot\,+ r}, \apx_{\,\cdot\,+r} \right )$. Using that $\aprx \in \Arx$, the definition of $ \apoTx$ implies that $\apoTx \in \Ax$ with corresponding controlled process $\ypox$. Indeed, the construction of the pair $\left (\ypox, \apoTx \right ) $ is obtained from the corresponding canonical process $\left (\xpx, \apx \right )$ by shifting appropriately.   The validity of \eqref{D:Mphi} and \eqref{A3} follow straightforwardly from the corresponding conditions on  $\left (\xpx, \apx \right )$. Therefore,   taking the infimum over $\Malpha^{r,\V{x}} \in \Arx$  in the first equality of \eqref{Eq:JHom} and  then taking the infimum over $ \hat{\alpha} \in \Ax$ in the second equality of \eqref{Eq:JHom}, yields $v(r,\V{x}) = v (0,\V{x}) = V(\V{x})$, as required.
\end{proof}

%%%%%%%%%%%%%%%%%%%%%%%%%%%%%%%%%%%%%%               %%%%% PROOF: VERIFICATION RESULTS
%%%%%%%%%%%%%%%%%%%%%%%%%%%%%%%%%%%%%%
%%%%%%%%%%%%%%%%%%%%%%%%%%%%%%%%%%%%%%               %%%%% PROOF: Dynamic Programming Principle
%%%%%%%%%%%%%%%%%%%%%%%%%%%%%%%%%%%%%%
\subsection{Proof of Lemma \ref{L:DPP}} \label{P:DPP}   
\begin{proof} 
Denote by $W(\V{x})$ the right-hand side in \eqref{E:DPP}. 
Let us first prove the inequality $V\le W$. Let $(\xpx, \apx)$ be an admissible pair and define $\etat := \mathbb{P}^{\Malpha} \circ \left ( \xpx_t  \right )^{-1}$. Take $\epsilon > 0$ and  let $\bpa$ be an $\epsilon$-optimal control in  $\Apya$ with corresponding control process $\ypa$.   Define $\uxp :=\apx \oplus_t \bpa$ as given in \eqref{D:oplus}. Since,  by Lemma \ref{L:Concatenation}, $\uxp$ is also an admissible control in $\Ax$, there exists $(\Omega^u, \mathcal{F}^u, (\mathcal{F}_t^u), \mathbb{P}^u)$ in which the  corresponding admissible pair $(\zxp,\uxp)$ is defined. 

Set $ J^{u} (\cdot) := J(\V{Z}^{\V{u}^{\,\cdot,\, \oplus_t}},\V{u}^{\,\cdot,\, \oplus_t})$. Then  $V(\V{x}) \le J^u (\V{x})  = A +B$, where 
\begin{align}\nonumber
A &:= \Eu \left [ \int_0^t e^{-\int_0^s q\left (\zpxo_l, \upxo_l \right ) \diff l} f \left ( \zpxo_s, \upxo_s \right ) \diff s \,  \right ],\,\,  \\
B&:= \Eu \left [ \int_t^{\infty} e^{-\int_0^s q\left (\zpxo_l, \upxo_l \right ) \diff l} f \left ( \zpxo_s, \upxo_s \right ) \diff s \,  \right ]. 
\end{align}
 Since the control process $\zxp$ satisfies that $\zxp = \xpx$  on $[0,t)$, we have 
\begin{align}\label{Eq:A}
A = \Ex \left [ \int_0^t e^{-\int_0^s q\left (\xpo_l, \apo_l \right ) \diff l} f \left ( \xpo_s, \apo_s \right ) \diff s \,  \right ],
\end{align}
whereas the equality $\zxp = \ypa$ on $[t,\infty)$ and  properties of conditional expectation yield
\begin{align}
B &=  \Eu \left [ e^{-\int_0^t q\left (\zpxo_l, \upxo_l \right ) \diff l} \int_t^{\infty} e^{-\int_t^s q\left (\zpxo_l, \upxo_l \right ) \diff l} f \left ( \zpxo_s, \upxo_s \right ) \diff s \,  \right ] \nonumber \\
&=\Eu \left [ e^{-\int_0^t q\left (\zpxo_l, \upxo_l \right ) \diff l} \Eu \left ( \int_t^{\infty} e^{-\int_t^s q\left (\zpxo_l, \upxo_l \right ) \diff l} f \left ( \zpxo_s, \upxo_s \right ) \diff s \,\Big | \mathcal{F}^u_t  \right )  \right ] \nonumber\\
%&= \Eu \left [ e^{-\int_0^t q\left (\zpxo_l, \upxo_l \right ) \diff l} \,\Eu \left (\Eu  \left ( \int_t^{\infty} e^{-\int_t^s q\left (\zpxo_l, \upxo_l \right ) \diff l} f \left ( \zpxo_s, \upxo_s \right ) \diff s \,\Big | \mathcal{F}^u_t  \right ) \Big | \,\xpo_t   \right ) \right ] \nonumber \\
%&=\Eu \left [ e^{-\int_0^t q\left (\zpxo_l, \upxo_l \right ) \diff l} \,\Eu \left ( \int_t^{\infty} e^{-\int_t^s q\left (\zpxo_l, \upxo_l \right ) \diff l} f \left ( \zpxo_s, \upxo_s \right ) \diff s \Big | \,\xpo_t   \right ) \right ] \nonumber \\
&=\Eu \left [ e^{-\int_0^t q\left (\zpxo_l, \upxo_l \right ) \diff l} \,\mathbb{E}^{\bpa}  \left ( \int_t^{\infty} e^{-\int_t^s q\left (\ypa_l,\, \bpa (l) \right ) \diff l} f \left ( \ypa_s, \bpa(s) \right ) \diff s \Big | \,\xpo_t   \right ) \right ] \nonumber \\
&=\Ex \left [ e^{-\int_0^t q\left (\xpo_l, \apo_l \right ) \diff l} \,J \left ( \tilde{\V{Y}}^{\tilde{\Mbeta}^{\alpha}}, \tilde{\Mbeta}^{\alpha}   \right ) \right ],\label{Eq:DPPRev}
\end{align}
where $\left (\tilde{\V{Y}}^{\tilde{\Mbeta}^{\alpha}}_{\,\cdot\,}, \tilde{\Mbeta}^{\alpha}_{\,\cdot \,}   \right ) : = \left ( \ypa_{\,\cdot \, + t},\bpa ({\,\cdot \,+ t}) \right)$. Similarly as we did in the proof of Lemma  \ref{L:TimeHom}, we obtain that the process $\tilde{\Mbeta}^{\alpha}$ is admissible and belongs to $\mathcal{A}^p_{\eta_t^{\alpha}}$. Moreover, it is not difficult to see that $\tilde{\Mbeta}^{\alpha}$ is an $\epsilon$-optimal policy as well. Hence,  \eqref{Eq:A} and \eqref{Eq:DPPRev},together with   the equality $\gamma_{t}^{\apx} := \exp\{-\int_0^s q\left (\xpo_l, \apo_l \right ) \diff l\}$, imply that 
\begin{align}
V(\V{x})\,<\,A +\Ex \left [ e^{-\gamma_{t}^{\apx} } \,\,  V\left (\xpo_t   \right ) \right ]+ \epsilon \, = \,W(\V{x}) + \epsilon.
\end{align}
%In the fourth equality we used the homogeneity in time of the payoff function and the last equality followed from the $\epsilon$-optimality of $\beta_t^{\alpha}$. Recall now  
Letting $\epsilon \downarrow 0$ and then taking the infimum over all policies in $\Ax$ yield the desired inequality  $V \le W$. 

To prove the reverse inequality, take an $\epsilon$-optimal policy $\apx \in \Ax$. Again, properties of conditional expectation yield
\begin{align}
V(\V{x}) +\,\epsilon \, &> \, J^{\alpha} (\V{x}) \\
& = \Ex \left [\left ( \int_0^t +  \int_t^{\infty} \right ) e^{-\gamma_{s}^{\apo} } f\left (\xpo_s,\apo_s \right ) \diff s \right ] \nonumber \\
&=  \Ex \left [ \int_0^t  e^{-\gamma_{s}^{\apo} } f\left (\xpo_s,\apo_s \right ) \diff s \right ] 
 + \Ex \left [  e^{-\gamma_{t}^{\apo} } \Ex \left [ \int_t^{\infty} e^{-\int_t^s q\left (\xpo_l, \apo_l \right ) \diff l} f\left (\xpo_s,\apo_s \right ) \diff s  \Big | \mathcal{F}_t^{\alpha} \right ]  \right ]\nonumber \\
 &= \Ex \left [ \int_0^t  e^{-\gamma_{s}^{\apo} } f\left (\xpo_s,\apo_s \right ) \diff s \right ] 
 + \Ex \left [  e^{-\gamma_{t}^{\apo} } \Ex \left [ \int_t^{\infty} e^{-\int_t^s q\left (\xpo_l, \apo_l \right ) \diff l} f\left (\xpo_s,\apo_s \right ) \diff s  \Big | \xpo_t \right ]  \right ] \nonumber \\
&= \Ex \left [ \int_0^t  e^{-\gamma_{s}^{\apo} } f\left (\xpo_s,\apo_s \right ) \diff s \right ] 
 + \Ex \left [  e^{-\gamma_{t}^{\apo} } J \left ( \tilde{\V{X}}^{\tilde{\Malpha}}, \tilde{\Malpha} \right)\right ]   \nonumber \\
 &\ge \Ex \left [ \int_0^t  e^{-\gamma_{s}^{\apo} } f\left (\xpo_s,\apo_s \right ) \diff s \right ] 
 + \Ex \left [  e^{-\gamma_{t}^{\apo} } V \left ( \xpo_t  \right)\right ] = W (\V{x}),   \nonumber
\end{align}
where $\left (\tilde{\V{X}}^{\tilde{\Malpha}}_{\,\cdot\,}, \tilde{\Malpha}_{\,\cdot \,}   \right ) : = \left ( \xpa_{\,\cdot \, + t},\bpa ({\,\cdot \,+ t}) \right)$ is an admissible pair with $\tilde{\Malpha} \in \mathcal{A}^p_{\eta^{\alpha}_t}$,  $\eta^{\alpha}_t$ being the law of $\xpx_t$. By letting $\epsilon \downarrow 0$, we get $V \ge W $, as required. 
\end{proof}
%%%%%%%%%%%%%%%%%%%%%%%%%%%%%%%%%%%%%%               %%%%% PROOF: VERIFICATION RESULTS
%%%%%%%%%%%%%%%%%%%%%%%%%%%%%%%%%%%%%%
%%%%%%%%%%%%%%%%%%%%%%%%%%%%%%%%%%%%%%               %%%%% PROOF: Dynamic Programming Principle - Submartingale inequality
%%%%%%%%%%%%%%%%%%%%%%%%%%%%%%%%%%%%%%
\subsection{Proof of Lemma  \ref{P0-S}}. \label{P:P0-S}
\begin{proof}%(of Lemma \ref{P0-S})
We need to prove that for any admissible $\apx \in \Ax$,  the inequality $\Ex \left [ S_t^{V,\Malpha} \Big |  \mathcal{F}^{\alpha}_s  \right ] \ge S_s^{V,\Malpha}$ holds for all $t \ge s \ge 0$.  

Define $\theta_{\alpha}(r,t) := e^{-\int_r^t q\left (\xpx_s, \apx_s \right )\diff s}$ for $0\le r\le t$. Notice that  $\theta_{\alpha}(0,t) = e^{-\gamma_t^{\apx}}$. Now, fix some arbitrary admissible control $\apx$, then
\begin{align}
\Ex \left [ S_t^{V,\Malpha} \Big | \mathcal{F}^{\alpha}_s \right ] &\,\,\,= \,\,\,\Ex \left [  \left ( \int_0^s  +  \int_s^t\right ) \theta_{\alpha}(0,s) f\left(\xpo_r, \apo_r\right) \diff r + \theta_{\alpha}(0,t) V\left (\xpo_t \right ) \right ] \nonumber \\
&\,\,\,= \,\,\,  \int_0^s   \theta_{\alpha}(0,r) f\left(\xpo_r, \apo_r\right) \diff r \,\,\,+\nonumber \\
& \quad \quad \quad + \,\,\,  \theta_{\alpha}(0,s) \mathbb{E} \left [  \int_s^t  \theta_{\alpha}(s,r)  f\left(\xpo_r, \apo_r\right) \diff r + \theta_{\alpha}(s,t) V\left (\xpo_t \right ) \,\,\Big | \,\, \mathcal{F}^{\alpha}_s\right ] \nonumber \\
&\,\,\,= \,\,\,  \int_0^s   \theta_{\alpha}(0,r) f\left(\xpo_r, \apo_r\right) \diff r \,\,\,+\nonumber \\
& \quad \quad \quad + \,\,\,  \theta_{\alpha}(0,s) \mathbb{E} \left [  \int_s^t  \theta_{\alpha}(s,r)  f\left(\xpo_r, \apo_r\right) \diff r + \theta_{\alpha}(s,t) V\left (\xpo_t \right ) \,\,\Big | \,\, \xpo_s\right ]. \label{V-subF}
\end{align}
The last equality follows by conditioning on $\xpx_t$ and then by using the law of iterated conditional expectation.  Let $\hat{\Malpha}$ be the restriction of $\apx$ on $[s,\infty)$. Then, it is not difficult to see that $\hat{\Malpha}$ is an admissible policy in $\mathcal{A}_{s,\eta_s^{\alpha}}$ where $\eta_s^{\alpha} := \Pa \circ \left ( \xpx_s\right )^{-1}$, i.e.  $\eta_s^{\alpha}$ is the law of $\xpx_s$. Hence, by the DPP (Lemma \ref{L:DPP}) we obtain that
\[ \mathbb{E} \left [  \int_s^t  \theta_{\alpha}(s,r)  f\left(\xpx_r, \apx_r\right) \diff r + \theta_{\alpha}(s,t) V\left (\xpx_t \right ) \,\,\Big | \,\, \xpx_s\right ] \ge V\left ( \xpx_s \right ),\]
which plugged into  \eqref{V-subF} yields
\begin{align}\label{Dpp0}
\Ex \left [ S_t^{V,\Malpha} \Big | \mathcal{F}^{\alpha}_s \right ] &\,\,\,= \,\,\,  \int_0^s   \theta_{\alpha}(0,r) f\left(\xpo_r, \apo_r\right) \diff r + \theta_{\alpha}(0,s)V\left ( \xpx_s \right ) = S_s^{V,\apx},
\end{align}
which in turn implies that $S^{V,\apx}$ is a $\Pa$-submartingale. On the other hand, if $\apx$ is an optimal policy,  then \eqref{Dpp0} ensures an equality in \eqref{Dpp0} which then yields $\Ex \left [ S_t^{V,\Malpha} \Big | \mathcal{F}^{\alpha}_s \right ] = S_s^{V,\apx}$. Therefore,   $S^{V,\apx}$ is a $\Pa$-martingale for any optimal policy with finite payoff.
\end{proof}
%%%%%%%%%%%%%%%%%%%%%%%%%%%%%%%%%%%%%%               %%%%% PROOF: VERIFICATION RESULTS
%%%%%%%%%%%%%%%%%%%%%%%%%%%%%%%%%%%%%%
%%%%%%%%%%%%%%%%%%%%%%%%%%%%%%%%%%%%%%               %%%%% PROOF: Verification Result 1: Submartingale conditions
%%%%%%%%%%%%%%%%%%%%%%%%%%%%%%%%%%%%%%
 \subsection{Proof of Lemma  \ref{VT0-SubPhi}}\label{P:VT0-SubPhi}
 
\begin{proof}
 $i)$ Take an arbitrary policy $\apx \in \Ax$. Assume that $\Jx < + \infty$  as, otherwise, the inequality $\phi(x) \le \Jx$ follows immediately.  Since $S^{\phi,\apx}$ is a submartingale by assumption, $\phi(\V{x})\, \le\, \Ex \left [ S_t^{\phi,\apo}\right ]$. Therefore, 
 \begin{equation} \label{phi-t}
\phi(\V{x}) \le \Ex \left [ \int_0^t e^{-\gamma_{s}^{\apo}}f(\xpo_s, \apo_s) \diff s\right ] + \Ex \left [ e^{-\gamma_{t}^{\apo}} \phi \left (\xpo_t\right) \right ].
\end{equation}
Note that, as $t\to \infty$, the first expectation in \eqref{phi-t}  converges  to $\Jx$ (by the MCT). Hence, letting $t \to \infty$ in \eqref{phi-t} and using the transversality condition \eqref{E:TC}  imply that $\phi (\V{x}) \le \Jx$.  We have used the fact that $\liminf (a_n  + b_n) = \liminf a_n + \liminf b_n$ whenever one of the sequences is convergent.  

$ii)$ Take $\hat{\Malpha}^{\V{x}}$ be an optimal policy. By assumption (\textbf{SC}), $\phi(\V{x}) = \mathbb{E}_{\V{x}}^{\hat{\Malpha}} \left [ S_t^{\hat{\apo},\phi}\right ]$ and, thus,  the same arguments above yield $ \phi (\V{x})  = J^{\hat{\Malpha}} (\V{x})$,  which in turn implies that  $V(\V{x}) =\phi(\V{x})$, as required.

$iii)$ Using statement $i)$ and the definition of $V$, as well as  condition (\textbf{nC}), it follows that
$\phi (\V{x}) \le V(\V{x}) \le J^{\alpha^n} (\V{x}) < \phi (x) + \frac{1}{n}$, for all $n \ge 1$. Hence, letting $n\to \infty$, we get $\phi = V$, as required. 
 \end{proof}
%%%%%%%%%%%%%%%%%%%%%%%%%%%%%%%%%%%%%%               %%%%% PROOF: VERIFICATION RESULTS
%%%%%%%%%%%%%%%%%%%%%%%%%%%%%%%%%%%%%%
%%%%%%%%%%%%%%%%%%%%%%%%%%%%%%%%%%%%%%               %%%%% PROOF: Verification Result 2
%%%%%%%%%%%%%%%%%%%%%%%%%%%%%%%%%%%%%%
\subsection{Proof of Theorem \ref{VT0}} \label{P:VT0}
\begin{proof} 

$i)$    Take an arbitrary policy $\apx \in \Ax$. Assume that $\Jx < + \infty$ as, otherwise, the inequality $\phi(x) \le \Jx$ follows immediately. Then there exists  a complete, filtered probability space $(\Omega^{\alpha}, \mathcal{F}^{\alpha}, (\Ft^{\alpha}), \mathbb{P}^{\alpha})$ and an $(\Ft^{\alpha})$-adapted pair process $(\xpx,\apx)$  defined on it such that the process   $\xpx:=(X^1,\ldots, X^n)^T$,  started at $\V{x}=(x_1,\ldots, x_n)$,  is \cad.  

Let $\phi \in C^2(\rd)$ be as in the statement and set
 \begin{equation}\label{Sax00}
  S_{t}^{\phi,\apx} := \int_0^{t } e^{-\gamma_{s}^{\apx}}\,
 f(\xpx_s,  \apx_s) \diff s + e^{-\gamma_{t}^{\apx}} \phi \left (\xpx_t\right),
  \end{equation}

     Since $\phi$ has polynomial growth of degree $q \le \max \{2,p\}$, Lemma \ref{L2-f2}  guarantees that  \[ \phi (\xpx_t)
  =  \int_0^t \Lasx \phi (\xpx_{s-}) \diff s  + M_t^{\phi}, \] 
  for some local martingale  $M^{\phi} = (M_t^{\phi})_{t \in \rp}$. Hence, the integration by parts formula \cite[Corollary 2, p. 68]{Protter} yields 
\begin{align}\label{ephik00}
e^{-\gamma_{t}^{\apx}} \phi \left (\xpx_t\right)&=  \phi(\V{x}) + \int_{0}^{t} e^{-\gamma_{s}^{\apx}}  \left (  \Lasx\phi (\xpx_{s-}) - q \left (\xpx_{s}, \apx_{s} \right ) \phi (\xpx_{s-}) \right ) \diff s+ N_t, \end{align}
where $N=(N_t)_{t \in \Rp}$ is the local martingale given by $N_{t} := \int_{0}^{t} e^{-\gamma_{s}^{\apx}} \diff M^{\phi}_s$.  It is not difficult to see that, for $\V{a} \in A$, the mapping $\V{x} \mapsto L^\V{a}\phi (\V{x})$ is continuous (this follows from the fact that  $\phi \in C^2 (\rd)$ has polynomial growth of degree $q \le p$ and each $\nu \in \mathcal{M}_p$ has finite second moments in the unitary ball $B_1$ and $p$th-moments outside $B_1$). Thus, since  the paths of $\xpx$ are \cad  (so they have at most a countable number of discontinuities), the integral in \eqref{ephik00} is $\as$ equal to the one but with $s$ instead of $s-$.  

Substituting \eqref{ephik00} into \eqref{Sax00} yields
\begin{equation}\label{Sax-f0}
 S_{t}^{\phi,\apx} := \phi(x) +\int_0^{t } e^{-\gamma_{s}^{\apx}}\left (\, f (\cdot,  \apx_s) + (L^{\apx_s} \phi) (\cdot) -  q \left (\cdot, \apx_s \right ) \phi (\cdot)\, \right ) (\xpx_s)\diff s + N_t.
  \end{equation}
Thus the  process $S^{\phi,\apx}$ is a local submartingale as the integral term in \eqref{Sax-f0} is non-negative thanks to assumption (\textbf{HJB}).   
  Let  $\{T_m\}_{n \ge 0}$ be  a localising sequence for the local martingale $N$. 
 Then, for each $m$, the stopped process $(S_{t\wedge T_m}^{\phi,\apx})_{t \in \Rp}$ is a submartingale and thus $   \phi(\V{x}) \le \Ex \left [S_{t\wedge T_m}^{\phi,\apo} \right ]$. Hence,
\begin{equation} \label{phi-tm}
\phi(\V{x}) \le \Ex \left [ \int_0^{t\wedge T_m}e^{-\gamma_{s}^{\apo}} \, f(\xpo_s, \apo_s) \diff s\right ] + \Ex \left [ e^{-\gamma_{t\wedge T_m}^{\apo}} \phi \left (\xpo_{t\wedge T_m}\right) \right ].
\end{equation}
The uniform integrability condition (\textbf{UI}) yields $\lim_{m\to \infty}\Ex\left [ e^{-\gamma_{t\wedge T_m}^{\apo}} \phi \left (\xpo_{t\wedge T_m}\right) \right ]=  \Ex\left [ e^{-\gamma_{t}^{\apo}} \phi \left (\xpo_t\right) \right ]$, whereas the MCT  implies (by letting $m\to \infty$) that
\begin{equation} \label{phi-t}
\phi(\V{x}) \le \Ex \left [ \int_0^t e^{-\gamma_{s}^{\apo}} f(\xpo_s, \apo_s) \diff s\right ] + \Ex \left [ e^{-\gamma_{t}^{\apo}} \phi \left (\xpo_t\right) \right ].
\end{equation}
Hence, letting $t \to \infty$ in \eqref{phi-t}, the MCT and the transversality condition  (\textbf{TC}) yield $\phi (\V{x}) \le \Jx$, as required.

$ii)$ Suppose now that, for every $\V{x} \in \rd$,  there exists an admissible pair  $(\V{X}^{\hat{\Malpha}^{\V{x}}}, \hat{\Malpha}^\V{x} )$ such that the triplet $(\hat{\Msigma}, \hat{\nu}, \hat{\Mmu})$  satisfies \eqref{HJB-op}. To prove the optimality of $\hat{\Malpha}^{\V{x}}$, it remains to prove that $ \phi (\V{x})  = J^{\hat{\Malpha}}(\V{x})$.  Similar calculations than above imply that the equality
 \begin{equation}\label{D:S_Bellman00}
 S_t^{\phi,\hat{\Malpha}^{\V{x}}} = \int_0^t e^{-\gamma_s^{\hat{\Malpha}^\V{x}}}f(\V{X}_s^{\hat{\Malpha}^\V{x}}, \hat{\Malpha}^\V{x}_s) \diff s + e^{-\gamma_t^{\hat{\Malpha}^\V{x}}} \phi \left (\V{X}^{\hat{\Malpha}^\V{x}}_t\right) ,\quad t \in \Rp,
\end{equation}
can be rewritten as
\begin{align*}
S_t^{\phi,\hat{\Malpha}^{\V{x}}} &= \phi(\V{x}) +\int_0^{t} e^{-\gamma_{s}^{\hat{\Malpha}^{\V{x}}}}  \left [f (\V{X}_s^{\hat{\Malpha}^\V{x}}, \hat{\Malpha}^\V{x}_s) + L^{\hat{\Malpha}^\V{x}_s} \phi (\V{X}_{s-}^{\hat{\Malpha}^\V{x}}) -q \left (\V{X}_{s}^{\hat{\Malpha}^\V{x}},  \hat{\Malpha}^\V{x}_s \right ) \phi(\V{X}_{s-}^{\hat{\Malpha}^\V{x}}) \right] \diff s + \hat{M}_t^{\phi},
\end{align*}
where $\hat{M}^{\phi}$ is some local martingale. 
The \cad property of  $\V{X}_s^{\hat{\Malpha}^\V{x}}$ and the fact that   $\phi$ solves  \eqref{HJB-op} ensure the equality $S_t^{\phi,\hat{\Malpha}^{\V{x}}}= \phi(\V{x})  + \hat{M}_t^{\phi}$, which then implies that  $S_t^{\phi,\hat{\Malpha}^{\V{x}}}$ is a local martingale. By repeating the same arguments as before (localising and taking the corresponding limits), we obtain the equality $ \phi (\V{x})  = J^{\hat{\Malpha}}(\V{x})$, which implies both that  $\hat{\alpha}^\V{x}$ is optimal and that  $\phi(\V{x})$ is the value function.
   \end{proof}

%%%%%%%%%%%%%%%%%%%%%%%%%%%%%%%%%%%%%%               %%%%% PROOF: VERIFICATION RESULTS
%%%%%%%%%%%%%%%%%%%%%%%%%%%%%%%%%%%%%%
%%%%%%%%%%%%%%%%%%%%%%%%%%%%%%%%%%%%%%               %%%%% PROOF: Two lemmas: a)  polynomial growth of V, and b) condition TC .
%%%%%%%%%%%%%%%%%%%%%%%%%%%%%%%%%%%%%%

\subsection{Proof of Lemma \ref{P:lemma00}}\label{Pr:lemma00}

\begin{proof}
Let $C>0$, $\V{a}_0 = ( \Msigma, \nu, \Mmu ) \in A$, $p\ge 2$ and $f$ be as in the statement. Take the   admissible pair $(\V{X}^{\tilde{\Malpha}^x}, \tilde{\Malpha}^x)$, where $\V{X}^{\tilde{\Malpha}^x}$ is the \levy process, starting at $\V{x}\in \rd$, corresponding to the constant policy $\tilde{\Malpha}_t^x =\V{a}_0$ for all $t\in \rp$ (see Remark \ref{CteP}). The  assumption $|f(\V{x},\V{a}_0)| \le C (1 + |\V{x}|^q)$ for all $\V{x} \in \rd$ and the definition of $J^{\tilde{\Malpha}} (\V{x})$, yield 
\begin{equation}\label{EX2}
J^{\tilde{\Malpha}} (\V{x}) \le C \left ( \frac{1}{\delta} +\int_{0}^{\infty} e^{-\delta t} \Ex|\V{X}^{\tilde{\Malpha}^x}_t|^q   \diff t \right ),
\end{equation}
where $\delta >0$ is the lower bound of the function $q$ in the discount factor $\gamma_{t}^{\apx}$.  
Since  $\V{X}^{\tilde{\Malpha}^x}$ is a \levy process with jump intensity measure in $\mathcal{M}_p$ (recall definition in \eqref{D:Lm}), Proposition \eqref{qth-moments} ensures that, for each $t\in \rd$,  $\mathbb{E} [\left |\V{X}^{\tilde{\Malpha}^x}_t \right|^q ] \le K (|\V{x}|^q + t^q)$,  for some  $K>0$  depending on the fixed constants  $\mu$, $\sigma$ and the measure $\nu$.  Plugging the previous expression into \eqref{EX2} implies that $J^{\tilde{\Malpha}} (\V{x}) \le \tilde{C} (1 + |\V{x}|^q)$ for some constant $\tilde{C} > 0$. Therefore, taking the infimum over all admissible policies implies, by definition of the value function, that $V(\V{x}) \le \tilde{C} (1 +| \V{x}|^q)$, as required.
\end{proof}

\subsection{Proof of Lemma \ref{P:lemma01}}\label{Pr:lemma01}
We first recall the following.
\begin{lemma}\label{A-Lemma2}
If $b =\liminf_{t\to \infty} f(t)$, then for all $\epsilon >0$, there exists $t_0$ such that $f(t) > b - \epsilon$ for all $t \ge t_0$. 
\end{lemma}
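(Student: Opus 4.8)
The statement is the standard \emph{$\epsilon$-characterisation of the limit inferior}, so the plan is simply to unwind the definition of $\liminf$. First I would set $g(t) := \inf_{s \ge t} f(s)$ and record two elementary facts: (a) $g$ is non-decreasing, since shrinking the index set $\{s : s \ge t\}$ as $t$ increases can only make the infimum larger; and (b) by the very definition of limit inferior, $b = \liminf_{t\to\infty} f(t) = \lim_{t\to\infty} g(t)$, which by the monotonicity in (a) equals $\sup_{t} g(t)$.

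Then, given $\epsilon > 0$, I would use that $b = \sup_t g(t)$ to produce some $t_0$ with $g(t_0) > b - \epsilon$; in the intended application $f \ge 0$, so $b \in [0,+\infty]$, and the case $b = +\infty$ is handled the same way with $b - \epsilon$ replaced by an arbitrary finite threshold. For every $t \ge t_0$ one then has $f(t) \ge \inf_{s \ge t_0} f(s) = g(t_0) > b - \epsilon$, which is precisely the assertion.

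There is no real obstacle here: the only point needing a word of care is the identity $\lim_{t\to\infty} g(t) = \sup_t g(t)$, which rests on the monotonicity in (a), together with the trivial degenerate case $b = +\infty$. The lemma is recorded only to keep the subsequent manipulation of the transversality condition \eqref{E:TC} in the proof of Lemma \ref{P:lemma01} self-contained.
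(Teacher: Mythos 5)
Your proof is correct and follows essentially the same route as the paper: both unwind $\liminf$ via the non-decreasing function $t \mapsto \inf_{s \ge t} f(s)$ and extract $t_0$ from the definition of its limit (you phrase this as a supremum, the paper as an $\epsilon$--$\delta$ limit, which is the same thing). Your brief remark on the degenerate case $b = +\infty$ is a small addition the paper omits, but there is no substantive difference.
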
    
\begin{proof}
By definition, $b = \liminf_{t\to \infty} f(t) := lim_{t\to \infty} A_t$, where $A_t := \inf \{ f(s)\,:\, s \ge t\}$. Thus, by definition of limit, for all $\epsilon > 0$, there exists $t_0$ such that $|A_{t_0} - b| < \epsilon$, thus $b - \epsilon < A_{t_0}$. Since $A_t$ is an increasing sequence in $t$, then $b - \epsilon < A_{t_0} \le A_t$ for all $t \ge t_0$. Also, by definition of $A_t$, it follows that $A_t \le f(t)$, which in turns implies $b - \epsilon < f(t)$ for all $t\ge t_0$, as required.
\end{proof}

\begin{proof} (of Lemma \ref{P:lemma01})
Define the mapping $ g^{h,\apx}: t\mapsto \Ex \left [ e^{-\gamma_{t}^{\apo}} h(\xpo_{t}) \right ]$ for any  nonnegative function $h$ on $\rd$. 
Take $\phi \in C^2(\rd)$ and $f$ as in the statement.  Suppose that \eqref{E:TC} does not hold. That is, $\Jx < \infty$ and $\liminf_{t \to \infty} g^{\phi,\apo}_t = \gamma$ for some constant $\gamma > 0$. Then  \[ +\infty > \Jx \ge \Ex \left [\int_{t_0}^{\infty} e^{-\gamma_{t}^{\apo}} f(\xpo_t, \apo_t) \diff t \right ] \ge c \left (  \frac{e^{-b t_0}}{b} +  \int_{t_0}^{\infty} g^{|\cdot|^p, \apx}_t \diff t \right), \]
where $b >0$ is the upper bound of the function $q$ defining the discounting factor  ${\gamma_{t}^{\apx}}$. Notice the use of the  lower bound of $|f(x,a)|$ as well as Tonelli's theorem to interchange the integral and the expectation in the right hand side above. Since $\phi $ is of polynomial growth of degree $p\ge 2$, there exists $C > 0$ such that  $ \int_{t_0}^{\infty}  g^{\phi,\apx}_t \diff t  \le C ( \frac{e^{-b t_0}}{b}  + \int_{t_0}^{\infty} g^{|\cdot|^p,\apx}_t \diff t) $. Moreover, by Lemma \ref{A-Lemma2},  for $\epsilon = \gamma/2$,  there exists $t_0 \ge 0$ such that $g^{\phi,\apx}_t > \gamma/2$   for all $t \ge t_0$, and this implies that $ \int_{t_0}^{\infty}  g^{\phi,\apx}_t \diff t$ is not finite, which in turn implies  (by the inequalities above) that the payoff function $\Jx$ is not finite. The latter yields a contradiction and  we thus conclude that \eqref{E:TC} holds.
\end{proof}

%\begin{proof}
%Take $\phi \in C^2(\rd)$ and $f$ as in the statement. By assumption, there exist positive constants $c,C$ such that $c|\V{x}|^p \le f(\V{x}, \cdot) \le C|\V{x}|^p$. Hence, \[c\Ex \left [\int_{t_0}^{\infty} e^{-qt} |\V{X}_t^{\ap}|^p \diff t\right ]\le J(\xp,\ap) \le C\Ex \left [\int_{0}^{\infty} e^{-qt} |\V{X}_t^{\ap}|^p \diff t\right ].\]Let us suppose that \eqref{E:TC} does not hold. That is, $J(\xp,\ap) < \infty$ and $\liminf_{t \to \infty} g_{\ap}(t) = \gamma > 0$. Therefore,  for $\epsilon = \gamma/2$,  there exists $t_0 \ge 0$ such that $g_{\ap}(t) > \gamma/2$   for all $t \ge t_0$, but the latter implies that $ \mathbb{E}_\V{x} \left [\int_{t_0}^{\infty} e^{-qt} |\V{X}_t^{\ap}|^p \diff t\right ]$ is not finite, which in turn implies  (as $\phi $ is of polynomial growth of degree $p\le 2$) that the payoff function $J(\xp,\ap)$ is not finite. The latter yields a contradiction and thus \eqref{E:TC} holds.
%\end{proof}

%%%%%%%%%%%%%%%%%%%%%%%%%%%%%%%%%%%%%%               %%%%% PROOF: VERIFICATION RESULTS
%%%%%%%%%%%%%%%%%%%%%%%%%%%%%%%%%%%%%%
%%%%%%%%%%%%%%%%%%%%%%%%%%%%%%%%%%%%%%               %%%%% PROOF: Verification Result 3
%%%%%%%%%%%%%%%%%%%%%%%%%%%%%%%%%%%%%%
 \subsection{Proof of Theorem \ref{VT-EFinite} }\label{P:VT-EFinite}

\begin{proof} The proof follows the same lines as the one for Theorem \ref{VT0}.  The only change is made at justifying the limiting step in  \eqref{phi-tm} to obtain the inequality \eqref{phi-t}, which  now  is ensured by the DCT and the finiteness of the expectation of the running maximum of $\left | \xpx \right |^q$. The latter assertion holds true due to Proposition \ref{qth-moments}.
\end{proof}

 %Proofs: Verification results
\section{Proofs of Results in Section \ref{S:Applications}} 
\Mqo{(TC) condition follows because $\phi >0$ and maximisation problem!}
%%%%%%%%%%%%%%%%%%%%%%%
%%%%%%%%%%%%% PROOFS: 
%%%%%%%%%%%%%%%%%%%.  EXAMPLE 2
%%%%%%%%%%%%%%%%%%%%%%%
\subsection{Proof of Lemma \ref{Example2}}

\begin{proof} 

$i)$ The convexity of $f$ yields
\begin{align*}
\theta \psi (x) + (1-\theta) \psi (y) &= \mathbb{E} \left (\int_0^{\infty} e^{-(q+1) t} \left [  \theta f(x + B_t) + (1-\theta) f(y + B_t) \right ] \diff t \right ) \\
&\ge   \mathbb{E} \left ( \int_0^{\infty} e^{-(q+1) t}   f ( \theta x +  (1-\theta) y + B_t) \diff t \right ) \\
 &= \psi (\theta x +  (1-\theta) y ),
\end{align*}
establishing that $\psi$ is convex. Symmetry follows from the symmetry of $f$ and of the normal distribution. Finally, convexity and symmetry show that $\psi (x) = \frac{1}{2} \psi (x) + \frac{1}{2} \psi (-x) \ge \psi (0)$, establishing that zero gives the global minimum of $\psi$.

$ii)$ The polynomial growth of $f$ ensures that $\psi(x)$ is finite for each $x\in \rr$ and, further, it implies that $\psi$ has the same  polynomial growth. It is not difficult to see that $
\psi$ satisfies
\begin{equation}\label{Eq:Psi}
\frac{1}{2} \psi'' - (q+1) \psi + f = 0.
\end{equation}
We will now show that $\phi := \psi + c$ solves the HJB equation:
\begin{equation}\label{Eq:hjb2}
\inf_{\nu \in \mathcal{M}_{\le 1}} \left \{ \frac{1}{2}h ''(x) + \int \left (h(x+y) - h (x) \right )\nu(\diff y)  -q h(x) + f(x) \right \} = 0.
\end{equation}
Note that, for each $x \in \rr$ and $\nu \in \mathcal{M}_{\le 1}$, 
\begin{align}\nonumber
\frac{1}{2}\phi '' (x)+ \int \left (\phi (x+y) - \phi (x) \right )\nu(\diff y)  -&q \phi (x) + f(x)  \\ \nonumber &\ge \frac{1}{2}\psi '' (x) + (\psi (0) - \psi (x) )\nu(\rr)  -q \psi (x) - \psi (0) + f (x) \\ \nonumber
 &\ge  \frac{1}{2}\psi '' (x) + (\psi (0) - \psi (x)) -q \psi  (x) - \psi (0)+ f (x) \\&= \frac{1}{2} \psi '' (x) -\psi (x) (1+q) +f(x) \ge 0, \label{Ineq:Ex2}
\end{align}
where we used that $\psi (z) \ge \psi (0)$ for all  $z \in \rr$ (because of statement $i)$ above), and  $cq = \psi (0)$ by definition. The last inequality in \eqref{Ineq:Ex2} follows from \eqref{Eq:Psi}. Hence, taking the infimum over all $\nu \in \mathcal{M}_{\le 1}$ establishes that $\phi$ satisfies condition (\textbf{HJB}).
Observe now that  \eqref{UniformB2} and Proposition  \eqref{qth-moments} imply that $\mathbb{E} \left [ \left |X_{t}^{\alpha^x}\right |^p\right ] \le C\, (|x|^p + t^p)$ for some positive  constant $C = C(\beta,p)$, which then implies (\textbf{TC}). Furthermore,  $\sup_{0\le s \le t}\left |X_{s}^{\alpha^x}\right |^p \in L^1(\Pa)$ which establishes condition (\textbf{UI}).  Moreover, since the infimum \eqref{Eq:hjb2} is attained at $\hat{\nu} = \delta_{-x}$ for each $x \in \rr$, Theorem  \eqref{VT-EFinite}  establishes the equality $V = \phi$, as required. 

\Mqo{Existence of $X^x$ is clear (Levy type process with bounded coefficients)!}

 %As for condition (\textbf{UI}), due to the polynomial growth of $\psi$, say $p\ge 2$, it is enough to prove that, for each $t\ge 0$,  $ \sup_{0\le s \le t} \left |X_{t}^{\alpha^x}\right |^p \in L^1 (\mathbb{P}^{\alpha})$, which  holds by Proposition \eqref{qth-moments} and assumption \eqref{UniformB2}. Finally, one can verify using again Proposition  \eqref{qth-moments} that $\mathbb{E} \left [ \left |X_{t}^{\alpha^x}\right |^p\right ] \le C\, (|x|^p + t^p)$ for some positive  constant $C = C(\beta,p)$, which then guarantees the validity of condition (\textbf{TC}). 
\end{proof}

%%%%%%%%%%%%%%%%%%%%%%%
%%%%%%%%%%%%% PROOFS: 
%%%%%%%%%%%%%%%%%%%.  EXAMPLE 3
%%%%%%%%%%%%%%%%%%%%%%%

\subsection{Proof of Theorem \ref{Example3}.}
We will need the following result.
\begin{theorem}\label{T:Phi-Up}
Define the operator $G^q  h  =  \frac{1}{2} h'' - q h$. Then, $\phi$ satisfies 
 \begin{equation}\label{Eq:HJB-3a}
  G^q \phi+ f  = 0, \quad \text{ on } \,\,\, (0,\hat{b}),
 \end{equation}
 and 
 \begin{equation}\label{Eq:HJB-3b}
 G^q \phi + f  + \kappa - (\phi - \phi(0)) = 0, \quad \text{ on } \,\,\, (\hat{b}, \infty),
 \end{equation}
 and $\phi$ is increasing on $\rp$.
 \end{theorem}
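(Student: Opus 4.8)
The plan is to analyse the function $\phi_b$ defined in \eqref{Eq:Phi-b} as a function of the barrier level $b$, show it solves the appropriate piecewise integro-differential equation, and then pin down $\hat b$ by the smooth/consistency condition $\phi_{\hat b}(\hat b)-\phi_{\hat b}(0)=\kappa$. First I would fix $b\ge 0$ and observe that $B^{b,x}$ is a Feller process on $\rr$ whose generator, acting on $C_c^2(\rr)$, is $G^q$-free but has an extra jump term: on $\{|x|<b\}$ it behaves like a Brownian motion (generator $\tfrac12 h''$), while on $\{|x|\ge b\}$ it additionally jumps to $0$ at unit rate, contributing $h(0)-h(x)$. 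Then, by the standard Feynman--Kac/verification argument (exactly the probabilistic representation discussed after Theorem \ref{VT0}, applied with running cost $f(x)+\kappa\mathbf 1_{|x|\ge b}$ and discount $q$), $\phi_b$ is the unique solution of polynomial growth to
\begin{equation}\label{Eq:Plan-phi-b}
\tfrac12\phi_b''(x)-q\phi_b(x)+f(x)+\kappa\mathbf 1_{\{|x|\ge b\}}+\mathbf 1_{\{|x|\ge b\}}\bigl(\phi_b(0)-\phi_b(x)\bigr)=0,\quad x\in\rr.
\end{equation}
Restricting \eqref{Eq:Plan-phi-b} to $(0,b)$ gives $G^q\phi_b+f=0$ and restricting it to $(b,\infty)$ gives $G^q\phi_b+f+\kappa-(\phi_b-\phi_b(0))=0$; so \eqref{Eq:HJB-3a}--\eqref{Eq:HJB-3b} hold with $b=\hat b$ once we know $\phi\equiv\phi_{\hat b}$, which is given by hypothesis. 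I would justify the smoothness needed to write \eqref{Eq:Plan-phi-b} pointwise (i.e.\ that $\phi_b\in C^2$ away from $\pm b$ and $C^1$ across $\pm b$, with $f\in C^2$) by the elliptic regularity of the one-dimensional ODE, since off the barrier the equation is a linear second-order ODE with $C^2$ (indeed smooth) coefficients.

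Next I would establish monotonicity: $\phi$ is increasing on $\rp$. The cleanest route is a coupling/comparison argument. For $0\le x_1<x_2$, couple two copies $B^{\hat b,x_1}$ and $B^{\hat b,x_2}$ by driving them with the same Brownian increments and the same jump clock, so that they stay ordered $B^{\hat b,x_1}_t\le B^{\hat b,x_2}_t$ up to the first time one of them jumps to $0$; at such a jump time they either both jump to $0$ (order preserved, both restart identically) or only the larger one jumps, landing at $0\ge$ nothing problematic — here one uses that the jump region $\{|x|\ge \hat b\}$ and the symmetry/convexity of $f$ keep the ordering and keep the accumulated running cost of the $x_2$-copy at least that of the $x_1$-copy on $\rp$. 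Since $f$ is symmetric and increasing on $\rp$, $f(B^{\hat b,x_2}_t)\ge f(B^{\hat b,x_1}_t)$ whenever the paths are ordered and, by symmetry of both the dynamics and $f$ about $0$, one can reduce the comparison of the cost integrals to the ordered case; taking expectations in \eqref{Eq:Phi-b} then yields $\phi(x_1)\le\phi(x_2)$. Alternatively, and perhaps more robustly, I would differentiate \eqref{Eq:HJB-3a}--\eqref{Eq:HJB-3b}: $\psi:=\phi'$ satisfies a linear equation with a maximum-principle structure, $\psi(0)=0$ by symmetry of $\phi$, and $\psi$ cannot attain a negative interior minimum on $(0,\infty)$ because at such a point $\psi''\ge 0$ while the equation forces $\psi''=2q\psi-2f'+\dots<0$ using $f'\ge 0$ on $\rp$ and $\psi<0$; together with the growth of $\phi$ this forces $\psi\ge 0$.

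The main obstacle I anticipate is the monotonicity claim, specifically handling the jump term correctly in the comparison argument: the coupling is not order-preserving in the naive sense because a jump to $0$ from a large positive value is a large \emph{downward} move, so one must argue that the two coupled processes, although they can cross, do so only by the larger one jumping down into the region already explored by the smaller one, after which they can be re-coupled to coincide; quantifying the effect of this on the discounted running cost (exploiting the strong Markov property at the jump time and $f$'s symmetry) is the delicate point. The analytic alternative via $\psi=\phi'$ sidesteps coupling but requires care at the point $x=\hat b$, where $\phi$ is only $C^1$: one shows $\phi''$ has a (bounded) jump there determined by \eqref{Eq:HJB-3a}--\eqref{Eq:HJB-3b}, so $\psi$ is still continuous and the maximum-principle argument applies on each of $(0,\hat b)$ and $(\hat b,\infty)$ separately, with the behaviour at $\hat b$ controlled by continuity of $\psi$ and the sign of the jump in $\psi'$. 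Once monotonicity and \eqref{Eq:Plan-phi-b} are in hand, the identification $V=\phi_{\hat b}$ and optimality of the stated control follow from Theorem \ref{VT0} (or Theorem \ref{VT-EFinite}), since $\phi_{\hat b}$ has polynomial growth, satisfies (\textbf{HJB}) by combining \eqref{Eq:HJB-3a}--\eqref{Eq:HJB-3b} with the defining relation $\phi_{\hat b}(\hat b)-\phi_{\hat b}(0)=\kappa$ (which is exactly what makes the two pieces match up so that the pointwise infimum over $a\in[0,1]$ is attained and equals $0$), and satisfies (\textbf{TC}), (\textbf{UI}) by the polynomial growth together with Corollary \ref{C:TC-cond}.
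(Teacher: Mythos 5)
Your derivation of the piecewise ODE is essentially the paper's argument (Feynman--Kac plus the strong Markov property at the first jump time), and the existence of $\hat b$ and the identification $V=\phi_{\hat b}$ via Theorem \ref{VT0} are also on target. The gap is in the monotonicity claim, which is the real content of the theorem.

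Your coupling route you yourself flag as unresolved: the larger copy jumping down to $0$ while the smaller one does not is a genuine order violation, and nothing in your sketch actually controls the effect of this on the discounted cost. That leaves your analytic alternative, and it contains two substantive problems. First, you assert $\phi$ is only $C^1$ at $\hat b$ and that $\phi''$ jumps there; in fact the paper shows $\phi\in C^2(\rr)$, and the reason is precisely that matching $\frac12\phi''(\hat b-)=q\phi(\hat b)-f(\hat b)$ with $\frac12\phi''(\hat b+)=(q+1)\phi(\hat b)-f(\hat b)-\kappa-\phi(0)$ is equivalent to the defining condition $\phi_{\hat b}(\hat b)-\phi_{\hat b}(0)=\kappa$. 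You note the smooth-pasting interpretation of that condition in the final paragraph but do not feed it back into the regularity argument, so your use of $\psi=\phi'$ is built on an incorrect premise. Second, the minimum principle applied separately on $(0,\hat b)$ and $(\hat b,\infty)$ only rules out an interior negative minimum on each piece; it does not exclude the single remaining scenario that $\psi$ dips negative over an interval $(l,r)$ straddling $\hat b$, decreasing on $[l,\hat b]$ and increasing on $[\hat b,r]$. Ruling this out is exactly what the paper's stages (4)--(6) do: stage (4) shows $\liminf_{x\to\infty}\phi'(x)\ge 0$ (so $\psi$ cannot stay negative forever on $(\hat b,\infty)$), and stage (6) derives a contradiction by introducing the auxiliary function $h:=q\phi-f$, observing that $G^q h=-\tfrac12 f''\le 0$ on $(0,\hat b)$, $h(0)\ge 0$, $h(\hat b)=\tfrac12\phi''(\hat b)=0$, and then applying the weak minimum principle to $h$ to contradict $\phi''<0$ on $(l,\hat b)$. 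Your proposal offers no mechanism that plays this role; "continuity of $\psi$ and the sign of the jump in $\psi'$" is not an argument, and without the auxiliary function (or some equivalent global device) the monotonicity does not follow.
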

 \begin{proof}(of Theorem \ref{T:Phi-Up})
 We first prove that $\phi$ satisfies \eqref{Eq:HJB-3a}-\eqref{Eq:HJB-3b}. Let $B^x$ be a Brownian motion started at $x$. Define the stopping times $\tau_b := \inf \{t \ge 0 : |B_t^{x}| = b \}$ and
 $\tau_0 := \inf \{t \ge 0 : B_t^{b,x} = 0 \}$, for each $x \in (-b,b)$. Observe that  $\tau_b = \tau_0$ in distribution.
 
  Using the strong Markov property of the Brownian motion,  $\phi$ in \eqref{Eq:Phi-b} can be rewritten as
 \begin{align*}
\phi (x) &= \mathbb{E}_x \left [  \int_0^{\tau_0} e^{-qt} \left ( f(B_t^{b,x}) + \kappa 1_{|B_t^{b,x}| \ge b} \right ) \diff t\right ] + \phi(0) \mathbb{E}_x \left [  e^{-q \tau_0}  \right ], \\
 &=\mathbb{E}_x \left [  \int_0^{\infty} e^{-\left(qt+\int_0^t 1_{\{|B_s^{b,x}| \ge b\}}\diff s\right )} \left ( f(B_t^{b,x}) + \kappa 1_{|B_t^{b,x}| \ge b} \right ) \diff t\right ] + \phi(0) \mathbb{E}_x \left [  e^{-q \tau_0}  \right ]
 \end{align*}
Using that
 \begin{align}
 \mathbb{E}_x \left [  e^{-q \tau_0}  \right ] =   \mathbb{E}_x \left [ \int_0^{\infty} 1_{\{|B_s^{b,x}| \ge b\}} e^{-\left(qt+\int_0^t 1_{\{|B_s^{b,x}| \ge b\}}\diff s\right )}  \diff t\right ], \nonumber
 \end{align}
 it follows that 
  \begin{align*}
\phi(x) &= \mathbb{E}_x \left [  \int_0^{\infty} e^{-\left(qt+\int_0^t 1_{\{|B_s^{b,x}| \ge b\}} \diff s\right )} \left ( f(B_t^{b,x}) + \right( \kappa + \phi(0)\left )1_{|B_t^{b,x}| \ge b} \right ) \diff t\right ],\quad x\in \rr
\end{align*}
and
  \begin{align*}
\phi(0) &= \frac{\mathbb{E}_x \left [  \int_0^{\infty} e^{-\left(qt+\int_0^t 1_{\{|B_s^{b,x}| \ge b\}}\right )} \left ( f(B_t^{b,x}) +  \kappa 1_{|B_t^{b,x}| \ge b} \right ) \diff t\right ]}{1- \mathbb{E}_x \left [  \int_0^{\infty} e^{-\left(qt+\int_0^t 1_{\{|B_s^{b,x}| \ge b\}}\right )} 1_{\{|B_s^{b,x}| \ge b\}} \right ]  }.\end{align*}

 Therefore, the stationary Feynman-Kac formula implies that $\phi$ solves
 \[ \frac{1}{2} \phi''(x) - (q + 1_{|x| \ge b}) \phi(x) + f(x) + (\kappa +\phi(0))1_{|x| \ge b} = 0,\]
 that is, 
 \[
 \left \{
 \begin{array}{ll}
\frac{1}{2} \phi''(x) - q  \phi(x) + f(x)  = 0 & x\in (-b,b)\\
\frac{1}{2} \phi''(x) - (q + 1) \phi(x) + f(x) + \kappa +\phi(0) = 0 & x\in (-b,b)^c 
 \end{array}
 \right .
 \]
The existence of $\hat{b}$ can  be justified as follows. Observe that $\mathbb{E}_x f (B_t) \uparrow \infty$ as $x\to \infty$, $\phi_b(x) \to \infty$ for any $b$, and $\phi_b(b) \ge c_q f (b-1)$ for some  constant $c_q$. Moreover, 
\begin{align}
\phi_b(0) &= \mathbb{E}_0 \int_0^{\tau_b} e^{-qt} f(B_t) \diff t + \mathbb{E}_0 e^{-q\tau_b} \phi_b(b) \\ &\le
\mathbb{E}_0 \int_0^{\infty} f(B_t) e^{-qt}\diff t  + c_b \phi_b(b),
\end{align}
where $0 < c_b <1$. Thus, $\phi_b (b) - \phi_b(0) \ge (1-c_b) \phi_b(b) - d \,\to\, \infty $ as $b\to \infty$ and, further,   $\phi_b(b) - \phi_b(0)$ is continuous as a function of $b$. 
%with the boundary conditions
%\begin{align*}
%\phi(b_+) &= \phi(b_-), \quad \phi(-b_+) = \phi(-b_-),\\
%\phi'(b_+) &= \phi'(b_-), \quad \phi'(-b_+) = \phi'(-b_-).
%\end{align*}

Now, to prove that $\phi$ is increasing, we proceed  in six stages:
 \begin{itemize}
 \item [(1)] Show that $\phi \in C^2 (\rr)$.
  \item [(2)] Show that $\phi(x) - \epsilon x \to \infty$ as $x\to \infty$ for some $\epsilon > 0$. 
 \item [(3)] Show that $\phi'$ has no negative minimum on the domain $(0, \hat{b})$ or on the domain $(\hat{b}, \infty)$.
 \item [(4)] Show that $\liminf_{x\to \infty} \phi' (x) \ge 0$.
 \item [(5)] Deduce that either $\phi' \ge 0$ on $\rp$ or
 \begin{itemize}
 \item [a)] $\phi'$ attains its unique negative minimum on $[0,\hat{b}]$ at $\hat{b}$ and
 \item [b)] $\phi'$ attains its unique negative minimum on $[\hat{b}, \infty]$ at $\hat{b}$.
 \end{itemize}
 \item [(6)] Deduce a contradiction from \eqref{Eq:HJB-3a} and \eqref{Eq:HJB-3b}.
 \end{itemize}
 Proof of:
 \begin{itemize}
 \item [(1)] Since $\phi$ is clearly positive and satisfies 
\eqref{Eq:HJB-3a} on $ (0,\hat{b})$,  it follows that $\phi$ is $C^2$ on $(0,\hat{b})$.  Similarly, since $\phi$ satisfies
 \eqref{Eq:HJB-3b} on  $(\hat{b}, \infty)$,  it follows that $\phi$ is $C^2$ on $(\hat{b}, \infty)$. A standard martingale argument based on the \ito-Tanaka formula shows that $\phi$ is $C^1$ on $\rr$. It then follows from the characterization of $\hat{b}$ that $\phi''$ does not have a discontinuity at $\hat{b}$.  
 \item [(2)]
 A simple argument show that, for $x \ge 1$, 
 \begin{equation}
 \phi (x) \ge e^{-(q+1)} \mathbb{E}_x \left [  \int_0^1 f(B_t) \diff t \right ] \ge e^{-(q+1)} \mathbb{P}_x \left (  \inf_{0\le t \le 1} B_t  \ge x-1\right ) \ge c_q f(x-1),
 \end{equation}
 for some $c_q > 0$. Since $f$ is convex, increasing on $\rp$, it is of at least linear growth on $\rp$, and so the result follows.
 \item [(3)] Denote $\phi'$ by $\psi$. It follows from differentiating \eqref{Eq:HJB-3a} and \eqref{Eq:HJB-3b} that
  \begin{equation}\label{Eq:HJB-3aD}
  \frac{1}{2} \psi'' - q \psi + f'  = 0, \quad \text{ on } \,\,\, (0,\hat{b}),
 \end{equation}
 and
  \begin{equation}\label{Eq:HJB-3bD}
  \frac{1}{2} \psi'' - (q+1) \psi + f'  = 0, \quad \text{ on } \,\,\, (\hat{b}, \infty).
 \end{equation}
 Since $f' > 0$ on $\rp$ the result follows from the strong minimum principle applied separately on each domain.
 \item [(4)] On $(\hat{b},\infty)$,  $ \frac{1}{2} \psi'' - (q+1) \psi  = -f' < 0$. It follows from the strong minimum principle that $\psi$ has no negative minimum on $(\hat{b},\infty)$. Consequently, if $m:= \liminf \psi < 0$, then once $\psi$ becomes negative it must decrease monotonically to $m$. But then $\lim \psi = -\infty$ which contradicts the positivity of $\phi$.
 \item [(5)] Note that $\psi (0) = 0$ by symmetry of $\phi$. So if $\psi$ has a negative minimum on $[0,\hat{b}]$ it follows from (3) that it must be attained at $\hat{b}$. Similarly for the negative minimum on $[\hat{b},\infty)$. 
 \item [(6)] Suppose that $\psi$ goes below $0$. Then from (5) we must have $0 < l < \hat{b} < r < \infty$ such that
 \[\{ x : \psi(x) < 0\} = (l,r)\]
 and $\psi$ is decreasing on $[l,\hat{b}]$ and increasing on $[\hat{b},r]$. It follows that $\phi'' = \psi'$ is negative on $(l,\hat{b})$ and zero at $\hat{b}$. Now define $h = q\phi - f$ and notice that $h(0) \ge 0$ since $f$ is minimised at $0$, while $h(\hat{b}) = \frac{1}{2} \phi''(\hat{b}) =0$. Note that $G^q h = -\frac{1}{2} f'' \le 0$ (since $f$ is convex) on $(0,\hat{b})$ so by the weak minimum principle the (negative) minimum of $h$ on $[0,\hat{b}]$ is attained at the boundary. But the boundary values are non-negative (since $h = \frac{1}{2} \phi'''$ on $[0,\hat{b}]$) so we deduce a contradiction.
 \end{itemize}
 \end{proof}

\begin{proof} (of Theorem \ref{Example3})
Observe that $\phi$ satisfies the HJB equation because $\phi$ solves \eqref{Eq:HJB-3a}-\eqref{Eq:HJB-3b}  and $\phi$ is increasing on $\rp$ (by Theorem \ref{T:Phi-Up}).  Condition (\textbf{TC}) is satisfied by Corollary \ref{C:TC-cond}. The validity of (\textbf{UI}) follows by Proposition \ref{qth-moments} as each $\nu \in \mathcal{M}_{\le 1}$. Therefore, the existence of the process $B^{b,x}$  satisfying \eqref{HJB-op} and Theorem \ref{VT0} imply the result.
\end{proof}

%%%%%%%%%%%%%%%%%%%%%%%
%%%%%%%%%%%%% PROOFS
%%%%%%%%%%%%%%%%%%%.  EXAMPLE 4 (Quadratic control)
%%%%%%%%%%%%%%%%%%%%%%%
\subsection{Proof of  Theorem \ref{P:1}}\label{S:Qcontrol}
For this proof, we need the following preliminary result.
\begin{lemma}\label{L:1}
The process $\hat{\Malpha}^\V{x} = (\hat{\Malpha}^\V{x}_t)_{t\in \Rp}$ as defined in \eqref{alpha*} 
 is an admissible policy in $\hat{\mathcal{A}}_{\V{x}}^2$. Furthermore, $J^{\hat{\alpha}} (\V{x}) < \infty$, for each $\V{x} \in \rd$. %  Furthermore, the corresponding admissible pair $(\hat{\V{X}}^\V{x}, \hat{\Malpha}^\V{x}) $ satisfies \eqref{E:TC}.	
 \end{lemma}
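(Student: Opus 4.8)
\textbf{Proof proposal for Lemma \ref{L:1}.}

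The plan is to verify directly that the candidate process $\hat{\Malpha}^\V{x}_t = (\hat{\Msigma}, \hat{\nu}, \hat{\mu}(\hat{\V{X}}_t^\V{x}))$, with $\hat{\mu}(\V{x}) = -\V{Q}\V{x} + \V{v}$, satisfies the defining conditions of $\hat{\mathcal{A}}_{\V{x}}^2$, i.e. (\textbf{H1})--(\textbf{H2}) together with (\textbf{H3}) of Definition \ref{Adm-Tilde} (with $q = p = 2$), and then to bound $J^{\hat{\alpha}}(\V{x})$. First I would construct the controlled process $\hat{\V{X}}^\V{x}$ as the solution to the martingale problem for $\{L^{\hat{\Malpha}_s^\V{x}}\}$: since the coefficients $\hat{\Msigma}$ and $\hat{\nu}$ are constant and the drift $\V{u} + \hat{\mu}(\V{x}) = -\V{Q}\V{x} + (\V{u} + \V{v})$ is affine in $\V{x}$, the operator $\hat{L}$ in \eqref{D:Lhat} is the generator of an Ornstein--Uhlenbeck-type process driven by a \levy process whose \levy measure $\hat{\nu} \in \mathcal{M}_2$. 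Existence and uniqueness in law of such a process (an affine/linear SDE with \levy noise) is classical — I would cite the standard theory for linear SDEs driven by semimartingales (e.g.\ via the variation-of-constants formula $\hat{\V{X}}_t^\V{x} = e^{-\V{Q}t}\V{x} + \int_0^t e^{-\V{Q}(t-s)}(\V{u}+\V{v})\diff s + \int_0^t e^{-\V{Q}(t-s)}\diff \V{L}_s$, where $\V{L}$ is the \levy process with triplet $(\hat{\Msigma}^T\hat{\Msigma}, 0, \hat{\nu})$), which gives a strong, hence a fortiori a weak/martingale-problem, solution unique in law. This establishes (\textbf{H1}). Moreover $\hat{\Malpha}^\V{x}$ is a stationary Markov control in the sense of Definition \ref{MarkovC}, with $\bar{\mu}(\V{x}) = \hat{\mu}(\V{x})$, $\bar{\sigma} \equiv \hat{\Msigma}$, $\bar{\nu} \equiv \hat{\nu}$.

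Next I would check the integrability conditions. Condition (\textbf{H2}) holds because $\|\hat{\Msigma}\|^2$ and $\int_{\Ro} |\V{y}|^2\vee|\V{y}|^2\,\hat{\nu}(\diff\V{y}) = \int_{\Ro}|\V{y}|^2\hat\nu(\diff\V{y})$ are finite constants (the latter since $\hat\nu \in \mathcal{M}_2$), while $|\hat{\mu}(\hat{\V{X}}_s^\V{x})| \le \|\V{Q}\|\,|\hat{\V{X}}_s^\V{x}| + |\V{v}|$ is locally bounded by the \cad property of $\hat{\V{X}}^\V{x}$, so $\int_0^t Q_s^{2,\hat{\Malpha}}\diff s < \infty$ a.s. For (\textbf{H3}) with $q = 2$, I need $\int_0^t (|\hat\mu(\hat{\V{X}}_s^\V{x})| + \|\hat{\Msigma}\|^2)\diff s \in L^1$, $\V{G}_t^{\hat{\Malpha}} \in L^1$, $\V{H}_t^{\hat{\Malpha}} \in L^1$; the last two are deterministic and finite, and for the first it suffices that $\sup_{0\le s\le t}\mathbb{E}_\V{x}|\hat{\V{X}}_s^\V{x}| < \infty$. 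This follows from the moment estimate $\mathbb{E}_\V{x}|\hat{\V{X}}_t^\V{x}|^2 \le K(1+|\V{x}|^2)$ uniformly on compacts in $t$, which in turn follows from Proposition \ref{qth-moments}(ii)--(iii) applied to the process $\hat{\V{X}}^\V{x}$: here $\int_0^t\|\hat{\Msigma}\|^2\diff s = t\|\hat{\Msigma}\|^2 \in L^1$ and $\V{G}_t^{\hat{\Malpha}}, \V{H}_t^{\hat{\Malpha}}$ are finite constants, giving $\mathbb{E}_\V{x}\sup_{0\le s\le t}|\hat{\V{X}}_s^{\hat\Malpha,c}|^2$ and $\mathbb{E}_\V{x}\sup_{0\le s\le t}|\hat{\V{X}}_s^{\hat\Malpha,d}|^2$ both finite, and then the drift part $\int_0^t \hat\mu(\hat{\V{X}}_s^\V{x})\diff s$ is controlled by Gr\"onwall's inequality (using $|\hat\mu(\V{z})| \le \|\V{Q}\||\V{z}| + |\V{v}|$) to close the bound on $\mathbb{E}_\V{x}|\hat{\V{X}}_t^\V{x}|^2$. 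I would record the resulting estimate as $\mathbb{E}_\V{x}|\hat{\V{X}}_t^\V{x}|^2 \le K e^{Ct}(1 + |\V{x}|^2)$ for constants $K, C > 0$ depending only on $\hat{\Msigma}, \hat{\nu}, \V{Q}, \V{v}$.

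Finally, to show $J^{\hat{\alpha}}(\V{x}) < \infty$, I would use that the running cost $f(\V{x}, \hat{\Malpha}_s^\V{x}) = \V{x}^T\Lambda\V{x} + \hat\mu(\hat{\V{X}}_s^\V{x})^T\Theta\hat\mu(\hat{\V{X}}_s^\V{x})$ evaluated along the trajectory is bounded by $c(1 + |\hat{\V{X}}_s^\V{x}|^2)$, so
\[
J^{\hat\alpha}(\V{x}) = \mathbb{E}_\V{x}\!\left[\int_0^\infty e^{-\gamma_t^{\hat\Malpha}} f(\hat{\V{X}}_t^\V{x},\hat{\Malpha}_t^\V{x})\,\diff t\right] \le c\int_0^\infty e^{-\delta t}\,\mathbb{E}_\V{x}\big(1 + |\hat{\V{X}}_t^\V{x}|^2\big)\,\diff t,
\]
where $\delta > 0$ is a lower bound for the discount rate $q$. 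This integral is finite provided the exponential growth of the second-moment bound is dominated by $\delta$; since $\V{Q} = \Theta^{-1}\V{B}$ with $\V{B}$ the positive-definite solution of the Riccati equation \eqref{Riccati}, the OU-type process is in fact mean-reverting, so $\mathbb{E}_\V{x}|\hat{\V{X}}_t^\V{x}|^2$ stays bounded (or grows at worst polynomially), and the integral converges. The main obstacle I anticipate is the moment/stability analysis of the OU-type process: one must either show directly that $-\V{Q}$ is a stable matrix (spectrum in the left half-plane), which uses the Riccati structure $\V{Q}^T\Theta\V{Q} + q\V{Q}\ldots$, or else argue more crudely with the crude Gr\"onwall bound $\mathbb{E}_\V{x}|\hat{\V{X}}_t^\V{x}|^2 \le Ke^{Ct}(1+|\V{x}|^2)$ and verify that $C < \delta$ can be arranged — and in fact, since the clean statement only needs $J^{\hat\alpha}(\V{x}) < \infty$ and admissibility in $\hat{\mathcal{A}}_{\V{x}}^2$ (which, per Definition \ref{Adm-Tilde}, imposes only the finite-time integrability (\textbf{H3}), not any infinite-horizon growth), the admissibility part is unconditional, and only the finiteness of $J^{\hat\alpha}$ requires the stability remark, which I would handle by invoking the mean-reversion implied by positivity of $\V{B}$ in \eqref{Riccati}.
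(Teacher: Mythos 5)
Your proof is essentially correct and follows the same strategy as the paper: pass to the variation-of-constants representation $\hat{\V{X}}_t^\V{x} = e^{-\V{Q}t}\V{x} + \int_0^t e^{-\V{Q}(t-s)}(\V{u}+\V{v})\,\diff s + \int_0^t e^{-\V{Q}(t-s)}\,\diff\V{L}_s$ of the L\'evy--OU process, verify (\textbf{H1})--(\textbf{H2}) from classical existence and the constancy of $(\hat{\Msigma},\hat{\nu})$, and then deduce (\textbf{H3}) and $J^{\hat\alpha}<\infty$ from a second-moment estimate. The paper cites Sato--Yamazato and Ethier--Kurtz (Prop.\ 1.7, Ch.\ 4) for (\textbf{H1}), where you appeal generically to linear-SDE theory with semimartingale noise; both are legitimate. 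Where you differ slightly in organisation: the paper works directly from the explicit solution to obtain the \emph{time-uniform} bound $\mathbb{E}_\V{x}|\hat{\V{X}}_t^\V{x}|^2 \le C(1+|\V{x}|^2)$ (the constant is uniform in $t$ because $\V{Q}=\Theta^{-1}\V{B}$ is similar to the positive definite $\Theta^{-1/2}\V{B}\Theta^{-1/2}$, so $e^{-\V{Q}s}$ decays), and then reads off both (\textbf{H3}) and $J^{\hat\alpha}<\infty$ from that single estimate. You instead first produce a crude Gr\"onwall bound $Ke^{Ct}(1+|\V{x}|^2)$ to settle admissibility, and separately invoke the stability of $-\V{Q}$ only for the integrability of $J^{\hat\alpha}$; this separation is logically clean and correctly isolates exactly where the Riccati/positive-definiteness structure is needed, though it is slightly redundant given that the explicit solution already yields the sharper uniform bound directly. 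One small caution: the intermediate suggestion that ``$C<\delta$ can be arranged'' is not available — $C$ is determined by $\V{Q}$, not free — but you correctly discard that route in favour of the stability argument, which is the mechanism the paper relies on as well.
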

 \begin{proof} 
  First notice that (\textbf{H2}) holds directly as  $\hat{\Malpha}^\V{x}_s = (\hat{\Msigma},\hat{\nu}, \hat{\mu} (\hat{\V{X}}_s^\V{x}))$ for all $s$, and, further,    $\hat{\nu} \in \mathcal{M}$.  
    To prove  (\textbf{H1}),  it is enough to guarantee the existence of a filtered probability space $(\hat{\Omega}, \hat{\mathcal{F}}, (\hat{\mathcal{F}} )_{t\ge 0}, \hat{\mathbb{P}})$ supporting the process $\hat{\V{X}}^{\V{x}}$,  $\V{x} \in \rd$.   This is, however,  just a direct consequence of  \cite[Theorem~3.1]{SatoY1984}.
   Furthermore, Proposition 1.7 in  \cite[ Chapter 4]{EK} guarantees that, for every function $h$ in the domain of the (infinitesimal) generator $\hat{L}$  (and hence for every $h \in C_c^2 (\rd)$), 
 the process $M^h$ defined by $M^h_t:=h(\hat{\V{X}}_t^\V{x}) - \int_0^{t} (\hat{L} h)(\hat{\V{X}}_{s}^\V{x})\diff s$ is an $(\hat{\mathcal{F}})_t$-martingale. Thus,  the equality 
\begin{equation}\label{LhatL}
 (\hat{L} h)(\hat{\V{X}}_{s}^\V{x})  =  (L^{\hat{\Malpha}^\V{x}_s} h)(\hat{\V{X}}_{s}^\V{x}),\quad \hat{\mathbb{P}}-a.s.,  
 \end{equation}
implies \eqref{D:Mphi}.  Therefore, $\hat{\Malpha}^\V{x} \in \mathcal{A}_{\V{x}}$ and its associated controlled process   $\V{X}^{\hat{\Malpha}^{\V{x}}}$ is given by $\hat{\V{X}}^\V{x}$.

For the second part, let us recall that $|\V{G}\V{x}| \le ||\V{G}|| \,|\V{x}|$ for any matrix $\V{G} \in \Mn$, $\V{x}\in \rd$, whereas $|\V{x}^T\V{G}\V{x}| \le \lambda_n |\V{x}|$ for any positive definite matrix $\V{G}$ with  eigenvalues $0\le \lambda_1\le \ldots \le \lambda_n$. Hence, due to the quadratic form of the payoff function and the fact that   $\hat{\mu}$ is linear in $\V{x}$, to prove \eqref{E:TC} it is enough to show the inequality 
 \begin{align}\label{E:X2-est}
 \Ex \left [ |\hat{\V{X}}_t^\V{x}|^2\right] \le C (|\V{x}|^2  + 1),  \quad t \in \Rp,\,\,\V{x}\in\rd,
  \end{align}
 for some positive constant $C$ independent of both $t$ and $\V{x}$.  
 
By Theorem~2.12 in \cite{Duffie2003}, % (see also \cite{Masuda2004}) % \cite[Section 17, Chapter 3]{sato}) 
 $\hat{\V{X}}^\V{x}$ is a $\rd$-valued semimartingale  satisfying the stochastic differential equation
\begin{equation}\label{E:dX_tx}
\diff \hat{\V{X}}_t^\V{x} = -\V{Q} \hat{\V{X}}_t^\V{x} \diff t + (\V{u} + \V{v}) \diff t + \hat{\Msigma} \diff \V{W}_t + \diff \V{Z}_t,\quad\quad \hat{\V{X}}_0^\V{x} = \V{x},
\end{equation}
where $\V{W}=(\V{W}_t)_{t \in \Rp}$ is a standard $n$-dimensional Brownian motion  %with $[B^i, B^j]_t = t \, \delta_{ij}$ for any $t \in \Rp$;  the marginal processes $B_i (t)$ are mutually independent standard Brownian motions in $\rr$.
  and $\V{Z}=(\V{Z}_t)_{t \in \Rp}$ is  a $\rd-$valued pure-jump \levy martingale with quadratic variation  $[\V{Z}]$ taking values in $\Mn$, where the quadratic covariation entries $[Z_i,Z_j]$, $1\le i,j\le d$, are given by \[ [Z_i,Z_j]_t = t \int_{\Ro}y_i y_j \hat{\nu}(\diff y), \quad t \in \Rp.  \]

Using \ito's formula \cite[Theorem 33, Chapter 7, p. 81]{Protter}  one can verify that the solution $\hat{\V{X}}^\V{x} $ to \eqref{E:dX_tx} is given by  \begin{align}
\hat{\V{X}}^\V{x}_t  &=  e^{-\V{Q} t}\V{x} +   \int_0^t e^{-\V{Q} (t-s)} (\V{u}+ \V{v})  \diff s + \V{U}_t + \V{V}_t,  \label{E:X_tsol}
\end{align}
where 
\[  \V{U}_t:=  \int_0^t \hat{\Msigma }e^{-\V{Q} (t-s)}  \diff \V{W}(s) \quad \text{ and }\quad   \V{V}_t := \int_0^t e^{-\V{Q} (t-s)} \diff \V{Z}(s).  \]

Since both $\V{W}$ and $\V{Z}$ are martingales and, further, \[ \Ex \{[\V{U}]_t\}  \le \frac{n\,||\hat{\Msigma}||^2}{ ||\V{Q}||} < \infty \quad  \text{and} \quad  
   \Ex \{[\V{V}]_t\}  \le  \frac{n}{||\V{Q}||} \int_{\Ro} |\V{y}|^2\hat{ \nu}(\diff \V{y}) < \infty,\]
  \Out{Theorem~28 in  \cite[IV.2, p. 173]{Protter}  implies  that } it follows that $\V{U}$ and  $\V{V}$ are also  true martingales. Moreover,  by  Theorem~29 in   \cite[ p. 75]{Protter} and the generalised Ito isometry, it follows that  $\Ex \{\V{U}^2_t\} = \Ex \{[\V{U}]_t\}$ and $\Ex \{\V{V}^2_t\} = \Ex \{[\V{V}]_t\}$. Set $C:= \max \left \{ \,4,\,  |\V{u}+ \V{v}|^2 /||\V{Q}||+ n\,\delta^*/||\V{Q}|| \right \}$ and   $\delta^* := ||\hat{\Msigma}||^2 + \int_{\Ro} |\V{y}|^2 \hat{\nu} (\diff \V{y})$. Then, \eqref{E:X_tsol} and the above estimates imply  \eqref{E:X2-est}, which in turn implies  that (\textbf{H3}) holds and, hence, $\hat{\Malpha}^\V{x} \in \hat{\mathcal{A}}_{\V{x}}^2$. Finally, note that estimate  \eqref{E:X2-est} also ensures   $J(\hat{\V{X}}^{\V{x}},\hat{\Malpha}^\V{x}) < \infty$, as required.
 \end{proof}

  \begin{proof} (of Theorem \ref{P:1}) \newline \leavevmode
By Theorem \ref{VT-EFinite}, we need to prove that  the admissible pair $(\hat{\V{X}}^{\V{x}}, \hat{\Malpha}^{\V{x}})$ satisfies  \eqref{HJB-op} with $\phi(\V{x}) := \V{x}^T \mathbf{B} \V{x} + \textbf{c}\cdot \V{x} + d$, i.e. $\phi$ solves the HJB equation 
\begin{equation}\label{D:HJB11}
\inf_{\V{a}=(\Msigma, \nu,\Mmu) \in \V{A}} \left \{ L^\V{a} \phi (\V{x}) -q \phi(\V{x}) + (\V{x}^T \Lambda \V{x} + \Mmu^T\Theta \Mmu) \right\} = 0.
\end{equation}
We will see then that,  for each $\V{x}\in \rd$, the triplet $(\hat{\Msigma}, \hat{\nu},\hat{\mu}(\V{x}))$    is a minimiser of \eqref{D:HJB11}.

Since $\nabla \phi(\V{x}) = 2 \mathbf{B} \V{x} + \textbf{c}$ and $\Hess \phi(\V{x}) = 2\mathbf{B}$, it follows that 
\begin{equation}\label{La-Op}
L^\V{a} \phi(\V{x}) = (\V{u} + \Mmu)^T (2\mathbf{B}\, \V{x} + \textbf{c}) + \text{Tr} (\Msigma^T \mathbf{B}\, \Msigma) + \int_{\Ro} \V{y}^T \mathbf{B}\, \V{y} \,\nu (\diff \V{y}). 
\end{equation}
Thus, the minimal infinitesimal variance $\hat{\delta}$  in \eqref{D:delta}  yields 
\begin{equation}\label{E:HJB2}
 \V{u}^T (2\textbf{B}\V{x} + \textbf{c})  + \hat{\delta} -q (\V{x}^T \textbf{B} \V{x} + \textbf{c}^T \V{x} + d) + \V{x}^T \Lambda \V{x}  + \inf_{\Mmu\in \rd} \left \{ (2 \V{x}^T \textbf{B} +\textbf{c}^T) \Mmu  + \Mmu^T \Theta \Mmu \right\} = 0.
\end{equation}
 Let $g(\Mmu):=   (2 \V{x}^T \textbf{B} +\textbf{c}^T) \Mmu  + \Mmu^T \Theta \Mmu$. Then
 \begin{equation}\label{D:mu*}
   \mu^*(\V{x}) :=  - \, \frac{1}{2} \Theta^{-1} ( 2\textbf{B}\V{x} + \textbf{c})
   \end{equation}
minimises  $g$ for every $\V{x}\in \rd$ and, further, $g(\mu^*(\V{x}) )=   - ( 2\textbf{B}\V{x} + \textbf{c})^T \Theta^{-1} ( 2\textbf{B}\V{x} + \textbf{c}) / 4 $, which   (after  rearranging terms and substituting into equation \eqref{E:HJB2}) yields 
\begin{equation}\label{QRS}
\V{x}^T\left (\Lambda -q \textbf{B} - \textbf{B}^T \Theta^{-1} \textbf{B}  \right)  +   \left (2\V{u}^T \textbf{B} -  q \textbf{c}^T- \textbf{c}^T  \Theta^{-1} \textbf{B} \right ) \V{x}  + \left (\V{u}^T \textbf{c} +  \hat{\delta}  -q d - \frac{\textbf{c}^T \Theta^{-1} \textbf{c}}{4} \right)= 0.
\end{equation}
Observe that $ \Theta^{-1}$ exists as $\Theta$ is a positive definite matrix. Since equation \eqref{QRS} should hold for every $\V{x} \in \rd$, we can now verify that  $\V{B}$, $\textbf{c}$ and $d$ as defined in \eqref{D:abc} solve   the corresponding system of
 equations. Moreover, using the definitions of $\V{c}$ and $\V{v}$, we can see  that $\mu^*(\V{x})$ coincides with $ \hat{\mu}(\V{x}) = -\V{Q}\V{x} + \V{v}$, as required. 
 
Let us also   observe that the previous calculations imply that the function $\phi$ satisfies 
\begin{equation}\label{Lhat-HJB}
 \hat{L} \phi(\V{x}) - q \phi(\V{x}) + \V{x}^T \Lambda \V{x} + \Mmu^T \Theta \Mmu = 0,\quad \text{ for each } \V{x} \in \Rp,
 \end{equation}
 where  $\hat{L}$ is the operator defined in \eqref{D:Lhat}. Since, by Lemma \ref{P:lemma01}, the transversality condition \eqref{E:TC} holds,  Theorem \ref{VT-EFinite} implies the optimality of the family of admissible policies $\{\,\hat{\Malpha}^\V{x}\,:\, \V{x} \in \rd\,\}$, as desired.
   \end{proof}

 %Proofs: Examples
\bibliography{bibfile}
\bibliographystyle{plain}

\end{document}